\documentclass[12pt,leqno]{amsart}
\usepackage[latin1]{inputenc}
\textheight=8.6truein
 \textwidth=7.2truein \hoffset=-1.1truein
  \voffset=-0.3truein
\usepackage{amsmath}
\usepackage{amssymb, amsfonts,amscd,verbatim,hyperref,cleveref,graphics}

\usepackage{xspace}
\usepackage{tikz}
\usetikzlibrary{matrix,calc,decorations.pathmorphing,shapes,arrows}
%\citesort
\usepackage{tikz-cd} 
\tikzset{
  symbol/.style={
    draw=none,
    every to/.append style={
      edge node={node [sloped, allow upside down, auto=false]{$#1$}}}
  }
}

\newtheorem{theorem}{Theorem}[section]
\newtheorem{proposition}[theorem]{Proposition}
\newtheorem{corollary}[theorem]{Corollary}
\newtheorem{lemma}[theorem]{Lemma}
\numberwithin{equation}{section}

\theoremstyle{definition}
\newtheorem{remark}[theorem]{Remark}
\newtheorem{example}[theorem]{Example}
\newtheorem{question}[theorem]{Question}

% textmode versions
\newcommand{\s}{\sum_{k=1}^\infty}

\newcommand{\norm}[1]{\lVert#1\rVert}

\newcommand{\Bignorm}[1]{\Bigl\lVert#1\Bigr\rVert}

\newcommand{\term}[1]{{\textit{#1}}}   % To introduce a term
\newcommand{\marg}[1]{\marginpar{\tiny #1}}     % A margin note

\newcommand{\timur}[1]{{\textcolor{blue}{{\bf T.O:} #1}}}

 % Use small romans for items

\DeclareSymbolFont{bbold}{U}{bbold}{m}{n}
\DeclareSymbolFontAlphabet{\mathbbold}{bbold}
\def\one{\mathbbold{1}}

\newcommand{\zs}

%\newcommand{\term}[1]{{\textit{\textbf{#1}}}}   % To introduce a term
%\newcommand{\marg}[1]{\marginpar{\tiny #1}}     % A margin note

     % un convergence

	% norm convergence
	%convergence in measure
%\renewcommand{\baselinestretch}{1.2}

\definecolor{dred}{RGB}{95,2,31}

\usepackage{mathtools}

\usepackage{tabularx,hhline}
\usepackage{graphicx}
\usepackage{enumitem}
\newlist{primenumerate}{enumerate}{1}
\setlist[primenumerate,1]{label={\arabic*$'$}}
\def\vp{\varepsilon}

\def\N{{\mathbb N}}
\def\R{{\mathbb R}}

\newcommand{\spn}{\mathrm{span}}

\begin{document}

\title[Maximal inequalities, frames and greedy algorithms]
{Maximal inequalities, frames and greedy algorithms}

\author{Pablo~M.\ Bern\'a}
\address{Departamento de Matem\'aticas, CUNEF Universidad\\ Madrid 28040, Spain
} 
\email{pablo.berna@cunef.edu}

\author{ Daniel\ Freeman}
\address{Department of Mathematics and Statistics\\
St Louis University\\
St Louis MO 63103,  USA
} \email{daniel.freeman@slu.edu}

\author{Timur\  Oikhberg}
\address{Deptartment of Mathematics, University of Illinois, Urbana IL 61801, USA}
\email{oikhberg@illinois.edu}

\author{Mitchell~A.\ Taylor}
\address{Department of Mathematics\\
ETH Z\"urich, Ramistrasse 101 8092 Z\"urich, Switzerland
} \email{mitchell.taylor@math.ethz.ch}

\keywords{Basic sequence, order convergence, greedy algorithm, maximal function, Schauder frame.}
\subjclass[2010]{46B42, 46B15, 41A65}%41A46
% 46B42 Banach lattices
% 46A40 Ordered topological linear spaces, vector lattices

\date{\today}

\begin{abstract}
The aim of this article is to use Banach lattice techniques to study coordinate systems in function spaces. We begin by proving that the greedy algorithm of a basis is order convergent if and only if a certain maximal inequality is satisfied. We then show that  absolute frames need not admit a reconstruction algorithm with respect to the usual order convergence, but do allow for reconstruction with respect to the order convergence inherited from the double dual. After this, we investigate the extent to which such coordinate systems affect the geometry of the underlying function space. Most notably, we prove that a Banach lattice $X$ is lattice isomorphic to a closed sublattice of a $C(K)$-space if and only if every unconditional sequence in $X$ is absolute.

%We then study the extent to which such coordinate systems affect the geometry of the underlying function space.  Most notably, we show that a Banach lattice $X$ is lattice isomorphic to a closed sublattice of a $C(K)$-space if and only if every unconditional sequence in $X^{**}$ is absolute. %Moreover, we show that a normalized basic sequence can be embedded into a Banach lattice in such a way that it fails to be absolute if and only if the sequence is not equivalent to the unit vector basis of $\ell_1$.
\end{abstract}

\maketitle
\tableofcontents
%\tableofcontents
\section{Introduction}\label{intro sec}
The study of coordinate systems in Banach spaces (Schauder bases, Markushevich bases, decompositions, frames, greedy algorithms) is a classical subject. However, in applications, the desirable coordinate systems often have additional structure, which may not even make sense in a generic Banach or Hilbert space. Common examples include wavelets, which make use of dilation and translation, and almost everywhere convergence systems.  Evidently, to even define these concepts, one must be working in  spaces  with suitable symmetries, or possessing additional notions of convergence. 
\medskip

Here, we will consider the interplay between coordinate systems and lattice structure. When considering a basic sequence $(x_k)$ in a Banach lattice, there are many different ways to incorporate the partial ordering. The most common requirement is to place restrictions on the set $\{x_k\}$. For example, one may require that this set be contained in the positive cone, an order interval, or be pairwise disjoint. The issue with this approach is that bases in function spaces cannot be disjoint, and those that appear in practice are rarely contained in the positive cone.  
Indeed, it is non-trivial \cite{FPT,JS} to even construct a positive basis in $L_1(\mathbb{R})$ and $L_2(\mathbb{R})$, and it is not known whether positive bases  exist in $L_p(\mathbb{R})$ for $p\neq 1,2.$ Moreover, if $(x_k)$ is a normalized basis for $L_1[0,1],$ then $\{x_k\}$ cannot be almost order bounded (equivalently, equiintegrable) \cite[p.~74-75]{DJT}. Hence, requiring that  $\{x_k\}$ lie in an order interval is also very restrictive.
\medskip

In contrast to the above, our approach in this article will be to  utilize  the lattice structure to define \emph{maximal functions} and \emph{order convergence}. For the moment, let us simply note that in any vector lattice $X$ one may define notions of order convergence $f_k\xrightarrow{o}f$ and uniform convergence $f_k\xrightarrow{u}f$. Moreover, when $X$ is a space of measurable functions, we have that $f_k\xrightarrow{o}f$ if and only if $f_k\xrightarrow{a.e.}f$ and there is a $g\in X$ with $|f_k|\leq g$ for all $k$. For this reason, one may view order convergence as a generalization of dominated almost everywhere convergence to vector lattices. 
\medskip

 The starting point of our paper is a result of  \cite{TT19} which states that for a basic sequence $(x_k)$ in a Banach lattice $X$, establishing order convergence of the basis expansions is equivalent to establishing boundedness of the associated maximal function. More precisely, letting $P_n(\sum_{k=1}^\infty a_kx_k)=\sum_{k=1}^n a_kx_k$ denote the $n$-th canonical basis projection, we have the following theorem.
\begin{theorem}[\cite{TT19} Theorem 3.1]\label{bidecomposition}
Let $(x_k)$ be a basic sequence in a Banach lattice $X$. Denote by $[x_k]$ its closed linear span and let $P_n:[x_k]\to [x_k]$  denote the $n$-th canonical basis projection. The following are equivalent.
%Let $X$ be a Banach lattice and $(X_k)$ a sequence of  subspaces of $X$ that form a Schauder decomposition of $[X_k]$. TFAE:%Let $P_i : [M_k]\to [M_k]$ be the $i$'th canonical projection. TFAE:
\begin{enumerate}
\item For all $x\in [x_k]$, $P_nx\xrightarrow{u}x$;
\item For all $x\in [x_k]$, $P_nx\xrightarrow{o}x$;
\item For all $x\in [x_k]$,  $|P_nx|\leq u$ for some $u\in X$ and  all $n$;
\item For all $x\in [x_k]$, $(\bigvee_{n=1}^m\left|P_nx\right|)_m$ is norm bounded;
\item There exists $M\geq 1$ such that for all $m\in \mathbb{N}$ and scalars $a_1,\dots, a_m$  one has 
\begin{equation}\label{bidecomp inequality}
\Bigg\|\bigvee_{n=1}^m\left|\sum_{k=1}^na_kx_k\right|\Bigg\|\leq M\Bigg\|\sum_{k=1}^m a_kx_k\Bigg\|.
\end{equation}
\end{enumerate}
\end{theorem}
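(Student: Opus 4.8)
The plan is to prove all five statements equivalent by running the cycle
(i)$\,\Rightarrow\,$(ii)$\,\Rightarrow\,$(iii)$\,\Rightarrow\,$(iv)$\,\Rightarrow\,$(v)$\,\Rightarrow\,$(i). The three implications (i)$\Rightarrow$(ii)$\Rightarrow$(iii)$\Rightarrow$(iv) are essentially formal. For (i)$\Rightarrow$(ii) I would use the standard vector-lattice fact that uniform convergence implies order convergence: if $|P_nx-x|\le\lambda_n u$ with $\lambda_n\downarrow 0$, then $\lambda_n u\downarrow 0$ by the Archimedean property, so $P_nx\xrightarrow{o}x$. For (ii)$\Rightarrow$(iii), an order convergent sequence is order bounded, so $\{P_nx\}$ is dominated by some $u\in X$. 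For (iii)$\Rightarrow$(iv), simply note $\bigvee_{n=1}^m|P_nx|\le u$ and that order intervals are norm bounded in a Banach lattice.

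The implication (iv)$\Rightarrow$(v) I would deduce from a Baire category argument adapted to sublinear maps. Put $S_m\colon[x_k]\to X$, $S_mx=\bigvee_{n=1}^m|P_nx|$; each $S_m$ is continuous (the $P_n$ are bounded and the lattice operations are norm continuous) and satisfies $S_m(\lambda x)=|\lambda|\,S_mx$. By (iv) the closed sets $A_N=\{x\in[x_k]:\sup_m\|S_mx\|\le N\}$ cover $[x_k]$, so some $A_N$ contains a ball $B(x_0,r)$. From $|P_nz|\le|P_n(x_0+z)|+|P_nx_0|$ and taking finite suprema one gets $S_mz\le S_m(x_0+z)+S_mx_0$, hence $\|S_mz\|\le 2N$ whenever $\|z\|\le r$; positive homogeneity upgrades this to $\|S_my\|\le(2N/r)\|y\|$ for every $y$ and $m$. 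Applying this to $y=\sum_{k=1}^m a_kx_k$ (so that $P_ny=\sum_{k=1}^n a_kx_k$ for $n\le m$) gives the maximal inequality of (v) with $M=\max\{1,2N/r\}$.

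The substantial step is (v)$\Rightarrow$(i). Fix $x=\sum_k a_kx_k\in[x_k]$. Applying the inequality of (v) to the scalars $(0,\dots,0,a_{m+1},\dots,a_{m'})$ gives, with the same constant $M$, the ``tail'' estimate $\big\|\bigvee_{n=m}^{m'}|P_nx-P_mx|\big\|\le M\|P_{m'}x-P_mx\|$ for $m'\ge m$. Since $\|P_nx-x\|\to 0$, combining this with $|P_nx-x|\le|P_nx-P_mx|+|P_mx-x|$ shows that the increasing sequence $\big(\bigvee_{n=m}^{m'}|P_nx-x|\big)_{m'\ge m}$ is norm Cauchy; by completeness of $X$ it converges in norm, and the limit is necessarily its supremum $u_m:=\sup_{n\ge m}|P_nx-x|\in X$, with $\|u_m\|\to 0$ as $m\to\infty$. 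Finally I would pick indices $m_1<m_2<\cdots$ with $\|u_{m_j}\|\le 4^{-j}$: then $u:=\sum_{j}2^ju_{m_j}$ converges in norm, $u\ge 0$, and $u_{m_j}\le 2^{-j}u$, so for every $\varepsilon>0$ we have $|P_nx-x|\le\varepsilon u$ once $n\ge m_j$ with $2^{-j}\le\varepsilon$; that is, $P_nx\xrightarrow{u}x$.

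The two steps carrying the real content — and where I expect the difficulty to lie — are (iv)$\Rightarrow$(v), which needs the uniform boundedness / Baire principle for the sublinear maps $S_m$ (ordinary Banach–Steinhaus does not apply directly), and (v)$\Rightarrow$(i), which uses norm completeness of $X$ in an essential way: first to realize each $u_m$ as a genuine supremum (as a norm limit of an increasing norm-Cauchy sequence), and then through the summability trick to manufacture a single dominating element $u$ that witnesses uniform convergence for all $\varepsilon$ at once. The remaining implications (i)$\Rightarrow$(ii)$\Rightarrow$(iii)$\Rightarrow$(iv), as well as the easy direction (v)$\Rightarrow$(iv), are routine.
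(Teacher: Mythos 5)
Your proposal is correct, and it is worth noting that the paper itself does not prove this statement: it is quoted from \cite{TT19}, and the closest in-paper arguments are the greedy analogue (Theorem \ref{u-greedy theorem.}) and the Ces\`aro variant, whose proofs follow exactly your cycle, including the Baire-category/uniform-boundedness principle for the continuous, absolutely homogeneous maps $x\mapsto\bigvee_{n\le m}|P_nx|$ in the step (iv)$\Rightarrow$(v). The one place where you genuinely deviate is (v)$\Rightarrow$(i): the paper's analogous proofs first extract a subsequence $(n_m)$ along which the norm-convergent sums converge uniformly, and then only bound the finite block suprema $u_m=\bigvee_{i=n_m+1}^{n_{m+1}}|P_ix-P_{n_m}x|$ before forming the weighted series $e=\sum_m m\,u_m$; you instead realize the full tail maximal functions $u_m=\sup_{n\ge m}|P_nx-x|$ inside $X$, justifying their existence by showing the increasing finite suprema are norm Cauchy (via the tail form of \eqref{bidecomp inequality}) and using that a norm-convergent increasing sequence converges to its supremum, and then apply the same weighted-series trick with weights $2^j$. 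Both routes are sound; yours avoids the subsequence extraction and the uniform-convergence-of-a-subsequence lemma at the cost of having to produce infinite suprema in $X$ (which your norm-Cauchy argument does legitimately, since the positive cone is closed), while the paper's block method sidesteps any completeness-type order argument and is the one that transfers verbatim to the nonlinear greedy setting where the tail estimate is not available in the same clean linear form.
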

The motivation for \Cref{bidecomposition} stems from a well-known principle  in harmonic analysis \cite{MR0189110,MR0209867,MR0125392} which states that for a family of linear operators $(T_k)_{k\in \mathbb{N}}$ mapping $L_p(\Omega)$ to $L_0(\Omega)$, establishing almost everywhere convergence $T_kf\xrightarrow{a.e.}f$  for all $f\in L_p(\Omega)$ is equivalent to establishing an inequality of weak type $(p,p)$ for the associated maximal operator $T^*f(x):=\sup_k|T_kf(x)|$.  From this perspective, \Cref{bidecomposition}   states that -- in the full generality of Banach lattices -- establishing \emph{dominated} almost everywhere convergence of the basis expansions  is equivalent to establishing strong boundedness of the associated maximal operator. 
\medskip

A second motivation for \Cref{bidecomposition} is that most of the important bases in martingale theory, harmonic analysis, probability, stochastic processes and
orthogonal series  do possess order convergent sums, although proving this is often a major result.  Common examples include martingale difference sequences in $L_p(P)$ with $p>1$ and $P$ a probability measure (this is essentially Doob's inequality), the Walsh basis (see \cite{MR0241885}), and unconditional blocks of the Haar in $L_1[0,1]$ (this is essentially Burkholder-Davis-Gundy combined with Khintchine). The proof of the
Carleson-Hunt theorem in \cite{Hunt} establishes \eqref{bidecomp inequality}  for the
trigonometric basis. It is also worth mentioning the work of Bourgain
\cite{B89} who used probabilistic techniques to make progress on the
Kolmogorov and Garsia conjectures. In our language, these conjectures
essentially ask whether every orthonormal basis of $L_2(P)$ admits a 
rearrangement satisfying \Cref{bidecomposition}.  In \cite{B89}, Bourgain was able to construct a rearrangement satisfying the inequality \eqref{bidecomp inequality} with $M\sim \log(\log(m))$, which is the optimal dependence that one can achieve for random rearrangements. More generally, maximal inequalities such as \eqref{bidecomp inequality}  have a long history in analysis, and can be used to prove the Birkhoff ergodic theorem \cite{Stein}, a.e.~convergence of the Schr\"odinger evolution back to the initial data \cite{DZ19}, and even appear in the non-commutative setting \cite{HRW,  JX07, P93, S20}.  %\Cref{bidecomposition} began the study of how maximal inequalities and convergence relate from a Banach lattice perspective, and opened the door for further study of sublinear and maximal operators.
\medskip

%Ultimately, the reason why examples of the bases we will study are abundant is that in spaces of measurable functions a sequence is $o$-null iff it is a.e.~null and order bounded. Almost everywhere convergence of series is a very desirable property, and bounding maximal functions is an effective way to establish this - \Cref{bidecomposition} shows exactly how much more it establishes.

The  objective of this paper is twofold. First, we wish  to continue the study initiated in \cite{ARTT,GKP, TT19} of basic sequences satisfying \Cref{bidecomposition}. Secondly, we want to extend the theory to frames and greedy algorithms. Informally, frames can be thought of as redundant bases. The extra redundancy present in a frame allows for a more robust reconstruction  with respect to the norm convergence, which is crucial in many applications. However, it also makes reconstruction with respect to  order and uniform convergence  much more subtle. Indeed, as we will see,  \Cref{bidecomposition} completely fails for frames, yet certain natural strengthenings of \eqref{bidecomp inequality} do guarantee a reconstruction algorithm with respect to the order convergence inherited from the double dual. %On the other hand, for greedy algorithms, we will prove that \Cref{bidecomposition} holds in full generality.
\medskip

In contrast,  an analogue of \Cref{bidecomposition} holds in complete generality  for the  greedy algorithm. Recall that the greedy algorithm is a nonlinear approximation scheme which approximates vectors using the coefficients of a basis with largest magnitude. Such algorithms appear  in applications such as signal processing, where one needs to approximate a signal $x=\sum_k a_k x_k$ in an infinite or high dimensional Banach space using only a relatively small number of coordinates $\sum_{k=1}^n b_{m_k} x_{m_k}$.  For orthornormal bases in Hilbert spaces, the most effective  way to approximate $x$ in norm using $n$ coordinates is to choose the $n$ coordinates of the basis expansion with the largest magnitude.  That is,  we set $\mathcal{G}_n(x) := \sum_{k=1}^n a_{m_k} x_{m_k}$ where $|a_{m_k}|\geq |a_i|$ for all $1\leq k\leq n$ and $i\not\in\{m_k\}_{1\leq k\leq n}$. Although it is easy to see that such a scheme provides an optimal  approximation  when $(x_k)$ is orthonormal, it also turns out to be  highly efficient for a large class of bases in Banach spaces. See, for example, \cite{MR2848161} for a survey on the applications and  effectiveness of the greedy algorithm. 
\medskip

Given the interest in determining whether the greedy algorithm provides an effective approximation in norm, it is also of interest to know how well it approximates a vector in order. As it turns out, by  considering the convergence of the greedy sums in \Cref{bidecomposition} rather than the convergence of the partial sums over initial segments of the basis, we are able to obtain the following nonlinear variant of the equivalence between establishing order convergence and maximal function estimates.
\begin{theorem}\label{u-greedy theorem intro}
Let $(x_k)$ be a semi-normalized basic sequence in a Banach lattice $X$ and let $[x_k]$ denote its closed linear span.  The following are equivalent.
\begin{enumerate}
\item For all $x\in [x_k]$,  $\mathcal{G}_n(x)\xrightarrow{u}x$;
\item For all $x\in [x_k]$, $\mathcal{G}_n(x)\xrightarrow{o}x$;
\item For all $x\in [x_k]$, $|\mathcal{G}_n(x)|\leq u$ for some $u\in X$ and all $n$;
\item For all $x\in [x_k]$, $(\bigvee_{n=1}^m|\mathcal{G}_n(x)|)_{m}$ is norm bounded;
\item There exists $C\geq 1$ such that for all $x\in [x_k]$ and $m\in \mathbb{N}$,
$$\bigg\|\bigvee_{n=1}^m\left|\mathcal{G}_n(x)\right|\bigg\|\leq C\|x\|.$$
%$\sup_m\|\mathcal{G}_m^\vee\|<\infty$.
\end{enumerate}
\end{theorem}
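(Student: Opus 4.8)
The plan is to prove the cycle of implications $(v)\Rightarrow(iv)\Rightarrow(iii)\Rightarrow(ii)\Rightarrow(i)\Rightarrow(v)$, mirroring the structure of \Cref{bidecomposition} but with the greedy sums $\mathcal G_n$ in place of the linear projections $P_n$. Several implications are essentially formal once the right general lemmas about order/uniform convergence in Banach lattices are in place, and I would isolate those first. Specifically: $(iv)\Rightarrow(iii)$ follows because in a Banach lattice an increasing norm-bounded sequence of positive elements need not have a supremum, so one should instead pass to the bidual $X^{**}$, where order-bounded increasing nets do converge; the sequence $\bigvee_{n=1}^m|\mathcal G_n(x)|$ increases to some $u\in X^{**}$ with $\|u\|\le\sup_m\|\bigvee_{n=1}^m|\mathcal G_n(x)|\|$, and then one argues that $u$ can be taken in $X$ — this is exactly the kind of step that appears in \cite{TT19} and I would quote or adapt it. The implication $(iii)\Rightarrow(ii)$ is the observation that once $|\mathcal G_n(x)|\le u$ for all $n$ and $\mathcal G_n(x)\to x$ in norm (which holds for a semi-normalized basic sequence when the relevant quasi-greedy-type condition is met — but note we must first secure \emph{norm} convergence, see below), domination plus a.e.-type convergence upgrades to order convergence; in the abstract lattice setting this is the statement that a norm-convergent, order-bounded sequence which is "a.e. Cauchy" is order convergent, again a lemma from the cited framework. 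And $(ii)\Rightarrow(i)$, i.e. order convergence implies uniform convergence of $\mathcal G_n(x)$: here one uses that $\mathcal G_n(x)-\mathcal G_{n-1}(x)$ is (up to the quasi-greedy constant) controlled, and order convergence of a sequence whose increments are suitably small forces uniform convergence — this should reduce to a general principle about order-to-uniform convergence for sequences with a common order bound.

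The implication $(i)\Rightarrow(v)$ is where I expect to spend real effort, and it is the genuinely new part compared to \Cref{bidecomposition}. From $(i)$, for each fixed $x$ the sequence $(\bigvee_{n=1}^m|\mathcal G_n(x)|)_m$ is uniformly convergent, hence order bounded, hence (by the order-continuity of the norm on $X^{**}$ or by a closed-graph/uniform boundedness argument) norm bounded; this gives a pointwise-in-$x$ bound $\sup_m\|\bigvee_{n=1}^m|\mathcal G_n(x)|\|<\infty$. The quantitative uniform bound $C$ in $(v)$ must then come from a Banach--Steinhaus argument. The subtlety is that $x\mapsto \bigvee_{n=1}^m|\mathcal G_n(x)|$ is \emph{not linear} — the greedy operator $\mathcal G_n$ is not even continuous at points where two coefficients tie in absolute value. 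The standard fix is to restrict to the dense set of $x$ with all nonzero coefficients distinct in modulus (where $\mathcal G_n$ is locally constant, hence continuous), run uniform boundedness there to get $\|\bigvee_{n=1}^m|\mathcal G_n(x)|\|\le C\|x\|$ on that set, and then extend to all of $[x_k]$ by a perturbation/limiting argument using semi-normalization of $(x_k)$ to control how $\mathcal G_n(x)$ moves under small changes of $x$. One has to be slightly careful that the supremum $\bigvee_{n=1}^m$ behaves well under these limits — but $\bigvee$ of finitely many terms is norm-continuous, so this goes through.

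The remaining implication $(v)\Rightarrow(iv)$ is immediate: $(v)$ says precisely that $(\bigvee_{n=1}^m|\mathcal G_n(x)|)_m$ has norms bounded by $C\|x\|$. The one hidden hypothesis lurking throughout is that semi-normalization of $(x_k)$, together with any of these equivalent conditions, actually forces $\mathcal G_n(x)\to x$ in norm (i.e. quasi-greediness of the basis); this is needed in $(iii)\Rightarrow(ii)$ to identify the order limit as $x$ rather than some other element. I would establish this early as a preliminary observation — condition $(iii)$ gives a uniform bound on $\|\mathcal G_n(x)\|$, hence on the greedy approximation errors, and combined with the basic sequence property and semi-normalization this yields norm convergence $\mathcal G_n(x)\to x$ on a dense set and then everywhere. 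In summary, the main obstacle is the non-linearity of the greedy maps in the Banach--Steinhaus step $(i)\Rightarrow(v)$; everything else is a careful but routine transcription of the lattice-theoretic arguments behind \Cref{bidecomposition}.
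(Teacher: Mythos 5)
Your plan has genuine gaps, and they cluster in two places. First, three of your individual implications lean on general lattice principles that are simply false. For (iv)$\Rightarrow$(iii): an increasing norm-bounded sequence does have a supremum in $X^{**}$, but there is no argument that lets you ``take $u$ in $X$'' -- that is exactly the obstruction, and it is why the paper never proves (iv)$\Rightarrow$(iii) directly; instead it proves (iv)$\Rightarrow$(i) by observing that (iii), hence (i), holds in $X^{**}$, and then uses the special fact that \emph{uniform} convergence of sequences passes from $X^{**}$ back to $X$ (\cite[Proposition 2.12]{TT19}); order bounds do not pass back. For (iii)$\Rightarrow$(ii): ``norm convergent plus order bounded implies order convergent'' is false in general (the typewriter sequence in $L_p$ is norm null and order bounded but not a.e.\ convergent; \Cref{example} exploits exactly this failure), and the ``a.e.\ Cauchy'' hypothesis you insert is precisely what has to be proved, so the argument is circular. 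For (ii)$\Rightarrow$(i): order convergence does not imply uniform convergence (the paper notes that the implications $u\Rightarrow o_1\Rightarrow o$ are strict), and the greedy increments are not controlled without the maximal inequality; there is no ``general principle'' to invoke here. In the paper these three implications are obtained only by closing the circle: (i)$\Rightarrow$(ii)$\Rightarrow$(iii)$\Rightarrow$(iv) are the trivial directions, and (i) is recovered from (v).

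Second, your route to the quantitative bound does not work as described. Restricting to the dense $G_\delta$ of vectors with pairwise distinct nonzero coefficient moduli does repair continuity of $x\mapsto\bigvee_{n\le m}|\mathcal G_n(x)|$, but the real obstruction to a Banach--Steinhaus argument is the total failure of subadditivity: the greedy set of $x_0+\varepsilon x$ bears no relation to those of $x_0$ and $x$, so the standard step from ``bounded on a ball around $x_0$'' to ``bounded by $C\|x\|$'' has no analogue. This is exactly why the classical proof of \Cref{W00}, and the paper's proof here, are gliding-hump arguments rather than Baire-category ones. Concretely, the paper proves (iii)$\Rightarrow$(v) by contradiction: \Cref{greedy lemma} (together with the tie-breaking perturbation in \Cref{equivalence of constants.}) yields finitely supported blocks $x_k$ with pairwise disjoint supports, $\|x_k\|\le 2^{-k}$, $\|\mathcal G^\vee_{m_k}(x_k)\|\ge k$, and strictly decreasing coefficient scales; the natural greedy sums of $x=\sum_k x_k$ then traverse each block in turn, so a single order bound for $(\mathcal G_m(x))$ would force $\mathcal G^\vee_{m_k}(x_k)\le 2u$ for all $k$, a contradiction. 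The remaining direction (v)$\Rightarrow$(i) first invokes the classical quasi-greedy theory to get norm convergence $\mathcal G_m(x)\to x$, then extracts a rapidly convergent subsequence and dominates the oscillations $u_k$ between consecutive subsequence indices by $e=\sum_k k\,u_k$ to upgrade to uniform convergence. I would reorganize your plan around these two arguments (and the $X^{**}$ detour for (iv)) rather than the proposed cycle.
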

One of the main goals of this article is to study the implications of \Cref{u-greedy theorem intro}. For now, we note that \Cref{u-greedy theorem intro} is a theorem about basic sequences which can have implications outside the context of Banach lattices.  That is, it is possible to study different properties of a basic sequence $(x_k)$ by embedding $(x_k)$ into different Banach lattices and then applying \Cref{u-greedy theorem intro}.  In particular, since $[x_k]$ is separable, we may consider $[x_k]$ as a subspace of $C[0,1]$. In $C[0,1]$,  uniform convergence agrees with norm convergence and $\| |x|\vee |y|\|=\|x\|\vee\|y\|$ for all $x,y\in C[0,1]$. Thus, choosing $X=C[0,1]$ in \Cref{u-greedy theorem intro} we recover the following key theorem about quasi-greedy bases from \cite{W00} which, a priori, makes no mention of Banach lattices. 
\begin{theorem}[\cite{W00} Theorem 1; \cite{AABW19} Theorem 4.1]\label{W00}
Let $E$ be a Banach space with a semi-normalized Schauder basis $(x_k)$. The following are equivalent.
\begin{enumerate}
\item For all $x\in E$,  $\mathcal{G}_n(x)\xrightarrow{\|\cdot\|}x$;
\item There exists $C\geq 1$ such that for all $x\in E$ and $n\in \mathbb{N}$,
$\|\mathcal{G}_n(x)\|\leq C\|x\|.$
\end{enumerate}
\end{theorem}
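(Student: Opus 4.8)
The plan is to deduce Theorem~\ref{W00} as a corollary of Theorem~\ref{u-greedy theorem intro} by embedding the Banach space $E$ into the Banach lattice $C[0,1]$. First, since $E$ is separable (it has a Schauder basis), by the Banach--Mazur theorem there is a linear isometric embedding $J\colon E\hookrightarrow C[0,1]$. Set $X=C[0,1]$ and consider the image sequence $(Jx_k)$, which is a semi-normalized basic sequence in $X$ with closed linear span $[Jx_k]=J(E)$. Because $J$ is an isometry intertwining the basis projections and the greedy selections (the greedy algorithm depends only on the coefficient functionals of the basis, which are transported by $J$), we have $\mathcal{G}_n^{X}(Jx)=J(\mathcal{G}_n^{E}(x))$ for every $x\in E$ and every admissible greedy ordering.

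Next I would exploit the two special features of $C[0,1]$ flagged in the paragraph preceding the theorem: uniform convergence in the vector lattice $C[0,1]$ coincides with norm convergence, and $\bignorm{\,\abs{f}\vee\abs{g}\,}=\norm{f}\vee\norm{g}$ for all $f,g\in C[0,1]$, so by induction $\bignorm{\bigvee_{n=1}^m\abs{f_n}}=\max_{1\le n\le m}\norm{f_n}$. With these identifications, condition (i) of Theorem~\ref{u-greedy theorem intro} for $(Jx_k)$ reads: for all $y\in J(E)$, $\mathcal{G}_n(y)\to y$ in norm; pulling back through the isometry $J$, this is exactly condition (i) of Theorem~\ref{W00} for $(x_k)$ in $E$. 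Likewise, condition (v) of Theorem~\ref{u-greedy theorem intro} becomes: there is $C\ge 1$ with $\max_{1\le n\le m}\norm{\mathcal{G}_n(y)}\le C\norm{y}$ for all $y\in J(E)$ and all $m$, which is equivalent to $\norm{\mathcal{G}_n(y)}\le C\norm{y}$ for all $n$; transporting back through $J$ yields condition (ii) of Theorem~\ref{W00}. Thus (i)$\Leftrightarrow$(v) of Theorem~\ref{u-greedy theorem intro}, applied to the embedded sequence, is precisely (i)$\Leftrightarrow$(ii) of Theorem~\ref{W00}.

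The only genuine point requiring care — and the place I expect the main subtlety — is the interaction between the greedy algorithm and the nonuniqueness of the greedy ordering when coefficients are tied: one must check that the embedding respects whichever tie-breaking convention is in force, and that conditions (i), (ii) hold for \emph{every} valid greedy sequence or for a single chosen one consistently on both sides. Since $J$ is linear and isometric and carries the biorthogonal functionals of $(x_k)$ to those of $(Jx_k)$, the sets of admissible greedy orderings coincide, so this is a routine but necessary verification rather than a real obstacle. With that in hand, the equivalence is immediate and no new estimates are needed beyond Theorem~\ref{u-greedy theorem intro}.
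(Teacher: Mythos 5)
Your proposal is exactly the paper's route: the authors state \Cref{W00} as a cited result of Wojtaszczyk and observe (in the paragraph preceding the theorem and again in \Cref{recover usual}) that it is recovered from \Cref{u-greedy theorem intro} by embedding the separable space $E$ isometrically into $C[0,1]$ and using precisely the two facts you identify -- uniform convergence coincides with norm convergence in $C[0,1]$, and $\bignorm{\abs{f}\vee\abs{g}}=\norm{f}\vee\norm{g}$, so that condition (v) collapses to $\sup_n\norm{\mathcal{G}_n(x)}\leq C\norm{x}$. Your point about tie-breaking is also handled correctly: the natural greedy ordering is defined purely through the biorthogonal functionals, which the isometry transports, so $\mathcal{G}_n$ commutes with $J$.

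One caveat you should be aware of if you intend this as an independent proof rather than as the consistency check the paper presents: in the proof of \Cref{u-greedy theorem.} (the precise form of \Cref{u-greedy theorem intro}), the implication (v)$\Rightarrow$(i) opens by invoking \cite[Theorem 10.2.3, Lemma 10.2.5]{AK16}, which \emph{is} Wojtaszczyk's theorem, to obtain norm convergence of the greedy sums before upgrading to uniform convergence; when $X=C[0,1]$ that upgrade is vacuous, so deducing (ii)$\Rightarrow$(i) of \Cref{W00} from \Cref{u-greedy theorem intro} is circular unless one supplies a proof of (v)$\Rightarrow$(i) that does not pass through \Cref{W00}. The converse direction (i)$\Rightarrow$(ii) is safe, since it factors through the self-contained chain (i)$\Rightarrow$(ii)$\Rightarrow$(iii)$\Rightarrow$(v) of \Cref{u-greedy theorem.}. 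This is why the paper attributes \Cref{W00} to \cite{W00} and \cite{AABW19} and describes the specialization to $C[0,1]$ as a ``recovery'' rather than a new proof.
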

Although \Cref{u-greedy theorem intro}  reduces to \Cref{W00} when $X=C[0,1]$, when $X=L_p(\Omega)$ it is  often highly non-trivial to verify whether a  basis satisfies \Cref{u-greedy theorem intro}. Several examples and structural properties of such bases will be given throughout the paper, but for now let us note that Tao \cite{Tao96} proved that all compactly supported wavelet bases  satisfy  \Cref{u-greedy theorem intro} whereas  K\"orner \cite{K96, K06} proved that the Fourier and Walsh bases do not. In other words,  even though the Fourier and Walsh bases satisfy \Cref{bidecomposition} and are optimally approximated  by the greedy algorithm in norm, there are $f\in L_2([0,1])$  for which $(\mathcal{G}_nf)_{n=1}^\infty$ fails to converge to $f$ almost everywhere. In fact, K\"orner could construct $f\in L_2([0,1])$ such that the greedy algorithm for $f$ diverges at almost every point.
\subsection{Outline of the paper}
We now briefly summarize  the paper.  In \Cref{Section2} we recall the necessary vector lattice and Banach space terminology. Then, in \Cref{Section3} we combine the ideas of \cite{T,TT19} with those from greedy theory to generalize several results on quasi-greedy bases to the lattice setting. More specifically, in \Cref{Sub1} we prove  \Cref{u-greedy theorem intro}, in \Cref{frame subsec} we discuss the extent to which \Cref{bidecomposition} holds for frames (and, as a corollary of our methods, solve a problem from \cite{hart2023overcomplete}), and in \Cref{Sub3} we  verify that most of the fundamental results on quasi-greedy bases extend to uniformly quasi-greedy bases. Since the results in Sections~\ref{Sub1} and \ref{Sub3} must recover the classical results on quasi-greedy bases when $X=C[0,1]$,  there is some inevitable overlap between the new and classical proofs -- our exposition in these two subsections will loosely follow  \cite[Chapter 10]{AK16}. 
\medskip

%We also verify that most of the fundamental results from both fields remain  true in this new level of generality. Such results are collected in \Cref{Sub1} and \Cref{Sub3}. .......... provides several interesting examples coming from applications -  examples specific to our needs will be presented throughout the paper. As with \Cref{bidecomposition},  bases satisfying \Cref{u-greedy theorem intro}  appear quite often in applications, but, again, proving an explicit example has this property is often quite delicate. 
%\medskip

The heart of the paper is \Cref{s:destroy} where we aim to develop a qualitatively new perspective on coordinate systems in Banach lattices by proving  results which have no direct analogues in the classical theories of Schauder or quasi-greedy bases. We begin in \Cref{embed bad} with the goal of characterizing when a ``good" basis of a Banach space $E$ can be embedded into a Banach lattice $X$ so that it inherits ``bad" lattice properties. As already mentioned, by choosing $X=C[0,1]$ one can always embed a Schauder or quasi-greedy basis into a lattice so that Theorems~\ref{bidecomposition} and \ref{u-greedy theorem intro} hold. In \Cref{ss:destroy_Lind} we construct a Banach lattice $X$ containing a copy of the Lindenstrauss basis which  simultaneously fails  Theorems~\ref{bidecomposition} and \ref{u-greedy theorem intro}. Then, in \Cref{ss:destroy_lp} we construct, for each $p>1$, a basis $(u_i)$ of $\ell_p$ which is equivalent to the canonical basis of $\ell_p$ yet simultaneously fails   Theorems~\ref{bidecomposition} and \ref{u-greedy theorem intro}. Finally, we show in \Cref{ss:uncondiitonal bases} that  a normalized basis  can  be embedded  into a Banach lattice in such a way that it fails to be absolute (i.e., so that the  maximal inequality for unconditional bases fails) if and only if it is not equivalent to the canonical  $\ell_1$ basis.
\medskip

In \Cref{Section5} we move from embeddings to blockings. This is a topic where  bibases and greedy bases behave very differently. Indeed, as we will demonstrate in \Cref{section 5.1},  it is quite easy to block a basis so that it satisfies \Cref{bidecomposition} or its unconditional variant. On the other hand, in \Cref{lose} we show that the canonical basis of $\ell_p$  is the only subsymmetric basis for which every blocking is greedy. 
\medskip

In \Cref{ss:bad_basic} we characterize (up to a lattice isomorphism) AM-spaces as the only Banach lattices for which every unconditional sequence is absolute.  We also show that if every basic sequence in a Banach lattice $X$ satisfies \Cref{bidecomposition} then $X$ must be  $p$-convex for all finite $p$.
\medskip

In \Cref{complemented} we prove that complemented absolute sequences behave very much like disjoint sequences and use this to give a Banach lattice proof of the well-known fact that the only complemented  subspace of $C[0,1]$ with an unconditional basis is $c_0$. In \Cref{Ambient} we prove that every unconditional basis of $L_p$ can be rearranged to fail \Cref{bidecomposition} and that  $\sigma$-order complete Banach lattices which embed into the span of an absolute FDD  must be purely atomic. \medskip

Throughout, we make use of standard facts and notation -- see e.g.~\cite{AK16, LT1, LT2} for Banach spaces in general, \cite{MN} for Banach lattices specifically. The field of scalars is $\R$, unless otherwise specified.
\medskip

\textbf{Acknowledgments.} The first author was supported by the grant PID2022-142202NB-I00 / AEI / 10.13039/501100011033 (Agencia Estatal de Investigaci\'on, Spain).

%One theme of \Cref{s:destroy} is that the interplay between Banach and lattice structure can be rather delicate; we demonstrate this by ``destroying" several nice lattice/greedy properties by suitably chosen embeddings/blockings. We complement these results in \Cref{Section5} by building more stable structures. 
%\medskip
% t is in this section where we gain a much deeper understanding of  the interplay between coordinate systems and lattice structures.

 %In \Cref{Ambient} we focus on the situation where the ambient lattice $X$ is far from an AM-space, or when $[X_k]$ sits very nicely in $X$ (for example, when $[X_k]$ is complemented in $X$ or even $[X_k]=X$). This allows us to prove theorems that are very different than those in the standard theory of Schauder or quasi-greedy bases, and even recover some classical Banach space facts via a  lattice approach.

%to continue the study initiated in \cite{GKP, TT19} of how maximal inequalities and convergence relate from a Banach lattice perspective.
\section{Background}\label{Section2}
In this section, we provide the necessary vector lattice and Banach space background -- background on frames and greedy algorithms will appear throughout the paper. 
\medskip

If $(y_\alpha)$ is a net in a vector lattice $X$ then  $y_{\alpha}\downarrow 0$ means that $y_\alpha$ is decreasing and $\inf\{y_\alpha\}=0$.  A vector lattice $X$ is called {\em Archimedean} if $n^{-1}e\downarrow 0$ for every positive vector $e\in X$.  We will be working with the
following three classical notions of sequential convergence of a sequence $(x_n)_{n=1}^\infty$ in an Archimedean vector lattice $X$.
\begin{itemize}
\item We say that $(x_n)_{n=1}^\infty$ {\em uniformly converges} to $x$ and write $x_n\xrightarrow{u}x$ if there exists $e\in X_+$ and a sequence $\epsilon_m\downarrow 0$ in $\mathbb{R}$ satisfying: $$\forall m\ \exists n_m \ \forall n\geq n_m,\ |x_n-x|\leq\epsilon_m e.$$
\item We say that $(x_n)_{n=1}^\infty$ {\em order converges} to $x$ (with respect to a sequence)  and write $x_n\xrightarrow{o_1}x$ if there exists a sequence $y_m\downarrow 0$ in $X$ satisfying: $$\forall m\ \exists n_m \ \forall n\geq n_m, \ |x_n-x|\leq y_m.$$
\item We say that $(x_n)_{n=1}^\infty$ {\em order converges} to $x$ (with respect to a net)  and write $x_n\xrightarrow{o}x$ if there exists a net $y_{\beta}\downarrow 0$ in $X$ satisfying: $$\forall \beta\ \exists n_{\beta} \ \forall n \geq n_{\beta}, \ |x_n-x|\leq y_{\beta}.$$
\end{itemize}
 Evidently, $x_n\xrightarrow{u}x\Rightarrow x_n\xrightarrow{o_1}x \Rightarrow x_n\xrightarrow{o}x$; however, neither converse implication holds in general. In a Banach lattice one also has norm convergence, and although  $u$-convergence implies norm convergence, $o_1$-convergence need not. It is also not true that norm convergent sequences are $o$-convergent, but they do at least have $u$-convergent subsequences. %With such varied behaviour, the equivalence of  $u$, $o_1$, and $o$-convergence  of the partial sums in \Cref{bidecomposition} and greedy sums in \Cref{u-greedy theorem intro} is of note.
 \medskip

Recall that a \emph{(Schauder) basis} of a Banach space $X$ is a sequence $(x_k)$ in $X$ such that  for
every vector $x$ in $X$ there is a unique sequence of scalars $(a_k)$ satisfying
$x=\s a_k x_k$. Here, of course, the series converges in norm.
For each~$n$,
we define the $n$-th basis projection $P_n\colon X\to X$ via
\begin{math}
  P_n\bigl(\s a_k x_k)=\sum_{k=1}^na_k x_k,
\end{math}
so that $P_nx\xrightarrow{\|\cdot\|}x.$
It is known that the projections $P_n$ are uniformly bounded; the number
$K:=\sup_n\norm{P_n}$ is called the basis constant of~$(x_k)$. A
sequence $(x_k)$ in $X$ is called a \emph{(Schauder) basic sequence}
if it is a Schauder basis of its closed linear span~$[x_k]$. In this case, the
$P_n$'s are defined on~$[x_k]$. It is a standard fact
that a sequence $(x_k)$ of non-zero vectors  is Schauder basic iff there
exists a constant $K\ge 1$ such that
\begin{equation}\label{reg basis in}
  \bigvee_{n=1}^m\Bignorm{\sum_{k=1}^na_kx_k}
  \le K\Bignorm{\sum_{k=1}^ma_kx_k}\hspace{1cm}\textrm{ for all $m\in\N$ and all scalars $a_1,\dots,a_m$.}
  %\quad\mbox{and}\quad
%  \bigvee_{n=1}^m\Bignorm{\sum_{k=1}^nx_k}
%  \le K\Bignorm{\sum_{k=1}^mx_k}.
\end{equation}
Moreover, the least
value of the constant $K$ is the basis constant of~$(x_k)$.  A  sequence $(x_k)$ is called \emph{unconditional} if every permutation of $(x_k)$ is basic. As is well-known, unconditionality of a basic sequence is characterized by the existence of a constant $C\geq 1$ such that
\begin{equation}\label{reg basis unc}
  \bigvee_{\epsilon_k=\pm 1}\Bignorm{\sum_{k=1}^m \epsilon_ka_kx_k}
  \leq C\Bignorm{\sum_{k=1}^ma_kx_k}\hspace{1cm}\textrm{ for all $m\in\N$ and all scalars $a_1,\dots,a_m$.}
\end{equation}

Suppose now that $X$ is a Banach lattice.  By interchanging the order of the supremum and the norm, one obtains the ``maximal inequality" variants of \eqref{reg basis in} and \eqref{reg basis unc}.  More formally, 
a sequence  $(x_k)$ of non-zero vectors in a Banach lattice $X$ is called \emph{bibasic} if there exists a constant $M\geq 1$ such that 
\begin{equation}\label{reg basis in2}
 \Bignorm{ \bigvee_{n=1}^m\bigg|\sum_{k=1}^na_kx_k\bigg|}\le M\Bignorm{\sum_{k=1}^ma_kx_k}\hspace{1cm}\textrm{ for all $m\in\N$ and all scalars $a_1,\dots,a_m$}
  %\quad\mbox{and}\quad
%  \bigvee_{n=1}^m\Bignorm{\sum_{k=1}^nx_k}
%  \le K\Bignorm{\sum_{k=1}^mx_k}.
\end{equation}
and \emph{absolute} if there exists a constant $A\geq 1$ such that 
 \begin{equation}\label{absolute basis unc }
\Bignorm{\sum_{k=1}^m |a_kx_k|}
  \leq A\Bignorm{\sum_{k=1}^ma_kx_k}\hspace{1cm}\textrm{ for all $m\in\N$ and all scalars $a_1,\dots,a_m$.}
\end{equation}
Note that in obtaining the inequality \eqref{absolute basis unc } from \eqref{reg basis unc} we used the standard fact that 
\begin{equation}\label{standard sup}
    \bigvee_{\epsilon_k=\pm 1}\left|\sum_{k=1}^m \epsilon_ka_kx_k\right|=\sum_{k=1}^m |a_kx_k|,
\end{equation}
which holds in any Archimedean vector lattice. Clearly, every bibasic sequence is basic.  By \Cref{bidecomposition}, a basic sequence is bibasic  if and only if, for each $x\in [x_k]$, $P_nx\xrightarrow{o}x$. %On the other hand,  it is easy to see that absolute sequences are both unconditional and bibasic.
\medskip

  When $X$ is a $C(K)$-space it is easy to see  that every basic sequence is  bibasic and   every unconditional sequence is absolute. One of the main objectives of this paper  is to prove a converse statement; namely, that a Banach lattice $X$ is lattice isomorphic to a closed sublattice of a $C(K)$-space if and only if every unconditional sequence is absolute.

\section{Reconstruction algorithms for coordinate systems in Banach lattices}\label{Section3}
In this section, we begin our goal of infusing lattice technique into the study of frames and greedy bases. 
\subsection{A greedy version of the bibasis theorem}\label{Sub1}
We begin by proving \Cref{u-greedy theorem intro}. To fix the notation, $X$ will be a Banach lattice, $E$ a closed subspace of $X$, and $\mathcal{B}=(e_n)$ a semi-normalized basis of $E$. The biorthogonal functionals of $(e_n)$ will be denoted by $e_n^*\in E^*$. 
\medskip

We first recall some of the essentials on greedy algorithms. Fix $x\in E$. A \emph{greedy ordering} of $x$ is an injective map $\pi : \mathbb{N}\to \mathbb{N}$ such that $\{n: e_n^*(x)\neq 0\}\subseteq \pi(\mathbb{N})$ and  $(|e^*_{\pi(n)}(x)|)$ is non-increasing. The \emph{$m$-th greedy sum of $x$} associated to $\pi$ is given by $G_{\pi,m}(x):=\sum_{n=1}^m e^*_{\pi(n)}(x)e_{\pi(n)},$
and the sequence $(G_{\pi,m}(x))_{m=1}^\infty$ is called a \emph{greedy approximation of $x$}.  A  \emph{strictly greedy sum of $x$ of order $m$} is an element  $G_{\pi,m}(x)$ for which  there is no ambiguity in defining the $m$-th greedy sum of $x$, i.e., $G_{\pi,m}(x)=G_{\sigma,m}(x)$ for all  greedy orderings $\sigma$.
It is well-known that the choice of greedy ordering is not particularly important, and hence we will usually use the ordering induced by the basis.  More specifically, we define  $\rho: \mathbb{N}\to \mathbb{N}$ so that $\{n: e_n^*(x)\neq 0\}\subseteq \rho(\mathbb{N})$ and if $j<k$ then either $|e_{\rho(j)}^*(x)|>|e_{\rho(k)}^*(x)|$ or $|e^*_{\rho(j)}(x)|=|e_{\rho(k)}^*(x)|$ and $\rho(j)<\rho(k)$. We define the \emph{$m$-th  natural greedy sum of $x$} as $\mathcal{G}_m(x):=\sum_{n=1}^me_{\rho(n)}^*(x)e_{\rho(n)}$ and call the basis $(e_n)$  \emph{quasi-greedy} if, for all $x\in E$, $\mathcal{G}_m(x)\xrightarrow{\|\cdot\|}x$. By \Cref{W00}, $(e_n)$ is quasi-greedy iff there exists a constant  $C\geq 1$ such that for all $x\in E$ and $m\in \mathbb{N}$,
$\|\mathcal{G}_m(x)\|\leq C\|x\|.$
\medskip

%Often we want a 

%Generically, for each $x\in E$ and $m\in \N$ we say that a sum of the form 

%$$G_m(x)=\sum_{n\in A_m(x)}e_n^*(x)e_n$$
%is a greedy sum of $x$ of order $m$ if $A_m(x)$ is an $m$-element set of indices and 
%$$\min\{|e_n^*(x)| :n\in A_m(x)\}\geq \max \{|e_n^*(x)| : n\not\in A_m(x)\}.$$

All of the above definitions are valid in Banach, and even quasi-Banach spaces. Since we require $X$ to be a lattice, for each $x\in E$ and greedy ordering $\pi$ we may define $$G_{\pi,m}^\vee(x):=\bigvee_{n=1}^m\left|G_{\pi,n}(x)\right|=\bigvee_{n=1}^m\left|\sum_{k=1}^n  e_{\pi(k)}^*(x)e_{\pi(k)}\right|.$$ In particular, we may consider the map $x\mapsto\mathcal{G}^\vee_m(x):=\bigvee_{n=1}^m|\mathcal{G}_n(x)|$, and because it is absolutely homogeneous  we may define
$\|\mathcal{G}^\vee_m\|:=\sup_{x\in S_E}\|\mathcal{G}^\vee_m(x)\|.$ We say that $G_{\pi,m}(x)$ is a \emph{lattice strictly greedy sum of $x$} of order $m$ if $|e^*_{\pi(i)}(x)|\neq |e^*_{\pi(j)}(x)|$ for each distinct $i,j\in \{1,\dots,m\}$ for which $e^*_{\pi(i)}(x)\neq 0$.
\medskip

To begin, we extend the fact  that the choice of greedy ordering is  not particularly important to the lattice setting. In the proof, we need to take some care since $G^\vee_{\pi,m}(x)$ may depend on $\pi$, even when $G_{\pi,m}(x)$ is strictly greedy.

\begin{lemma}\label{equivalence of constants.}
Let $(e_n)$ be a semi-normalized basic sequence in a Banach lattice $X$ and let $C>0$ be a constant. The following are equivalent.
\begin{enumerate}
\item For all $x\in E$, all $m\in \N$ and all greedy orderings $\pi$, $\|G_{\pi,m}^\vee(x)\|\leq C\|x\|.$
\item For all $x\in E$ and all $m\in \N$, $\|\mathcal{G}^\vee_m(x)\|\leq C\|x\|$.
\item For all $x\in E$ and all $m\in \N$ there exists a greedy ordering $\pi$ such that $\|G_{\pi,m}^\vee(x)\|\leq C\|x\|.$
\item  We have $\|{\mathcal{G}^\vee_{|\text{supp} (x)|}}(x)\|\leq C\|x\|$ whenever $x\in E$ has finite support and $\mathcal{G}_{|\text{supp}(x)|}(x)$ is lattice strictly~greedy.
%\item There exists a constant $C$ such that $\|G_{\pi,m}^\vee(x)\|\leq C\|x\|$ for all $x\in E$ and all strictly greedy sums $G_{\pi,m}(x).$
%\item There exists a constant $C$ such that $\|G_m^\vee(x)\|\leq C\|x\|$ for all $x\in E$ of finite support and all strictly greedy sums of such $x$.
\item We have $\|{\mathcal{G}^\vee_{|\text{supp} (x)|}}(x)\|\leq C\|x\|$ for all $x\in E$ of finite support.

%$\|\bigvee_{n=1}^{|\text{supp}(x)|}|G_{\pi,n}
\end{enumerate}
Furthermore, the least such $C$ so that the above estimates hold is  $C=\sup_m\|\mathcal{G}^\vee_m\|$.
\end{lemma}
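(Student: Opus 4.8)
The plan is to prove the cycle $(1)\Rightarrow(2)\Rightarrow(3)\Rightarrow(1)$ together with the chain $(1)\Rightarrow(5)\Rightarrow(4)$ and the harder returning implication $(4)\Rightarrow(1)$ (or $(4)\Rightarrow(2)$), after which the final sentence about $C=\sup_m\|\mathcal G^\vee_m\|$ is immediate from $(2)$: indeed $(2)$ says exactly that $\|\mathcal G^\vee_m(x)\|\le C\|x\|$ for all $x$ and $m$, i.e. $\|\mathcal G^\vee_m\|\le C$ for all $m$, hence $\sup_m\|\mathcal G^\vee_m\|\le C$; conversely $C:=\sup_m\|\mathcal G^\vee_m\|$ obviously satisfies $(2)$, so it is the least admissible constant in $(2)$, and by the equivalence it is the least admissible constant in all five statements.

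First I would dispatch the easy arrows. $(1)\Rightarrow(2)$ is trivial since $\mathcal G_n=G_{\rho,n}$ for the natural greedy ordering $\rho$. $(2)\Rightarrow(3)$ is trivial (take $\pi=\rho$). $(1)\Rightarrow(5)$ is trivial, and $(5)\Rightarrow(4)$ is trivial since the hypothesis in $(4)$ is a restriction of that in $(5)$. The implication $(2)\Rightarrow(1)$ — equivalently $(3)\Rightarrow(1)$ — is the statement that for a \emph{fixed} $x$ the quantity $\|G^\vee_{\pi,m}(x)\|$ does not really depend on $\pi$; this is the ``choice of greedy ordering is unimportant'' phenomenon alluded to just before the lemma. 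Here the point is that two greedy orderings of the same $x$ differ only by permutations \emph{within} blocks of coordinates on which $|e^*_n(x)|$ is constant. A perturbation/limiting argument handles this: given $x$ and any greedy ordering $\pi$, perturb the nonzero coefficients of $x$ slightly to $x_\varepsilon$ (keeping their signs and their relative order, breaking all ties) so that $x_\varepsilon$ has a \emph{unique} greedy ordering, namely $\pi$ up to the finitely many already-distinct values; as $\varepsilon\to0$ one has $x_\varepsilon\to x$ in norm, $G_{\pi,n}(x_\varepsilon)\to G_{\pi,n}(x)$, hence $G^\vee_{\pi,m}(x_\varepsilon)\to G^\vee_{\pi,m}(x)$ in norm (a finite lattice operation applied to norm-convergent sequences is norm-convergent, since finite suprema are norm-continuous in a Banach lattice). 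Applying the bound valid for $x_\varepsilon$ and passing to the limit gives $\|G^\vee_{\pi,m}(x)\|\le C\|x\|$. The one subtlety is matching the ordering: after breaking ties one must check that the resulting unique greedy ordering of $x_\varepsilon$ agrees with $\pi$ on the first $m$ places, which is arranged by choosing the perturbation to respect $\pi$'s order.

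The main obstacle is $(4)\Rightarrow(1)$ (or $(4)\Rightarrow(2)$): we must upgrade a bound assumed only for finitely supported vectors whose \emph{full} natural greedy sum is lattice strictly greedy, to the bound for all $x\in E$, all $m$, and all greedy orderings. I would proceed in three reductions. \emph{Reduction to finite support}: given arbitrary $x\in E$ and $m\in\N$, the vector $y=\mathcal G_N(x)$ for large $N\ge m$ is finitely supported with $\mathcal G_n(y)=\mathcal G_n(x)$ for $n\le m$, while $\|y\|\le K'\|x\|$ for a fixed constant $K'$ coming from the semi-normalization and basis constant — more carefully, using that quasi-greedy-type projections are bounded is circular here, so instead one simply truncates: writing $x=\sum a_ke_k$ in norm and letting $x^{(N)}=\sum_{k\le N}a_ke_k$, one has $x^{(N)}\to x$, $\mathcal G_n(x^{(N)})\to\mathcal G_n(x)$ for each fixed $n$ once $N$ is large relative to the support-localization of the top $m$ coefficients, and again pass to the limit; this reduces $(1)$ for general $x$ to $(1)$ for finitely supported $x$. \emph{Reduction to the full greedy sum}: for finitely supported $x$ with $|\mathrm{supp}(x)|=L$, note $\mathcal G^\vee_m(x)=\bigvee_{n=1}^m|\mathcal G_n(x)|$ and $\mathcal G^\vee_L(x)=\bigvee_{n=1}^L|\mathcal G_n(x)|\ge \mathcal G^\vee_m(x)$ pointwise (more terms in the sup), and for $n\ge L$ one has $\mathcal G_n(x)=x$, so in fact $\mathcal G^\vee_m(x)\le \mathcal G^\vee_{L}(x)$ for every $m$; hence it suffices to bound $\|\mathcal G^\vee_{|\mathrm{supp}(x)|}(x)\|$. \emph{Reduction to lattice strictly greedy}: break ties in the coefficients of $x$ by a perturbation as above to get $x_\varepsilon$ with $\mathcal G_{|\mathrm{supp}(x_\varepsilon)|}(x_\varepsilon)$ lattice strictly greedy, apply $(4)$, and pass $\varepsilon\to0$ using norm-continuity of the finite lattice operations and of the first $|\mathrm{supp}(x)|$ greedy sums (the support does not grow under small perturbation). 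Finally, to recover $(1)$ in full — i.e. an \emph{arbitrary} greedy ordering $\pi$, not just the natural one $\rho$ — combine this with the ordering-independence established in the $(2)\Rightarrow(1)$ step. Assembling these reductions closes the cycle and, as noted, yields the identification of the optimal constant.
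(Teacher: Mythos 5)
Your proposal is correct and uses essentially the same ingredients as the paper's proof: a tie-breaking perturbation of the coefficients (combined with norm-continuity of finite suprema) to pass between lattice strictly greedy sums, arbitrary greedy orderings and the natural one, and truncation by the basis projections $P_N$ followed by a limiting argument to upgrade from finitely supported vectors to general $x\in E$, together with the observation that $\mathcal{G}^\vee_m(x)\leq \mathcal{G}^\vee_{|\mathrm{supp}(x)|}(x)$. The only difference is cosmetic: you organize the implications as $(4)\Rightarrow(2)$ and $(2)\Rightarrow(1)$, whereas the paper runs $(iv)\Rightarrow(v)\Rightarrow(i)$, but the underlying arguments coincide.
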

\begin{proof}
(i)$\Rightarrow$(ii)$\Rightarrow$(iii)$\Rightarrow$(iv) is clear. 
\medskip

(iv)$\Rightarrow$(v): Let $x\in E$ have finite support, say, $r$. Write $x=\mathcal{G}_r(x)=\sum_{n=1}^r e_{\rho(n)}^*(x)e_{\rho(n)}$. Fix $\varepsilon>0$ and find $\delta_1, \dots, \delta_r$ with $|e_{\rho(1)}^*(x)+\delta_1|>\cdots >|e_{\rho(r)}^*(x)+\delta_r|>\max_{i\not\in \{\rho(1),\dots,\rho(r)\}}|e_i^*(x)|$ and $\|\delta_n e_{\rho(n)}\|<\frac{\varepsilon}{Cm}$ for all $n=1,\dots,r$. We  consider the element $y:=x+\sum_{n=1}^r \delta_n e_{\rho(n)}$. It is easy to see that $y=\mathcal{G}_r(y)$ is a lattice strictly greedy sum,  so  (iv) implies that
$$\bigg\|\bigvee_{i=1}^r \left|\sum_{n=1}^i (e_{\rho(n)}^*(x)+\delta_n)e_{\rho(n)} \right|\bigg\|\leq C\|y\|\leq C\|x\|+\varepsilon.$$
Hence,
\begin{eqnarray*}
\bigg\|\bigvee_{i=1}^r \left|\sum_{n=1}^i e_{\rho(n)}^*(x)e_{\rho(n)} \right|\bigg\|&\leq& \bigg\|\bigvee_{i=1}^r \left|\sum_{n=1}^i (e_{\rho(n)}^*(x)+\delta_n)e_{\rho(n)} \right|\bigg\|\\
&+&\bigg\||\delta_1e_{\rho(1)}|+\cdots +|\delta_re_{\rho(r)}|\bigg\|\\
&\leq& C\|x\|+2\varepsilon,
\end{eqnarray*}
which yields that $\|\mathcal{G}_m^\vee(x)\|\leq C\|x\|$.
\medskip

%(iii)$\Rightarrow$(iv) requires some care since supping up gives a preferred ordering. Suppose $G_m(x)$ is a strictly greedy sum. Let's say $G_m(x)=\sum_{n\in A}e_n^*(x)e_n$ where $|A|=m$. We must think of this as an ordered sum so pick an ordering with $|e_{n_1}^*(x)|\geq \cdots\geq |e_{n_m}^*(x)|$ and write $G_m(x)=e_{n_1}^*(x)e_{n_1}+\cdots +e_{n_m}^*(x)e_{n_m}$. 
%Fix $\varepsilon>0$ and find $\delta_1, \dots, \delta_m$ with $|e_{n_1}^*(x)+\delta_1|>\cdots >|e_{n_m}^*(x)+\delta_m|>\max_{n\not\in A}|e_n^*(x)|$ and $\|\delta_i e_{n_i}\|<\frac{\varepsilon}{Cm}$ for all $i$. We then consider the element $y:=x+\sum_{i=1}^m \delta_i e_{n_i}$. Then $y$ has a unique $m$-th order greedy sum, and so by (iii)
%$$\bigg\|\bigvee_{i=1}^m \left|\sum_{k=1}^i (e_{n_i}^*(x)+\delta_i)e_{n_i} \right|\bigg\|\leq C\|y\|\leq C\|x\|+\varepsilon.$$
%So,  

%$$\bigg\|\bigvee_{i=1}^m \left|\sum_{k=1}^i e_{n_i}^*(x)e_{n_i} \right|\bigg\|\leq \bigg\|\bigvee_{i=1}^m \left|\sum_{k=1}^i (e_{n_i}^*(x)+\delta_i)e_{n_i} \right|\bigg\|+\bigg\||\delta_1e_{n_1}|+\cdots |\delta_me_{n_m}|\bigg\|\leq C\|x\|+2\varepsilon.$$
%Hence, $\|G_m^\vee(x)\|\leq C\|x\|$.

(v)$\Rightarrow$(i): By a similar argument as (iv)$\Rightarrow$(v), the condition (v) implies that 
$\|G_{\pi,|\text{supp}(x)|}^\vee(x)\|\leq C\|x\|$ for all $x\in E$ of finite support and all greedy orderings $\pi$ of $x$. We now fix $m\in\N$ and let $\pi$ be a greedy ordering of $x$. Note that for any $n\in\N$ satisfying $n\geq \max(\pi(1),...,\pi(m))$ we have  
\begin{equation}\label{ine lim}
    \|G_{\pi,m}^\vee(x)\|=\|G_{\pi,m}^\vee(P_n x)\|\leq \|G_{\pi,n}^\vee(P_n x)\|\leq C\|P_n x\|.
\end{equation}
On the other hand, since $(e_n)$ is a basic sequence, we have $x=\lim_{n\rightarrow\infty }P_n(x)$. Therefore, passing to the limit in \eqref{ine lim}, we conclude that $\|G_{\pi,m}^\vee(x)\|\leq C\|x\|$ for all $m\in\N$, as desired.

%\textcolor{red}{Pablo. Alternative proof of (v)$\Rightarrow$ (i) using the notion of $P_A^\vee$ and Markushevich bases: A similar argument as (iv) $\Rightarrow$ (v), the condition (v) implies that 
%	$\|G_{\pi,|\text{supp}(x)|}^\vee(x)\|\leq C\|x\|$ for all $x\in E$ of finite support and all greedy orderings $\pi$ of $x$. Hence, automatically we have that 
%	\begin{eqnarray}\label{one}
%		\Vert G_{\pi, m}^\vee(x)\Vert\leq C\vert x\Vert,
%	\end{eqnarray}
%	for all $x\in X$ of finite support and all greedy ordering $\pi$ of $x$ with $m\leq \vert supp(x)\vert$. 
%	Take now $x\in X$ and $A=\lbrace \pi(1),\cdots,\pi(m)\rbrace$ a finite greedy set of $x$. From \cite{beraberna} or \cite{Oikhberggaps}, we know that there exists an element $y$ with finite support such that $\Vert x-y\Vert\leq \varepsilon$ for $\varepsilon>0$ and $A$ is a greedy set of $y$ (of course, $\vert A\vert\leq \vert supp(y)\vert$). Hence,
%	\begin{eqnarray*}
%		\Vert {G^\vee_{\pi,m}}(x)\Vert=\Vert P_{A}^\vee(x)\Vert&\underset{sublinearity}{\leq}&\Vert P_A^\vee(x-y)\Vert+\Vert P_A^\vee(y)\Vert\\
%		&\underset{\eqref{one}}{\leq}& \Vert P_A^\vee(x-y)\Vert+C\Vert y\Vert\\
%		&\leq& \Vert P_A^\vee\Vert\Vert x-y\Vert+C\Vert x-y\Vert+C\Vert x\Vert\\
%		&<&\varepsilon(\Vert P_A^\vee\Vert+C)+C\Vert x\Vert.
%	\end{eqnarray*}}

%May be able to fix like we did with permutable. For each $x\in X$ and $r\in \mathbb{R}$ consider the sets $A_r=\{n: e_n^*(x)=r\}$ and use sups to control these.
\end{proof}

In order to prove \Cref{u-greedy theorem intro}, we need the following lemma.
\begin{lemma}\label{greedy lemma}
Suppose that (v) in \Cref{equivalence of constants.} fails. Then for every $C\geq 1$ and every finite set $A\subseteq \N$, there exists $x\in E$ with $|supp(x)|<\infty$ and $supp(x)\cap A=\emptyset$ such that $\|\mathcal{G}_{|\text{supp}(x)|}^\vee(x)\|>C\|x\|$.
\end{lemma}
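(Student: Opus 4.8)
The plan is to use the failure of (v) as a ``seed'' and then translate a bad example living on an arbitrary finite support into one whose support avoids the prescribed finite set $A$, all while only mildly perturbing the quantity $\|\mathcal G^\vee_{|\mathrm{supp}(x)|}(x)\|/\|x\|$. First, since (v) fails, for \emph{every} $C\ge 1$ there exists some finitely supported $z\in E$ with $\|\mathcal G^\vee_{|\mathrm{supp}(z)|}(z)\|>C\|z\|$; by the argument already used in the proof of \Cref{equivalence of constants.}, (iv)$\Leftrightarrow$(v), we may even take $\mathcal G_{|\mathrm{supp}(z)|}(z)$ to be a lattice strictly greedy sum (i.e.\ all nonzero coefficients of $z$ have distinct absolute values). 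So the only real content of the lemma is moving the support off of $A$.

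The key step is a ``shift along the basis'' argument. Write $A\subseteq\{1,\dots,N\}$ and let $r=|\mathrm{supp}(z)|$. We want to produce $x$ supported on indices $>N$ with the same coefficient pattern as $z$, but of course the vectors $e_n$ for large $n$ need not resemble the $e_n$ used by $z$, so a naive index shift fails. Instead I would exploit that $(e_n)$ is a basic sequence: fix the bad vector $z$, and for $n$ large consider the tail projections $Q_n := I - P_n$. Since $P_n z \to z$ and the greedy data of $z$ is \emph{strictly} greedy (gaps between consecutive coefficient moduli, and a gap above the rest), a perturbation argument shows that for a suitable finitely supported $w$ with $\mathrm{supp}(w)\subseteq\{N+1,N+2,\dots\}$ and $\|w-z\|$ small, $w$ still has $|\mathrm{supp}(w)|=r$, is still lattice strictly greedy, and $\|\mathcal G^\vee_r(w)\|$ is close to $\|\mathcal G^\vee_r(z)\|$ while $\|w\|$ is close to $\|z\|$. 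To build such a $w$: enumerate $\mathrm{supp}(z)=\{k_1,\dots,k_r\}$ and replace $z=\sum_j c_j e_{k_j}$ by $\sum_j c_j e_{m_j}$ where $m_1<\dots<m_r$ are chosen in $\{N+1,N+2,\dots\}$; the difficulty is that this is not a small perturbation. The honest fix is the standard small-perturbation / gliding-hump principle: since $z$ is a \emph{fixed} finitely supported vector, and the map is only required to produce \emph{some} bad $x$, one invokes the failure of (v) for the constant $C' := C\cdot K_0$ with $K_0$ a suitable stability factor, applies the equivalence in \Cref{equivalence of constants.} again on the closed subspace $[e_n]_{n>N}$ — which still has the same basis constant and still fails (v) with the same threshold up to the uniform constant — and reads off a bad vector already supported above $N$.

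Concretely, the cleanest route: the constant $\sup_m\|\mathcal G^\vee_m\|$ computed for the basis $(e_n)_{n\ge 1}$ equals the one computed for any tail $(e_n)_{n>N}$. This is because for a finitely supported $x$ with $\mathrm{supp}(x)\subseteq\{N+1,\dots\}$ the greedy sums $\mathcal G_m(x)$ are the same whether we regard $x$ in $[e_n]_{n\ge1}$ or in $[e_n]_{n>N}$, so the tail basis satisfies exactly the same family of inequalities; and conversely, for arbitrary finitely supported $x$ one can use the argument of \eqref{ine lim} to push support upward in the limit without changing the constant. Hence if (v) fails for $(e_n)_{n\ge1}$ it fails for $(e_n)_{n>N}$, which furnishes, for the given $C\ge1$, a finitely supported $x\in[e_n]_{n>N}\subseteq E$ with $\mathrm{supp}(x)\cap A=\emptyset$ and $\|\mathcal G^\vee_{|\mathrm{supp}(x)|}(x)\|>C\|x\|$, as desired.

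\textbf{Main obstacle.} The subtle point — and where care is needed — is the claim that the tail basis $(e_n)_{n>N}$ fails (v) with the \emph{same} threshold $C$, i.e.\ that restricting to a tail does not improve the greedy constant. The containment ``tail is at least as bad'' is the nontrivial direction: one must verify that any bad finitely supported $x\in E$ (with support possibly meeting $A$) can be replaced by a bad finitely supported vector with support above $N$ \emph{without loss in the ratio}. I expect this is handled exactly as in the (iv)$\Rightarrow$(v) and (v)$\Rightarrow$(i) steps of \Cref{equivalence of constants.}: approximate, then use the basis property $x=\lim_n P_n x$ together with $\mathcal G^\vee_{m}(x)=\mathcal G^\vee_m(P_n x)$ for large $n$, so no constant is lost in the limit. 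The perturbation bookkeeping (choosing the $\delta_n$'s to restore strict greediness and distinct moduli) is routine and mirrors the earlier lemma.
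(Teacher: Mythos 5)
Your proposal has a genuine gap at exactly the step that carries the content of the lemma. You reduce everything to the claim that the tail basis $(e_n)_{n>N}$ fails (v) ``with the same threshold up to a uniform constant,'' but you never prove this; you assert it and point to the mechanisms of \Cref{equivalence of constants.} ((iv)$\Rightarrow$(v) and the limiting argument \eqref{ine lim}). Those mechanisms move in the wrong direction: the $\delta$-perturbation only nudges coefficients to break ties, and the argument of \eqref{ine lim} relates a general vector $x$ to its \emph{initial} projections $P_nx$ — neither can take a bad finitely supported vector whose support meets $\{1,\dots,N\}$ and produce a comparably bad vector supported above $N$ (indeed, if $\operatorname{supp}(x)\subseteq\{1,\dots,N\}$ then the tail projection of $x$ is $0$, so there is nothing to ``push upward''). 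Your second paragraph is circular for the same reason: invoking the failure of (v) ``on the closed subspace $[e_n]_{n>N}$'' presupposes that the tail fails (v), which is precisely what the lemma asserts (it is the statement for $A=\{1,\dots,N\}$). The naive index shift, as you note yourself, is unavailable because the basis has no symmetry.

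What is missing is a quantitative argument showing that the finitely many coordinates in $A$ cannot be responsible for unbounded greedy ratios. The paper does this by subtraction rather than shifting: set $M=\sum_{n\in A}\|e_n^*\|\,\|e_n\|$, use the failure of (v) to pick a finitely supported $y$ with $\|\mathcal{G}^\vee_{|\operatorname{supp}(y)|}(y)\|>(C(1+M)+M)\|y\|$, and put $x=(I-P_A)y$ where $P_A z=\sum_{n\in A}e_n^*(z)e_n$. Then $\operatorname{supp}(x)\cap A=\emptyset$, $\|x\|\leq(1+M)\|y\|$, and since every natural greedy sum of $y$ differs from a natural greedy sum of $x$ only by terms indexed by $A$, one has the lattice bound $\mathcal{G}^\vee_{|\operatorname{supp}(y)|}(y)\leq \mathcal{G}^\vee_{|\operatorname{supp}(x)|}(x)+\sum_{k\in A}|e_k^*(y)e_k|$, whence $\|\mathcal{G}^\vee_{|\operatorname{supp}(x)|}(x)\|>(C(1+M)+M)\|y\|-M\|y\|\geq C\|x\|$. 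Your draft contains no substitute for this step, so as written the proof does not go through; the first paragraph (extracting a lattice strictly greedy seed) is fine but inessential, since the paper's subtraction argument needs no strict greediness.
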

%$M=\max_{D\subseteq A}\|P_D\|$. 
\begin{proof}
%Fix $C$ and $A$; put $M=\sum_{n\in A}\|e_n^*\|\|e_n\|$. By assumption there exists a finitely supported $y\in E$ and a strictly greedy sum $G_r(y)$ with $\|G_r^\vee(y)\|>(C(1+M)+M)\|y\|.$ Let $F$ be such that $P_F(y)=G_r(y)$. If we define $x=(I-P_A)(y)$ then $P_{F\setminus A}(G_r(y))$ is a strictly greedy sum of $x$, so there exists $m \leq r$ such that $G_m(x)=P_{F\setminus A}(G_r(y))$. Since $x\in E$ has finite support disjoint from $A$ we just must show that $\|G_m^\vee(x)\|>C\|x\|$.

%For this notice that $G_r^\vee(y)\leq G_m^\vee(x)+\sum_{k\in A}|e_k^*(y)e_k|$ so 

%$$\|G_m^\vee(x)\|\geq \|G_r(y)\|-\|\sum_{k\in A}|e_k^*(y)e_k|\|>(C(1+M)+M)\|y\|-M\|y\|.$$
%Finally use that $\|x\|\leq (1+M)\|y\|$ to get $\|G_m^\vee(x)\|>C\|x\|$.

Fix $C$ and $A$ as above and define $M:=\sum_{n\in A}\|e_n^*\|\|e_n\|$. By assumption, there exists a finitely supported $y\in E$ such that
 \begin{equation}\|
 {\mathcal{G}^\vee}_{|\text{supp}(y)|}(y)\|>(C(1+M)+M)\|y\|.
\end{equation}  %Let $F$ be the support of $y$. 
Set $r=|\text{supp}(y)|$ and define $x=(I-P_A)(y)$, where for $z\in E$, $P_A(z):=\sum_{n\in A}e_n^*(z)e_n$. % then $P_{F\setminus A}(G_r(y))$ is a strictly greedy sum of $x$, so there exists $m \leq r$ such that $G_m(x)=P_{F\setminus A}(G_r(y))$. 
Since $x\in E$ has finite support disjoint from $A$, we  must simply show that $\|\mathcal{G}_m^\vee(x)\|>C\|x\|$, where $m$ is the cardinality of the support of $x$. For this, notice that $\mathcal{G}_r^\vee(y)\leq \mathcal{G}_m^\vee(x)+\sum_{k\in A}|e_k^*(y)e_k|$, so that 

$$\|\mathcal{G}_m^\vee(x)\|\geq \|\mathcal{G}_r^\vee(y)\|-\|\sum_{k\in A}|e_k^*(y)e_k|\|>(C(1+M)+M)\|y\|-M\|y\|.$$
Inserting the inequality $\|x\|\leq (1+M)\|y\|$ into the above, we conclude that $\|\mathcal{G}_m^\vee(x)\|>C\|x\|$, as desired.
\end{proof}
We may now  establish the greedy analogue of  \Cref{bidecomposition}.
\begin{theorem}\label{u-greedy theorem.}
For a semi-normalized basic sequence $\mathcal{B}=(e_n)$ in a Banach lattice $X$, the following are equivalent.
\begin{enumerate}
\item For all $x\in E$,  $\mathcal{G}_m(x)\xrightarrow{u}x$;
\item For all $x\in E$, $\mathcal{G}_m(x)\xrightarrow{o}x$;
\item For all $x\in E$, $|\mathcal{G}_m(x)|\leq u$ for some $u\in X$ and all $m$;
\item For all $x\in E$, $(\bigvee_{n=1}^m|\mathcal{G}_n(x)|)_m$ is norm bounded;
\item There exists $C\geq 1$ such that for all $x\in E$ and $m\in \mathbb{N}$,
$$\bigg\|\bigvee_{n=1}^m\left|\mathcal{G}_n(x)\right|\bigg\|\leq C\|x\|.$$
%$\sup_m\|\mathcal{G}_m^\vee\|<\infty$.
\end{enumerate}

\end{theorem}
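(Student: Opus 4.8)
The plan is to prove the chain of implications $(v)\Rightarrow(iv)\Rightarrow(iii)\Rightarrow(ii)\Rightarrow(i)\Rightarrow(v)$, closely paralleling the proof of \Cref{bidecomposition} in \cite{TT19} but replacing the canonical projections $P_n$ with the greedy sums $\mathcal{G}_n$. The implications among the ``qualitative'' conditions are soft: $(iv)\Rightarrow(iii)$ uses that an increasing norm-bounded sequence in a Banach lattice need not have a supremum, so instead one observes that $\bigvee_{n=1}^m|\mathcal{G}_n(x)|$ is increasing in $m$, and uniform norm-boundedness lets us pass to the ideal generated by a weak-unit-type majorant; more directly, since $|\mathcal{G}_m(x)|\le \bigvee_{n=1}^m|\mathcal{G}_n(x)|$, condition $(iv)$ gives a norm-bounded increasing sequence whose terms dominate each $|\mathcal{G}_m(x)|$, and one invokes the standard fact (as in \cite{TT19}) that this forces the existence of a single majorant $u$ once we work in an appropriate order-continuous or universally complete enlargement — but since the statement is about $X$ itself, I would instead argue $(iv)\Rightarrow(i)$ directly via a diagonal/subsequence argument and close the loop differently. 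Let me restructure: I would prove $(v)\Leftrightarrow(iv)$, $(v)\Rightarrow(i)\Rightarrow(ii)\Rightarrow(iii)\Rightarrow(iv)$.

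For $(v)\Rightarrow(i)$: given $x\in E$ and $\varepsilon>0$, pick $N$ so large that $\|x-\mathcal{G}_N(x)\|$ is small and so that the coefficients beyond the first $N$ greedy coordinates are all small; write $x=\mathcal{G}_N(x)+z$ where $z$ has small norm and whose greedy coordinates are a tail of those of $x$. Using linearity and the structure of greedy sums (for $m\ge N$, $\mathcal{G}_m(x)-\mathcal{G}_N(x)$ is essentially a greedy sum of $z$), apply $(v)$ to $z$ to get $\|\bigvee_{n>N}|\mathcal{G}_n(x)-\mathcal{G}_N(x)|\|\le C\|z\|$, hence $\bigvee_{n>N}|\mathcal{G}_n(x)-x|\le \bigvee_{n>N}|\mathcal{G}_n(x)-\mathcal{G}_N(x)| + |x-\mathcal{G}_N(x)|$ has small norm; together with $|x-\mathcal{G}_N(x)|\le$ (something small) this produces the $u$-convergence majorant $e$ and the sequence $\epsilon_m\downarrow 0$. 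The subtlety here, as flagged before \Cref{equivalence of constants.}, is that $G^\vee_{\pi,m}$ depends on the ordering $\pi$ and greedy orderings of $z$ need not extend greedy orderings of $x$ when there are coefficient ties; this is exactly what \Cref{equivalence of constants.}(i)--(v) was built to handle, so I would invoke it to move freely between orderings and between $\mathcal{G}^\vee$ and $G^\vee_\pi$.

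For the remaining implications: $(i)\Rightarrow(ii)$ is immediate since $u$-convergence implies $o$-convergence (stated in \Cref{Section2}); $(ii)\Rightarrow(iii)$ because if $\mathcal{G}_m(x)\xrightarrow{o}x$ there is a net $y_\beta\downarrow 0$ dominating the tails, and then $|\mathcal{G}_m(x)|\le |x| + y_{\beta_0}$ for a fixed $\beta_0$, giving the uniform majorant $u$; $(iii)\Rightarrow(iv)$ because $\bigvee_{n=1}^m|\mathcal{G}_n(x)|\le u$ for all $m$, hence is norm bounded by $\|u\|$. Finally $(iv)\Rightarrow(v)$ is the heart of the matter and is where \Cref{greedy lemma} enters: if $(v)$ failed then by \Cref{equivalence of constants.} (the $(v)$ there) combined with \Cref{greedy lemma} one can, for each $j$, find finitely supported $x_j\in E$ with pairwise disjoint supports (choosing the forbidden set $A$ to be the union of supports already used, plus enough of the basis to control basis-projection tails) such that $\|\mathcal{G}^\vee_{|\mathrm{supp}(x_j)|}(x_j)\|>4^j\|x_j\|$; rescaling so $\|x_j\|=2^{-j}$ and setting $x=\sum_j x_j\in E$ (norm-convergent since $(e_n)$ is basic and the supports are successive), one shows $\bigvee_{n}|\mathcal{G}_n(x)|\ge \mathcal{G}^\vee_{|\mathrm{supp}(x_j)|}(x_j)$ for suitable $n$ along the greedy ordering of $x$ — using that the large coordinates of $x_j$ appear as a contiguous block in the greedy ordering of $x$ provided we also arrange the coefficient sizes to be nested between blocks — contradicting norm-boundedness in $(iv)$. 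The main obstacle I anticipate is precisely this last bookkeeping: ensuring that the greedy ordering of the assembled vector $x$ restricts, on each block, to a greedy ordering witnessing the blow-up for $x_j$, which requires carefully interleaving the magnitude scales of the coefficients across blocks and again appealing to \Cref{equivalence of constants.} to tolerate the ambiguity coming from ties and from coordinates of other blocks interspersed by magnitude; handling the ``contamination'' of block $j$'s partial sums by already-summed coordinates from blocks $j'<j$ is managed exactly as in \Cref{greedy lemma} via the disjointness of supports and the $\|\sum_{k\in A}|e_k^*(y)e_k|\|$ correction term.
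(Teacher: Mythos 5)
Your route runs on the same machinery as the paper's proof: \Cref{equivalence of constants.} and \Cref{greedy lemma}, the gluing of disjointly supported finitely supported blocks with nested coefficient scales, and the tail identity that for $i>N$, $\mathcal{G}_i(x)-\mathcal{G}_N(x)$ is the $(i-N)$-th greedy sum of $x-\mathcal{G}_N(x)$. The one genuine structural difference is how you close the cycle: you prove (iv)$\Rightarrow$(v) directly by contradicting \emph{norm} boundedness, whereas the paper proves (iii)$\Rightarrow$(v) (contradicting the order bound $u$) and then needs a separate argument for (iv), passing to $X^{**}$, where the increasing sequence $(\mathcal{G}^\vee_m(x))$ has a supremum, and pulling uniform convergence back to $X$ via \cite[Proposition 2.12]{TT19}. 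Your variant does work: once the blocks are ordered by coefficient magnitude, for $l_j<n\le l_{j+1}$ one has $\mathcal{G}_n(x)=\mathcal{G}_{l_j}(x)+S_{n-l_j}(x_j)$ where $S_r(x_j)$ is the $r$-th greedy sum of $x_j$, hence $\mathcal{G}^\vee_{m_j}(x_j)\le 2\bigvee_{n=1}^{l_{j+1}}|\mathcal{G}_n(x)|$ and the norms blow up with $j$. This makes the bidual step unnecessary, which is a small but real simplification over the paper.

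Three points in your sketch need repair. First, in (v)$\Rightarrow$(i) you ``pick $N$ so that $\|x-\mathcal{G}_N(x)\|$ is small'': norm convergence of the greedy sums is not automatic; it follows from (v) only because (v) gives $\sup_m\|\mathcal{G}_m(x)\|\le C\|x\|$ and one then invokes \Cref{W00} (for basic sequences, \cite[Theorem 10.2.3 and Lemma 10.2.5]{AK16}) -- this has to be cited, and the paper does so explicitly. Second, your bound features $\bigvee_{n>N}|\mathcal{G}_n(x)-\mathcal{G}_N(x)|$, an infinite supremum that need not exist in $X$ (only finite suprema are available, and $X$ need not be monotonically complete); moreover, norm-smallness at each scale does not by itself produce $u$-convergence, since that requires a \emph{single} majorant $e$ valid for all tolerances. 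The correct execution is the paper's: extract $(m_k)$ with $\|x-\mathcal{G}_{m_k}(x)\|\le 2^{-k}$ (and the Cauchy condition on intermediate sums), set $u_k=\bigvee_{i=m_k+1}^{m_{k+1}}|\mathcal{G}_i(x)-\mathcal{G}_{m_k}(x)|$, note $\|u_k\|\le C2^{-k}$ by applying (v) to $x-\mathcal{G}_{m_k}(x)$, and take $e=\sum_k k u_k$, so that $u_k\le e/k$; your final estimate then goes through. Third, in (iv)$\Rightarrow$(v) the nesting of coefficient magnitudes across blocks cannot be arranged with a fixed blow-up schedule like $4^j$ followed by rescaling to $\|x_j\|=2^{-j}$: the minimal nonzero coefficient $\mu$ of block $j-1$ may be arbitrarily small, so the constant fed into \Cref{greedy lemma} at stage $j$ must be chosen adaptively, of order $\max\{j2^j,\,2Kj\mu^{-1}\}$ with $K=\sup_n\|e_n^*\|$, so that after scaling all coefficients of the new block fall strictly below $\mu$; \Cref{equivalence of constants.} does not by itself absorb an interleaving of blocks in the greedy ordering of the assembled vector, so this adaptive choice is essential, exactly as in the paper's conditions (a)--(d).
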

\begin{proof}
It is clear that (i)$\Rightarrow$(ii)$\Rightarrow$(iii)$\Rightarrow$(iv).
\medskip

 (iii)$\Rightarrow$(v): Suppose not. Then using \Cref{greedy lemma} we get a sequence $(x_k)$ of elements of $E$ such that for each $k$:
\begin{enumerate}
\item[(a)] $m_k:=|supp(x_k)|$ is finite and $supp(x_k)\cap supp(x_i)=\emptyset$ for $i=1,\dots, k-1$;
%\item $y_k$ is a strictly greedy sum of $x_k$, so there exists $m_k$ with $y_k=G_{m_k}(x_k)$.
\item[(b)] $\|x_k\|\leq 2^{-k}$;
\item[(c)] $\|\mathcal{G}_{m_k}^\vee(x_k)\|\geq k$;
\item[(d)] $\max \{|e_n^*(x_k)| :n\in \N \}<\min \{|e_n^*(x_{k-1})| : n\in supp(x_{k-1})\}.$
\end{enumerate}
Indeed, we can certainly do this for $k=1$, so suppose that we have constructed $x_1,\dots, x_{k-1}$. Let $\mu=\min\{|e_n^*(x_{k-1})|: n\in supp(x_{k-1})\}$ and put $C_k=\max\{k2^k, 2Kk\mu^{-1}\}$ where $K=\sup_n\|e_n^*\|$. Using \Cref{greedy lemma} there exists $x_k$ of finite support disjoint from $\cup_{i=1}^{k-1}supp(x_i)$  such that $\|\mathcal{G}_{m_k}^\vee(x_k)\|>C_k\|x_k\|$. Scaling $x_k$ we can take $\|x_k\|=kC_k^{-1}\leq 2^{-k}$, so that $\|\mathcal{G}_{m_k}^\vee(x_k)\|\geq k$.  Then for every $n\in \N$ we have
$$|e_n^*(x_k)|\leq \|e_n^*\|\|x_k\|<\mu.$$
Now, the series $\sum_{k=1}^\infty x_k$ converges to some $x\in E$. Note by construction that $\sum_{k=1}^{j-1}x_k=\mathcal{G}_{l_j}(x)$ is a strictly greedy sum of $x$ (here $l_j:=\sum_{k=1}^{j-1}m_k$, $j\geq 2$, and $l_1:=0$). Write $x_j$ in its natural greedy ordering as $e_{\rho_j(1)}^*(x_j)e_{\rho_j(1)}+\cdots + e_{\rho_j(m_j)}^*(x_j)e_{\rho_j(m_j)}$ and notice that $\mathcal{G}_{l_j}(x)+e_{\rho_j(1)}^*(x_j)e_{\rho_j(1)}+\cdots +e_{\rho_j(r)}^*(x_j)e_{\rho_j(r)}$ is a natural greedy sum of $x$  for each $1\leq r\leq m_j$. Hence, by assumption, there is a $u$ with $|\mathcal{G}_{l_j}(x)+e_{\rho_j(1)}^*(x_j)e_{\rho_j(1)}+\cdots +e_{\rho_j(r)}^*(x_j)e_{\rho_j(r)}|\leq u$. Note that $u$ is uniform (independent of $j$ and $r$) since we are building a natural greedy approximation of $x$, i.e., every term we are bounding is a greedy sum of the same approximation. In particular, $|\mathcal{G}_{l_j}(x)|\leq u$ for all $j$, hence $|e_{\rho_j(1)}^*(x_j)e_{\rho_j(1)}+\cdots +e_{\rho_j(r)}^*(x_j)e_{\rho_j(r)}|\leq 2u$. Taking sup we get $\mathcal{G}_{m_j}^\vee(x_j)\leq 2u$, so that $j\leq \|\mathcal{G}_{m_j}^\vee(x_j)\|\leq 2\|u\|$. This is a contradiction. 
\medskip
%%By absolute homogeneity the condition $\sup_m\|\mathcal{G}_m^\vee\|<\infty$ is equivalent to the existence of some $C$ such that for all $x\in E$ and all $m\in \N$ $\|\mathcal{G}_m^\vee(x)\|\leq C\|x\|.$

(v)$\Rightarrow$(i):  By \cite[Theorem 10.2.3, Lemma 10.2.5]{AK16} we get that for each $y\in E$, $\mathcal{G}_m(y)\xrightarrow{\|\cdot\|}y$. Fix $x\in E$.  Since $\mathcal{G}_m(x)\xrightarrow{\|\cdot\|}x$ there exists $m_1<m_2<\dots$ with $\mathcal{G}_{m_k}(x)\xrightarrow{u}x$.  Passing to a further subsequence and using that $(\mathcal{G}_m(x))$ is Cauchy, we may assume that for all $i>m_k$, $\|\mathcal{G}_i(x)-\mathcal{G}_{m_k}(x) \|<\frac{1}{2^k}$. In particular, $\|x-\mathcal{G}_{m_k}(x)\|\leq \frac{1}{2^k}$.
\medskip

Consider the element $y_k=x-\mathcal{G}_{m_k}(x)$. Then for each $i>m_k$, $\mathcal{G}_i(x)-\mathcal{G}_{m_k}(x)$ is just $\mathcal{G}_{i-m_k}(y_k)$. Hence, our assumption yields that $$u_k=\bigvee_{i=m_k+1}^{i=m_{k+1}}\left|\mathcal{G}_i(x)-\mathcal{G}_{m_k}(x)\right|$$ has norm at most $C\|y_k\|\leq \frac{C}{2^k}$. Define $e:=\sum_{k=1}^\infty ku_k$. Then for every $k\in \mathbb{N}$, $u_k\leq \frac{e}{k}$, so that $u_k\xrightarrow{u}0$.
\medskip

Since $|\mathcal{G}_{m_k}(x)-x|+u_k\xrightarrow{u}0$ there exists a vector $f_1>0$ with the property that for any $\varepsilon>0$ there exists $k^*$, for any $k\geq k^*$, $|\mathcal{G}_{m_k}(x)~-~x|+u_k\leq \varepsilon f_1$. Fix $\varepsilon$, and find the required $k^*$. Let $i \in \mathbb{N}$ with $i> m_{k^*}$. We can find $k\geq k^*$ with $m_k<~i\leq ~m_{k+1}$ so that
$$|\mathcal{G}_{i}(x)-x|\leq |\mathcal{G}_{m_k}(x)-x|+|\mathcal{G}_{i}(x)-\mathcal{G}_{m_k}(x)|=|\mathcal{G}_{m_k}(x)-x|+u_k\leq \varepsilon f_1.$$
This shows that $\mathcal{G}_{i}(x)\xrightarrow{u}x$, and hence that $\mathcal{B}$ satisfies (i).
\medskip

We have thus established that  (i)$\Leftrightarrow$(ii)$\Leftrightarrow$(iii)$\Leftrightarrow$(v)$\Rightarrow$(iv). For (iv)$\Rightarrow$(i) assume that for each $x\in E$, $(\mathcal{G}_m^\vee(x))$ is norm bounded. Since $(\mathcal{G}_m^\vee(x))$ is an increasing sequence, it has a supremum in $X^{**}$, hence $(\mathcal{G}_m(x))$ is order bounded in $X^{**}$. Thus, (ii) holds in $X^{**}$, so (i) holds in $X^{**}$. However, uniform convergence of sequences passes freely from $X^{**}$ to $X$ by \cite[Proposition 2.12]{TT19}. Consequently, (i) holds in $X$.
%By \cite[Theorem 3.1]{AABW19} $\mathcal{B}$ is quasi-greedy.
%
% Now we can sup up starting from the first term of $G_{m_j}(x)$ and going all the way up to $G_{r_j}(x)$, but notice this is at most $G_{m_j}^\vee(x)+G_{m_k}^\vee(x_k)$. By assumption we have a uniform norm bound the first sum, and also for $G_{m_j}^\vee(x)$, so we get a uniform norm bound for $G_{m_k}^\vee(x_k)$, a contradiction.??
\end{proof}

We will call a semi-normalized basic sequence $(e_n)$ in a Banach lattice $X$ \emph{uniformly quasi-greedy} if it satisfies any, and hence all, of the conditions in \Cref{u-greedy theorem.}. Note that the last condition in \Cref{u-greedy theorem.}   says that $C_{qg}^\vee:=\sup_m\|\mathcal{G}_m^\vee\|<\infty$.  We will call $C_{qg}^\vee$ the \term{uniform quasi-greedy constant of} $(e_n)$. 

\begin{example}\textit{Wavelets:}
By controlling the maximal function of the greedy approximations by the Hardy-Littlewood maximal function, Tao (\cite{Tao96}, see also \cite{KO97})  proved that  all compactly supported wavelet bases (such as the Haar and Daubechies wavelets) are uniformly quasi-greedy in $L_p$ for all $1<p<\infty$. In particular, considering a rearrangement of the Haar basis that is not bibasic (\cite[Example 6.2]{TT19}), we see that uniformly quasi-greedy bases need not be bibases.
\end{example}
\begin{example}\textit{The trigonometric and Walsh bases:} In \cite{K96}, K\"orner showed  the existence of a real-valued function $f\in L_2(\mathbb{T})$ whose greedy algorithm with respect to the trigonometric system diverges almost everywhere. In particular, this provides an example of a basis that is both greedy and a bibasis but is not uniformly quasi-greedy. Similarly, in \cite{K06} it is shown that the greedy algorithm for the Walsh basis need not converge almost everywhere. %which gives another example of a  greedy bibasis that is not uniformly quasi-greedy.%It is shown in \cite{K06} that the greedy algorithm for the Walsh basis needn't converge almost everywhere. Hence, the Walsh is not uniformly quasi-greedy.%It would be good to get a handful of quasi-greedy but not uniformly quasi-greedy bases. 
\end{example}

\begin{remark}\label{recover usual}
As mentioned in \Cref{intro sec}, due to the freedom in choosing the ambient Banach lattice $X$, the theory we will develop in this article for uniformly quasi-greedy bases  will recover and extend the usual theory for quasi-greedy bases. To see this, let $(e_n)$ be a semi-normalized Schauder basis of a Banach space $E$. As $E$ is separable, we may consider $E$ as a subspace of $C[0,1]$. In $C[0,1]$, it is easy to see that uniform convergence agrees with norm convergence and $\| |x|\vee |y|\|=\|x\|\vee\|y\|$ for all $x,y\in C[0,1]$. Thus, choosing $X=C[0,1]$ in \Cref{u-greedy theorem.}, we recover the classical characterization of quasi-greedy bases in \Cref{W00}. Similarly, when $X=C[0,1]$,  our results in \Cref{Sub3} will recover the classical structural properties of quasi-greedy bases. This leads to two different perspectives on \Cref{u-greedy theorem.}:
\medskip

First, suppose that the Banach lattice $X$ is fixed. Then one would be interested in determining which  quasi-greedy bases (or quasi-greedy basic sequences) are uniformly quasi-greedy in $X$, as these bases would exhibit  the enhanced approximation properties in \Cref{u-greedy theorem.}.
On the other hand, suppose  that the Banach space $E$ -- or even a  basis $(e_n)$ of $E$  --  is fixed, and consider an embedding of $E$  into a Banach lattice $X$. If one chooses $X$ to be $C[0,1]$, then several of the theorems we will prove in this paper will recover those from the theory of quasi-greedy bases. However, if one chooses $X$ differently, then the sequence $(e_n)$ may, or may not, be uniformly quasi-greedy with respect to $X$. Note that the property of being uniformly quasi-greedy is ``stable" with respect to the choice of $X$, in the sense that if $(e_n)$ is a basic sequence in $X$, and $X$ is a closed sublattice of $Y$, then $(e_n)$ will be uniformly quasi-greedy with respect to $X$ iff it is uniformly quasi-greedy with respect to $Y$. In the next section, we will show that if one puts conditions on $X$ (say, order continuity, or being a classical Banach lattice) or on how $E$ sits in $X$ (say, complementably, or even $E=X$) then one can prove theorems for bibasic and uniformly quasi-greedy bases that have no analogues for Schauder or quasi-greedy bases. %Some theorems of this flavour can be found in later sections.%However, in general, if $E=[e_k]$ is viewed as a subspace of two different Banach lattices (say $L_p[0,1]$ and $C[0,1]$) then one may get an entirely different answer for whether $(e_k)$ is uniformly quasi-greedy. It will thus be of interest, in future papers, to put conditions either on $X$ (say, order continuity, or even have $X$ be a classical space) or on how $E$ sits in $X$ (say complementably) and try to prove theorems that are stronger than ones that can be proven for quasi-greedy bases
\end{remark}

\begin{remark}\label{Recall FDD}
\Cref{u-greedy theorem.}  is equally valid for Schauder decompositions. Since greedy decompositions do not seem to appear in the literature, we quickly sketch what we mean:
\medskip

Let $\mathcal{B}=(E_n)$ be a sequence of closed non-zero subspaces of a Banach lattice $X$ which forms a Schauder  decomposition of $E:=[E_n].$  Fix $x=\sum_{n=1}^\infty e_n \in E$, with $e_n\in E_n$ for all $n$. A \term{greedy ordering} of $x$ is an injective map $\pi : \mathbb{N}\to \mathbb{N}$ such that $\{n: e_n\neq 0\}\subseteq \pi(\mathbb{N})$ and  $(\|e_{\pi(n)}\|)$ is non-increasing. The \term{$m$-th greedy sum} of $x$ associated to $\pi$ is given by $G_{\pi,m}(x):=\sum_{n=1}^m e_{\pi(n)},$
and the sequence $(G_{\pi,m}(x))_{m=1}^\infty$ is called a \term{greedy approximation} of $x$.  $G_{\pi,m}(x)$ is said to be a \term{strictly greedy sum} of $x$ of order $m$ if $G_{\pi,m}(x)=G_{\sigma,m}(x)$ for all greedy orderings $\sigma$.  The \term{$m$-th  natural greedy sum} of $x$ is $\mathcal{G}_m(x):=\sum_{n=1}^m e_{\rho(n)}$ where $\rho: \mathbb{N}\to \mathbb{N}$ is such that $\{n: e_n\neq 0\}\subseteq \rho(\mathbb{N})$ and if $j<k$ then either $\|e_{\rho(j)}\|>\|e_{\rho(k)}\|$ or $\|e_{\rho(j)}\|=\|e_{\rho(k)}\|$ and $\rho(j)<\rho(k)$. A \term{quasi-greedy decomposition} is a decomposition for which $\mathcal{G}_m(x)\xrightarrow{\|\cdot\|}x$ for all $x\in E$. Defining $\mathcal{G}_m^\vee$ in the obvious way, the proof of the following theorem is almost identical to the semi-normalized basic sequence case.
\end{remark}
\begin{theorem}\label{u-greedy theorem. decompo}
Let $\mathcal{B}=(E_n)$ be a sequence of subspaces of a Banach lattice $X$ which forms a Schauder decomposition of $[E_n]$.  The following are equivalent.
\begin{enumerate}
\item For all $x\in E$,  $\mathcal{G}_m(x)\xrightarrow{u}x$;
\item For all $x\in E$, $\mathcal{G}_m(x)\xrightarrow{o}x$;
\item For all $x\in E$, $|\mathcal{G}_m(x)|\leq u$ for some $u\in X$ and all $m$;
\item For all $x\in E$, $(\bigvee_{n=1}^m|\mathcal{G}_n(x)|)_{m}$ is norm bounded;
\item There exists $C\geq 1$ such that for all $x\in E$ and $m\in \N$,
$$\bigg\|\bigvee_{n=1}^m\left|\mathcal{G}_n(x)\right|\bigg\|\leq C\|x\|.$$
%$\sup_m\|\mathcal{G}_m^\vee\|<\infty$.
\end{enumerate}

\end{theorem}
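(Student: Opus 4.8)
The plan is to follow the blueprint of the proof of \Cref{u-greedy theorem.} essentially verbatim, since the decomposition setting differs only cosmetically from the semi-normalized basic sequence case. First I would record the analogue of \Cref{equivalence of constants.}: the constant $C$ in condition (v) is the same whether one uses an arbitrary greedy ordering, the natural one $\rho$, some greedy ordering, or only finitely supported vectors with lattice strictly greedy truncations. The only change in the proof is that $|e_n^*(x)|$ is everywhere replaced by $\|e_n\|$ (the ``coefficient'' of $x$ in $E_n$) and the perturbation argument in (iv)$\Rightarrow$(v) is carried out by slightly rescaling the blocks $e_{\rho(n)}\in E_{\rho(n)}$ to break ties in their norms, which is possible since the $E_n$ are nonzero; the telescoping estimate (v)$\Rightarrow$(i) uses the uniform boundedness of the canonical decomposition projections in exactly the same way. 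I would then state the analogue of \Cref{greedy lemma}: if the finitely supported condition fails, one can find, away from any prescribed finite set $A$ of indices, a finitely supported $x$ with $\|\mathcal{G}^\vee_{|\mathrm{supp}(x)|}(x)\| > C\|x\|$; the proof is word-for-word the same with $M:=\sum_{n\in A}\|P_{E_n}\|$, where $P_{E_n}$ is the canonical coordinate projection onto $E_n$.

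With these two lemmas in hand, the proof of the theorem itself proceeds exactly as before. The implications (i)$\Rightarrow$(ii)$\Rightarrow$(iii)$\Rightarrow$(iv) are immediate from the definitions of $u$- and $o$-convergence. For (iii)$\Rightarrow$(v) I would argue by contradiction: using the decomposition analogue of \Cref{greedy lemma} construct disjointly supported finitely supported blocks $x_k$ with $\|x_k\|\leq 2^{-k}$, with $\|\mathcal{G}^\vee_{m_k}(x_k)\| \geq k$, and with the coefficient norms of $x_k$ strictly smaller than the minimal coefficient norm appearing in $x_{k-1}$ (achievable by rescaling, using that the coordinate projections are bounded); then $x:=\sum_k x_k$ converges in $E$, the partial sums $\sum_{k<j} x_k$ are strictly greedy sums of $x$, and the natural greedy approximation of $x$ passes through each intermediate sum $\mathcal{G}_{l_j}(x) + \sum_{n\leq r} e_{\rho_j(n)}$ where $x_j = \sum_n e_{\rho_j(n)}$. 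Order-boundedness of the natural greedy approximation by a single $u\in X$ forces $\mathcal{G}^\vee_{m_j}(x_j)\leq 2u$, whence $j\leq 2\|u\|$ for all $j$, a contradiction. For (v)$\Rightarrow$(i) I would first invoke the norm-convergence statement (the decomposition analogue of the quasi-greedy results cited from \cite[Chapter 10]{AK16}; this is standard and only uses uniform boundedness of the decomposition projections together with (v)), extract a subsequence $(m_k)$ along which $\mathcal{G}_{m_k}(x)\xrightarrow{u}x$ with $\|x - \mathcal{G}_{m_k}(x)\|\leq 2^{-k}$, set $y_k = x - \mathcal{G}_{m_k}(x)$, bound $u_k := \bigvee_{m_k < i \leq m_{k+1}}|\mathcal{G}_i(x)-\mathcal{G}_{m_k}(x)| = \bigvee_i |\mathcal{G}^\vee_{i-m_k}(y_k)|$ in norm by $C\|y_k\|\leq C2^{-k}$, put $e:=\sum_k k u_k\in X$ so that $u_k\leq e/k$, and conclude that $|\mathcal{G}_i(x)-x|\leq |\mathcal{G}_{m_k}(x)-x| + u_k$ for $m_k < i\leq m_{k+1}$ tends to $0$ uniformly. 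Finally, for (iv)$\Rightarrow$(i), since $(\mathcal{G}^\vee_m(x))_m$ is increasing and norm bounded it has a supremum in $X^{**}$, so $(\mathcal{G}_m(x))$ is order bounded in $X^{**}$; hence (ii) — and thus (i) — holds in $X^{**}$, and $u$-convergence of sequences descends from $X^{**}$ to $X$ by \cite[Proposition 2.12]{TT19}.

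The one point requiring genuine (if minor) care, and the only place where the decomposition case is not a literal transcription of the basic-sequence case, is the tie-breaking perturbation in \Cref{equivalence of constants.}(iv)$\Rightarrow$(v) and in the rescaling steps of \Cref{greedy lemma} and of (iii)$\Rightarrow$(v): in the basic sequence case one perturbs individual coefficients $e^*_{\rho(n)}(x)$, whereas here one must perturb inside the subspaces $E_{\rho(n)}$. Concretely, to make the natural greedy sum of a finitely supported $x=\sum_n e_{\rho(n)}$ ``lattice strictly greedy'' one replaces $e_{\rho(n)}$ by $(1+\delta_n)e_{\rho(n)}$ for suitable small $\delta_n$ chosen so that the norms $\|(1+\delta_n)e_{\rho(n)}\|$ are pairwise distinct and still dominate all other coordinate norms; since each $e_{\rho(n)}\neq 0$ this is clearly possible and the resulting perturbation error is controlled. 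Beyond this, every estimate transfers mechanically, so I would simply remark in the proof that the argument is identical to that of \Cref{u-greedy theorem.}, highlighting only this perturbation step; I do not expect any real obstacle.
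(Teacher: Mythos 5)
Your proposal is correct and takes essentially the same route as the paper: the paper proves \Cref{u-greedy theorem. decompo} simply by observing (in \Cref{Recall FDD}) that the argument for \Cref{u-greedy theorem.} transfers almost verbatim to Schauder decompositions, which is exactly what you spell out. Your identification of the only genuinely new point -- breaking ties by rescaling inside the nonzero subspaces $E_{\rho(n)}$ (and using uniform boundedness of the coordinate projections in place of $\|e_n^*\|\|e_n\|$) rather than perturbing scalar coefficients -- is the right observation, and the rest of the estimates indeed carry over mechanically.
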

In particular, arguing as in \Cref{recover usual}, the ``usual" characterization of quasi-greedy bases in \Cref{W00} is valid for quasi-greedy decompositions -- we will focus on the basis case for convenience of the reader; however, we will return to the subject of finite dimensional decompositions (FDDs) at various points in the paper. 
\medskip

We also mention that, although stated for the natural ordering, being uniformly quasi-greedy is independent of the greedy ordering. Indeed, noting that absolutely convergent series converge uniformly, the proof from \cite[Lemma 10.2.5]{AK16} can be used to show the following.

\begin{proposition}\label{U-independ}
Let $(e_n)$ be a basic sequence in a Banach lattice $X$. The following are equivalent.
\begin{enumerate}
\item $G_m(x)\xrightarrow{u}x$ for every $x\in E$ and every greedy approximation $(G_m(x))$;
\item $(e_n)$ is uniformly quasi-greedy;
\item For every $x\in E$ there is a greedy approximation $(G_m(x))$ such that $G_m(x)\xrightarrow{u}x$;
\item For every $x\in E$ with infinite support its strictly greedy approximation converges uniformly to $x$.

\end{enumerate}
\end{proposition}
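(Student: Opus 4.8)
\textbf{Proof strategy for Proposition~\ref{U-independ}.} The plan is to reduce everything to \Cref{u-greedy theorem.} and \Cref{equivalence of constants.}, exploiting the fact that (by (v) of \Cref{equivalence of constants.}) the uniform quasi-greedy constant is witnessed by finitely supported vectors, where the ambiguity between greedy orderings is governed entirely by the finitely many coordinates at which the coefficient magnitudes coincide. The implications (i)$\Rightarrow$(ii)$\Rightarrow$(iii) are trivial, so the content is (iii)$\Rightarrow$(ii) and (ii)$\Rightarrow$(i), together with the equivalence of (iv) with the rest. I would organize the argument so that (ii) $\Leftrightarrow$ (iv) is handled first (it is a statement about the definition of ``uniformly quasi-greedy'' applied to a single natural greedy sequence, which is \Cref{u-greedy theorem.}(i)), and then prove the cycle (i)$\Rightarrow$(ii)$\Rightarrow$(iii)$\Rightarrow$(i) using the constant.

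First I would record that, by \Cref{equivalence of constants.}, if $(e_n)$ is uniformly quasi-greedy with uniform quasi-greedy constant $C=C_{qg}^\vee$, then in fact $\|G_{\pi,m}^\vee(x)\|\leq C\|x\|$ for \emph{all} $x\in E$, all $m$, and all greedy orderings $\pi$ — not just the natural one. This is precisely condition (i) of \Cref{equivalence of constants.}, which is equivalent to condition (ii) that defines the constant. So the uniform bound on all greedy sums along any greedy ordering is available for free once we know $(e_n)$ is uniformly quasi-greedy. Next, given any greedy approximation $(G_{\pi,m}(x))_{m}$ of a fixed $x$, I would mimic the argument (v)$\Rightarrow$(i) in the proof of \Cref{u-greedy theorem.}: from the norm bounds $\|G_{\pi,m}^\vee(x)\|\leq C\|x\|$ applied to tails, one extracts that the increments $\bigvee_{i=m_k+1}^{m_{k+1}}|G_{\pi,i}(x)-G_{\pi,m_k}(x)|$ have summable-in-a-weighted-sense norms along a suitable subsequence $m_1<m_2<\cdots$ (using that $G_{\pi,m}(x)\to x$ in norm, which holds since $(e_n)$ is in particular quasi-greedy, via \cite[Theorem 10.2.3, Lemma 10.2.5]{AK16} applied as in the proof of \Cref{u-greedy theorem.}), hence the increments are $u$-null, hence $G_{\pi,m}(x)\xrightarrow{u}x$. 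The only adjustment from the natural-ordering proof is that the relevant tail $G_{\pi,i}(x)-G_{\pi,m_k}(x)$ must itself be a greedy sum of $x-G_{\pi,m_k}(x)$; this is true because $\pi$ restricted past index $m_k$ is a greedy ordering of the tail vector, exactly as in the natural case. This gives (ii)$\Rightarrow$(i) and (iii)$\Rightarrow$(i) simultaneously, since both assume $(e_n)$ is uniformly quasi-greedy (through the equivalence with \Cref{u-greedy theorem.}). For (iv): if $x$ has infinite support its strictly greedy approximation is a well-defined single greedy approximation (there is no ambiguity past the finitely repeated levels — wait, this needs the support to be such that no magnitude is attained infinitely often, which is automatic since $e_n^*(x)\to 0$), so (i)$\Rightarrow$(iv) is immediate, and for (iv)$\Rightarrow$(ii) I would perturb a finitely supported witness of failure as in (iv)$\Rightarrow$(v) of \Cref{equivalence of constants.} to make it strictly greedy and embed it into an infinitely supported vector by appending a rapidly decaying tail, transferring the failure.

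The main obstacle — and the reason the proposition is not entirely formal — is handling the case distinction between finite and infinite support cleanly in part (iv), and more importantly making sure the ``$u$'' in the conclusion is genuinely uniform, i.e. the dominating vector $e$ and sequence $\epsilon_m\downarrow 0$ do not depend on which greedy approximation we picked. For a \emph{fixed} $x$ this is not an issue because we process one approximation at a time; the subtlety flagged in the proof of \Cref{u-greedy theorem.} — that ``$u$ is independent of $j$ and $r$ since every term is a greedy sum of the same approximation'' — is exactly what we must preserve. I would therefore be careful to fix the greedy ordering $\pi$ at the outset of the (iii)$\Rightarrow$(i) argument and run the entire extraction of the subsequence $(m_k)$ and the vector $e=\sum_k k u_k$ relative to that single $\pi$, so that no diagonalization over orderings is needed. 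With that care, the rest is a routine transcription of the (v)$\Rightarrow$(i) portion of \Cref{u-greedy theorem.} together with citations to \Cref{equivalence of constants.} and \cite[Lemma 10.2.5]{AK16}.
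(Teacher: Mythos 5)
Your (ii)$\Rightarrow$(i) argument is sound and close in spirit to what the paper intends: \Cref{equivalence of constants.} upgrades the uniform quasi-greedy constant to all greedy orderings, norm convergence of an arbitrary greedy approximation follows from the classical quasi-greedy theory, the shifted ordering $\pi(m_k+1),\pi(m_k+2),\dots$ is indeed a greedy ordering of the tail $x-G_{\pi,m_k}(x)$, and the $e=\sum_k k\,u_k$ device from the proof of \Cref{u-greedy theorem.} then runs verbatim for the fixed $\pi$. The problem is the rest of your cycle. You claim that (iii)$\Rightarrow$(i) comes ``simultaneously'' because (iii) ``assumes $(e_n)$ is uniformly quasi-greedy through the equivalence with \Cref{u-greedy theorem.}'', but every condition in that theorem concerns the \emph{natural} greedy sums $\mathcal{G}_m(x)$, whereas (iii) only supplies, for each $x$, some $x$-dependent greedy ordering whose sums $u$-converge (hence are order bounded by a bound depending on that ordering). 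Passing from this to order boundedness of the natural greedy sums is precisely the content of (iii)$\Rightarrow$(ii), which your proposal never proves; as written the step is circular. Note also that purely norm-theoretic input cannot close this gap: norm convergence of some greedy approximation for every $x$ only yields quasi-greediness, and quasi-greedy bases (e.g.\ the trigonometric system in $L_2$) need not be uniformly quasi-greedy.

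The second gap is your (iv)$\Rightarrow$(ii) plan. Perturbing one finitely supported witness and ``appending a rapidly decaying tail'' cannot transfer the failure, because failure of uniform convergence is a tail property: a single bad finite block followed by a well-behaved tail still produces a strictly greedy approximation that converges uniformly to the vector. What is needed -- and what repairs (iii)$\Rightarrow$(ii) at the same time -- is the gliding-hump construction from the proof of (iii)$\Rightarrow$(v) of \Cref{u-greedy theorem.}: assuming (ii) fails, use \Cref{greedy lemma} together with the perturbation in (iv)$\Rightarrow$(v) of \Cref{equivalence of constants.} to produce disjointly supported finite blocks $x_k$ whose nonzero coefficient magnitudes are pairwise distinct within each block and strictly dominated by those of the previous block, with $\|x_k\|\leq 2^{-k}$ and $\|\mathcal{G}^\vee_{m_k}(x_k)\|\geq k$. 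For $x=\sum_k x_k$ all nonzero coefficients have distinct magnitudes, so the greedy approximation of $x$ is unique and every greedy sum is strictly greedy; the blockwise estimate shows these sums are not order bounded, hence neither the strictly greedy approximation nor any greedy approximation can $u$-converge. This single construction gives the contrapositives of both (iii)$\Rightarrow$(ii) and (iv)$\Rightarrow$(ii), and it is consistent with the paper's (terse) indication that the argument of \cite[Lemma 10.2.5]{AK16} should be adapted, using that absolutely convergent series converge uniformly.
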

Results analogous to \Cref{equivalence of constants.} and \Cref{U-independ} are also valid for conditions (ii), (iii) and (iv) of \Cref{u-greedy theorem.}. For example, one can show that being uniformly quasi-greedy is equivalent to the fact that for every $x\in E$ and greedy ordering $\pi$, there is a $u^\pi\in X$ satisfying that for all $m$, $|G_{\pi,m}(x)|\leq u^\pi$. However, the next example shows that one cannot pick $u^\pi$ independently of $\pi$:
%\begin{question}
%If $(e_n)$ is uniformly quasi-greedy is it then true that for each $x\in E$ there is a $u\in X$ such that for every $m$ and every greedy ordering $\pi$ we have $|G_{\pi,m}(x)|\leq u$?
%\end{question}
\begin{example} \label{Not uniform} We will construct a uniformly quasi-greedy sequence $(e_k)$ in the Banach lattice $X=\ell_2(L_2)$ such that there exists $x\in E$ for which there is no $u\in X$ such that for every $m\in\N$ and every greedy ordering $\pi$ we have $|G_{\pi,m}(x)|\leq u$.  In the $n$-th copy of $L_2$ we consider the first $n^2$ Rademacher vectors. Order these into a normalized  basic sequence $(e_k)$ in lexicographical order; by standard martingale inequalities the Rademacher's are bibasic in every ordering, from which it follows that $(e_k)$ is uniformly quasi-greedy. Now  consider the vector $x=\sum_{k=1}^\infty a_ke_k\in \ell_2(L_2)$ defined by having all of the $a_k$ in the $n$-th copy of $L_2$ being $\frac{1}{n^2}$. Note that this series converges as $(e_k)$ is an orthonormal sequence, and it is already in its natural greedy ordering. 
Next, recall the inequality $\sup_{\delta_n=0,1}|\sum_{n=1}^m\delta_nx_n|\geq \frac{1}{2}\sum_{n=1}^m|x_n|$ and suppose that $u\in X$ dominates $|G_{\pi,m}(x)|$ for each $\pi$ and $m$. Then in the $n$-th block, $u$ must dominate the partial sums of each permutation of the Rademacher's, hence it must dominate half of the sum of their absolute values. This means that the $n$-th block contributes at least $\frac{1}{2}$ to the norm of $u$. Since each of the blocks are disjoint, this forces $\|u\|$ to be arbitrarily large, a contradiction.
\end{example}
\begin{remark}
The motivation for \Cref{Not uniform} stems from the various characterizations of absolute sequences given in \cite{TT19}. Suppose that $(x_k)$ is an unconditional basic sequence in a Banach lattice $X$. Then $(x_k)$ is said to be \emph{permutable} if it is bibasic in every ordering; that is, if for each permutation $\sigma$ and $x\in [x_k]$ there exists $u^\sigma$ such that $|P^\sigma_nx|\leq u^\sigma$ for all $n$, where $P^\sigma_n$ is the $n$-th partial sum associated to the permutation $\sigma$. It turns out that one can choose $u^\sigma$ independently of $\sigma$ if and only if $(x_k)$ is absolute -- see \cite[Proposition 7.5]{TT19}.  As shown in \Cref{Not uniform}, the property that the order bound $u^\pi$ in statement (iii) of \Cref{u-greedy theorem.} can be chosen independently of the greedy ordering $\pi$ can be viewed as an intermediate between being uniformly quasi-greedy and absolute (see \cite[Theorem 7.2]{TT19} for further characterizations of absolute sequences). 
%so one gets control of the sums of the moduli of the basis vectors.
\end{remark}
\begin{remark}
    Apart from \Cref{W00}, the other main characterization in the theory of greedy bases is the equivalence between being greedy and being unconditional and democratic. This characterization can also be generalized to the Banach lattice setting as follows: Let $(M,\Sigma,\mu)$ be a measure space and let $X$ be a Banach function space over $(M,\Sigma,\mu)$.   For each $x\in X$ we let $G_{x}\in\Sigma$ denote any finite measure  set such that
$$ \text{ess sup}_{s\in G^c_{x}}|x(s)|\leq \text{ess inf}_{t\in G_{x}}|x(t)|<\infty.$$
 For each $A\in\Sigma$, we let $P_A$ be the restriction operator to $A$.  We say that $X$ is \emph{greedy} if there exists some constant $C>0$ such that for all $x\in X$ and all sets of the form $G_{x}$ we have  
\begin{equation*}\label{greedy}
\|x-P_{G_{x}}x\|\leq C \inf_{\mu(A)\leq\mu(G_x);y\in P_A X}\|x-y\|.
\end{equation*}
We say that $X$ is {\em democratic} if there exists $D>0$ such that $\|1_A\|\leq D\|1_B\|$ for all $A,B\in\Sigma$ with $\mu(A)\leq  \mu(B)<\infty$. 
Routine arguments then show that $X$ is greedy if and only if it is democratic, which extends the characterization of (weight)-greedy bases from the discrete setting where $\mu$ is atomic to the continuous setting of Banach function spaces.
\end{remark}

%\begin{remark}\label{rem1}
%The property in \Cref{Not uniform} is clearly stronger than being uniformly quasi-greedy, and weaker than absolute by \cite[Theorem 7.2]{TT19}. By choosing a UCC but not quasi-greedy sequence and viewing it in $C[0,1]$ we see that, at the level of basic sequences, being absolute for constant coefficients is not enough to guarantee quasi-greedy, let alone the condition in \Cref{Not uniform}. However, for counterexamples, it is always nicer to have a basis (rather than a basic sequence), and we should check if the condition in \Cref{Not uniform} implies absolute for constant coefficents (it doesn't imply permutable by more $C[0,1]$ tricks). The Rademacher's tell us that permutable does not imply absolute for constant coefficents.
%\end{remark}
%\begin{proof}
%(i)$\Rightarrow$(ii)$\Rightarrow$(iii)$\Rightarrow$(iv) is obvious. The proof of (iv)$\Rightarrow$(i) is verbatim to \cite[Lemma 10.2.5]{AK16} noting that absolutely convergent series converge uniformly. 
%The lemma and proof are equally valid with uniform replaced with order convergence, or even order boundedness of the greedy approximations, assuming $(e_n)$ is quasi-greedy. Can we add in the item $(e_n)$ is greedy and for each $x\in E$ there is a greedy approximation with $(\bigvee_{m=1}^n|\mathcal{G}_m(x)|)$ norm (or order) bounded.
%\end{proof}

\subsection{The bibasis theorem fails for frames}\label{frame subsec}
One of the surprising features of \Cref{bidecomposition} is that although order convergence and order boundedness generally do not pass between sublattices, statement (iv) is stable under passing between sublattices, in the sense that if $(x_k)\subseteq X\subseteq Y$ then (iv) holds in $X$ iff it holds in $Y$. In statement (ii) of \Cref{bidecomposition} one actually has a simultaneously norm and order convergent sequence, so one may wonder if these sequences pass freely between closed sublattices. The next result says that the answer is no; we will later expand on this example to show that the bibasis theorem fails for frames.
\begin{example}\label{no pass}
Let $\varphi$ be an Orlicz function and for each sequence $x=(x_k)$ of real numbers, define  $I(x)=\sum_{k=1}^\infty \varphi (|x_k|)$. Then $\ell_{\varphi}:=\{x : \exists \lambda>0, \  I(\lambda x)\leq 1\}$ is a Banach lattice under the Luxemburg norm, and $h_{\varphi}:=\{x : \forall \lambda>0, \ I(\lambda x)<\infty\}$ is a closed order dense ideal of $\ell_\varphi$. It is well-known that $\ell_\varphi=h_\varphi$ iff $\varphi$ satisfies the $\Delta_2$-condition near zero, so choose any $\varphi$ that fails the $\Delta_2$-condition near zero. Fix some positive vector $x=(x_k)$ in $\ell_\varphi \setminus h_\varphi$ and define $y^k=x_ke^k\in h_\varphi$, where $e^k$ is the $k$-th standard unit vector. Next, define $z^k$ by $z^k_n=x_n$ if $n\geq k$, and $0$ otherwise. Then $0\leq y^k\leq z^k\downarrow 0$ in $\ell_{\varphi}$, so that $y^k\xrightarrow{o}0$ in $\ell_{\varphi}$. Note that since $x\in \ell_{\varphi}$, $(y^k)$ is norm null, but $(y^k)$ is not even order bounded in $h_\varphi$, since any upper bound for $(y^k)$ must coordinate-wise dominate $x\not \in h_\varphi$.

%Consider $\varphi(t)=e^{-1/t}$, which fails the $\Delta_2$ condition. We then choose $x_k=\frac{1}{log(1+k)}$, so that $x=(x_k)$ is a positive vector in $\ell_\varphi \setminus h_\varphi$. Now define $y^k=x_ke^k$ - where $e^k$ is the standard unit vector -  and $z^k$ by $z^k_n=x_n$ if $n\geq k$, and $0$ otherwise. Then $0\leq y^k\leq z^k\downarrow 0$ in $\ell_{\varphi}$, so that $y^k\xrightarrow{o}0$ in $\ell_{\varphi}$. Note that $\|y^k\|=\frac{1}{log(1+k)}$, so that $(y^k)$ is norm null, but $(y^k)$ is not even order bounded in $h_\varphi$.
\end{example}
We are now ready to show that \Cref{bidecomposition} fails for frames. Recall that, given  a Banach space $E$,  a sequence $(x_k,f_k)\in E\times E^*$ is a \emph{Schauder frame} for $E$ if for all $x\in E$ we have $x=\sum_{k=1}^\infty f_k(x)x_k$. Note that each of the statements in \Cref{bidecomposition} has an obvious  ``frame" analogue. Specifically, for a frame $(x_k,f_k)$ with $(x_k)$ contained in a Banach lattice,  we can consider the following properties:
\begin{enumerate}
\item For all $x\in [x_k], \sum_{k=1}^nf_k(x)x_k\xrightarrow{u}x$;
\item For all $x\in [x_k], \sum_{k=1}^nf_k(x)x_k\xrightarrow{o}x$;
\item For all $x\in [x_k], \left(\sum_{k=1}^nf_k(x)x_k\right)_n$ is order bounded;
\item  For all $x\in [x_k], \left(\bigvee_{n=1}^m\left|\sum_{k=1}^nf_k(x)x_k\right|\right)_m$ is norm bounded;
\item There exists $M\geq 1$ such that for all $x\in [x_k]$ and  $m\in \mathbb{N},$ $$\bigg\|\bigvee_{n=1}^m\left|\sum_{k=1}^nf_k(x)x_k\right|\bigg\|\leq M\|x\|.$$
\end{enumerate}
It is easy to see that (i)$\Rightarrow$(ii)$\Rightarrow$(iii)$\Rightarrow$(iv)$\Leftrightarrow$(v). However, it turns out that the reverse implications do not hold, in general. To see that (iv)$\not\Rightarrow$(iii), we will modify \Cref{no pass}. However, before  doing so, we give an easier example which shows that  (iii)$\not\Rightarrow$(ii). 
\begin{example}\label{example}
Let $X=L_p[0,1]$, $1<p<\infty$. Since $X$ is reflexive, order convergence agrees with uniform convergence (see \cite{Bed-Wirth}) and increasing norm bounded sequences have supremum. 
Hence, for any frame of $X$, properties (i) and (ii) coincide, as do (iii), (iv) and (v). We will show that (iii)$\not\Rightarrow$(ii). For this, we let $(t_n)$ be the ``typewriter" sequence, $t_n=\chi_{[\frac{n-2^k}{2^k},\frac{n-2^k+1}{2^k}]}$, where $k\geq 0$ is such that $2^k\leq n<2^{k+1}$. Let $(h_n,h_n^*)$ denote the Haar basis with its coordinate functionals and choose $f\in X^*$ with $f(\one)=1$. We define our sequence $(x_k,f_k)$ by weaving the typewriter sequence through the Haar basis as follows: $$(x_k)=(h_1,t_1,-t_1,h_2,t_2,-t_2,h_3,t_3,-t_3,\dots),$$ $$(f_k)=(h_1^*,f,f,h_2^*,f,f,h_3^*,f,f,\dots).$$
It is  easy to see that $(x_k,f_k)$ is a frame. Moreover, since $(h_k)$ is a bibasis, it is easy to deduce that for every $x\in X$, $P_nx:=\sum_{k=1}^n f_k(x)x_k$ defines an order bounded sequence. However, $P_n(\one)\not\xrightarrow{a.e.}\one$.
\end{example}
\begin{remark}
    A slight modification of the above construction (by instead weaving through a weakly null normalized sequence) allows one to give examples of reproducing pairs that are not Schauder frames. This solves an open problem from \cite{hart2023overcomplete}.
\end{remark}
As we have already remarked, the implication (iv)$\Rightarrow$(iii) above also fails for frames; however, there is a significant strengthening of this fact. Indeed, recall that a frame $(x_k,f_k)$ for a closed subspace $E$ of a Banach lattice $X$ is \term{absolute} if for each $x\in E$,
$$\sup_n \bigg\|\sum_{k=1}^n\left|f_k(x)x_k\right|\bigg\|<\infty.$$
Baire Category arguments then give the existence of some $A\geq 1$ with  
$$ \bigg\|\sum_{k=1}^n\left|f_k(x)x_k\right|\bigg\|\leq A\|x\|$$ for each $x\in E$. Clearly, absolute frames satisfy property (v). However, as we will now show, they need not satisfy property (iii).
\begin{example}\label{Mon is nec}
Consider an Orlicz sequence space $\ell_{\varphi}$ failing the $\Delta_2$-condition near zero. The unit vectors $(e_k)$ are disjoint and hence absolute. We will view  $(e_k)\subseteq h_\varphi \subseteq \ell_{\varphi}$. As in \Cref{no pass}, we choose $0\leq x=(x^k)$ in $\ell_\varphi \setminus h_\varphi$ and define $y_k=x^ke_k$.
Let $f$ be a non-zero functional. We  consider the sequence
$$(e_1, y_1,-y_1, e_2, y_2, -y_2, e_3,\dots)$$
equipped with the functionals
$$(e_1^{*}, f, f, e_2^*, f, f, e_3^*,\dots).$$
For $u\in h_{\varphi}$, the partial sums  of this sequence will look like 
$\sum_{k=1}^n e_k^*(u)e_k$ or $\sum_{k=1}^n e_k^*(u)e_k +f(u)x^ne_n$, and so this sequence will be a frame.
Also, if we sum moduli we can bound the norm by $$\|\sum_{k=1}^\infty |e_k^*(u)e_k|\|+2|f(u)|\|\sum_{n=1}^\infty x^ke_k\|<\infty.$$ Hence, we have constructed an absolute frame.
\medskip

Now, let $u$ be such that $f(u)\neq 0$. If the partial sums for $u$ were order bounded in $h_{\varphi}$, we would deduce that the sequence $(x^ne_n)$ is order bounded in $h_{\varphi}$, which we showed in \Cref{no pass} is not true.
\end{example}

The main reason why the above example works is that the space $X=h_\varphi$ is not $\sigma$-monotonically complete. Recall that a Banach lattice $X$ is \emph{$\sigma$-monotonically complete} if  all positive increasing norm  bounded sequences in $X$ have supremum. In this case, one can deduce order convergence of the partial sums for absolute frames.
\begin{proposition}\label{mc}
If $X$ is $\sigma$-monotonically complete and $(x_k,f_k)$ is an absolute frame for $E\subseteq X$ then for each $x\in E$, $\sum_{k=1}^\infty f_k(x)x_k\xrightarrow{o}x$. In particular, each absolute frame has order convergent expansions, when the order convergence in $X^{**}$ is used.
\end{proposition}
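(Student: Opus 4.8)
The plan is to use $\sigma$-monotone completeness to produce, for each fixed $x\in E$, an explicit decreasing null sequence that dominates the tails of the frame expansion. Fix $x\in E$ and set $s_n:=\sum_{k=1}^n\abs{f_k(x)x_k}$. By the absolute frame hypothesis the sequence $(s_n)$ is positive, increasing, and norm bounded (this is immediate, even without invoking the Baire category remark), so $\sigma$-monotone completeness yields $u:=\sup_n s_n\in X$. Put $t_m:=u-s_m$; since $s_m\uparrow u$ we get $t_m\downarrow$ and $\inf_m t_m=u-\sup_m s_m=0$, i.e.\ $t_m\downarrow 0$.

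Writing $P_nx:=\sum_{k=1}^n f_k(x)x_k$, for $n>m$ we have $\abs{P_nx-P_mx}=\bigabs{\sum_{k=m+1}^n f_k(x)x_k}\le s_n-s_m\le t_m$. The step linking norm and order convergence is the only one needing care: since $(x_k,f_k)$ is a frame, $P_nx\goesnorm x$, hence $P_nx-P_mx\goesnorm x-P_mx$, and as the lattice modulus is norm continuous, $\abs{P_nx-P_mx}\goesnorm\abs{x-P_mx}$. Because $t_m-\abs{P_nx-P_mx}\ge 0$ for every $n\ge m$ and the positive cone of $X$ is norm closed, letting $n\to\infty$ (with $m$ fixed) gives $\abs{x-P_mx}\le t_m$. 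Combined with $t_m\downarrow 0$, this is exactly $P_mx\xrightarrow{o_1}x$, hence in particular $P_mx\xrightarrow{o}x$.

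For the final clause I would pass to the bidual. The space $X^{**}$ is a dual Banach lattice, hence monotonically complete --- a norm bounded increasing net has a supremum, namely its weak$^*$-limit --- and in particular $\sigma$-monotonically complete; moreover $X\hookrightarrow X^{**}$ is an isometric lattice embedding, so the elements $s_n$, their norms, and hence the relevant hypotheses, are unchanged when $(x_k,f_k)$ is regarded as a frame for $E\subseteq X^{**}$. Applying the first part inside $X^{**}$ then gives $\sum_{k=1}^\infty f_k(x)x_k\xrightarrow{o}x$ computed in $X^{**}$. Beyond this, the only genuinely nontrivial input is the monotone completeness of dual Banach lattices; everything else is bookkeeping about how finite lattice operations and norms are preserved by the canonical embedding.
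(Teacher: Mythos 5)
Your proof is correct and follows essentially the same route as the paper: form $u=\sup_n\sum_{k=1}^n|f_k(x)x_k|$ via $\sigma$-monotone completeness, observe that $u-\sum_{k=1}^n|f_k(x)x_k|\downarrow 0$ dominates $|x-\sum_{k=1}^nf_k(x)x_k|$, and handle the final clause by recalling that the dual Banach lattice $X^{**}$ is monotonically complete. The only difference is that you spell out the step the paper labels ``easy to see'' (passing to the limit in $n$ using norm continuity of the modulus and closedness of the positive cone), which is exactly the intended justification.
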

\begin{proof}
Fix $x\in E$. Since $(x_k,f_k)$ is absolute, the sequence $(\sum_{k=1}^n|f_k(x)x_k|)$ is increasing and norm bounded, so has a supremum $u$. It is then easy to see that $|x-\sum_{k=1}^nf_k(x)x_k|\leq u-\sum_{k=1}^{n}|f_k(x)x_k|\downarrow 0$. To finish the proof, recall the standard fact that if $Y$ is a Banach lattice then $Y^*$ is monotonically complete. 
\end{proof}
\Cref{mc} shows that absolute frames  admit a reconstruction formula with respect to an appropriate order convergence. However, being an absolute frame is a very strong property. Indeed, by modifying the proof of \cite[Theorem 3]{JS}, one  obtains the following result.
\begin{proposition}
Let $(x_k,f_k)$ be an absolute frame in $L_p[0,1]$, $1\leq p<\infty$. Then $[x_k]$ embeds isomorphically into $\ell_p$.
\end{proposition}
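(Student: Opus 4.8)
\emph{Proof strategy.} Write $E=[x_k]$ and, after deleting any indices with $x_k=0$ (which play no role), use the Baire category argument already recorded for absolute frames to fix $A\ge 1$ with $\|\sum_{k=1}^n|f_k(x)x_k|\|_p\le A\|x\|$ for all $x\in E$ and all $n$. Because $L_p[0,1]$ is order continuous for $1\le p<\infty$, for each $x\in E$ the increasing norm-bounded sequence $\bigl(\sum_{k=1}^n|f_k(x)x_k|\bigr)_n$ converges in norm, and the same holds for $\bigl(\sum_{k=1}^n|c_kx_k|\bigr)_n$ whenever $\sup_n\|\sum_{k=1}^n|c_kx_k|\|_p<\infty$. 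The plan is to set
\[
\mathcal F:=\Bigl\{c=(c_k)\in\R^{\N}:\ \|c\|_{\mathcal F}:=\sup_n\Bigl\|\sum_{k=1}^n|c_kx_k|\Bigr\|_p<\infty\Bigr\},
\]
which is a Banach sequence lattice in which (by the convergence just noted) the finitely supported sequences are dense, so the unit vectors $(e_k)$ form a $1$-unconditional basis with $\|e_k\|_{\mathcal F}=\|x_k\|_p$. The analysis map $U\colon E\to\mathcal F$, $Ux=(f_k(x))_k$, is then bounded with $\|U\|\le A$ — this is \emph{exactly} absoluteness — and the synthesis map $V\colon\mathcal F\to E$, $V(c)=\sum_kc_kx_k$, is a contraction; since the frame reconstructs, $VU=\mathrm{id}_E$, and hence $\|x\|\le\|Ux\|_{\mathcal F}\le A\|x\|$, so $U$ is an isomorphic embedding of $E$ into $\mathcal F$.

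\emph{Ruling out $\ell_r$ for $r>p$.} The elementary inequality $\bigl(\sum_k a_k\bigr)^p\ge\sum_k a_k^p$ ($a_k\ge 0$, $p\ge 1$), integrated over $[0,1]$, shows that a normalised disjointly supported sequence $(v_n)$ in $\mathcal F$ satisfies $\|\sum_{n=1}^N v_n\|_{\mathcal F}=\|\sum_{n=1}^N w_n\|_p\ge N^{1/p}$, where $w_n:=\sum_k|v_n(k)|\,|x_k|$ are nonnegative and normalised in $L_p$ (the equality being disjointness, the inequality the pointwise estimate applied to $w_1,\dots,w_N$). If $E$ contained a copy of $\ell_r$ for some $r\in(p,\infty]$ ($r=\infty$ meaning $c_0$), its basis would be weakly null, so its image under $U$ is a seminormalised weakly null sequence in $\mathcal F$; passing to a subsequence and a small perturbation, it is equivalent to a disjoint block basis $(v_n)$ of $(e_k)$, hence to the $\ell_r$-basis. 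But $\|\sum_{n=1}^N v_n\|_{\mathcal F}\asymp N^{1/r}\ll N^{1/p}$ — a contradiction. So $E$ omits $\ell_r$ for every $r>p$, and omits $c_0$.

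\emph{Ruling out $\ell_2$ and concluding.} When $p\le 2$ the previous step already excludes $\ell_2$. When $p>2$ I would run the same extraction: an $\ell_2$-copy in $E$ produces a normalised disjoint $(v_n)$ in $\mathcal F$ with $[v_n]\cong\ell_2$, hence nonnegative normalised $(w_n)\subseteq L_p$ with $\|\sum_{n=1}^N w_n\|_p\asymp N^{1/2}$, and then apply the Kadec--Pe\l czy\'nski dichotomy to $(w_n)$. An almost disjoint subsequence would be equivalent to the $\ell_p$-basis, forcing $\ell_p\cong\ell_2$; a uniformly integrable subsequence is bounded in $L_2$ but cannot be weakly null there, since $\int w_n\to 0$ is incompatible with uniform integrability of $(w_n^p)$, so a Ces\`aro argument gives $\|\sum_{n=1}^N w_n\|_2\gtrsim N$ and $\|\cdot\|_p\ge\|\cdot\|_2$ contradicts $N^{1/2}$. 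At this point $E$ is a subspace of $L_p[0,1]$ containing no copy of $\ell_2$ and none of $\ell_r$ for $p<r<\infty$, and the subspace structure theory of $L_p$ — the Johnson--Odell characterisation for $p>2$ ($E\hookrightarrow\ell_p$ iff $E$ omits $\ell_2$) and its counterpart for $1\le p<2$ (omitting $\ell_r$ for $p<r\le 2$ suffices), the case $p=2$ being trivial — yields $E=[x_k]\hookrightarrow\ell_p$.

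\emph{Main obstacle.} The passage $E\hookrightarrow\mathcal F$ and the disjointification are routine. The hard part will be the last stage: eliminating $\ell_2$ when $p>2$ (the one relevant exponent the inequality $(\sum a_k)^p\ge\sum a_k^p$ does not reach), and then invoking the nontrivial subspace structure of $L_p$ to upgrade ``$E$ omits the offending $\ell_r$'s'' into ``$E$ embeds into $\ell_p$''. This is precisely the portion of the proof of \cite[Theorem 3]{JS} that has to be adapted, with the absolute-frame inequality playing the role of the hypothesis used there.
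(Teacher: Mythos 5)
Your reduction to the coefficient lattice $\mathcal F$ is sound: $V$ is a well-defined contraction because $L_p$ is a KB-space, $U$ is bounded exactly by absoluteness, $VU=\mathrm{id}_E$ by the frame identity, $(e_k)$ is a $1$-unconditional basis of $\mathcal F$, and the superadditivity $\bigl(\sum_n a_n\bigr)^p\ge\sum_n a_n^p$ does give $\bigl\|\sum_{n=1}^N v_n\bigr\|_{\mathcal F}\ge N^{1/p}$ for normalized disjoint $(v_n)$, so the Bessaga--Pe\l czy\'nski extraction correctly excludes copies of $c_0$ and of $\ell_r$, $r>p$, from $E=[x_k]$. Your Kadec--Pe\l czy\'nski step for $p>2$ also works (most cleanly: a subsequence of the nonnegative normalized $(w_n)$ is either almost disjoint, hence equivalent to the $\ell_p$ basis, or lies in some set $M(\varepsilon)$, whence $\int w_n\ge\varepsilon^2$ and $\bigl\|\sum_{n\le N}w_n\bigr\|_p\ge N\varepsilon^2$; either way the $N^{1/2}$ growth is contradicted), and then Johnson--Odell finishes the cases $p\ge 2$.

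The genuine gap is the range $1\le p<2$. The ``counterpart of Johnson--Odell'' you invoke there --- that a subspace of $L_p$, $1\le p<2$, containing no isomorphic copy of $\ell_r$ for any $r\in(p,2]$ must embed into $\ell_p$ --- is not a theorem. By Aldous/Krivine--Maurey, that hypothesis is equivalent to saying every infinite-dimensional subspace contains $\ell_p$, and whether this forces embeddability into $\ell_p$ is a well-known open problem for $p=1$ (and, to the best of current knowledge, for all $p\in[1,2)$); the Johnson--Odell characterization is genuinely special to $p>2$. So your strategy --- use absoluteness only to exclude $\ell_r$-copies and then appeal to the general subspace structure of $L_p$ --- cannot be completed precisely in the range that includes the most relevant case $p=1$; nor does the extra information you actually have (e.g.\ $U(E)$ is complemented in $\mathcal F$, so $E$ has the bounded approximation property) feed into any known theorem of this kind. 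This is where the paper's route differs: it does not pass through subspace-structure theorems at all, but adapts the direct argument of \cite[Theorem 3]{JS}, where the nonnegativity coming from absoluteness (note $(x_k)$ is equivalent to the nonnegative system $(|x_k|)$, with the frame expansion replacing basis expansions) is exploited throughout the proof to produce the $\ell_p$-embedding uniformly for all $1\le p<\infty$. To repair your write-up you would either have to prove the $p<2$ structural statement (not advisable) or replace the last stage by such a direct argument.
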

On the other hand, although property (i) is the strongest of the five properties listed above, we will now show that frames satisfying this condition are in complete abundance. To set the stage,  let $E$ be a closed subspace of a Banach lattice $X$ and recall that a sequence $(x_k,f_k)$ in $E\times E^*$ is  a  \emph{$u$-frame} if for each $x\in E$ we have $\sum_{k=1}^n f_k(x)x_k\xrightarrow{u}x$. The following result shows that if $E$ admits a $u$-frame (which happens, in particular, if $E$ has a bibasis) then one can construct a $u$-frame for $E$ with the $(x_k)$ lying in any subset of $E$ with dense span.
\begin{proposition}
    Let $E$ be a closed subspace of a Banach lattice $X$ and let $M\subseteq E$ have dense span. If $E$ admits a $u$-frame then there is a $u$-frame $(x_k,f_k)$ for $E$ with each $x_k\in M$.
\end{proposition}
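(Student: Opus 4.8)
The plan is to start from a given $u$-frame $(y_k, g_k)$ for $E$ and to "refine" it by replacing each $y_k$ with a short finite linear combination of elements of $M$ that is close to $y_k$, adjusting the functionals accordingly. First I would fix a sequence $\varepsilon_k \downarrow 0$ to be specified and, using that $\spn M$ is dense in $E$, choose for each $k$ an element $z_k \in \spn M$ with $\|y_k - z_k\| < \varepsilon_k$. Writing $z_k = \sum_{j=1}^{N_k} c_{k,j} m_{k,j}$ with $m_{k,j} \in M$, I would form the new sequence $(x_i)$ by listing, for $k = 1, 2, \dots$, first the block $m_{k,1}, \dots, m_{k,N_k}$ (these lie in $M$) and interleaving with it the correction vector $y_k - z_k$ — that is, for each $k$ the new block reads $c_{k,1}m_{k,1}, \dots, c_{k,N_k}m_{k,N_k}, (y_k - z_k)$. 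The corresponding functionals are $g_k$ placed on each of the $N_k + 1$ positions of the $k$-th block (so that $\sum_j c_{k,j} g_k(x)\, m_{k,j} + g_k(x)(y_k - z_k) = g_k(x)\, y_k$), except we must be careful: putting $g_k$ on the $m_{k,j}$-slot with coefficient built into $x_i$ requires the $i$-th functional to be $c_{k,j} g_k$ (with the vector being just $m_{k,j}$), and the last slot of the block carries functional $g_k$ with vector $(y_k - z_k)$. Either bookkeeping convention works; the point is that summing over a complete block of $(x_i, f_i)$ reproduces $g_k(x) y_k$.

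Next I would verify the frame identity and the $u$-convergence. For $x \in E$, the partial sums of $(f_i(x) x_i)$ that end exactly at a block boundary are precisely $\sum_{k=1}^n g_k(x) y_k$, which converge to $x$ uniformly by hypothesis. A partial sum ending in the middle of the $k$-th block differs from the preceding block-boundary partial sum by a sub-sum of $\{c_{k,j} g_k(x) m_{k,j}\}_j \cup \{g_k(x)(y_k - z_k)\}$; in modulus this is at most $|g_k(x)| \bigl( \sum_j |c_{k,j}| |m_{k,j}| + |y_k - z_k| \bigr) =: |g_k(x)| w_k$, where $w_k \in E_+ \subseteq X_+$ is a fixed vector depending only on the chosen representation of $z_k$. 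So the tail fluctuation of the new partial sums is controlled by $\sup_k |g_k(x)| w_k$, and the main issue is to arrange that this quantity tends to $0$ uniformly — i.e. that $|g_k(x)| w_k \xrightarrow{u} 0$ — so that it can be absorbed into the uniform convergence already supplied by the block-boundary sums.

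The key step — and the main obstacle — is precisely this: ensuring $|g_k(x)| w_k \xrightarrow{u} 0$ in $X$, with a single dominating vector $e$ and scalars $\epsilon_m \downarrow 0$ working simultaneously for all $x$ in a norm-bounded set. Since $(y_k, g_k)$ is a $u$-frame, $g_k(x) y_k \xrightarrow{u} 0$ for each $x$, but to get a usable bound I would note that $\|g_k\|$ is bounded (the $u$-frame has bounded coefficient functionals, by a Baire category / uniform boundedness argument applied to the maps $x \mapsto \sup_n \sum_{k=1}^n g_k(x) y_k$ — this is exactly the mechanism used for absolute frames earlier in the paper), say $\|g_k\| \le L$. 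Then $|g_k(x)| w_k \le L\|x\| w_k$, so it suffices to choose the representation $z_k = \sum_j c_{k,j} m_{k,j}$ so that $w_k = \sum_j |c_{k,j}| |m_{k,j}| + |y_k - z_k|$ is itself $u$-null, say $\|w_k\| \le 2^{-k}$ and $w_k \le 2^{-k} e$ for a fixed $e \in X_+$. The norm bound $\|w_k\| \le 2^{-k}$ can be demanded outright when picking $z_k$ close to $y_k$ (just take $z_k$ very close and note $|y_k - z_k|$ has small norm while $\sum_j |c_{k,j}||m_{k,j}|$ can be made to have norm at most $\|y_k\| + 1 \le$ constant — this does not shrink, so one must be more careful). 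The clean fix: set $e := \sum_{k=1}^\infty 2^k w_k$; this series converges in $X$ because $\|w_k\| \le 2^{-k}$, and then $w_k \le 2^{-k} e$ automatically, giving $w_k \xrightarrow{u} 0$ with dominating vector $e$. Hence for $\|x\| \le R$ we get $|g_k(x)| w_k \le LR\, 2^{-k} e \xrightarrow{u} 0$ uniformly over the ball, which is exactly what is needed to combine with the block-boundary uniform convergence and conclude $\sum_{i=1}^n f_i(x) x_i \xrightarrow{u} x$ for every $x \in E$. Finally I would remark that the $x_i$ that are not already in $M$ — namely the correction vectors $y_k - z_k$ — can be eliminated from the requirement by instead asking only that the span be dense: if one truly needs every $x_i \in M$, replace each correction vector $y_k - z_k$ by a further element of $\spn M$ approximating it and iterate, or simply observe that the statement asks for $x_k \in M$ only for the frame vectors and the construction already delivers this once we also approximate $y_k - z_k$ from $M$; absorbing that final approximation error the same way completes the argument.
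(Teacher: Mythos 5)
Your construction does not deliver what the statement asks for: the system you build contains the correction vectors $y_k-z_k$, which lie in $E$ but not in $M$ (nor, in general, in $\spn M$), and your closing remark that one can ``approximate $y_k-z_k$ from $M$ and iterate/absorb'' is circular --- every further approximation creates a new non-$M$ correction, and you give no mechanism by which the reconstruction identity $x=\sum_i f_i(x)x_i$ is ever restored exactly. This is precisely the point where the paper's argument does something you have no analogue of: it perturbs each frame vector $x_k$ to a nearby vector $y_k$ (taken from $M$, with tolerance $\varepsilon/(2^{2k+1}\|f_k\|)$), forms $S(x)=\sum_k f_k(x)(x_k-y_k)$, checks that this series converges uniformly with $\|S\|<1$, and then pushes the error into the \emph{functionals} by inverting $T=I-S$: the new $u$-frame is $(y_k,(T^{-1})^*f_k)$, so no auxiliary correction vectors appear at all. (The passage from $M$ dense to $M$ with dense span is then handled via the argument of \cite[Theorem 4.4]{FPT2}; your idea of distributing the coefficients of $z_k\in\spn M$ into the functionals is a reasonable way to address that aspect, but it does not repair the main defect.)

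The uniform-convergence estimate is also broken. The within-block fluctuation is bounded by $|g_k(x)|\,w_k$ with $w_k\geq \sum_j |c_{k,j}|\,|m_{k,j}|$, and, as you yourself observe, $\|w_k\|$ cannot be forced to be $\leq 2^{-k}$: it is at least of the order of $\|y_k\|$, and in fact can be far larger, since the ``absolute sum'' of whatever representation of $z_k$ over $M$ exists is not under your control. Your ``clean fix'' $e:=\sum_k 2^k w_k$ presupposes exactly the bound $\|w_k\|\leq 2^{-k}$ that you have just conceded is unavailable, so the series defining $e$ need not converge and the domination $w_k\leq 2^{-k}e$ is never established; without it, neither $u$-convergence nor even norm convergence of the new expansions follows. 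Finally, the claim $\sup_k\|g_k\|<\infty$ is not a consequence of uniform boundedness --- one only obtains $\sup_k\|g_k\|\,\|y_k\|<\infty$ --- although that particular point is repairable, since the approximation tolerance at stage $k$ may be chosen depending on $\|g_k\|$. As written, the proposal establishes neither the membership requirement $x_k\in M$ nor the $u$-frame property of the constructed system.
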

\begin{proof}
    This result was proven in \cite[Theorem 4.4]{FPT2} under the assumption that $E=X$ and $(x_k,f_k)$ is a bibasis. Therefore, it suffices to extend the result by only assuming that $(x_k,f_k)$ is a $u$-frame for a subspace of $X$. The key to doing this is to prove a perturbation result; namely, that if $(x_k,f_k)$ is a $u$-frame for a closed subspace $E$ of a Banach lattice $X$ and $0<\varepsilon<1$ then any sequence $(y_k)$ in $E$ for which
$$\|x_k-y_k\| \leq \frac{\varepsilon}{2^{2k+1}\|f_k\|}$$
can be paired with a sequence $g_k\in E^*$ so that $(y_k,g_k)$ is a $u$-frame for $E$. Such a perturbation result allows one to construct, for any  dense set $M$ of $E$, a $u$-frame for $E$ consisting  of elements of $M$. After this, the proof of \cite[Theorem 4.4]{FPT2} can be used to replace the assumption that $M$ is dense with the assumption that $M$ has dense span. To see that the above perturbation result holds, let $x\in E$ and define $S(x)=\sum_{k=1}^{\infty}f_k(x)(x_k-y_k)$. It is shown in \cite[Lemma 2.3]{PS} that $S$ is well-defined and $\|S\|<1$. In fact, it is easy to see that the sum is uniformly Cauchy and hence uniformly converges. One then defines $T=I-S$, so that $T(x)=u-\sum_{k=1}^\infty f_k(x)y_k$. Replacing $x$ with $T^{-1}x$, we see that $x=u-\sum_{k=1}^\infty f_k(T^{-1}x) y_k$, so that $(y_k, (T^{-1})^*f_k)$ is a $u$-frame for $E$.
\end{proof}
\begin{remark}
  Interesting examples of Gabor frames exhibiting enhanced convergence properties can be found in \cite{MR1817671}; see also \cite{MR1836633} for nonharmonic Fourier series.
\end{remark}

\subsection{Properties of uniformly quasi-greedy bases}\label{Sub3}
 As explained in \Cref{recover usual}, any result about quasi-greedy bases can be viewed as a result about uniformly quasi-greedy bases with ambient Banach lattice   $C[0,1]$.  In this subsection, we record some extensions of the fundamental results on quasi-greedy bases   to other ambient Banach lattices $X$. 
 \medskip
 
 We begin by generalizing the well-known fact that quasi-greedy bases are unconditional for constant coefficients.
%In this subsection, for completeness, we record some elementary properties of quasi-greedy bases that generalize easily to uniformly quasi-greedy bases, in the sense when the viewed as a result about uniformly quasi-greedy basic sequences with ambient space $C[0,1]$, and then ask if the result is still true if $C[0,1]$ is replaced by another Banach lattice $X$. %This subsection is by no means exhaustive, as there are certainly several results in the quasi-greedy literature which generalization to uniformly quasi-greedy bases remains unclear. Actually, as explained in \Cref{recover usual}, one can take \emph{any} result about quasi-greedy bases, view it as a result about uniformly quasi-greedy basic sequences with ambient space $C[0,1]$, and then ask if the result is still true if $C[0,1]$ is replaced by another Banach lattice $X$.
\begin{proposition}\label{almost unc}
Suppose that $(e_n)$ is a uniformly quasi-greedy basic sequence. Then the following hold.
\begin{enumerate}
\item For any distinct indices $n_1,\dots, n_m$ we have
$$\bigg\|\bigvee_{k=1}^m\left|\sum_{i=1}^ke_{n_{i}}\right|\bigg\|\leq C_{qg}^\vee\bigg\|\sum_{i=1}^me_{n_i}\bigg\|.$$

\item $(e_n)$ is permutable for constant coefficients, i.e., for any distinct indices $n_1,\dots, n_m$ and any choices of signs $\varepsilon_i$ we have 
$$(2C_{qg}^\vee )^{-1}\bigg\|\bigvee_{k=1}^m\left|\sum_{i=1}^k e_{n_i}\right|\bigg\|\leq \bigg\|\sum_{i=1}^m\varepsilon_i e_{n_i}\bigg\|\leq \bigg\|\bigvee_{k=1}^m\left|\sum_{i=1}^k \varepsilon_ie_{n_i}\right|\bigg\|\leq 2C_{qg}^\vee \bigg\|\sum_{i=1}^me_{n_i}\bigg\|.$$
%(all a and below say is unconditional and bi) for constant coefficients, i.e., for every $m\in \N$ and every choice of signs $\varepsilon_n=\pm 1$ we have
%$$(2C_{gq}^\vee)^{-1}\bigg\|\bigvee_{k=1}^m\left|\sum_{n=1}^ke_n\right|\bigg\|\leq \bigg\|\sum_{n=1}^m\varepsilon_ne_n\bigg\|\leq C_{qg}^\vee\bigg\|\bigvee_{k=1}^m\left|\sum_{n=1}^ke_n\right|\bigg\|\leq 2{C_{qg}^\vee}^3\bigg\|\sum_{n=1}^me_n\bigg\|$$
\end{enumerate}
\end{proposition}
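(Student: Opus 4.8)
The plan is to derive both statements from the maximal inequality $\|\mathcal{G}_m^\vee(x)\|\leq C_{qg}^\vee\|x\|$ by feeding carefully chosen finitely supported vectors into it and exploiting the freedom in the greedy ordering. For (i), the key observation is that if $x=\sum_{i=1}^m e_{n_i}$, then every vector of the form $\sum_{i=1}^k e_{n_i}$ (for $1\leq k\leq m$) is a greedy sum of $x$: all nonzero coefficients equal $1$, so any initial segment is a legitimate greedy approximation. Strictly speaking $x$ is not lattice strictly greedy (the coefficients are not distinct), so I would first perturb as in the proof of \Cref{equivalence of constants.}, replacing $e_{n_i}$ by $(1+\delta_i)e_{n_i}$ with $|1+\delta_1|>\cdots>|1+\delta_m|$ and $\|\delta_i e_{n_i}\|$ small, apply \Cref{equivalence of constants.}(v) (or directly the estimate in (v) of \Cref{u-greedy theorem.}), and then let the perturbations tend to $0$. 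This yields $\|\bigvee_{k=1}^m|\sum_{i=1}^k e_{n_i}|\|\leq C_{qg}^\vee\|\sum_{i=1}^m e_{n_i}\|$.

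\textbf{Proof of (ii).} For the middle inequality $\|\sum_{i=1}^m \varepsilon_i e_{n_i}\|\leq \|\bigvee_{k=1}^m|\sum_{i=1}^k \varepsilon_i e_{n_i}|\|$, this is immediate since $|\sum_{i=1}^m \varepsilon_i e_{n_i}|\leq \bigvee_{k=1}^m|\sum_{i=1}^k \varepsilon_i e_{n_i}|$ (the $m$-th term is dominated by the supremum). For the rightmost inequality, I would apply the argument of (i) to the vector $x'=\sum_{i=1}^m \varepsilon_i e_{n_i}$: again all coefficients have modulus $1$, so each $\sum_{i=1}^k \varepsilon_i e_{n_i}$ is a greedy sum of $x'$, and the same perturbation trick (now perturbing moduli while keeping signs) gives $\|\bigvee_{k=1}^m|\sum_{i=1}^k \varepsilon_i e_{n_i}|\|\leq C_{qg}^\vee\|\sum_{i=1}^m \varepsilon_i e_{n_i}\|\leq C_{qg}^\vee\cdot 2C_{qg}^\vee\|\sum_{i=1}^m e_{n_i}\|$ — wait, that last step is circular. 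Instead: to compare $\|\sum \varepsilon_i e_{n_i}\|$ with $\|\sum e_{n_i}\|$ I use the standard quasi-greedy-unconditionality trick. Write $P = \{i : \varepsilon_i = 1\}$, $N = \{i : \varepsilon_i = -1\}$. Then $\sum_P e_{n_i} = \mathcal{G}_{|P|}$ of a suitable ordering, so $\|\sum_P e_{n_i}\| \leq C_{qg}^\vee\|\sum_{i=1}^m e_{n_i}\|$ (apply (i) with $k=|P|$ after reordering so that indices in $P$ come first — but $\bigvee$ dominates each term, so $\|\sum_P e_{n_i}\|\leq \|\bigvee_{k}|\cdots|\|\leq C_{qg}^\vee\|\sum e_{n_i}\|$). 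Similarly $\|\sum_N e_{n_i}\|\leq C_{qg}^\vee\|\sum e_{n_i}\|$, since deleting a greedy-selected set from $x$ leaves something of norm $\leq (1+C_{qg}^\vee)\|x\|$ — actually the cleaner route is: $\sum_P e_{n_i} - \sum_N e_{n_i} = \sum \varepsilon_i e_{n_i}$, so $\|\sum\varepsilon_i e_{n_i}\|\leq \|\sum_P e_{n_i}\| + \|\sum_N e_{n_i}\|\leq 2C_{qg}^\vee\|\sum e_{n_i}\|$. Then feeding this into the result of (i) applied to $x'$ gives the rightmost chain. For the leftmost inequality, combine: $\|\bigvee_{k=1}^m|\sum_{i=1}^k e_{n_i}|\|\leq C_{qg}^\vee\|\sum_{i=1}^m e_{n_i}\|\leq C_{qg}^\vee\cdot(\text{something})\cdot\|\sum\varepsilon_i e_{n_i}\|$; I need $\|\sum e_{n_i}\|\leq 2\|\sum\varepsilon_i e_{n_i}\|$, which again follows by writing $\sum e_{n_i} = \sum_P(\cdots) + \sum_N(\cdots)$ where each half is now a greedy sum of $\sum\varepsilon_i e_{n_i}$, giving $\|\sum_P e_{n_i}\|, \|\sum_N e_{n_i}\|\leq C_{qg}^\vee\|\sum\varepsilon_i e_{n_i}\|$ — hmm, that gives a factor $2C_{qg}^\vee$, not $2$. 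Let me revisit the claimed constant.

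\textbf{On the constants.} The asserted leftmost bound has constant $2C_{qg}^\vee$, so I should get $\|\bigvee_{k}|\sum_{i=1}^k e_{n_i}|\|\leq 2C_{qg}^\vee\|\sum\varepsilon_i e_{n_i}\|$. From (i) this reduces to $\|\sum e_{n_i}\|\leq 2\|\sum\varepsilon_i e_{n_i}\|$. To get the clean factor $2$: note that $|\sum_{i=1}^m \varepsilon_i e_{n_i}| \leq \mathcal{G}_m^\vee(\sum\varepsilon_i e_{n_i})$ has nothing to do with it; rather, observe $\sum_P e_{n_i}$ is a \emph{strictly greedy sum} of $y:=\sum_{i=1}^m\varepsilon_i e_{n_i}$ only if $|P|$ is chosen among equal coefficients, which it is. But the greedy \emph{sum} $\mathcal{G}_{|P|}(y)$ need not equal $\sum_P e_{n_i}$ (it could pick up terms from $N$). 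The honest bound is $\|\sum_P e_{n_i}\|\leq C_{qg}^\vee\|y\|$ via (i) applied to $y$ with an ordering listing $P$ first. So $\|\sum e_{n_i}\| = \|\sum_P e_{n_i} + \sum_N e_{n_i}\| \leq 2C_{qg}^\vee\|y\|$, and combining with (i), $\|\bigvee_k|\sum_{i=1}^k e_{n_i}|\|\leq C_{qg}^\vee\cdot 2C_{qg}^\vee\|y\| = 2(C_{qg}^\vee)^2\|y\|$. This is weaker than claimed — so I expect the authors' proof obtains the sharper constant by a more economical splitting, probably by noting that $\sum_N e_{n_i} = \sum_P e_{n_i} - y$ and bounding $\|\sum_P e_{n_i}\|\leq C_{qg}^\vee\|y\|$ directly (so $\|\sum_N e_{n_i}\|\leq (1+C_{qg}^\vee)\|y\|$, still not $2$). \textbf{The main obstacle} is precisely this bookkeeping of constants: getting exactly $2C_{qg}^\vee$ rather than a quadratic-in-$C_{qg}^\vee$ bound requires observing that in a quasi-greedy setting with all coefficients of equal modulus, $\bigvee_{k=1}^m|\sum_{i=1}^k\varepsilon_i e_{n_i}|$ can be compared to $\|\sum\varepsilon_i e_{n_i}\|$ with constant $C_{qg}^\vee$ directly (treating $\sum\varepsilon_i e_{n_i}$ as its own greedy approximation), and then using a single application of the sign-flipping estimate, which itself costs a factor $2$ via the identity $2\sum_P e_{n_i} = y + \sum_{i=1}^m e_{n_i}$ combined with $\|\sum_{i=1}^m e_{n_i}\|\leq C_{qg}^\vee\|\sum_{i=1}^m e_{n_i}\|$ being trivial — so I would set up the final comparison so that the $\bigvee$ appears only once and the sign change is absorbed into a single factor of $2$. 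I would organize the write-up around this, deferring the exact constant optimization to the careful version.
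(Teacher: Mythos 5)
Your handling of (i) and of the middle inequality in (ii) is correct and matches the paper (the perturbation step in (i) is not even needed, since \Cref{equivalence of constants.} already bounds the maximal function along \emph{every} greedy ordering, ties included). The genuine gap is in the two outer inequalities of (ii). Your route -- first bounding the maximal function by $C_{qg}^\vee\|\sum_i\varepsilon_ie_{n_i}\|$ (resp.\ $C_{qg}^\vee\|\sum_ie_{n_i}\|$) via (i), and then comparing $\|\sum_i\varepsilon_ie_{n_i}\|$ with $\|\sum_ie_{n_i}\|$ by splitting into positive and negative parts -- pays the factor $C_{qg}^\vee$ twice and only yields $2(C_{qg}^\vee)^2$. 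Since the proposition asserts the constant $2C_{qg}^\vee$, and this constant is reused downstream (in the corollary on arbitrary coefficients and in the $\mathcal{O}(\log_2 m)$ bound on the non-permutability constants), the statement as written is not established; you acknowledge this yourself, and your closing guesses (e.g.\ the identity $2\sum_{P}e_{n_i}=\sum_i\varepsilon_ie_{n_i}+\sum_ie_{n_i}$) do not supply the missing step, which you explicitly defer.

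The missing idea is to perform the sign-splitting \emph{inside the lattice supremum}, pointwise, before any norm is taken, so that the maximal inequality is applied exactly once to each half and never via an intermediate comparison of $\|\sum_ie_{n_i}\|$ with $\|\sum_i\varepsilon_ie_{n_i}\|$. For the right inequality, write for each $k$
$$\Big|\sum_{i=1}^k\varepsilon_ie_{n_i}\Big|\leq\Big|\sum_{\{i\leq k:\,\varepsilon_i=1\}}e_{n_i}\Big|+\Big|\sum_{\{i\leq k:\,\varepsilon_i=-1\}}e_{n_i}\Big|,$$
take $\bigvee_{k=1}^m$ and then norms: each of the two resulting terms is precisely the maximal function $G^\vee_{\pi,\cdot}$ of a greedy approximation of the single vector $x=\sum_{i=1}^me_{n_i}$ (all coefficients are equal, so any ordering listing the relevant set first is greedy), hence each is at most $C_{qg}^\vee\|x\|$ by \Cref{equivalence of constants.}, for a total of $2C_{qg}^\vee\|x\|$. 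For the left inequality, split $|\sum_{i=1}^ke_{n_i}|$ the same way over $A=\{i:\varepsilon_i=1\}$ and its complement (with a factor $-1$ that disappears under the modulus); now each piece is the maximal function of a greedy approximation of $y=\sum_{i=1}^m\varepsilon_ie_{n_i}$, so each is at most $C_{qg}^\vee\|y\|$, giving $\|\bigvee_{k=1}^m|\sum_{i=1}^ke_{n_i}|\|\leq2C_{qg}^\vee\|y\|$. This single-application-per-half bookkeeping is exactly where the clean constant $2C_{qg}^\vee$ comes from, and it is the step your proposal lacks.
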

\begin{proof}
(i) follows since $(\sum_{i=1}^ke_{n_{i}})$ is a greedy approximation of $x=~\sum_{i=1}^m e_{n_i}$.
For the right inequality in (ii), notice that
$$\left|\sum_{i=1}^k\varepsilon_i e_{n_i}\right|\leq \left|\sum_{\{i\leq k\ : \  \varepsilon_i=1\}} \varepsilon_{i}e_{n_i}\right|+\left|\sum_{\{i\leq k\ :\  \varepsilon_i=-1\}}\varepsilon_ie_{n_i}\right|,$$
where if we happen to sum over the empty set, we declare the sum to be $0$. It follows that
$$\bigg\|\bigvee_{k=1}^m\left|\sum_{i=1}^k\varepsilon_i e_{n_i}\right|\bigg\|\leq \bigg\|\bigvee_{k=1}^m\left|\sum_{\{i\leq k\ : \  \varepsilon_i=1\}} \varepsilon_{i}e_{n_i}\right|\bigg\|+\bigg\|\bigvee_{k=1}^m\left|\sum_{\{i\leq k\ :\  \varepsilon_i=-1\}}\varepsilon_ie_{n_i}\right|\bigg\|.$$
In this form,   we can ignore the $\varepsilon_i$ and interpret the above terms as greedy sums of $x$, which gives us our desired estimate.
\medskip

%The lazy way to get the  (not sharp) first inequality in (ii) is to use that $$\bigg\|\bigvee_{k=1}^m\left|\sum_{i=1}^k e_{n_i}\right|\bigg\|\leq C_{qg}^\vee \bigg\|\sum_{i=1}^me_{n_i}\bigg\|$$
%and combine with \cite[Proposition 10.2.10]{AK16}. 

%To remove the constant $(2C_{qg}^\vee)^{-1}$ I would again break apart the sum into those in which $\varepsilon_i=1$ and those in which $\varepsilon=-1$ So for a generic term

To obtain the first inequality in (ii), let $A=\{i: \varepsilon_i=1\}$ and $B=\{1,\dots,m\}\setminus A$. Then a generic term $\sum_{i=1}^ke_{n_i}$ can be decomposed as follows:
$$\left|\sum_{i=1}^ke_{n_i}\right|\leq \left|\sum_{A\cap\{1,\dots,k\}}e_{n_i}\right|+\left|\sum_{B\cap\{1,\dots,k\}}(-1)e_{n_i}\right|.$$
Taking sup over the above inequality and using that both the first and second terms on the right-hand side form part of a greedy approximation of $\sum_{i=1}^m\varepsilon_ie_{n_i}$, we obtain our desired estimate with constant  $(2C_{qg}^\vee )^{-1}$.

%One can do a similar thing for the right inequality. I expect one can get better bounds. For example one can replace $C_{qg}$ with 1.5 since
%$$\bigg\|\bigvee_{k=1}^m\left|\sum_{i=1}^k \left(e_{n_i}+(\varepsilon_i-1)e_{n_i}\right)\right|\bigg\|\leq \bigg\|\bigvee_{k=1}^m\left|\sum_{i=1}^k e_{n_i}\right|\bigg\|+\bigg\|\bigvee_{k=1}^m\left|\sum_{i=1}^k(\varepsilon_i-1)e_{n_i}\right|\bigg\|.$$
%So using (i) and $\varepsilon_i-1\in \{0,-2\}$ we estimate this as
%$$\leq C_{qg}^\vee\bigg\|\sum_{i=1}^m e_{n_i}\bigg\|+2C_{qg}^\vee\bigg\|\sum_{i=1}^m e_{n_i}\bigg\|.$$
%Can we replace $C_{qg}$ by 1?
%For the left inequality we use that 
%$$(2C_{qg}^\vee)^{-1}\bigg\|\bigvee_{k=1}^m\left|\sum_{i=1}^k e_{n_i}\right|\bigg\|\leq\frac{1}{2}C_{qg}^{vee} \bigg\|\sum_{i=1}^me_{n_i}\bigg\| $$
%so using that $1-\varepsilon_i\in \{0,2\}$ (i) gives that 
%$$\bigg\|\bigvee_{k=1}^m\left|\sum_{i=1}^k e_{n_i}\right|\bigg\|\leq \bigg\|\bigvee_{k=1}^m\left|\sum_{i=1}^k \varepsilon_ie_{n_i}\right|\bigg\|+\bigg\|\bigvee_{k=1}^m\left|\sum_{i=1}^k(1-\varepsilon_i)e_{n_i}\right|\bigg\|$$

%Let $i_1,\dots, i_r$ be the indices with sign $1$ and $i_{r+1},\dots, i_m$ the indices with sign $-1$.

%Suppose that $i_1,\dots, i_r$ are the indices of sign $1$ (ordered in the same way as the were previously) and $i_{r+1},\dots, i_m$ are the indices of sign $-1$ (again, we maintain the prescribed ordering). Then for each $k$

\end{proof}
\begin{corollary}\label{Convex max}
Suppose that $(e_n)$ is a uniformly quasi-greedy basic sequence. Then for any distinct indices $n_1,\dots, n_m$ and any real numbers $a_1,\dots, a_m$, we have 
$$\bigg\|\bigvee_{k=1}^m\left|\sum_{i=1}^k a_ie_{n_i}\right|\bigg\|\leq 2\max |a_i|C_{qg}^\vee \bigg\|\sum_{i=1}^m e_{n_i}\bigg\|.$$
\end{corollary}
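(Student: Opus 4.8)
\textbf{Proof strategy for \Cref{Convex max}.}
The plan is to reduce the statement with arbitrary coefficients $a_1,\dots,a_m$ to the constant-coefficient estimate in \Cref{almost unc}(ii) by an Abel summation argument, exploiting that the block $\sum_{i=1}^k a_i e_{n_i}$ can be written as a convex-type combination of the ``truncated'' constant-coefficient blocks $\sum_{i=1}^k \varepsilon_i e_{n_i}$. First I would normalize: dividing through by $\max_i |a_i|$, I may assume $|a_i|\le 1$ for all $i$, so that I must show $\bigl\|\bigvee_{k=1}^m |\sum_{i=1}^k a_i e_{n_i}|\bigr\| \le 2 C_{qg}^\vee \bigl\|\sum_{i=1}^m e_{n_i}\bigr\|$. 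The key observation is that for each fixed $k$ the element $\sum_{i=1}^k a_i e_{n_i}$ lies in the symmetric convex hull (over the lattice) generated by the sign-blocks $\sum_{i=1}^k \varepsilon_i e_{n_i}$; more precisely, I will use the integral/averaging representation $a_i = \int (\text{signs}) $ so that $|\sum_{i=1}^k a_ie_{n_i}|\le \sup_{\varepsilon} |\sum_{i=1}^k \varepsilon_i e_{n_i}|$ after taking moduli, then take the supremum over $k$.

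The cleanest route is the following. For a vector $t=(t_1,\dots,t_m)\in[-1,1]^m$, the standard fact (valid in any Archimedean vector lattice, already invoked in \eqref{standard sup}) gives, coordinatewise in the partial sums,
\begin{equation*}
\bigvee_{k=1}^m\Bigl|\sum_{i=1}^k t_i e_{n_i}\Bigr| \;\le\; \bigvee_{k=1}^m\;\bigvee_{\varepsilon_i=\pm 1}\Bigl|\sum_{i=1}^k \varepsilon_i e_{n_i}\Bigr| \;=\;\bigvee_{\varepsilon_i=\pm 1}\;\bigvee_{k=1}^m\Bigl|\sum_{i=1}^k \varepsilon_i e_{n_i}\Bigr|,
\end{equation*}
since each $\sum_{i=1}^k t_i e_{n_i}$ is a convex combination of the $\sum_{i=1}^k \varepsilon_i e_{n_i}$ over the $2^k$ sign choices on the first $k$ coordinates (and the modulus of a convex combination is dominated by the supremum of the moduli). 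Taking norms and then the supremum over the finitely many sign patterns $\varepsilon$, the right-hand inequality of \Cref{almost unc}(ii) bounds each $\bigl\|\bigvee_{k=1}^m|\sum_{i=1}^k \varepsilon_i e_{n_i}|\bigr\|$ by $2C_{qg}^\vee\bigl\|\sum_{i=1}^m e_{n_i}\bigr\|$. Undoing the normalization yields the stated constant $2\max|a_i|\,C_{qg}^\vee$.

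The only point requiring a little care — and what I would flag as the main (though minor) obstacle — is justifying the convex-combination domination inside the lattice: namely that $\bigl|\sum_{i=1}^k t_i e_{n_i}\bigr| \le \bigvee_{\varepsilon}\bigl|\sum_{i=1}^k \varepsilon_i e_{n_i}\bigr|$ when each $|t_i|\le 1$. This follows because $(t_1,\dots,t_k)$ is a convex combination $\sum_\varepsilon \lambda_\varepsilon (\varepsilon_1,\dots,\varepsilon_k)$ with $\lambda_\varepsilon\ge 0$, $\sum_\varepsilon \lambda_\varepsilon = 1$ (e.g.\ $\lambda_\varepsilon = \prod_{i=1}^k \frac{1+t_i\varepsilon_i}{2}$), so that $\sum_{i=1}^k t_i e_{n_i} = \sum_\varepsilon \lambda_\varepsilon \sum_{i=1}^k \varepsilon_i e_{n_i}$, and then $\bigl|\sum_{i=1}^k t_i e_{n_i}\bigr| \le \sum_\varepsilon \lambda_\varepsilon \bigl|\sum_{i=1}^k \varepsilon_i e_{n_i}\bigr| \le \bigvee_\varepsilon \bigl|\sum_{i=1}^k \varepsilon_i e_{n_i}\bigr|$ by the triangle inequality for the modulus and convexity. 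Everything else is a routine combination of this domination with \Cref{almost unc}(ii); no further greedy-ordering subtleties intervene since all indices in play are among the fixed set $\{n_1,\dots,n_m\}$.
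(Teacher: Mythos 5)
There is a genuine gap at the end of your second paragraph. After exchanging the two suprema you are left with the element $\bigvee_{\varepsilon}\bigvee_{k=1}^m\bigl|\sum_{i=1}^k\varepsilon_i e_{n_i}\bigr|$, and you then propose to take norms ``and then the supremum over the finitely many sign patterns,'' i.e.\ you are implicitly using $\bigl\|\bigvee_\varepsilon u_\varepsilon\bigr\|\le\max_\varepsilon\|u_\varepsilon\|$. That inequality is an AM-space property and fails in a general Banach lattice: the norm respects convex combinations (triangle inequality) but not lattice suprema, and this distinction is precisely what the corollary is navigating. In fact, by \eqref{standard sup} one has $\bigvee_{\varepsilon}\bigvee_{k=1}^m\bigl|\sum_{i=1}^k\varepsilon_i e_{n_i}\bigr|=\sum_{i=1}^m|e_{n_i}|$, so the bound your route requires, namely $\bigl\|\sum_{i=1}^m|e_{n_i}|\bigr\|\le 2C_{qg}^\vee\bigl\|\sum_{i=1}^m e_{n_i}\bigr\|$, is an absoluteness statement for constant coefficients that genuinely fails for uniformly quasi-greedy sequences: for the Rademachers in $L_2$ (which are uniformly quasi-greedy, cf.\ \Cref{Not uniform}) the left-hand side is $m$ while the right-hand side is of order $\sqrt{m}$. \Cref{almost unc}(ii) controls $\bigl\|\bigvee_{k}\bigl|\sum_{i\le k}\varepsilon_ie_{n_i}\bigr|\bigr\|$ for each \emph{fixed} sign pattern; it gives no control of the norm of the lattice supremum over all $2^m$ patterns.

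The repair is already contained in your third paragraph: keep the weights instead of passing to the supremum. After normalizing $\max_i|a_i|\le 1$, write $(a_1,\dots,a_m)=\sum_{\varepsilon\in\{\pm1\}^m}\lambda_\varepsilon(\varepsilon_1,\dots,\varepsilon_m)$ with $\lambda_\varepsilon=\prod_{i=1}^m\tfrac{1+a_i\varepsilon_i}{2}$; the \emph{same} weights decompose every truncation, $\sum_{i=1}^k a_ie_{n_i}=\sum_\varepsilon\lambda_\varepsilon\sum_{i=1}^k\varepsilon_ie_{n_i}$ for all $k$ (marginalize over the unused coordinates). Hence $\bigvee_{k=1}^m\bigl|\sum_{i=1}^k a_ie_{n_i}\bigr|\le\sum_\varepsilon\lambda_\varepsilon\bigvee_{k=1}^m\bigl|\sum_{i=1}^k\varepsilon_ie_{n_i}\bigr|$; now take norms, use the triangle inequality, and apply \Cref{almost unc}(ii) to each summand to obtain the constant $2C_{qg}^\vee\bigl\|\sum_{i=1}^m e_{n_i}\bigr\|$. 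This is exactly the paper's proof, except that the paper invokes a Carath\'eodory-type result to express $(a_i)$ as a convex combination of only $m+1$ sign vectors; since only convexity of the decomposition is used, your $2^m$-term product decomposition works just as well.
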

\begin{proof}
Without loss of generality, we may assume that $\max |a_i|=1$. By \cite[Theorem 3.13]{Heil10}, there exists $c_l\geq 0$ and signs $\varepsilon_l^i$, $l=1,\dots, m+1$, $i=1,\dots, m$ such that $\sum_{l=1}^{m+1}c_l=1$ and $\sum_{l=1}^{m+1}\varepsilon_l^i c_l=a_i$ for $i=1,\dots, m$. It follows that
\begin{eqnarray*}
    \bigg\|\bigvee_{k=1}^m\left|\sum_{i=1}^k a_ie_{n_i}\right|\bigg\|&\leq&\sum_{l=1}^{m+1}c_l\bigg\|\bigvee_{k=1}^m\left|\sum_{i=1}^k \varepsilon_l^ie_{n_i}\right|\bigg\|\leq 2C_{qg}^\vee\sum_{l=1}^{m+1}c_l\bigg\|\sum_{i=1}^me_{n_i}\bigg\|\\
&=&2C_{qg}^\vee \bigg\|\sum_{i=1}^me_{n_i}\bigg\|.
\end{eqnarray*}

%We will induct on $m$. Clearly this is true for $m=1$. So suppose the claim is true for $m-1$ and let $n_1,\dots, n_m$ be distinct indices, and $a_1\dots, a_m$ be real numbers. Then 

%$$\bigg\|\bigvee_{k=1}^m\left|\sum_{i=1}^k a_ie_{n_i}\right|\bigg\|=\max |a_i|\bigg\|\bigvee_{k=1}^m\left|\sum_{i=1}^k b_ie_{n_i}\right|\bigg\|$$
%where $|b_i|\leq 1$ and we may assume there is an index $i_0$ with $b_{i_0}=1$. Now, 

%$$\max |a_i|\bigg\|\bigvee_{k=1}^m\left|\sum_{i=1}^k b_ie_{n_i}\right|\bigg\|\leq \max |a_i|\left(\bigg\|\bigvee_{k=1}^m\left|\sum_{i=1}^k e_{n_i}\right|\bigg\|+\bigg\|\bigvee_{k=1}^m\left|\sum_{i=1}^k (1-b_i)e_{n_i}\right|\bigg\|\right)$$
%$$\leq \max |a_i|\left(C_{qg}^\vee\bigg\|\sum_{i=1}^m e_{n_i}\bigg\|+\bigg\|\bigvee_{k=1}^m\left|\sum_{i=1}^k (1-b_i)e_{n_i}\right|\bigg\|\right)$$
%Now notice that the second term contains at most $m-1$ non-zero elements so by induction we get using that $\max |1-b_i|\leq 2$

%$$\leq \max |a_i|\left(C_{qg}^\vee\bigg\|\sum_{i=1}^m e_{n_i}\bigg\|+\bigg\|\sum_{i\in \{1,\dots,m\}\setminus \{i_0\} } (1-b_i)e_{n_i}\bigg\|\right)$$
\end{proof}
%\begin{proposition}
%Suppose that $(e_n)$ is a uniformly quasi-greedy basic sequence. Then for every %$x\in E$, every greedy ordering $\pi$ of $x$ and every $m\in \N$,
%$$\left|e^*_{\pi(m)}(x)\right|\bigg\|\bigvee_{k=1}^m\left|\sum_{i=1}^ke_{\pi(i)}\right|%\bigg\|\leq 4(C_{qg}^\vee)^2\|x\|.$$
%In particular, for any distinct indices $n_1,\dots, n_m$ and any real numbers $a_1,%\dots,a_m$,
%$$\min_{n\in\{1,\dots,m\}} |a_n|\bigg\|\bigvee_{k=1}^m\left|\sum_{i=1}^ke_{n_i}\right|%\bigg\|\leq 4(C_{qg}^\vee)^2\bigg\|\sum_{i=1}^ma_ie_{n_i}\bigg\|.$$

%\end{proposition}
\begin{corollary}
Suppose that $(e_n)$ is a uniformly quasi-greedy basic sequence. Then for every $x\in E$, every greedy ordering $\pi$ of $x$ and every $m\in \N$,
$$\left|e^*_{\pi(m)}(x)\right|\bigg\|\bigvee_{k=1}^m\left|\sum_{i=1}^ke_{n_i}\right|\bigg\|\leq 4C_{qg}C_{qg}^\vee\|x\|,$$
where $\{n_1,\dots, n_m\}=\{\pi(1),\dots,\pi(m)\}$. In particular, for any distinct indices $n_1,\dots, n_m$, any real numbers $a_1,\dots,a_m$, and any permutation $\sigma$ of $\{1,\dots,m\}$
$$\min_{j\in\{1,\dots,m\}} |a_j|\bigg\|\bigvee_{k=1}^m\left|\sum_{i=1}^ke_{n_{\sigma(i)}}\right|\bigg\|\leq 4C_{qg}C_{qg}^\vee\bigg\|\sum_{i=1}^ma_ie_{n_i}\bigg\|.$$

\end{corollary}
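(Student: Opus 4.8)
The plan is to strip off the supremum with the sign‑unconditionality estimate of \Cref{almost unc}(ii) and then to reduce what remains to the classical truncation lemma for quasi‑greedy bases. Fix $x\in E$, a greedy ordering $\pi$ of $x$, and $m\in\mathbb{N}$; write $a_n:=e_n^*(x)$ and $\alpha:=|e_{\pi(m)}^*(x)|$. If $\alpha=0$ there is nothing to prove, so assume $\alpha>0$; then $a_{\pi(1)},\dots,a_{\pi(m)}$ are all nonzero, and because $\pi$ is a greedy ordering the set $A:=\{\pi(1),\dots,\pi(m)\}=\{n_1,\dots,n_m\}$ satisfies $\min_{n\in A}|a_n|=\alpha\ge\sup_{n\notin A}|a_n|$; that is, $A$ is a greedy set for $x$ whose least coefficient has modulus $\alpha$. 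Applying the leftmost inequality of \Cref{almost unc}(ii) to the distinct indices $n_1,\dots,n_m$ with the sign choice $\varepsilon_i:=\operatorname{sign}(a_{n_i})$ (and using $\{n_1,\dots,n_m\}=A$) yields
$$\Bigl\|\bigvee_{k=1}^m\Bigl|\sum_{i=1}^k e_{n_i}\Bigr|\Bigr\|\;\le\;2C_{qg}^\vee\,\Bigl\|\sum_{i=1}^m\operatorname{sign}(a_{n_i})\,e_{n_i}\Bigr\|\;=\;2C_{qg}^\vee\,\Bigl\|\sum_{n\in A}\operatorname{sign}(a_n)\,e_n\Bigr\|.$$

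\textbf{Truncation.} Since $(e_n)$ is uniformly quasi‑greedy it is in particular quasi‑greedy, with $C_{qg}:=\sup_m\|\mathcal{G}_m\|\le C_{qg}^\vee<\infty$ (because $|\mathcal{G}_m(x)|\le\mathcal{G}_m^\vee(x)$). Hence the truncation lemma for quasi‑greedy bases (the statement that the operator capping all coefficients at a fixed level is bounded; see, e.g., \cite[Chapter~10]{AK16}), applied to $x$ and the greedy set $A$, gives
$$\alpha\,\Bigl\|\sum_{n\in A}\operatorname{sign}(a_n)\,e_n\Bigr\|\;\le\;2C_{qg}\,\|x\|.$$
Multiplying the previous display by $\alpha$ and inserting this estimate gives $\alpha\,\bigl\|\bigvee_{k=1}^m|\sum_{i=1}^k e_{n_i}|\bigr\|\le 4C_{qg}C_{qg}^\vee\|x\|$, which is the first assertion; note that \Cref{almost unc}(ii) was used for an arbitrary enumeration $n_1,\dots,n_m$ of $A$, exactly as the statement requires.

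\textbf{The ``in particular''.} Given distinct indices $n_1,\dots,n_m$, reals $a_1,\dots,a_m$ and a permutation $\sigma$ of $\{1,\dots,m\}$: if some $a_j=0$ the inequality is trivial, so assume all $a_i\ne 0$ and set $x:=\sum_{i=1}^m a_i e_{n_i}$. Every greedy ordering $\pi$ of $x$ has $\{\pi(1),\dots,\pi(m)\}=\{n_1,\dots,n_m\}$ and $|e_{\pi(m)}^*(x)|=\min_j|a_j|$, while $(n_{\sigma(1)},\dots,n_{\sigma(m)})$ is an enumeration of $\{\pi(1),\dots,\pi(m)\}$; applying the first assertion with this enumeration gives
$$\min_{j}|a_j|\,\Bigl\|\bigvee_{k=1}^m\Bigl|\sum_{i=1}^k e_{n_{\sigma(i)}}\Bigr|\Bigr\|\;\le\;4C_{qg}C_{qg}^\vee\,\|x\|\;=\;4C_{qg}C_{qg}^\vee\,\Bigl\|\sum_{i=1}^m a_i e_{n_i}\Bigr\|.$$

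\textbf{Main obstacle.} Everything except the truncation step is routine bookkeeping with the inequalities already recorded in \Cref{almost unc}; the genuinely nontrivial ingredient is the truncation lemma. In the present exposition it is most natural to quote it — just as the neighbouring arguments quote \cite[Chapter~10]{AK16} — but if one wants a self‑contained treatment it can be proved directly from quasi‑greediness, and that is where the real work lies.
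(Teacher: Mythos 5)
Your proof is correct and follows essentially the same route as the paper, which simply says to run the argument of \cite[Theorem 10.2.12]{AK16} with its first inequality replaced by the estimate $\|\bigvee_{k=1}^m|\sum_{i=1}^k e_{n_i}|\|\leq 2C_{qg}^\vee\|\sum_{i=1}^m\varepsilon_i e_{n_i}\|$ from \Cref{almost unc}(ii); your two steps (sign-unconditionality to remove the supremum, then the classical truncation lemma) are exactly that argument spelled out, with the constants $2C_{qg}^\vee\cdot 2C_{qg}$ matching.
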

\begin{proof}
The proof is the same as \cite[Theorem 10.2.12]{AK16} except that we replace  the first inequality by $\|\bigvee_{k=1}^m\left|\sum_{i=1}^ke_{n_i}\right|\|\leq 2C_{qg}^\vee \|\sum_{i=1}^m\varepsilon_ie_{\pi(i)}\|$, which is allowed by \Cref{almost unc}.

%Qualitatively the result trivially follows from \cite[Theorem 10.2.12]{AK16} (but with extra powers on $C_{qg}^\vee$); the non-trivial part is getting the ideal constant, which I will think about.
\end{proof}
%\begin{question}
%We will eventually need to take care to make sure our estimates are %sharp. I believe some of the results in \cite{AK16} are already outdated.
%\end{question}

We next introduce some notation to measure non-permutability. Let $(e_n)$ be a basic sequence in a Banach lattice $X$ and let $A=\{n_1,\dots,n_m\}$ be an ordered subset of $\N$. Define a continuous, sublinear, absolutely homogeneous map $P_A^\vee$ on $E$ by $P_A^\vee(x):=\bigvee_{i=1}^m\left|\sum_{k=1}^i e_{n_k}^*(x)e_{n_k}\right|$, where we take into account that $A$ is ordered via the order in which we sup up. Define   $\|P_A^\vee\|=\sup_{x\in S_E}\big\|P_A^\vee(x)\big\|$ and note that the following are equivalent:
\begin{itemize}
\item $(e_n)$ is bibasic;
\item For each $x\in E$ the sequence $(P_{\{1,\dots,m\}}^\vee(x))_m$ is norm bounded;
\item The sequence $(\|P_{\{1,\dots,m\}}^{\vee}\|)_m$ is norm bounded.
\end{itemize}
Hence, noting the trivial bound $\|P_A^\vee(x)\|\leq \textbf{k}|A|\|x\|$ where $\textbf{k}=\sup_n\|e_n\|\|e_n^*\|$, the maps $P_A^\vee$ can be used as a measure of how far $(e_n)$ is from being bibasic via the asymptotic growth of their norms. Taking into account more sets $A$, one can measure non-permutability. For uniformly quasi-greedy bases, we get the following bound.

\begin{corollary}\label{Similar asymptotics}
Suppose that $(e_n)$ is a uniformly quasi-greedy basic sequence. Then for every $x\in E$ and every ordered set $A=\{n_1,\dots,n_m\}\subseteq \text{supp}(x)$ we have
$$\|P_A^\vee (x)\|\leq 8C_{qg}^2C_{qg}^\vee\frac{\max\{|e_n^*(x)| : n\in A\}}{\min\{|e_n^*(x)| : n\in A\}}\|x\|.$$
\end{corollary}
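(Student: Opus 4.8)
The plan is to deduce \Cref{Similar asymptotics} from the preceding corollary by the standard ``reduction to a permutation'' trick. First I would fix $x\in E$ and an ordered subset $A=\{n_1,\dots,n_m\}\subseteq\mathrm{supp}(x)$. Set $\alpha=\min\{|e_n^*(x)|:n\in A\}$ and $\beta=\max\{|e_n^*(x)|:n\in A\}$, both strictly positive since $A\subseteq\mathrm{supp}(x)$. Put $a_i=e_{n_i}^*(x)$, so that $P_A^\vee(x)=\bigvee_{i=1}^m\bigl|\sum_{k=1}^i a_k e_{n_k}\bigr|$ and, by construction, $\alpha\le|a_i|\le\beta$ for all $i$.

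The key step is to choose a greedy ordering of $x$ whose first $m$ terms, \emph{in that order}, are exactly $n_1,\dots,n_m$. This is possible: since every coordinate outside $A$ has modulus at most $\beta$ is not guaranteed, I should instead pass to a perturbed vector as in the proof of \Cref{equivalence of constants.}(iv)$\Rightarrow$(v) — replace $x$ by $y=\sum_{i=1}^m(a_i+\delta_i)e_{n_i}+\sum_{n\notin A}e_n^*(x)e_n$ with $|\delta_i|$ small enough that $|a_1+\delta_1|>\cdots>|a_m+\delta_m|>\sup_{n\notin A}|e_n^*(y)|$, so that $A$ really is a strictly greedy initial segment of $y$ in the prescribed order, and $\|P_A^\vee(y)\|$ and $\|y\|$ differ from the $x$-quantities by at most $\varepsilon$; then let $\varepsilon\to0$. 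With $A=\{\pi(1),\dots,\pi(m)\}$ an initial segment of a greedy ordering $\pi$ of $y$, the preceding corollary (applied to $y$, with $\sigma=\mathrm{id}$ and the indices $n_1,\dots,n_m$) gives
$$
\min_{j\le m}|a_j+\delta_j|\;\Bigl\|\bigvee_{k=1}^m\Bigl|\sum_{i=1}^k e_{n_i}\Bigr|\Bigr\|\le 4C_{qg}C_{qg}^\vee\Bigl\|\sum_{i=1}^m (a_i+\delta_i)e_{n_i}\Bigr\|\le 4C_{qg}C_{qg}^\vee\|y\|.
$$
Next I would bound $P_A^\vee(y)$ above in terms of $\bigvee_{k=1}^m|\sum_{i=1}^k e_{n_i}|$ using \Cref{Convex max}: since $\max_i|a_i+\delta_i|\le\beta+\varepsilon$,
$$
\Bigl\|\bigvee_{k=1}^m\Bigl|\sum_{i=1}^k(a_i+\delta_i)e_{n_i}\Bigr|\Bigr\|\le 2(\beta+\varepsilon)C_{qg}^\vee\Bigl\|\sum_{i=1}^m e_{n_i}\Bigr\|.
$$
Hmm — but this reintroduces $\|\sum e_{n_i}\|$, so I should instead combine the two displays directly: from the first, $\bigl\|\bigvee_{k=1}^m|\sum_{i=1}^m e_{n_i}|\bigr\|\le (\min_j|a_j+\delta_j|)^{-1}4C_{qg}C_{qg}^\vee\|y\|$, and plugging this into the \Cref{Convex max}-type bound on $P_A^\vee(y)$ yields
$$
\|P_A^\vee(y)\|\le 2(\beta+\varepsilon)C_{qg}^\vee\cdot\frac{4C_{qg}C_{qg}^\vee}{\min_j|a_j+\delta_j|}\|y\|=\frac{8C_{qg}(C_{qg}^\vee)^2(\beta+\varepsilon)}{\min_j|a_j+\delta_j|}\|y\|.
$$
Here I have only the uniform quasi-greedy constant and the basis constant available, not a second copy of $C_{qg}$; I would check the exact constant bookkeeping against the cited corollary — its statement has $4C_{qg}C_{qg}^\vee$, and one application of \Cref{Convex max} contributes a further factor $2C_{qg}^\vee$, giving $8C_{qg}(C_{qg}^\vee)^2$, matching the claimed $8C_{qg}^2C_{qg}^\vee$ only if $C_{qg}\le C_{qg}^\vee$, which holds because $\|\mathcal G_m(x)\|=\||\mathcal G_m(x)|\|\le\|\mathcal G_m^\vee(x)\|$. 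Finally, letting $\varepsilon\to0$ turns $\min_j|a_j+\delta_j|\to\alpha$ and $\beta+\varepsilon\to\beta$ and $\|y\|\to\|x\|$, and $\|P_A^\vee(y)\|\to\|P_A^\vee(x)\|$ by continuity of $P_A^\vee$, giving $\|P_A^\vee(x)\|\le 8C_{qg}^2C_{qg}^\vee(\beta/\alpha)\|x\|$.

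The main obstacle is the perturbation bookkeeping: making $A$ an exact strictly-greedy initial segment \emph{in the given order} requires choosing the $\delta_i$ to simultaneously strictly order the $|a_i+\delta_i|$ and dominate all tail coordinates, and then verifying that all the relevant quantities ($P_A^\vee$, $\|\cdot\|$, the min and max of the coefficients, and the intermediate $\|\sum e_{n_i}\|$ term) converge as $\varepsilon\to0$ without the constant degrading. Everything else is a routine chaining of \Cref{Convex max} and the previous corollary; the only genuinely new idea is that restricting $A\subseteq\mathrm{supp}(x)$ lets us control $\min_j|a_j|$ from below, which is exactly what converts the ``$\min|a_j|$'' on the left of the previous corollary into the ratio $\max/\min$ on the right here.
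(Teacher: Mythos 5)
Your plan has a genuine gap at its central step: you cannot, in general, turn an arbitrary ordered set $A\subseteq\operatorname{supp}(x)$ into a greedy initial segment of a small perturbation of $x$. The coordinates of $x$ outside $A$ may have modulus far larger than $\alpha=\min_A|e_n^*(x)|$ (indeed larger than $\beta$), so the requirement $|a_m+\delta_m|>\sup_{n\notin A}|e_n^*(y)|$ forces the $\delta_i$ to be large, and then $\|y\|$, $\|P_A^\vee(y)\|$ and the ratio $\beta/\alpha$ no longer approximate the corresponding quantities for $x$; the limit $\varepsilon\to 0$ you invoke is not available. Relatedly, the inequality $\bigl\|\sum_{i=1}^m(a_i+\delta_i)e_{n_i}\bigr\|\le\|y\|$ in your first display is unjustified: coordinate projections onto an arbitrary set are not norm-one for a (generally conditional) quasi-greedy basis, and even if $A$ were a genuine greedy set of $y$ you would only get $\|\mathcal{G}_m(y)\|\le C_{qg}\|y\|$, costing another factor. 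If instead you apply the ``in particular'' part of the preceding corollary directly to the coefficients of $x$ on $A$ (no perturbation needed, since it allows arbitrary coefficients and permutations), your argument cleanly yields $\|P_A^\vee(x)\|\le 8C_{qg}(C_{qg}^\vee)^2\frac{\beta}{\alpha}\|P_A x\|$ --- and the real missing ingredient is a bound of $\|P_A x\|$ by a multiple of $\|x\|$, which is false for arbitrary $A$ precisely because the basis may be conditional. Also note your constant bookkeeping runs backwards: since $C_{qg}\le C_{qg}^\vee$, your $8C_{qg}(C_{qg}^\vee)^2$ is weaker than, not equal to, the claimed $8C_{qg}^2C_{qg}^\vee$.

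The paper bridges exactly this gap by a level-set argument rather than a perturbation: it enlarges $A$ to $B=\{n:\alpha\le|e_n^*(x)|\le\beta\}$, extends the given order of $A$ to $B$, and uses monotonicity of the running supremum to replace $P_A^\vee(x)$ by the corresponding quantity over $B$. On $B$ one applies \Cref{Convex max} and the truncation lemma from \cite{BBG} (each contributing a factor $2C_{qg}^\vee$, resp.\ $2C_{qg}$), and the decisive point is that $P_Bx$ is a difference of two (approximate) greedy projections of $x$, hence $\|P_Bx\|\le 2C_{qg}\|x\|$; this is what converts the restricted norm into $\|x\|$ and produces the stated constant $8C_{qg}^2C_{qg}^\vee$. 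Your observation that $A\subseteq\operatorname{supp}(x)$ controls $\min_j|a_j|$ from below is correct and is indeed used, but without the passage to the level set $B$ the argument does not close.
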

\begin{proof}
Take $A=\{n_1,\dots,n_m\}\subseteq \text{supp}(x)$ and let  $$B=\{n\in\N : \alpha \leq \vert e_{n}^*(x)\vert\leq \beta\}$$ with $\alpha = \min_{A}\vert e_{n}^*(x)\vert$ and $\beta=\max_{A}\vert e_{n}^*(x)\vert$. We can extend the order on $A$ to the set $B$ by writing $B=\{n_1,...,n_m,n_{m+1},...,n_l\}$. Hence,
\begin{eqnarray}\label{one2}
\bigg\|\bigvee_{k=1}^m\left|\sum_{i=1}^k e_{n_i}^*(x)e_{n_i}\right|\bigg\|\leq \bigg\|\bigvee_{k=1}^l\left|\sum_{i=1}^k e_{n_i}^*(x)e_{n_i}\right|\bigg\|.
\end{eqnarray}
Applying  Corollary \ref{Convex max}, we obtain the estimate
\begin{eqnarray*}
\bigg\|\bigvee_{k=1}^l\left|\sum_{i=1}^k e_{n_i}^*(x)e_{n_i}\right|\bigg\|&\leq& 2C_{gq}^\vee \max_A\vert e_{n}^*(x)\vert \Vert \sum_{i\in B}\varepsilon_i e_{n_i}\Vert\\
&=& 2C_{qg}^\vee \frac{\max_A\vert e_{n}^*(x)\vert}{\min_A\vert e_{n}^*(x)\vert}\min_B \vert e_{n}^*(x)\vert  \Vert \sum_{i=1}^l\varepsilon_i e_{n_i}\Vert.
\end{eqnarray*}
Using \cite[Lemma 2.3 ]{BBG},  we see that
\begin{eqnarray}\label{two}
\min_B \vert e_{n}^*(x)\vert  \Vert \sum_{i\in B}\varepsilon_i e_{n_i}\Vert\leq 2C_{qg}\Vert \sum_{i=1}^le_{n_i}^*(x)e_{n_i}\Vert.
\end{eqnarray}
We next notice that for $\varepsilon >0$ small enough we have
$$\sum_{i=1}^l e_{n_i}^*(x)e_{n_i} = \sum_{\lbrace n_i: \vert e_{n_i}^*(x)\vert>\alpha-\varepsilon\rbrace}e_{n_i}^*(x)e_{n_i}- \sum_{\lbrace n_i: \vert e_{n_i}^*(x)\vert>\beta\rbrace}e_{n_i}^*(x)e_{n_i}.$$
Hence, $\Vert \sum_{i=1}^l e_{n_i}^*(x)e_{n_i}  \Vert \leq 2C_{qg}\Vert x\Vert.$ Combining this with \eqref{two} we obtain the estimate
\begin{eqnarray}\label{three}
\min_B \vert e_{n}^*(x)\vert  \Vert \sum_{i=1}^l\varepsilon_i e_{n_i}\Vert\leq 4C_{qg}^2\Vert x\Vert.
\end{eqnarray} 
Combining everything, we conclude that
$$\bigg\|\bigvee_{k=1}^m\left|\sum_{i=1}^k e_{n_i}^*(x)e_{n_i}\right|\bigg\|\leq 8C_{qg}^\vee C_{qg}^2\frac{\max_A\vert e_{n}^*(x)\vert}{\min_A\vert e_{n}^*(x)\vert} \Vert x\Vert,$$
as desired

%Similar to the proof of  \cite[Corollary 10.2.13]{AK16} but using \Cref{Convex max} for the first inequality.
\end{proof}

\begin{remark}
To measure conditionality of $(e_n)$ it is standard to use the growth of the sequence $\textbf{k}_m:=\sup_{|A|\leq m}\|P_A\|$. As mentioned, the sequence $\textbf{k}_m^\vee:=\sup_{|A|\leq m}\|P_A^\vee\|$ is a measure of non-permutability, where the sup is over all ordered subsets of $\N$ of cardinality at most $m$.  Clearly, $\textbf{k}_m\leq \textbf{k}_m^\vee$.  By applying \Cref{Similar asymptotics}  to $P_A(x)$ and using  the identity $P_A^\vee(P_A(x))=P_A^\vee(x)$ we see that, for certain $x\in E$, we can bound $\|P_A^\vee(x)\|$ in terms of $\|P_A(x)\|$. However, in general, $\|P_A^\vee(x)\|$ and $\|P_A(x)\|$ behave qualitatively different. Indeed, the Haar in $L_p[0,1]$, $p>1$, is a uniformly quasi-greedy bibasis, it is unconditional so $\textbf{k}_m=\mathcal{O}(1)$, but it is not permutable so $\textbf{k}_m^\vee \neq \mathcal{O}(1)$. 
\end{remark}
\begin{proposition}\label{asymp bound of constant}
 Let  $(e_n)$ be a uniformly quasi-greedy basic sequence.  Then $\textbf{k}_m^\vee=\mathcal{O}(\log_2(m)).$
\end{proposition}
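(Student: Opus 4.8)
The plan is to mimic the classical estimate for the conditionality constant of quasi-greedy bases (the $\mathcal{O}(\log m)$ bound, see \cite[Theorem 10.2.13]{AK16} or \cite{DKKT}), but tracking the extra lattice supremum throughout. Fix an ordered set $A=\{n_1,\dots,n_m\}$ and $x\in E$ with $\|x\|=1$; we must bound $\|P_A^\vee(x)\|$. Without loss of generality we may assume $A\subseteq\mathrm{supp}(x)$ (coefficients outside the support only shrink $A$), and by homogeneity we may rescale so that the coefficients $|e_n^*(x)|$ for $n\in A$ all lie in $[1,2^N]$ for some $N=\mathcal{O}(\log_2 m)$ — this is the standard dyadic truncation: discard indices with coefficient larger than $\|x\|$ times a fixed constant (there are few of them, bounded via the quasi-greedy property) and group the remaining ones by dyadic blocks $A_j=\{n\in A:\ 2^{j-1}\le|e_n^*(x)|<2^j\}$, $j=1,\dots,N$ with $N\lesssim\log_2 m$ since $m$ coefficients whose ratio max/min exceeds $m$ would force a norm blow-up again by the quasi-greedy/democracy-type estimates already available.

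The core step is then a block decomposition of the partial-sum-supremum. Writing $A=A_1\sqcup\dots\sqcup A_N$ (reordered so each block is an interval of the ordering on $A$), for any $k\le m$ the vector $\sum_{i=1}^k e_{n_i}^*(x)e_{n_i}$ splits as a sum over the blocks, at most one of which is ``partial''. Hence by the lattice triangle inequality
$$
\Bigl|\sum_{i=1}^k e_{n_i}^*(x)e_{n_i}\Bigr|\ \le\ \sum_{j=1}^{N}\Bigl(\bigvee_{B}\Bigl|\sum_{n\in B}e_n^*(x)e_n\Bigr|\Bigr),
$$
where the inner supremum in block $j$ runs over initial segments $B$ of $A_j$. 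Taking the supremum over $k$ and then the norm,
$$
\|P_A^\vee(x)\|\ \le\ \sum_{j=1}^N \bigl\|P_{A_j}^\vee(x)\bigr\|.
$$
Now each term $\|P_{A_j}^\vee(x)\|$ is controlled by \Cref{Similar asymptotics}: since within $A_j$ the ratio $\max_{A_j}|e_n^*(x)|/\min_{A_j}|e_n^*(x)|<2$, we get $\|P_{A_j}^\vee(x)\|\le 16\,C_{qg}^2 C_{qg}^\vee\|x\|$, a bound independent of $j$ and of the cardinalities. Summing over the $N=\mathcal{O}(\log_2 m)$ blocks gives $\|P_A^\vee(x)\|\le 16\,C_{qg}^2 C_{qg}^\vee N\,\|x\| = \mathcal{O}(\log_2 m)$, and taking the supremum over $x\in S_E$ and over ordered $A$ with $|A|\le m$ yields $\textbf{k}_m^\vee=\mathcal{O}(\log_2 m)$.

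The main obstacle I anticipate is the bookkeeping in the first paragraph: one needs to justify that after discarding the ``large-coefficient'' indices and restricting to a window where the coefficient ratio is at most $m$ (so $N\lesssim\log_2 m$), the discarded part contributes only $\mathcal{O}(1)$ to $\|P_A^\vee(x)\|$ and the rescaling is legitimate. The large coefficients number at most a constant (bounded using $\mathcal{G}_m$-type estimates, e.g. that $|e_n^*(x)|\,\|e_n\|\le 2C_{qg}\|x\|$), so the supremum of their partial sums is controlled by \Cref{Similar asymptotics} applied to that bounded set — contributing $\mathcal{O}(1)$. The only genuinely delicate point is making sure the reordering of $A$ into blocks that are intervals of the ordering doesn't hurt: but $P_A^\vee$ with a \emph{fixed} ordering only ever sees initial segments, and any initial segment meets each block in an initial segment of that block's induced order, so the decomposition above is valid for whichever ordering $A$ carries — no reordering of $A$ itself is actually needed, only the observation that the blocks partition each initial segment into (at most one partial, several full) block-pieces, each of which is an initial segment of its block.
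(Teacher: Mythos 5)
Your block decomposition and the per-band use of \Cref{Similar asymptotics} are exactly the paper's argument, but the step that caps the number of dyadic bands at $\mathcal{O}(\log_2 m)$ contains a genuine gap. You justify the window $[1,2^N]$ with $N\lesssim\log_2 m$ by asserting that $m$ coefficients whose max/min ratio exceeds $m$ ``would force a norm blow-up'' via quasi-greedy/democracy-type estimates. This is false: uniformly quasi-greedy bases need not be democratic, and in any case a norm-one vector can have coefficients on $A$ with arbitrarily large ratio, simply because some of them may be tiny (take $x=\sum_{i=1}^m 2^{-i}e_i$ for the canonical basis of $\ell_2$: the ratio is $2^{m-1}\gg m$ and nothing blows up). The available lower estimate $\min_j|a_j|\,\bigl\|\bigvee_{k\le m}\bigl|\sum_{i\le k}e_{n_i}\bigr|\bigr\|\le 4C_{qg}C_{qg}^\vee\bigl\|\sum_i a_ie_{n_i}\bigr\|$ gives no contradiction when the minimal coefficient is small, so nothing prevents the coefficients on $A$ from spreading over far more than $\log_2 m$ dyadic bands. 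Moreover, your bookkeeping attends to the wrong end of the window: the top needs no discarding at all, since $|e_n^*(x)|\le\textbf{K}\|x\|$ with $\textbf{K}=\sup_n\|e_n^*\|$, whereas the genuinely problematic indices are those with very small coefficients, and your sketch never controls their contribution to $P_A^\vee(x)$.

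The missing ingredient is the paper's trivial estimate for the residual band. Anchor the bands at the top: with $2^p\le m<2^{p+1}$, put $B_j=\{n:\ \textbf{K}2^{-j}<|e_n^*(x)|\le\textbf{K}2^{-j+1}\}$ for $j=1,\dots,p$, and lump every index with $|e_n^*(x)|\le\textbf{K}2^{-p}$ into a single set $B_0$. The bands $B_1,\dots,B_p$ have coefficient ratio at most $2$ and are handled by \Cref{Similar asymptotics} exactly as you do (each contributes at most $16C_{qg}^2C_{qg}^\vee$); for $B_0$ one uses the plain triangle inequality: any initial segment of $A$ meets $B_0$ in at most $m<2^{p+1}$ indices, each term having norm at most $c\textbf{K}2^{-p}$ with $c=\sup_n\|e_n\|$, so the partial-sum supremum over $B_0$ has norm at most $2c\textbf{K}=\mathcal{O}(1)$. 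With this replacing your first paragraph, the rest of your argument (an initial segment of $A$ meets each band in an initial segment of that band, triangle inequality, per-band bound) goes through and yields $\|P_A^\vee(x)\|\le 16pC_{qg}^2C_{qg}^\vee+2c\textbf{K}$, hence $\textbf{k}_m^\vee=\mathcal{O}(\log_2 m)$.
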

\begin{proof}
Consider an integer $m\geq 2$ and find $p$ such that $2^p\leq m<2^{p+1}$. Let $x\in S_E$ and note that we have $|e_n^*(x)|\leq \textbf{K}$ for all $n\in \N$, where $\textbf{K}:=\sup_n \|e_n^*\|$. Construct a partition $(B_j)_{j=0}^p$ of $\mathbb{N}$ as in the proof of \cite[Theorem 10.2.14]{AK16}.
\medskip

Let $A=\{n_1,\dots,n_m\}$ be an ordered subset of $\N$. Then for each $1\leq l\leq m$ we have 
$$\left|\sum_{k=1}^l e_{n_k}^*(x)e_{n_k}\right|\leq \left|\sum_{k\in\{1,\dots, l\}\cap B_0} e_{n_k}^*(x)e_{n_k}\right|+\cdots+ \left|\sum_{k\in\{1,\dots, l\}\cap B_p} e_{n_k}^*(x)e_{n_k}\right|.$$
Hence,
$$P_A^\vee(x)\leq \bigvee_{l=1}^m\left|\sum_{k\in\{1,\dots, l\}\cap B_0} e_{n_k}^*(x)e_{n_k}\right|+\cdots+ \bigvee_{l=1}^m\left|\sum_{k\in\{1,\dots, l\}\cap B_p} e_{n_k}^*(x)e_{n_k}\right|.$$
For $j=1,\dots, p$, \Cref{Similar asymptotics} gives $$\bigg\|\bigvee_{l=1}^m\left|\sum_{k\in\{1,\dots, l\}\cap B_j} e_{n_k}^*(x)e_{n_k}\right|\bigg\|\leq 16 C_{qg}^2C_{qg}^\vee,$$ while the trivial estimate gives $$\bigg\|\bigvee_{l=1}^m\left|\sum_{k\in\{1,\dots, l\}\cap B_0} e_{n_k}^*(x)e_{n_k}\right|\bigg\|\leq m\textbf{K}2^{-p}c\leq 2c\textbf{K},$$
with $c=\sup_n \|e_n\|$. Combining the above estimates yields the bound $\|P_A^\vee(x)\|\leq 16pC_{qg}^2C_{qg}^\vee+2c\textbf{K}$, as required.
\end{proof}
\begin{remark}
One cannot hope for a better bound in \Cref{asymp bound of constant}, in general, as one can consider embeddings into $C[0,1]$ (the estimate $\textbf{k}_m=\mathcal{O}(\log_2(m))$ for quasi-greedy bases is sharp for general Banach spaces). One may hope for better estimates if one only considers bases, or if one works in nicer spaces, e.g., $L_p(\mu)$. In this direction, martingale theory gives that the bibasis constant of the standard ordering of the Haar in $L_p[0,1]$ is $q$, where $\frac{1}{p}+\frac{1}{q}=1$. Knowing this exact bound, it is natural to inquire what the value -- or at least the asymptotics -- of $\textbf{k}_m^\vee$, $C_{qg}^\vee$  and the bibasis constant are for other bases. In particular, which bases of $L_p[0,1]$ minimize these quantities? Although we do not have a complete answer to this question, in \Cref{Lp no perm} we will show that $\textbf{k}_m^\vee\neq \mathcal{O}(1)$ for any basis of $L_p[0,1]$.
%In general, it is true that $k_m \leq k_m^\vee$. Take the Lindestrauss basis. This basis is almost-greedy and $k_m \approx \log(m)$. Hence, $k_m^\vee \gtrsim \log(m)$. Is it possible to show for this particular example that $k_m^\vee \lesssim \log(m)$? If this question is true, the above proposition has sense in the context of quasi-greedy bases and not necessarily in the context of u-quasi-greedy bases. The example of Lindestrauss basis can be studied in my thesis or in the paper \cite{GHO}.
\end{remark}
%\begin{question}
%Martingale theory gives that the bibasis constant of the standard ordering of the Haar in $L_p$ is $q$, where $\frac{1}{p}+\frac{1}{q}=1$. It would be of interest to know the value - or at least the asymptotics in $p$ - of $\textbf{k}_m^\vee$ or $C_{qg}^\vee$ for various natural bases of $L_p$. One would also want to know which bases of $L_p$ are ``best" at respecting the lattice structure; for example, those with the smallest bibasis constant, or best value of $\textbf{k}_m^\vee$ or $C_{qg}^\vee$, etc.

%What is the exact asymptotics of $\textbf{k}_m^\vee$ for the Haar in $L_p$? What is (the asymptotics in $p$ of) $C_{qg}^\vee$ for the Haar in $L_p$?  What is the smallest bibasis constant a basis of $L_p$ can have? $1+\varepsilon$? These questions apply to other bases as well, e.g., the trig basis in $L_2$. %One would also want to understand the asymptotics of $\textbf{k}_m^\vee$ for the trigonometric system in $L_2$. Here we don't know any non-trivial bound (but maybe it is in the literature???).
%\end{question}
 %\begin{question}
%What is the rate for which the uniformly quasi-greedy constant of the Haar blows up as p tends to 1? It follows from martingale theory that in the standard ordering the bibasis constant of the Haar in $L_p$ is $p^*$. We also know that there are permutations of the Haar that are not bibasic, and $L_1$ admits no bibases at all. Can $L_p$ admit, say, $1+\varepsilon$-bibases?
%\\

%Is there a basis such that the best approximation $\sigma_m$ isn't uniformly convergent? If $C_{qg}^\vee=1$ is $(e_n)$ permutable?
%\end{question}

\section{The role of the ambient space}\label{s:destroy}
In \cite{TT19} and \Cref{Sub3} it is shown that many of the major results about Schauder and quasi-greedy bases generalize to bibases and uniformly quasi-greedy bases. In this section, we delve deeper into the interplay between coordinate systems and lattice structures and prove  results that have no direct analogues in the classical theories.
\subsection{Embedding bases into Banach lattices}\label{embed bad}
Given any basis $(e_i)_i$ of a Banach space $E$, we may always embed $E$ into a Banach lattice $X$ so that $(e_i)_i$ becomes bibasic, and hence has good order properties. In this subsection, we aim to do the following: %can embed $E$ into a Banach lattice $X$ so that $(e_i)$ assumes bad order properties.%If so, can we choose $X$ to be ``nice" (for example, $p$-convex for large $p$, or if $E$ is itself a Banach lattice, can we choose $(e_i)$ to be a basis of $E$?).  In this section,
\begin{itemize}
 \item Take a basis $(e_i)_i$ of a Banach space $E$ possessing certain good properties.
 \item Embed $E$ into a Banach lattice $X$ so that the ``uniform'' analogues of these properties fail. 
\end{itemize}
More specifically, we deal with:
\begin{enumerate}
 \item 
 The Lindenstrauss basis in $\ell_1$, or rather its version described in \cite{DM}; this is an example of a conditional quasi-greedy basis.
 \item
 The canonical basis in $\ell_p$, $1<p<\infty$.
\end{enumerate}
%This leads to the general question: suppose $(e_i)$ is a  basis of $E$. Can we embed $E$ into a Banach lattice $X$ so that $(e_i)$ assumes bad order properties? If so, can we choose $X$ to be ``nice" (for example, $p$-convex for large $p$, or if $E$ is itself a Banach lattice, can we choose $(e_i)$ to be a basis of $E$?). %For the first question, we conjecture that if $(e_i)$ is a normalized basis not equivalent to $\ell_1$, then it will not be bibasic with respect to some lattice structure (equivalently, by properties of free spaces, the canonical copy of $(e_i)$ will not be bibasic in $\fbl[E]$). We will prove that this conjecture holds true if we replace ``bibasic" with the stronger property of being absolute. The second question depends much more on the basis and lattice properties of $E$, and we will focus here on the case of $\ell_p$.
%\medskip

\subsubsection{A copy of the  Lindenstrauss basis that is neither bibasic nor uniformly quasi-greedy}\label{ss:destroy_Lind}
A fundamental example of a conditional quasi-greedy basis in $\ell_1$ is given in \cite{DM}. We begin by outlining their construction.
\medskip

For $n \in \N$ and $j \in \{1,2\}$, define $\phi_j(n) = 2n+j$. Denote by $(e_i)$ the canonical basis of $\ell_1$, and let $x_n = e_n - \big(e_{\phi_1(n)} + e_{\phi_2(n)}\big)/2$.
Denote by $F_n$ the span of $x_1, \ldots, x_n$ in $\ell_1^{2n+2}$.
By \cite{DM}, each $F_n$ is $C$-isomorphic to $\ell_1^n$, where $C$ is a uniform constant.
Denote by $x_1^{(n)}, \ldots, x_n^{(n)}$ the copies of $x_1, \ldots, x_n$ inside of $F_n$.
By \cite{DM}, the vectors $\big(x_i^{(n)}\big)_{1 \leq i \leq n, n \in \N}$ form a quasi-greedy basis inside of $\big( \sum_n F_n \big)_1 \sim \ell_1$. Now equip ${\mathrm{span}} \big[ \big(x_i^{(n)}\big)_{1 \leq i \leq n} \big]$ with the lattice structure of $\big( \sum_n \ell_1^{2n+2} \big)_1$, which contains $\big( \sum_n F_n \big)_1$ in the natural way.
\begin{proposition}\label{p:destroy_Lind}
 In the above notation, the sequence $\big(x_i^{(n)}\big)_{1 \leq i \leq n, n \in \N}$ is neither uniformly quasi-greedy nor bibasic.
\end{proposition}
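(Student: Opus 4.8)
The plan is to exploit the explicit structure of the Lindenstrauss-type vectors $x_n = e_n - (e_{2n+1} + e_{2n+2})/2$ together with the fact that the ambient lattice $\big( \sum_n \ell_1^{2n+2} \big)_1$ measures the $\ell_1$-sum of the moduli coordinate-wise. Since every statement in \Cref{u-greedy theorem.} and in the bibasis theorem fails simultaneously once we show the maximal inequality (v) fails, it suffices to produce, for each $M$, a finitely supported vector $x$ in the span of the $x_i^{(n)}$ for which $\big\| \bigvee_{k=1}^m |\sum_{j=1}^k f_j(x) x_{i_j}^{(n)}| \big\|$ is large compared to $\|x\|$, where the $f_j$ are either the initial-segment functionals (for bibasic) or the greedy functionals (for uniformly quasi-greedy). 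Because the coefficients we will use are all equal, the greedy ordering and a partial-sum ordering can be made to coincide, so a single construction kills both properties at once.

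First I would work inside a single block $F_n$ and consider $y = \sum_{i=1}^n x_i = \sum_{i=1}^n e_i - \frac12 \sum_{i=1}^n (e_{2i+1} + e_{2i+2})$, computed in $\ell_1^{2n+2}$. The key observation is a telescoping/cancellation phenomenon: in $\ell_1$ the norm $\|y\|$ stays bounded (this is exactly the conditionality that makes $(x_i)$ a Lindenstrauss-type basis — the $\ell_1$-norms of the partial sums $\sum_{i=1}^n x_i$ grow much slower than $n$, in fact they are uniformly bounded by the $C$-isomorphism with $\ell_1^n$ only after passing to $F_n$; more precisely one uses that $\|\sum_{i=1}^n x_i\|_{\ell_1} \lesssim \log n$ or is even $O(1)$ depending on the normalization, and in any case is $o(n)$). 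On the other hand, the partial sums $\sum_{i=1}^k x_i$ have $\ell_1^{2n+2}$-norms that do \emph{not} all cancel: for suitable $k$ the vector $\sum_{i=1}^k x_i$ has many coordinates of size comparable to a constant, and — crucially — the \emph{supremum} $\bigvee_{k=1}^n |\sum_{i=1}^k x_i|$, taken coordinate-wise, dominates on each coordinate $e_i$ (resp. $e_{2i+1}$) the largest value that coordinate ever attains along the partial sums, which is bounded below by a constant. Hence $\big\| \bigvee_{k=1}^n |\sum_{i=1}^k x_i| \big\|_{\ell_1^{2n+2}} \gtrsim n$. Comparing with $\|y\| = o(n)$ gives the desired blow-up of the ratio as $n \to \infty$.

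The main obstacle is the bookkeeping in the second estimate: I must choose the ordering of the summands (and possibly small perturbations of the coefficients, as in the proof of \Cref{equivalence of constants.}, to make the sum a \emph{lattice strictly greedy} sum) so that the coordinate-wise supremum genuinely sees a constant contribution on $\Omega(n)$ distinct coordinates, rather than having the positive and negative halves of the $e_n - \frac12 e_{2n+1} - \frac12 e_{2n+2}$ structure cancel inside the supremum. The clean way to do this is to note that the coordinate $e_i$ (for $1 \le i \le n$, which is not of the form $2i'+1$ or $2i'+2$ for $i' \le n$ when $n$ is in a sparse enough range — or simply restrict to the "top half" indices $i \in (n/2, n]$ whose images $2i+1, 2i+2$ lie outside $\{1,\dots,n\}$) receives the full value $1$ from $x_i$ and nothing from any other $x_{i'}$, so that $\bigvee_{k} |\sum_{i=1}^k x_i|$ is at least $1$ on each such coordinate $e_i$, giving at least $n/2$ coordinates each contributing $\ge 1$ to the $\ell_1$-norm. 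Then I would package the two cases — showing (v) of \Cref{u-greedy theorem.} fails (using that these constant-coefficient partial sums are a greedy approximation of $y$) and showing \eqref{reg basis in2} fails (using that they are also the initial-segment partial sums, up to the chosen order) — and conclude via \Cref{u-greedy theorem.} and the definition of bibasic that $\big(x_i^{(n)}\big)$ is neither uniformly quasi-greedy nor bibasic. Finally I would remark that the same computation in fact shows the sequence fails condition (iv) of \Cref{u-greedy theorem.}, which is the sublattice-stable formulation, so no re-embedding into a larger lattice can repair it.
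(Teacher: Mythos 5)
Your witness vector does not work: the key estimate $\big\| \sum_{i=1}^n x_i \big\| = o(n)$ is false. Writing $x_i = e_i - \tfrac12(e_{2i+1}+e_{2i+2})$ and noting that every index $j \in \{3,\dots,2n+2\}$ occurs exactly once among the $2i+1,\,2i+2$ with $1 \le i \le n$, one computes $\sum_{i=1}^n x_i = e_1 + e_2 + \tfrac12\sum_{j=3}^{n} e_j - \tfrac12\sum_{j=n+1}^{2n+2} e_j$, so $\big\|\sum_{i=1}^n x_i\big\|_{\ell_1^{2n+2}} = n+2$. With flat coefficients there is no telescoping: the $-\tfrac12$ that $x_{i'}$ puts on a coordinate $j\le n$ is only half-cancelled by the $+1$ from $x_j$, and the coordinates $n+1,\dots,2n+2$ are not cancelled at all. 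Since for any scalars one has the pointwise bound $\bigvee_{k}\big|\sum_{i\le k} a_i x_i\big| \le \sum_i |a_i|\,|x_i|$, whose $\ell_1$-norm is $2\sum_i|a_i|$ (equal to $2n$ for your choice), the ratio you want to blow up is at most $2$; no reordering, no passage to greedy sums, and no small perturbation to make the sum lattice strictly greedy can change this. (Your claim that a coordinate $e_i$ with $i\in(n/2,n]$ receives nothing from the other vectors is also incorrect: it receives $-\tfrac12$ from $x_{i'}$ with $i'=\lfloor (i-1)/2\rfloor \le n/2$; but this is secondary to the failure of the norm estimate.)

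The conditionality of the Lindenstrauss system is invisible on constant coefficients; it is produced by geometrically decaying coefficients arranged along the binary tree generated by $\phi_1,\phi_2$ starting at $1$. That is exactly what the paper's proof uses: it takes $y_m = \sum_{k=0}^{m-1} 2^{-k}\sum_{j_1,\dots,j_k\in\{1,2\}} x_{\phi_{j_k,\dots,j_1}(1)}$, so that each level's negative part is cancelled by the next level's positive part, giving $y_m = e_1 - 2^{-m}\sum_{i\in I_m} e_i$ with $\|y_m\|=2$, while the sets $I_m$ are pairwise disjoint and each carries mass $1$ in $|y_m|$; hence $\big\| \bigvee_{m=0}^{N-1}|y_m| \big\| = N+1$. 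Because the coefficients $2^{-k}$ are decreasing, each $y_m$ is simultaneously an initial partial sum and a greedy sum of $y_N$, which defeats the bibasis and uniformly quasi-greedy constants in one stroke. Your single-block strategy (taking $n \ge 3\cdot 2^N$ so the depth-$N$ tree fits in the block) is the right frame, but the witness must be such a tree vector, not a constant-coefficient sum.
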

\begin{proof}
For any $C > 1$, find $m \in \N$ with $m+1 > C/2$.
 For $n \geq 3 \cdot 2^m$, we construct $x \in {\mathrm{span}} \big[ \big(x_i^{(n)}\big)_{1 \leq i \leq n} \big]$ which witnesses the fact that the uniformly quasi-greedy and  bibasis constants of the sequence $\big(x_i^{(n)}\big)_{1 \leq i \leq n}$ in $\ell_1^{2n+2}$ are at least $C$.
 Since $n$ is fixed, we shall use $x_i$ instead of $x_i^{(n)}$.
 \medskip

 For $\overline{j} = ( j_1, \ldots, j_k ) \in \{1,2\}^k$ define $\phi_{\overline{j}}(t) = \phi_{j_k} \big( \ldots \phi_{j_2}(\phi_{j_1}(t)) \ldots \big)$; then
 $\phi_{\overline{j}}(t) = 2^k t + 2^{k-1} j_1 + \ldots + 2 j_{k-1} + j_k$.
 For convenience, let $\phi_\emptyset(t) = t$.
 Note that, if $\overline{j} = ( j_1, \ldots, j_k )$ and $\overline{i} = ( i_1, \ldots, i_\ell )$, then $\phi_{\overline{j}}(1) = \phi_{\overline{i}}(1)$ holds iff $\ell = k$ and $j_1=i_1,\dots, j_k=i_k$. Indeed, for $k = \ell$ this follows from the uniqueness of the binary decomposition. On the other hand, if $\ell > k$ then
 $$
 \phi_{\overline{j}}(1) \leq 2^k + 2 \big( 2^{k-1} + \ldots + 1 \big) < 2^k + \big( 2^k + 2^{k-1} + \ldots + 1 \big) \leq \phi_{\overline{i}}(1) .
 $$
 The case of $\ell < k$ is handled in a similar way.
 \medskip
 
 For $m \in \N$, let
 $$
 y_m = \sum_{k=0}^{m-1} 2^{-k} \sum_{j_1, \ldots, j_k \in \{1,2\}} x_{\phi_{j_k, \ldots, j_1}(1)},
 $$
 where, by convention, the term for $k=0$ corresponds to $x_{\phi_\emptyset(1)} = x_1$.
 % Note that the natural ordering of the summands of $x$ coincides with the greedy ordering. Therefore, it suffices to show that $y_m$ (for $m$ large) witnesses the failure of the bibasis property of $(x_i)$.  \medskip
 A direct computation shows that
 \begin{equation}
     \label{eq:y-m1}
 y_m = e_1 - 2^{-m} \sum_{j_1, \ldots, j_m \in \{1,2\}} e_{\phi(j_m, \ldots, j_1)(1)} .
 \end{equation}
 By the definition of $\phi$,
 \begin{equation}
     \label{eq:y-m2}
 \sum_{j_1, \ldots, j_m \in \{1,2\}} e_{\phi(j_m, \ldots, j_1)(1)} = % \sum_{i=2^{m+1}-1}^{2^{m+1}+2^m-2} e_i ,
 \sum_{i \in I_m} e_i, \, {\textrm{  where   }} \, I_m = \{2^{m+1}-1 , \ldots , 2^{m+1}+2^m-2 \} ,
 \end{equation}
 hence $\|y_m\| = 2$.
\medskip

 Next, we show that, for any $N \in \N$, $\| \vee_{m=0}^{N-1} |y_m| \| = N+1$. To this end, observe that, in \eqref{eq:y-m2}, $|I_m| = 2^m$, and $I_m \cap I_k = \emptyset$ if $m \neq k$. Therefore,
 $\vee_{m=0}^{N-1} |y_m| = e_1 + \sum_{m=1}^N 2^{-m} \sum_{i \in I_m} e_i$, yielding the desired estimate.
\medskip

 Note that, if the Lindenstrauss basis is bibasic (uniformly quasi-greedy), then there exists ${\mathbf{B}} \in (0,\infty)$ so that $(x_i)_{i=1}^n$ is bibasic (resp.~uniformly quasi-greedy) with constant ${\mathbf{B}}$, no matter how large $n$ is.
Fix $N$, and pick $n \geq \phi_{2,\ldots,2}(1)$ ($2$ is repeated $N$ times).
If $m \leq N$, then $y_m$ is both a greedy sum and a partial sum of $y_N$, hence $\| \vee_{m=0}^{N-1} |y_m| \| \leq {\mathbf{B}} \|y_N\|$. This gives ${\mathbf{B}} \geq(N+1)/2$ (here ${\mathbf{B}}$ can stand for either the bibasic or the uniformly quasi-greedy constant of $(x_i)_{i=1}^n$), leading to the desired contradiction.
\end{proof}

\subsubsection{A copy of the canonical \texorpdfstring{$\ell_p$}{}-basis that is not bibasic nor uniformly quasi-greedy}\label{ss:destroy_lp}
Here, we shall denote by $(e_i)$ the canonical $\ell_p$-basis. We establish the following proposition, which partially resolves a question in \cite[Remark 7.6]{OTTT}.

\begin{proposition}\label{p:destroy_lp}
 For $1 < p < \infty$ the space $\ell_p$ contains a basis $(u_i)$, equivalent to $(e_i)$,  which is neither bibasic nor uniformly quasi-greedy.
\end{proposition}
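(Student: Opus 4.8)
The plan is to mimic the structure of the Lindenstrauss construction from \Cref{ss:destroy_Lind}, but carried out inside $\ell_p$ instead of $\ell_1$, and to package it as a single perturbation of the canonical basis. Concretely, I would fix a rapidly growing sequence of integers and, on disjoint blocks of coordinates of increasing length $N_r$, place a finite-dimensional ``tree'' perturbation of the unit vectors: on the $r$-th block take a rooted binary tree of depth $N_r$ and set $u_{\text{root}} = e_{\text{root}} - 2^{-1}(e_{\text{child}_1}+e_{\text{child}_2})$ at each internal node, leaving the leaf-level vectors equal to the corresponding $e_i$'s (or perturbing them by a null amount as in \Cref{equivalence of constants.}). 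Because each such perturbation is a small, finitely supported, uniformly bounded linear change of coordinates — the key point being that in $\ell_p$, $\|e_n - 2^{-1}(e_{2n+1}+e_{2n+2})\|_p = (1 + 2\cdot 2^{-p})^{1/p}$ is a bounded constant and the perturbation matrix is banded with bounded inverse on each block — the resulting system $(u_i)$ is a basis of $\ell_p$ equivalent to $(e_i)$. This equivalence is where most of the routine work lies: one checks that the block-diagonal change-of-basis operator and its inverse are bounded on $\ell_p$, using that the blocks are supported on disjoint coordinate intervals and that within each block the transformation is lower-triangular with $1$'s on the diagonal and entries decaying geometrically down the tree.

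The second step is to exhibit, in the $r$-th block, the analogue of the vectors $y_m$ from the proof of \Cref{p:destroy_Lind}: set
$$
y_m = \sum_{k=0}^{m-1} 2^{-k} \sum_{j_1,\dots,j_k \in \{1,2\}} u_{\phi_{j_k,\dots,j_1}(\text{root})},
$$
which telescopes to $y_m = e_{\text{root}} - 2^{-m}\sum_{i \in I_m} e_i$ where $I_m$ is the set of $2^m$ leaves at depth $m$. Then $\|y_m\|_p = (1 + 2^{-mp}\cdot 2^m)^{1/p} \to 1$, so $\|y_N\|_p$ stays bounded, while $\bigvee_{m=0}^{N-1}|y_m| = e_{\text{root}} + \sum_{m=1}^{N} 2^{-m}\sum_{i\in I_m} e_i$ (the suprema of disjointly-supported pieces simply add), whence $\|\bigvee_{m=0}^{N-1}|y_m|\|_p = (1 + \sum_{m=1}^N 2^{-mp}2^m)^{1/p} = (1 + \sum_{m=1}^N 2^{m(1-p)})^{1/p}$. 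The subtlety compared to the $\ell_1$ case is that this last quantity is \emph{bounded} (the geometric series $\sum 2^{m(1-p)}$ converges for $p>1$), so a single tree of finite depth no longer gives an unbounded ratio. This is the main obstacle, and it is exactly why one needs infinitely many blocks.

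The resolution is to stack these contributions: arrange for the $r$-th block to carry $N_r$ levels with $N_r \to \infty$ and, crucially, to normalize so that within block $r$ the analogous vector $y^{(r)}$ has the property that $\|\bigvee |y^{(r)}_m|\| / \|y^{(r)}_N\|$ — while bounded for each fixed $r$ — is realized with $\|y^{(r)}_N\|$ scaled down to, say, $2^{-r}$, so that $\sum_r y^{(r)}_{N_r}$ converges in $\ell_p$ to some $x$, yet for each $r$ the partial/greedy sums of $x$ restricted to block $r$ witness a maximal-function value bounded below by a fixed positive constant times $2^{-r}\cdot(\text{something that does not decay})$. More precisely, one checks that the ratio $\|\bigvee_{m}|y^{(r)}_m|\|/\|y^{(r)}_{N_r}\|$ is bounded below by a constant $c_p>1$ \emph{independent of $N_r$} (since both numerator and denominator converge to finite nonzero limits as $N_r\to\infty$, with the numerator's limit strictly larger — indeed $1 + \sum_{m\ge 1} 2^{m(1-p)} > 1 + 2^{1-p}\cdot(1-2^{-m p})\cdots$, and more simply the numerator limit $(1+\sum_{m\ge1}2^{m(1-p)})^{1/p}$ exceeds the denominator limit $1$). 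Wait — that only gives a \emph{bounded} bibasic constant per block, hence per block we get constant $\ge c_p$, not $\to\infty$. So the actual mechanism must be different: I would instead take, in block $r$, \emph{$r$ disjoint copies} of a depth-$N$ tree (with $N$ chosen so $c_p' := (1+\sum_{m=1}^N 2^{m(1-p)})^{1/p}$ is already $>1$, e.g. $N=1$ giving $(1+2^{1-p})^{1/p}$), forming the vector $z^{(r)} = r^{-1/p}\sum_{t=1}^r y^{(t\text{-th copy})}_N$, so $\|z^{(r)}\|_p = \|y_N\|_p$ (by disjointness and the $r^{-1/p}$ normalization) stays $O(1)$, while $\|\bigvee|\cdot|\,\|_p$ picks up a factor $r^{1/p}$ from the $r$ disjoint blocks: $\|\bigvee_m |G_m(z^{(r)})|\|_p \ge r^{1/p}\cdot c' \cdot \|y_N\|_p / \text{const}$. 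This gives ratio $\gtrsim r^{1/p} \to \infty$, and since $y_m$ is simultaneously a greedy sum and an initial-segment sum of $z^{(r)}$ (after suitably perturbing coefficients to make the greedy ordering unambiguous, as in \Cref{greedy lemma}), condition (v) of both \Cref{bidecomposition} and \Cref{u-greedy theorem.} fails with constant $\ge r^{1/p}$ for all $r$. Letting these blocks be supported on disjoint coordinate intervals and reindexing $(u_i)$ to interleave them appropriately completes the construction; the verification that $(u_i)$ remains a basis of $\ell_p$ equivalent to $(e_i)$ is as in the first paragraph.
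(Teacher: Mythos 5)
Your proposal has a genuine gap at the crucial step, namely the mechanism by which the bibasic/uniformly quasi-greedy constants are supposed to blow up. You correctly observe that a single Lindenstrauss-type tree gives only a bounded ratio in $\ell_p$ for $p>1$ (the geometric series $\sum_m 2^{m(1-p)}$ converges), but the proposed repair -- taking $r$ disjoint copies in block $r$ and normalizing by $r^{-1/p}$ -- does not work. If $z^{(r)}=r^{-1/p}\sum_{t=1}^r y^{(t)}$ with the copies supported on disjoint coordinate sets, then for any partial-sum or greedy-sum maximal function the restriction to the support of the $t$-th copy is dominated by the within-copy maximal function of $r^{-1/p}y^{(t)}$, so by disjointness
$\big\|\bigvee_m|P_m z^{(r)}|\big\|_p^p\leq \sum_{t=1}^r r^{-1}\big\|\bigvee_m|P_m y^{(t)}|\big\|_p^p=\big\|\bigvee_m|P_m y|\big\|_p^p\leq c_p^p\,\|y\|_p^p=c_p^p\,\|z^{(r)}\|_p^p$.
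The factor $r^{1/p}$ you expect the maximal function to ``pick up'' from the $r$ disjoint blocks is exactly cancelled by the $r^{-1/p}$ scaling of each block's contribution (and without the scaling, both numerator and denominator grow by $r^{1/p}$); in an $\ell_p$-sum of disjoint blocks the relevant constant of the concatenated system is simply the supremum of the block constants. So no amount of disjoint stacking of a fixed finite-dimensional example with bounded constant can produce an unbounded constant, and conditions (iv)/(v) of \Cref{bidecomposition} and \Cref{u-greedy theorem.} do not fail for your $(u_i)$.

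What is genuinely needed is a single block of dimension $n$ whose constant grows with $n$, i.e.\ a ``long-range'' conditional interaction rather than a tree of local perturbations. The paper achieves this with the triangular truncation of the Hilbert matrix: with $T^{(n)}_{ij}=1/(i-j)$ off the diagonal, $\|T^{(n)}\|_{\ell_p^n\to\ell_p^n}$ is uniformly bounded (Kwapie\'n--Pe{\l}czy\'nski), and setting $S=T^{(n)}/(2K(p))$, $v_i=f_i\oplus Sg_i$, $w_i=-Sf_i\oplus g_i$ in $\ell_p^n\oplus_p\ell_p^n$ gives a basis uniformly equivalent to the canonical $\ell_p^{2n}$ basis (via a Neumann series for the change of basis), while for $x=\sum_{i=1}^n v_i$ the partial sums accumulate harmonic sums $\sim\log j$ in the second coordinate, so $\big\|\bigvee_{k\leq n}|\sum_{i\leq k}v_i|\big\|\succ n^{1/p}\log n$ against $\|x\|\sim n^{1/p}$. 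Your first paragraph (equivalence of the perturbed system to the canonical basis) is fine but is not where the difficulty lies; without a growth mechanism of this kind the construction does not prove the proposition.
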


\begin{proof}
Our reasoning is similar to \cite[Section 7.1]{OTTT} and relies on the investigation of the main triangular truncation carried out in \cite{KP}. 
\medskip

For each $n$, consider the $n \times n$ matrix $T^{(n)} = \big[ T^{(n)}_{ij} \big]$ with $T^{(n)}_{ij} = 1/(i-j)$ if $i \neq j$ and $T^{(n)}_{ij} = 0$ if $i=j$.
As noted in \cite[(1.7)]{KP} (where a ``mirror image'' of this matrix is considered), $\|T^{(n)}\| \leq K(p)$ with $K(p)$  a uniform constant and $T^{(n)}$  viewed as an operator on $\ell_p^n$.
Let $S^{(n)} = \alpha T^{(n)}$ where $\alpha = 1/(2 K(p))$.
When the value of $n$ is not in doubt, we shall write $S$ and $T$ instead of $S^{(n)}$ and $T^{(n)}$.
\medskip

Identify $\ell_p$ with $\big( \sum_n (\ell_p^n \oplus_p \ell_p^n) \big)_p$.
Denote by $(f_i)_{i=1}^n$ and $(g_i)_{i=1}^n$ the canonical bases of the first and the second copies of $\ell_p^n$, respectively.
Inside of $\ell_p^n \oplus_p \ell_p^n$ consider the basis consisting of elements $v_i, w_i$ ($1 \leq i \leq n$) with $v_i = f_i \oplus S^{(n)} g_i$ and $w_i = - S^{(n)} f_i \oplus g_i$. 
We claim that the basis $(v_i, w_i)$ is equivalent to $(e_i)_{i=1}^{2n}$, with the constant of equivalence independent of $n$.
Indeed, we can write $\sum_{i=1}^n (\alpha_i u_i + \beta_i v_i) = A \sum_i (\alpha_i f_i + \beta_i g_i)$, where
$$
A = \begin{pmatrix}  I & S  \\  -S & I  \end{pmatrix} = I + \begin{pmatrix}  0 & S  \\  -S & 0  \end{pmatrix} .
$$
Clearly, $\left\| \begin{pmatrix}  0 & S  \\  -S & 0  \end{pmatrix} \right\| \leq \frac12$. Hence, $\|A\| \leq \frac32$. A Neumann series expansion gives us the identity
$$
A^{-1} = I + \sum_{k=1}^\infty (-1)^k \begin{pmatrix}  0 & S  \\  -S & 0  \end{pmatrix}^k ,
$$
hence $\|A^{-1}\| \leq 2$.
\medskip

Concatenate the bases $(v_i, w_i)$ into $(u_i)$. As shown above, $(u_i)$ is equivalent to the $\ell_p$ basis. Now we show that $(v_i, w_i)$ is neither bibasic in $\ell_p^n \oplus_p \ell_p^n$ nor uniformly quasi-greedy there. To this end, consider $x = \sum_{i=1}^n v_i$. Clearly, $\|x\| \leq 3 n^{1/p}/2$. We shall show that
$$
\Big\| \bigvee_{k=1}^n \Big| \sum_{i=1}^k v_i \Big| \Big\| \succ n^{1/p} \log n .
$$
To estimate the left-hand side, we focus on the second copy of $\ell_p^n$. For $2 \leq j \leq n$, the $j$-th term of $\sum_{i=1}^j v_i$ (that is, the coefficient in front of $g_j$) equals
$\alpha \big( \frac1{j-1} + \ldots + 1 \big)$. Consequently,
$$
\bigvee_{k=1}^n \Big| \sum_{i=1}^k v_i \Big| \geq \alpha \sum_{j=2}^n \Big( \frac1{j-1} + \ldots + 1 \Big) g_j .
$$
The term in front of $g_j$ is $\succ \log n$ for $j \geq n/2$, hence the norm of the left-hand side $\succ n^{1/p} \log n$. This finishes the proof.
\end{proof}

In the specific case of $p=2$, one can do somewhat better.

\begin{proposition}\label{p:destroy_l2_isometrically}
 The space $\ell_2$ contains a basis $(u_i)$, isometrically equivalent to $(e_i)$,  which is neither bibasic nor uniformly quasi-greedy.
\end{proposition}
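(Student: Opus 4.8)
The plan is to replace the Neumann-series construction of \Cref{p:destroy_lp} by an orthogonal one, exploiting the fact that in a Hilbert space the truncated Hilbert matrix can be rescaled to give a genuine orthogonal transformation rather than merely a bounded perturbation of the identity. First I would take, inside $\ell_2^n \oplus_2 \ell_2^n$, the block operator
$$
A = \begin{pmatrix} I & S \\ -S & I \end{pmatrix},
$$
but now I would \emph{not} fix $S = \alpha T^{(n)}$; instead I would seek a self-adjoint $S = S^{(n)}$ proportional to the antisymmetric part of $T^{(n)}$ such that $A$ becomes (a scalar multiple of) an orthogonal matrix. Concretely, writing $S = \tan(\theta) \cdot R$ for a suitable orthogonal $R$ and then renormalizing, one gets
$$
A = \frac{1}{\sqrt{1+\|S\|^2\text{-type factor}}}\begin{pmatrix} I & S \\ -S & I \end{pmatrix}
$$
is orthogonal precisely when $S$ is skew and $S^\top S$ is a scalar; the cleanest route is to diagonalize the skew-symmetric part of $T^{(n)}$ into $2\times 2$ rotation blocks and choose $S$ to be the corresponding ``partial isometry onto its range'' scaled by a single constant. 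Then $\begin{pmatrix} I & S \\ -S & I\end{pmatrix}$, after dividing by $\sqrt{1 + c^2}$ where $c$ is that constant, is orthogonal, so the images $u_i$ of the canonical basis vectors under it form a basis isometrically equivalent to $(e_i)$.

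The second step is to verify that this isometric copy still fails to be bibasic and uniformly quasi-greedy, by reproducing the logarithmic-blowup computation from \Cref{p:destroy_lp}. With $x = \sum_{i=1}^n v_i$ (the images of the first $n$ coordinate vectors), one again has $\|x\| \asymp n^{1/2}$, and the coefficient of $g_j$ in the $j$-th partial sum $\sum_{i=1}^{j} v_i$ is, up to the fixed scalar, a partial sum of the harmonic-type tail coming from the Hilbert matrix structure, which is $\succ \log n$ for $j$ in the upper half of $\{1,\dots,n\}$. Hence $\bigl\|\bigvee_{k=1}^n |\sum_{i=1}^k v_i|\bigr\| \succ n^{1/2}\log n$, and since $n$ is arbitrary this contradicts any uniform bibasic or uniform quasi-greedy constant, exactly as in the previous proof. (The greedy case follows because $\sum_{i=1}^k v_i$ is simultaneously a partial sum and a natural greedy sum of $x$, whose coefficients are all equal.)

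The main obstacle I anticipate is arranging the \emph{exact} orthogonality of $A$ while keeping $S$ close enough to a scalar multiple of the truncated Hilbert matrix that the $\log n$ lower bound survives. The truncated Hilbert matrix $T^{(n)}$ is not normal and its skew-symmetric part $\tfrac12(T^{(n)} - (T^{(n)})^\top)$ is what carries the harmonic tail; one must check that replacing $T^{(n)}$ by a rescaled version of this skew part (or by a rotation whose block structure is read off from the Schur form of the skew part) does not destroy the growth of the relevant partial column sums. I would handle this by working directly with the skew-symmetric matrix $B^{(n)}$ with entries $B^{(n)}_{ij} = \tfrac12(\tfrac1{i-j} - \tfrac1{j-i}) = \tfrac1{i-j}$ for $i \ne j$ off the diagonal on one side, noting that $B^{(n)}$ already equals $T^{(n)}$ up to the diagonal, so in fact $S = \alpha B^{(n)}$ with $\alpha$ chosen so that $\alpha^2 (B^{(n)})^\top B^{(n)}$ has norm making $I + (B^{(n)})$-type block orthogonal after normalization; if exact scalarity of $(B^{(n)})^\top B^{(n)}$ fails, I would instead take $S$ to be $\alpha B^{(n)} (I + \alpha^2 (B^{(n)})^\top B^{(n)})^{-1/2}$, which makes $A/\sqrt{\cdot}$ orthogonal by a standard polar-decomposition identity, and then argue that this functional-calculus correction is a small relative perturbation that still leaves the $j$-th coefficient of $\sum_{i \le j} v_i$ of order $\log n$ for $j \ge n/2$. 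The remaining details are the routine norm estimates and the observation, already used above, that partial sums of an equal-coefficient vector are greedy sums.
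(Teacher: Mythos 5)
There is a genuine gap, and it occurs at the two places your proposal itself flags as delicate. First, the linear algebra: for
$$
A=\begin{pmatrix} I & S\\ -S & I\end{pmatrix},\qquad
A^{\top}A=\begin{pmatrix} I+S^{\top}S & S-S^{\top}\\ S^{\top}-S & I+S^{\top}S\end{pmatrix},
$$
so a scalar multiple of $A$ is orthogonal if and only if $S$ is \emph{symmetric} and $S^{2}$ is a scalar multiple of $I$ --- not, as you assert, when $S$ is skew. The truncated Hilbert matrix $B^{(n)}$ is skew-symmetric, and your fallback $S=\alpha B^{(n)}\bigl(I+\alpha^{2}(B^{(n)})^{\top}B^{(n)}\bigr)^{-1/2}$ commutes with $B^{(n)}$ (since $(B^{(n)})^{\top}B^{(n)}=-(B^{(n)})^{2}$) and is therefore still skew, so the off-diagonal block $S-S^{\top}=2S$ never vanishes and no normalization of $A$ is orthogonal. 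Even with the other sign pattern $\begin{pmatrix} I & S\\ S & I\end{pmatrix}$, where skewness does kill the off-diagonal blocks, one would need $S^{\top}S$ to be scalar, and your polar-type correction gives $S^{\top}S=\alpha^{2}B^{\top}B(I+\alpha^{2}B^{\top}B)^{-1}$, which is not scalar either. So the proposed matrix is simply not (a multiple of) an orthogonal matrix.

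Second, and more seriously, even after an honest orthogonalization (Cayley transform, exponential of a skew $2n\times 2n$ block matrix, or a dilation) the step you defer is the heart of the proof: you must show that the coefficient of $g_{j}$ in $\sum_{i\le j}v_{i}$ is still $\succ\log n$ for $j\ge n/2$, and ``small relative perturbation'' cannot deliver this. If $S=\alpha B^{(n)}+E$ with $\|E\|\le\varepsilon$, the coordinate in question is $\langle S\sum_{i\le j}g_{i},g_{j}\rangle$, and since $\|\sum_{i\le j}g_{i}\|=j^{1/2}$ the error term is only controlled by $\varepsilon j^{1/2}$, which swamps $\log j$; what would be needed is entrywise (or partial-row-sum) control of the functional-calculus correction for the \emph{truncated} matrix, and functional calculus does not commute with compression to the first $n$ coordinates, so this is a substantive estimate you do not supply. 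The paper avoids the issue entirely: it never constructs the unitary, but produces the bad orthonormal system $(h_{i})$ of \Cref{l:unitaries} by a soft duality argument (Russo--Dye reduces the maximal-function estimate to unitaries, and trace duality of the triangular matrix against the Toeplitz/Hilbert matrix gives the $\sqrt{n}\,\log n$ lower bound), then plants $(h_{i})$ isometrically via the rotation $v_{i}=\tfrac{12}{13}f_{i}\oplus\tfrac{5}{13}h_{i}$, $w_{i}=\dots$ as in \Cref{p:destroy_lp}. If you want a constructive route, the correction must go into the \emph{other} corner: with $K=\alpha B^{(n)}$, $\|K\|\le\tfrac12$, the matrix $\begin{pmatrix}(I-KK^{\top})^{1/2} & K\\ -K^{\top} & (I-K^{\top}K)^{1/2}\end{pmatrix}$ is orthogonal and its lower-left block equals $\alpha B^{(n)}$ exactly, so the harmonic-sum computation of \Cref{p:destroy_lp} applies verbatim --- but that is a different argument from the one you wrote.
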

This is a consequence of the following lemma.

\begin{lemma}\label{l:unitaries}
 There is a constant $c$ such that for every $n$ there is an orthonormal basis $h_1, \ldots, h_n$ of $\ell_2^n$ so that
 $$
 \Big\| \bigvee_{k=1}^n \big|h_1 + \ldots + h_k\big| \Big\| \geq c \log n \, \sqrt{n} .
 $$
\end{lemma}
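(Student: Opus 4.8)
\textbf{Proof plan for Lemma \ref{l:unitaries}.}
The plan is to realize the orthonormal basis $h_1,\dots,h_n$ as the columns of a suitable unitary matrix $U$ on $\ell_2^n$, chosen so that the partial sums $h_1+\dots+h_k$ have large sup-norm profile. The natural candidate is the (normalized) discrete Fourier transform matrix, or a closely related unitary built from it, because the triangular-truncation phenomenon invoked in Proposition \ref{p:destroy_lp} already shows that partial sums of Fourier-type systems blow up logarithmically. Concretely, I would take $U = \frac{1}{\sqrt n}\big[\omega^{(i-1)(j-1)}\big]_{i,j=1}^n$ with $\omega = e^{2\pi i/n}$ (and, since the paper works over $\R$, replace this by its real/imaginary-part realization on $\ell_2^{2n}$, or equivalently use a real orthogonal Fourier-type matrix; the bound only changes by an absolute constant). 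Then $h_j$ is the $j$-th column, $h_1+\dots+h_k$ is a vector whose $i$-th coordinate is $\frac{1}{\sqrt n}\sum_{j=1}^k \omega^{(i-1)(j-1)}$, i.e.\ (up to normalization) a partial sum of a geometric series, which is exactly a Dirichlet-kernel-type quantity.

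The key computation is then a lower bound on $\big\|\bigvee_{k=1}^n |h_1+\dots+h_k|\big\|$. For each coordinate $i$ the geometric sum $\sum_{j=1}^k \omega^{(i-1)(j-1)}$ has modulus $\big|\sin(\pi k (i-1)/n)/\sin(\pi(i-1)/n)\big|$; taking the maximum over $k$ for a fixed $i$ gives something of order $1/|\sin(\pi(i-1)/n)| \asymp n/\operatorname{dist}(i-1,n\Z)$ for $i-1$ in a suitable range, but more carefully one should choose, for each $i$, an index $k=k(i)$ making the Dirichlet sum large, and then estimate the $\ell_2$ norm of the resulting coordinatewise maximum. Summing $\big(1/\sqrt n\big)^2 \cdot \big(\text{that maximum}\big)^2$ over $i$ in a dyadic range where $|\sin(\pi(i-1)/n)|\asymp (i-1)/n$ produces a sum comparable to $\frac1n\sum_{i} \big(n/(i-1)\big)^2 \sim n$, which only gives $\sqrt n$, not $\sqrt n\log n$. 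To recover the extra $\log n$ one instead argues as in \cite{KP}: fix a \emph{single} value of $k$ (e.g.\ $k = \lfloor n/2\rfloor$) and estimate $\big\|\,|h_1+\dots+h_k|\,\big\|$ directly — this is $\frac{1}{\sqrt n}$ times the $\ell_2^n$ norm of the truncated Dirichlet kernel $D_k$, and $\|D_k\|_{\ell_2^n} \asymp \sqrt{n\log n}$ because $|D_k(i)|\asymp n/\operatorname{dist}(i,n\Z)$ is cut off at $\asymp n$ rather than continuing, so $\|D_k\|_2^2 \asymp \sum_{d=1}^{n} (n/d)^2 \cdot(\text{number of }i\text{ at distance }d)$ — here one must be careful: the honest statement is $\|D_k\|_2 \asymp \sqrt n \log n$ is \emph{false}; the correct fact (Hilbert-matrix / triangular-truncation estimate) is that the operator norm of the truncation is $\asymp \log n$, which after applying it to the all-ones-type vector $\sum_i h_i$ of norm $\sqrt n$ yields a vector of norm $\asymp \sqrt n\log n$. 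So the cleanest route is: write $\bigvee_{k}|{\textstyle\sum_{i\le k}} h_i| \ge |{\textstyle\sum_{i\le k_0}} h_i|$ for a well-chosen $k_0$, identify this with (a column block of) the triangular truncation of $U$ applied to a fixed vector, and quote the $\log n$ lower bound for that truncation from \cite{KP}.

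I expect the main obstacle to be pinning down this last lower bound honestly, rather than just the $\sqrt n$ one: the coordinatewise supremum $\bigvee_k$ is larger than any single partial sum, but the simplest single partial sum only gives $\sqrt n$, while the full truncation operator bound gives the $\log n$ factor only if one is summing over a \emph{range of $k$'s simultaneously}. The honest argument is therefore to lower-bound $\big\|\bigvee_{k=1}^n|h_1+\dots+h_k|\big\|^2 = \sum_{i=1}^n \big(\max_k \frac{1}{\sqrt n}|\sum_{j\le k}\omega^{(i-1)(j-1)}|\big)^2$ by choosing, for each frequency $i$, the near-optimal $k$ (roughly $k\approx n/(2(i-1))$), giving $\max_k |\sum_{j\le k}\omega^{(i-1)(j-1)}| \gtrsim \min(k, n/(i-1)) \gtrsim n/(i-1)$ for $1 \le i-1 \le n/2$ — but that again only sums to $\gtrsim n$. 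The resolution, which I would adopt, is to note we do not need the sup over \emph{all} $k$ in a single vector; we need the norm of the sup, and by a pigeonhole/averaging argument over dyadic blocks of $k$ one shows that for $k$ in the $t$-th dyadic block $[2^{t}, 2^{t+1})$ the partial sums already contribute $\gtrsim n$ to the squared norm from the $\asymp n/2^t$ frequencies near $0$, and these frequency bands are essentially disjoint across $t$, so summing over the $\asymp \log n$ dyadic blocks of $k$ gives the squared norm $\gtrsim n\log n$, i.e.\ norm $\gtrsim \sqrt{n\log n}$. Wait — that yields $\sqrt{n \log n}$, not $\sqrt n \log n$; so in fact one needs the bands to contribute $\gtrsim n \log n$ \emph{each}, which they do not. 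Hence the genuinely correct and safest approach is to invoke \cite{KP} directly: the operator norm of the $n\times n$ triangular truncation on $\ell_2^n$ is $\gtrsim \log n$, witnessed by a specific unit vector $x_0$; set $h_j$ to be the columns of the unitary diagonalizing the relevant structure so that $\bigvee_k |h_1+\dots+h_k|$ dominates $|\Pi_{k_0}(\text{something})|$ for the truncation $\Pi_{k_0}$ attaining the norm, applied to $\sqrt n\, x_0$; the resulting norm is $\gtrsim \sqrt n \log n$. I would structure the final write-up around this reduction to \cite{KP}, spelling out the unitary and the choice of vector, and treating the passage from $\C$ to $\R$ as a routine doubling-of-dimension remark.
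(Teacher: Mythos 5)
There is a genuine gap: your proposal never actually produces (or proves the existence of) an orthonormal basis achieving the bound. Your explicit candidate, the discrete Fourier basis, provably fails: as your own first computation shows, the coordinatewise maximum of the partial sums is bounded above by $\tfrac{1}{\sqrt n}\min\bigl(n,\,n/d\bigr)$ at frequency distance $d$ from $0$, so $\bigl\| \bigvee_{k=1}^n |h_1+\cdots+h_k| \bigr\| \asymp \sqrt n$ for that basis -- no rearrangement of the argument can extract the extra $\log n$, because this is an upper bound, not merely a failure of your lower bound. Your fallback, ``invoke \cite{KP} directly,'' rests on a misreading of what the triangular-truncation estimate gives. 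That estimate says the truncation, as a map on matrices, has norm $\gtrsim \log n$; unwinding it, one gets a matrix $B$ of small operator norm and \emph{some} coefficient vector $x_0$ (depending on $B$) with $\|\Delta(B)x_0\|\gtrsim \log n\,\|x_0\|$. The lemma, however, requires largeness of the partial sums $h_1+\cdots+h_k=U(e_1+\cdots+e_k)$, i.e.\ of row sums of a \emph{staircase} truncation of $U$ evaluated at the all-ones coefficient vector, and it requires $U$ to be unitary. Neither the unit coefficients nor the unitarity is supplied by the operator-norm statement, and your sketch (``applied to $\sqrt n\,x_0$'') does not bridge either.

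The paper closes exactly this gap by a duality argument that you are missing. One regards $\Phi(T)=\bigl(Te_1,\,Te_1+Te_2,\,\ldots,\,Te_1+\cdots+Te_n\bigr)$ as a linear map $B(\ell_2^n)\to\ell_2^n(\ell_\infty^n)$; by Russo--Dye the unit ball of $B(\ell_2^n)$ is the closed convex hull of unitaries, so $\sup_{U\ \mathrm{unitary}}\|\Phi(U)\|=\|\Phi\|$, and it suffices to bound $\|\Phi\|$ from below. This is done on the adjoint: evaluating $\Phi^*$ at $(e_1,\ldots,e_n)\in\ell_2^n(\ell_1^n)$, a vector of norm $\sqrt n$, yields the lower-triangular matrix of ones, whose trace-class norm is $\succ n\log n$ because pairing it with the Toeplitz matrix $[1/(k-\ell)]$ (operator norm $\le\pi$, the same matrix underlying \cite{KP}) gives $\sum_{m}\sum_{j\le m}1/j\sim n\log n$. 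Hence $\|\Phi\|\succ \sqrt n\log n$, and some unitary $U$, hence some orthonormal basis $h_j=Ue_j$, witnesses the bound -- note this is non-constructive, whereas your plan presumes an explicit unitary can be written down. If you want to salvage your write-up, you should replace the ``reduction to \cite{KP}'' by this trace-duality and Russo--Dye step (or find some other device that forces unit coefficients and unitarity), since the constant-coefficient structure of $h_1+\cdots+h_k$ is precisely what the dual element $(e_1,\ldots,e_n)$ encodes.
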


\begin{proof}[Sketch of a proof of \Cref{p:destroy_l2_isometrically}]
We closely follow the proof of \Cref{p:destroy_lp}. The only difference is that now (in the same notation as before) we take $v_i = 12 f_i/13 \oplus 5 h_i/13$ and $w_i = -5 h_i/13 \oplus 12 g_i/13$, with $h_1, \ldots, h_n$ coming from \Cref{l:unitaries}.
\end{proof}

% The rest of this section will be devoted to proving \Cref{l:unitaries}.

\begin{proof}[Proof of \Cref{l:unitaries}]
Note that there exists a bijection between orthonormal bases $h_1, \ldots, h_n$ and $n \times n$ unitary matrices $U$, implemented by $U \mapsto (Ue_1, \ldots, Ue_n)$.
By the Russo-Dye Theorem (for the real version, see \cite{unitary-operators}), the unit ball of $B(\ell_2^n)$ is the closed convex hull of unitaries, hence it suffices to show that the operator
$$
\Phi : B(\ell_2^n) \to \ell_2^n(\ell_\infty^n) : T \mapsto \big( Te_1, Te_1 + Te_2, \ldots, Te_1 + \ldots + Te_n \big)
$$
has norm $\succ \log n \sqrt{n}$. Here, we define
$$
\big\|(\eta_1, \ldots, \eta_n) \big\|_{\ell_2^n(\ell_\infty^n)} = \big\| \vee_k |\eta_k| \big\|_{\ell_2^n},
$$
or in other words, for $\eta_k = \sum_{\ell=1}^n \eta_{k\ell} e_\ell$,
$$
\big\|(\eta_1, \ldots, \eta_n) \big\|_{\ell_2^n(\ell_\infty^n)} = \big\| \sum_\ell \big( \vee_k |\eta_{k\ell}| \big) e_\ell \big\|_{\ell_2^n} = \Big( \sum_\ell  \big( \vee_k |\eta_{k\ell}| \big)^2 \Big)^{1/2} .
$$
We shall actually establish the corresponding estimate for the norm of $\Phi^*$, which takes $\ell_2^n(\ell_1^n)$ to $B(\ell_2^n)^*$ (the space of trace class $n \times n$ matrices).
Here, we consider the ``trace duality'' on $B(\ell_2^n)$: for $n \times n$ matrices $A$ and $B$,
$\langle A, B \rangle = {\mathrm{tr}}(A B^*)$.
One can observe that $\Phi^*$ maps $\overline{\xi} = (\xi_1, \ldots, \xi_n)$ to the operator
$$
\Phi^* \big( \overline{\xi} \big) : e_k \mapsto \xi_k + \ldots + \xi_n \, \, (1 \leq k \leq n ) .
$$
In particular, taking $\xi_k = e_k$ ($1 \leq k \leq n$), we obtain
$$
\big\|\overline{\xi}\big\|_{\ell_2^n(\ell_1^n)} = \big\| \sum_k |e_k| \big\|_{\ell_2^n} = \sqrt{n} .
$$
On the other hand, $\Phi^*(\overline{\xi})$ is represented by the lower triangular matrix ${\mathbf{\tau}} = \big[ {\mathbf{\tau}}_{k\ell} \big]_{k,\ell=1}^n$, with ${\mathbf{\tau}}_{k\ell} = 1$ if $k \geq \ell$, ${\mathbf{\tau}}_{k\ell} =0$ otherwise. It remains to show that $\big\| {\mathbf{\tau}} \big\|_{B(\ell_2^n)^*} \succ n \log n$.
\medskip

To obtain this estimate, consider the $n \times n$ Toeplitz matrix $A = \big[ A_{k\ell} \big]$, with $A_{k\ell} = 1/(k-\ell)$ if $k \neq \ell$ and $A_{k\ell}=0$ if $k=\ell$.
By \cite{ACN} (or \cite{KP}), $\|A\|_{B(\ell_2^n)} \leq \pi$. %, while $\|A'\|_{B(\ell_2^n)} \sim \log n$, where $A'$ is the upper trianglular part of $A$. 
By parallel duality,
$$
\|A\|_{B(\ell_2)} \big\| {\mathbf{\tau}} \big\|_{B(\ell_2^n)^*} \geq \langle A, {\mathbf{\tau}} \rangle = \sum_{k,\ell} A_{k\ell} {\mathbf{\tau}}_{k\ell} = \sum_{m=1}^n \sum_{j=1}^m \frac1j \sim n \log n ,
$$
which yields the desired estimate for $\big\| {\mathbf{\tau}} \big\|_{B(\ell_2^n)^*}$.
\end{proof}

\begin{comment}
    
\marg{\timur{I have added my (somewhat half-baked) notes on the topic to this project; the file is titled embedding.tex}}
{\color{purple} I think Timur also had some results for ``improving" greedy properties by embedding into spaces with better properties, in the style of every space with an unconditional FDD embeds into a space with an unconditional basis (it would be interesting if this held for qg or greedy or UCC FDD). It would be very nice if a result or two of this flavor could be included in this subsection (Apologies if I am mis-remembering Timur's results!)}

\end{comment}

\subsubsection{$\ell_1$ is the only basis that is  absolute in every Banach lattice}\label{ss:uncondiitonal bases}

Here we give an alternative proof of one of the results of \cite{OTTT}.

\begin{proposition}\label{p:destroy_arbitrary_uncinditional}
Suppose that $(e_i)$ is a semi-normalized  basis in a Banach space $E$, not equivalent to the $\ell_1$ basis.
% \footnote{Due to the uniquness of an unconditional basis in $\ell_1$ \cite[Theorem 2.b.10]{LT1}, this condition is equivalent to $(e_i/\|e_i\|)$ not being equivalent to the $\ell_1$ basis.}. 
Then there exists a Banach lattice $X$ containing a basic sequence $(x_i)$ equivalent to $(e_i)$ so that $(x_i)$ is not an absolute basic sequence.
%  Suppose $(e_i)$ is an unconditional basis in some Banach space $E$, so that $E$ is not isomorphic to $\ell_1$\footnote{Due to the uniquness of an unconditional basis in $\ell_1$ \cite[Theorem 2.b.10]{LT1}, this condition is equivalent to $(e_i/\|e_i\|)$ not being equivalent to the $\ell_1$ basis.}. Then there exists a Banach lattice $X$ with a basic sequence $(x_i)$, equivalent to $(e_i)$, so that $(x_i)$ is not an unconditional bibasic sequence, in the sense of \cite[Section 6]{TT19}.
\end{proposition}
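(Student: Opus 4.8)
The plan is to realize a copy of $(e_i)$ inside a Banach lattice so that signed sums stay small but the sum of moduli blows up — this is exactly the failure of \eqref{absolute basis unc }. The starting observation is that since $(e_i)$ is not equivalent to the $\ell_1$ basis, by the James distortion / James non-distortion theorem (or, more precisely, by the standard fact that a normalized basis dominates the $\ell_1$ basis iff it is $\ell_1$-like on all blocks), for every $n$ one can find a normalized block basis $y_1,\dots,y_n$ of $(e_i)$ together with a choice of signs $\varepsilon_1,\dots,\varepsilon_n\in\{\pm1\}$ such that $\|\sum_{i=1}^n \varepsilon_i y_i\|$ is much smaller than $n$ — say $o(n)$, or at least $\le \delta_n n$ with $\delta_n\to 0$, after passing to a suitable subsequence and using a gliding-hump argument. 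In fact it is cleaner to use the contrapositive of the characterization: if every normalized block basis $(y_i)$ satisfied $\|\sum a_i y_i\|\ge c\sum|a_i|$ for a fixed $c>0$, then $(e_i)$ would be equivalent to the $\ell_1$ basis (this is a classical consequence of the subsequence-splitting / Bessaga--Pe\l czy\'nski type argument).

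Next I would assemble these blocks into a single lattice. Following the pattern of \Cref{ss:destroy_lp} and \Cref{ss:destroy_Lind}, let $X = \big(\sum_n E_n\big)_{\ell_1}$ (or an appropriate $\ell_p$-type sum with a weight) where each $E_n$ carries a lattice structure witnessing the bad behavior on $n$ coordinates: concretely, put inside the $n$-th summand the block vectors $y_1^{(n)},\dots,y_{n}^{(n)}$ realized as elements of a large finite-dimensional $\ell_1^{N_n}$ (the span of the underlying unit vectors of $E$ restricted to the relevant coordinates, which sits inside $\ell_1^{N_n}$ once we pass to the coordinate lattice). Scale the $n$-th block by a summable weight $c_n$ so that concatenating $(c_n y_i^{(n)})_{1\le i\le n,\, n\in\N}$ yields a basic sequence $(x_k)$ in $X$ which is equivalent to $(e_i)$ — here one uses that a small-perturbation / disjointly-supported-in-the-outer-sum argument preserves the $E$-equivalence class up to a uniform constant, exactly as in the cited subsections.

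To see $(x_k)$ is not absolute: in the $n$-th summand, on one hand $\big\|\sum_{i=1}^n \varepsilon_i^{(n)}\, c_n y_i^{(n)}\big\| = c_n\|\sum_i \varepsilon_i^{(n)} y_i^{(n)}\| \le c_n\delta_n n$, while on the other hand $\big\|\sum_{i=1}^n |c_n y_i^{(n)}|\big\| = c_n\|\sum_i |y_i^{(n)}|\| \ge c_n\, \kappa n$ for a uniform $\kappa>0$ (the sum of $n$ normalized disjointly-supported-ish positive vectors in an $\ell_1$-lattice has norm $\gtrsim n$; if the blocks are genuinely disjoint this is an equality with $\kappa=1$). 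Choosing the $y_i^{(n)}$ so that they are in fact pairwise disjoint in the coordinate lattice — which one can arrange by making them successive blocks — makes the lower bound trivial. Hence the ratio between the two sides is $\gtrsim \kappa/\delta_n \to \infty$, so no constant $A$ can work in \eqref{absolute basis unc }, and $(x_k)$ fails to be an absolute basic sequence.

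The main obstacle I anticipate is the first step: extracting, for the given basis $(e_i)$, normalized blocks together with signs making the signed sum genuinely sublinear in $n$ while keeping the blocks disjoint in the ambient coordinate lattice of $\ell_1$-type. This requires invoking the right structural dichotomy — that a semi-normalized basis not equivalent to $\ell_1$ admits, after passing to blocks, vectors on which the $\ell_1$-lower-estimate fails uniformly — and then being careful that the lattice operations $|\cdot|$ and $\vee$ computed in $X$ (the outer $\ell_1$-sum of finite-dimensional $\ell_1$'s) really do behave additively on the disjoint pieces. Everything after that is bookkeeping of the type already carried out in \Cref{ss:destroy_lp}.
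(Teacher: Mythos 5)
There is a genuine gap, in fact two. First, your key extraction step is false as stated: not being equivalent to the $\ell_1$ basis does \emph{not} yield, for each $n$, normalized blocks $y_1<\dots<y_n$ and signs with $\|\sum_i \varepsilon_i y_i\|=o(n)$. Tsirelson's space is a counterexample: its canonical basis is $1$-unconditional and not equivalent to the $\ell_1$ basis (the space contains no copy of $\ell_1$), yet for \emph{every} normalized block basis $y_1<\dots<y_n$ and every choice of signs one has $\|\sum_{j=1}^n\varepsilon_j y_j\|\geq \tfrac{1}{2}\lfloor n/2\rfloor$, because the last $\lfloor n/2\rfloor$ blocks form an admissible family and the basis is unconditional. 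The correct contrapositive you quote only produces \emph{arbitrary} scalars $(a_i)$ with $\|\sum a_i e_i\|<c\sum|a_i|$, and that weaker statement does not feed your construction: to contradict absoluteness you must make $\|\sum_i|a_ix_i|\|$ large in the ambient lattice, and in the coordinate lattice of $E$ itself there is no reason for moduli of blocks to add up (think of $c_0$-like behaviour). Second, your construction does not produce a sequence equivalent to the \emph{given} basis $(e_i)$, which is what the statement demands. Realizing the blocks "as elements of $\ell_1^{N_n}$" replaces the $E$-norm by an $\ell_1$-norm and destroys the isomorphism class; scaling by summable weights $c_n$ makes $(x_k)$ norm-null, hence not even semi-normalized; and in any case a concatenation of block bases inside an outer $\ell_1$-sum is not equivalent to $(e_i)$ -- the cited subsections \ref{ss:destroy_lp} and \ref{ss:destroy_Lind} contain no such general transfer principle; their equivalence proofs are specific (an invertible operator $I\pm S$, resp.\ the structure of the Lindenstrauss basis).

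The paper's route supplies exactly the two ingredients you are missing. It first disposes of the conditional case (absolute implies unconditional, so embed into $C(K)$), then regards $E$ itself as a Banach lattice via the $1$-unconditional basis; since $(e_i)$ is not equivalent to the $\ell_1$ basis, uniqueness of the unconditional basis of $\ell_1$ shows $E$ is not (lattice isomorphic to) $\ell_1$, and Wickstead's theorem then gives finite-rank \emph{contractions} $T_j:\spn[e_i:N_{j-1}\leq i<N_j]\to\ell_p$ with arbitrarily large regular norm, i.e.\ normalized positive combinations $\sum_i\alpha_i e_i$ with $\|\sum_i\alpha_i|T_je_i|\|>j$. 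Setting $x_i=e_i\oplus T_je_i$ inside $X=E\oplus_\infty c_0(\ell_p)$ makes the equivalence with $(e_i)$ automatic (the $\oplus_\infty$-norm is unchanged because each $T_j$ is a contraction), while the large sums of moduli in the $\ell_p$-coordinates kill absoluteness. In short: the mechanism for "moduli blow up while signed sums stay small" is the failure of regularity of operators into $\ell_p$, not a block/sign extraction from $(e_i)$, and the equivalence is preserved by keeping an intact copy of $(e_i)$ in the first coordinate rather than rebuilding the sequence from blocks.
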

\begin{remark}
 We do not know whether this proposition can be strengthened in one of the following ways:
 \begin{itemize}
  \item Can we replace ``absolute'' by a weaker condition such as ``permutable bibasic?''
  \item Can $(x_i)$ be a basis and not just a basic sequence? (here we are implicitly assuming that $(x_i)$ is unconditional  so that $E=[x_i]$ admits lattice structures).
 \end{itemize}
\end{remark}

\begin{proof}
Any absolute basis is unconditional, so if $(e_i)$ is conditional an embedding of $E$ into a suitable $C(K)$ will generate the desired $(x_i)$. We henceforth assume that $(e_i)$ is an unconditional basis of $E$. Due to the uniqueness of  unconditional bases in $\ell_1$ \cite[Theorem 2.b.10]{LT1}, $E$ is not isomorphic to $\ell_1$.
By renorming, we can assume that $(e_i)$ is $1$-unconditional.
We shall view $E$ as a Banach lattice with the order determined by $(e_i)$.
\medskip

% By scaling, we can and do assume that the basis $(e_i)$ is normalized. Further, assume that this basis is $1$-unconditional.
Fix $p \in [1,\infty)$. By \cite[Theorem 2.3]{Wickstead07}, for any $N \in \N$ and $C>0$ there exists a finite rank contraction $T : \spn[e_i : i > N] \to \ell_p$ of regular norm greater than $C$.  That is,
$\||T|\|>C $.
Note that $\ell_p$ is positively contractively complemented in its second dual, and hence the regular norm is equal to the order bounded norm.
Approximating, we can find a norm one $x = \sum_i \alpha_i e_i$ (finite sum, with $\alpha_i \geq 0$) so that
$$
\Big\| \vee_{\varepsilon_i = \pm 1} \big| \sum_i \varepsilon_i \alpha_i T e_i \big| \Big\| > C .
$$
Note that
$$
\vee_{\varepsilon_i = \pm 1} \big| \sum_i \varepsilon_i \alpha_i T e_i\big| = \sum_i \big| \alpha_i T e_i \big| ,
$$
so we have, in fact,
$$
\Big\| \sum_i \big| \alpha_i T e_i \big| \Big\| > C .
$$
This allows us to find $1 = N_0 < N_1 < N_2 < \ldots$, contractions $T_j : \spn[e_i : N_{j-1} \leq i < N_j] \to \ell_p$, and $\alpha_i \geq 0$ so that
$$
\big\|\sum_{i=N_{j-1}}^{N_j-1} \alpha_i e_i\big\| = 1 , \, \, \,
\Big\| \sum_i \alpha_i \big| T_j e_i \big| \Big\| > j .
$$
In the space $X = E \oplus_\infty c_0(\ell_p)$, let $x_i = e_i \oplus 0 \oplus \ldots \oplus 0 \oplus T_j e_i \oplus 0 \oplus \ldots$ (the term $T_j e_i$ is located in the $j$-th copy of $\ell_p$).
The sequence $(x_i)$ is equivalent to $(e_i)$ as each $T_j$ is a contraction, but $(x_i)$ is not absolute in $X$.
\end{proof}

\subsection{Blocking bases}\label{Section5}
In this subsection, we show that the absolute and bibasis properties behave very well under blocking, whereas the greedy property does not.
\subsubsection{Blocking to gain the absolute property}\label{section 5.1}
In \Cref{embed bad} we have given several methods to construct basic sequences with poor lattice properties. We now present some techniques to produce good basic sequences (or more specifically FDDs, from which one can extract good basic sequences). The main insight is that most bases are built in disjoint blocks, so if we lump the blocks together, we can get better lattice behavior.

\begin{proposition}\label{Blocking to get a-FDD}
Let $X$ be a Banach lattice and $(C_n)$ an absolute FDD of $[C_n]\subseteq X$. Then every unconditional and shrinking FDD $(B_n)$ with $[B_n]\subseteq [C_n]$ can be blocked to be absolute.
\end{proposition}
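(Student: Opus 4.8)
The plan is to exploit the absolute FDD $(C_n)$ as a "lattice skeleton" that controls sums of moduli, together with the shrinking hypothesis on $(B_n)$, which is what makes a blocking argument work. The key observation is that absoluteness of $(C_n)$ gives a constant $A\ge 1$ such that $\bignorm{\sum_{k} |z_k|}\le A\bignorm{\sum_k z_k}$ whenever each $z_k$ lies in $C_{n_k}$ for distinct $n_k$; equivalently, absoluteness passes to the projections onto any block of consecutive $C_n$'s with uniform constant. So if a vector $y\in[B_n]$ is supported (in the $(C_n)$-expansion) in a single block $\bigoplus_{n\in I} C_n$ of the $C$-FDD, then the sum of the moduli of \emph{its} $(B_n)$-coordinates over that block is controlled by $A$ times its norm. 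The difficulty is that a single $B_n$ need not be supported in finitely many $C_m$'s, and consecutive $B_n$'s need not be supported in disjoint blocks of $(C_m)$ — this is where we need to block and where the shrinking assumption enters.

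First I would set up the standard blocking apparatus: let $Q_m$ denote the canonical projection onto $C_1\oplus\dots\oplus C_m$ associated with the FDD $(C_n)$, and let $R_k$ denote the canonical projection onto $B_1\oplus\dots\oplus B_k$ for $(B_n)$. Using that $(B_n)$ is shrinking, the operators $(I-R_k)$ converge to $0$ in the strong operator topology on the predual side, so for a \emph{finite-dimensional} subspace the tails are uniformly small; concretely, each $B_n$ is finite dimensional, so given $\varepsilon_n>0$ we can find an index $m_n$ with $\norm{(I-Q_{m_n})|_{B_n}}<\varepsilon_n$, i.e. each $B_n$ is, up to $\varepsilon_n$, supported in $C_1\oplus\dots\oplus C_{m_n}$. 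A simultaneous use of shrinkingness of $(B_n)$ — which guarantees that for each fixed $C$-block index the "overlap" from far-away $B_n$'s is negligible — lets us choose, by a standard diagonal/gliding-hump selection, an increasing sequence $0=N_0<N_1<N_2<\dots$ and an increasing sequence $0=M_0<M_1<M_2<\dots$ such that the new blocks $D_j:=B_{N_{j-1}+1}\oplus\dots\oplus B_{N_j}$ are each within $2^{-j}$ (in operator norm on $D_j$) of being supported in the $C$-block $C_{M_{j-1}+1}\oplus\dots\oplus C_{M_j}$. The point of the gliding hump is precisely that distinct $D_j$'s are then almost-disjointly supported over almost-disjoint blocks of $(C_m)$, and the disjointness of the $C$-blocks is where absoluteness of $(C_n)$ can be applied block-by-block.

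Once the blocks $(D_j)$ are chosen, I would verify absoluteness of $(D_j)$ directly from \eqref{absolute basis unc }. Given a finite sum $\sum_j d_j$ with $d_j\in D_j$, write $d_j=\tilde d_j+r_j$ where $\tilde d_j$ is the part supported in the $C$-block $C_{M_{j-1}+1}\oplus\dots\oplus C_{M_j}$ and $\norm{r_j}\le 2^{-j}\norm{d_j}$. Because the $C$-blocks for different $j$ are disjoint, absoluteness of $(C_n)$ gives $\bignorm{\sum_j|\tilde d_j|}\le A\bignorm{\sum_j \tilde d_j}\le A\bigl(\bignorm{\sum_j d_j}+\sum_j\norm{r_j}\bigr)$, and $\sum_j\norm{r_j}\le \sum_j 2^{-j}\norm{d_j}$ is itself controlled by $\bignorm{\sum_j d_j}$ up to the unconditional constant of $(D_j)$ (which is inherited, with a fixed constant, from the unconditional constant of $(B_n)$ and the uniform FDD constant). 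Reassembling, $\bignorm{\sum_j|d_j|}\le\bignorm{\sum_j|\tilde d_j|}+\sum_j\norm{r_j}$ is bounded by a uniform multiple of $\bignorm{\sum_j d_j}$, which is precisely the statement that $(D_j)$ is absolute.

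The main obstacle is the selection step: arranging that the new $B$-blocks $(D_j)$ are \emph{almost supported} in pairwise disjoint $C$-blocks. This requires combining (a) the finite-dimensionality of each $B_n$ with the convergence of the $(C_m)$-expansion to push each $D_j$ into an initial segment of $(C_m)$, and (b) the shrinking hypothesis on $(B_n)$ to push the early $C$-coordinates off of the later $D_j$'s (so that the tails of the $C$-expansion carry no mass of the early $D_j$'s). Making (a) and (b) compatible is a routine-but-delicate double gliding-hump argument; I would treat it carefully but not expect any conceptual surprise there. Everything after the selection is a soft perturbation estimate using only \eqref{absolute basis unc } and the uniform boundedness of the FDD projections.
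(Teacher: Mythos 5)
The step you describe as ``routine-but-delicate'' is exactly where the argument breaks: in general you \emph{cannot} block so that the new $B$-blocks $D_j$ are (even approximately) supported in pairwise disjoint $C$-blocks. The obstruction is a circularity in the selection: to make the head projection $Q_{M_j}$ small on $D_{j+1}$ you must choose the starting index of $D_{j+1}$ \emph{after} $M_j$, while to make the tail $(I-Q_{M_j})$ small on $D_j$ you must choose $M_j$ \emph{after} the last index of $D_j$. This is precisely why the blocking principle you are implicitly invoking (Lindenstrauss--Tzafriri, Proposition 1.g.4) only yields that each new $B$-block sits, up to $\varepsilon_i$, inside the union of \emph{two consecutive} new $C$-blocks, so that consecutive $B$-blocks share a $C$-block; the overlap cannot be removed. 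Concretely, take $X=\ell_2$, $C_n=\mathrm{span}(e_n)$, and $b_n=e_{2n}+\tfrac12(e_{2n-1}+e_{2n+1})$, $B_n=\mathrm{span}(b_n)$. Then $(b_n)$ is equivalent to the unit vector basis of $\ell_2$, hence unconditional and shrinking, and $(C_n)$ is absolute. But for any blocking $D_j=\mathrm{span}(b_n: N_{j-1}<n\le N_j)$ and any pairwise disjoint index sets $I_j$, the coordinate $2N_j+1$ lies in at most one of $I_j,I_{j+1}$, while it carries coefficient $\tfrac12$ both in $b_{N_j}\in D_j$ and in $b_{N_j+1}\in D_{j+1}$; hence one of these two vectors (of norm $\sqrt{3/2}$) is at relative distance at least $\tfrac12/\sqrt{3/2}>2/5$ from $[\,e_m : m\in I_j\,]$ (respectively $I_{j+1}$), so your $2^{-j}$-approximation is impossible for all $j\ge 2$, no matter how the blockings are chosen.

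The paper circumvents this by accepting the overlapping conclusion of the blocking lemma -- every $x\in B_i'$ is $\varepsilon_i$-close to $C_{i-1}'\oplus C_i'$ -- and then using the unconditionality of $(B_n)$ in an essential way: a vector $x=\sum_i a_ix_i$ is split into its odd-indexed and even-indexed parts (both of which converge by unconditionality), and within each parity class the windows $C_{i-1}'\oplus C_i'$ are pairwise disjoint, so the absoluteness of $(C_n)$ applies to each part separately; the two estimates are then added. In your write-up unconditionality only enters to dominate $\sum_j 2^{-j}\|d_j\|$, which misses its real role. If you replace your disjoint-support selection by the two-window blocking plus the odd/even splitting, the remaining perturbation estimate you give goes through essentially as written.
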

\begin{proof}
Let $(B_n)$ be an unconditional and shrinking FDD with $[B_n]\subseteq [C_n]$.  Let $\varepsilon_i=\frac{1}{2^i}$ and apply \cite[Proposition 1.g.4]{LT1} to get a blocking $(B_i')$ of $(B_n)$ and a blocking $(C_i')$ of $(C_n)$ such that for every $x\in B_i'$ there is a $y\in C_{i-1}'\oplus C_i'$ with $\|x-y\|\leq \varepsilon_i\|x\|$.
\medskip

Take normalized $x_i\in B_i'\setminus \{0\}$; it suffices to show that $(x_i)$ is absolute. For this, we let $x\in [x_i]$, write $x=\sum_{i=1}^\infty a_ix_i$, and find $y_i\in C_{i-1}'\oplus C_i'$ with $\|a_ix_i-y_i\|\leq \varepsilon_i |a_i|$. By unconditionality,
$x^1=\sum_{i\ odd}a_ix_i$ and $x^2=\sum_{i\ even} a_ix_i$ exist.  Moreover, we have 

$$\|\sum_{i\ odd}|a_ix_i|\|\leq\|\sum_{i \ odd}|a_ix_i-y_i|\|+\|\sum_{i\ odd}|y_i|\|\leq \sum_{i \ odd}\varepsilon_i |a_i|+\|\sum_{i\ odd}|y_i|\|<\infty.$$ 
The reason the above is finite is because the sequence $(x_i)$ is normalized, $a_i\to 0$, and since we are only summing over odd terms the $y_i$ do not overlap. This means that $\|\sum_{i\ odd}|y_i|\|\leq A\|\sum_{i\ odd}y_i\|$ and $\|\sum_{i\ odd}y_i\|<\infty$ since $\sum_{i\ odd}(a_ix_i-y_i)$ exists. Similarly, $\|\sum_{i\ even} |a_ix_i|\|<\infty$, so $\|\sum_{i=1}^\infty |a_ix_i|\|<\infty$ and hence $(x_i)$ is absolute.
%The result on $\ell_p$ follows since every FDD of $\ell_p$ can be blocked to be unconditional.
\end{proof}
\begin{remark}
By \cite{Johnson-Zippin72}, every FDD of $\ell_p$, $p>1$, can be blocked to be unconditional.
As the canonical basis of $\ell_p$ is absolute, \Cref{Blocking to get a-FDD} shows that every FDD of $\ell_p$, $p>1$, can be blocked to be an absolute FDD. By contrast, the ``dual summing basis'' in $\ell_1$ cannot even be blocked as a bi-FDD. 
\end{remark}
Next, we present two examples to illustrate the sharpness of the assumptions in \Cref{Blocking to get a-FDD}.
\begin{example}
Based on the reasoning from \cite[Example 2.13]{JO81}, we show that $X=\ell_p\oplus \ell_q$ ($1 < q < p < \infty$) has a shrinking FDD which cannot be blocked to be either unconditional or a bi-FDD.
Denote by $(\delta_i)$ and $(e_i)$ the canonical bases of $\ell_q$ and  $\ell_p$, respectively. Let $E_1=[0\oplus \delta_1]$ and for $n\geq 2$ let $E_n=[e_{n-1}\oplus \delta_{n-1},0\oplus \delta_n]$.
For a blocking $F_n = [E_i : k(n) < i \leq k(n+1)]$, take $f_1 = 0 \oplus e_{k(2)} \in F_1$, and, for $2 \leq n \leq m$, $f_n = e_{k(n)} \oplus (\delta_{k(n)} + \delta_{k(n+1)}) \in F_n$.
It is easy to see that $\|\sum_{n=1}^m (-1)^n f_n\| \sim m^{1/p}$, while  $\|\sum_{n=1}^m f_n\| \sim m^{1/q}$, and also, $\|\vee_{k=1}^m |\sum_{n=1}^k (-1)^n f_n| \| \sim m^{1/q}$.
\end{example}

% \timur{Do we need \Cref{bad weakly null}? \cite{GKP} shows that branches of the Haar system, in their natural order, are not bibasic. Thus we have to go into details concerning order as well; but on the other hand, we only need to work with single branches, so \cite{JMS07} is an overkill. \cite[Section 7]{OTTT} should suffice...}

\begin{example}\label{bad weakly null}
A \emph{Haar system} is a family of functions $f_{i,n}$ ($n \geq 0, 1 \leq i \leq 2^n$) so that there exist sets $A_{i.n} \subset (0,1)$ of measure $2^{-n}$ so that $A_{i,n} = A_{2i-1,n+1} \cup A_{2i,n+1}$, $A_{i,n} \cap A_{j,n} = \emptyset$ if $i \neq j$, $f_{i,n} = 1$ on $A_{2i-1,n+1}$, and $f_{i,n} = -1$ on $A_{2i,n+1}$.
\cite{JMS07} gives an example of a weakly null normalized sequence $(g_k) \subset L_1(0,1)$ so that, for every subsequence $g_k'$, and every sequence of positive numbers $\varepsilon_{i,n}$, there exists a Haar sequence $h_{i,n}$ and a block sequence $g_{i,n}''$ of $g_k'$ so that $\|g_{i,n}'' - h_{i,n}\| < \varepsilon_{i,n}$.
By \cite[Theorem 4.6]{GKP}, ``branches'' of any Haar system fail to be bibasic. Hence, no subsequence of $(g_k)$ is bibasic. In particular, although every norm convergent sequence has a uniformly convergent subsequence, it is not true that every basic sequence admits a bibasic subsequence. Actually, much stronger results can be shown if one allows the ambient Banach lattice to be non-classical -- in \cite[Theorem 7.5]{OTTT} it is shown that there are subspaces with no bibasic sequences at all.
\end{example}

\subsubsection{Blocking to lose the greedy property}\label{lose}
In contrast to the situation in \Cref{section 5.1}, we now show that greediness is very unstable under blockings. %Essentially, what we will do is take a basis which is greedy and then block it so that it isn't greedy. In the next section, we contrast this in the bibasis setting by showing that most bases can be blocked to be bi-FDD.
For this purpose, it is instructive to look at an example. Recall that we are starting with a greedy basis $(e_i)$, which we want to block into an FDD which fails the democracy condition. To begin, let us take $(e_i)$ to be the standard basis of the Lorentz space $\ell_{p,q}$. One can find a sequence of mutually disjoint elements which is equivalent to $\ell_q$. Thus, one can block the canonical basis for $\ell_{p,q}$ into an FDD $(E_i)$ so that 
\begin{itemize}
    \item For $i$ odd, the space $E_i$ is $1$-dimensional, and spanned by a canonical basis element.
    \item For $i$ even, $E_i$ contains a unit vector $u_{i/2}$ such that $u_1,u_2,\dots$ span a copy of $\ell_q$.
\end{itemize}
The resulting FDD is not democratic. Indeed, the fundamental function evaluated from $E_i$'s with $i$ odd gives $\sim n^\frac{1}{p}$; on the other hand, if we look at $u_j\in E_{2j}$, we obtain the fundamental function $\sim n^\frac{1}{q.}$
\medskip

In a similar fashion, one can use the fact that the space $L_p$ with $p\in (1,\infty)\setminus\{2\}$ contains a Hilbert space to show that the Haar basis, although greedy, can be blocked into a non-greedy FDD. In general, we have the following.
\begin{proposition}\label{Rearrange greedy}
For a subsymmetric basis $(e_i)$ of a Banach space $X$, the following are equivalent.
\begin{enumerate}
    \item $(e_i)$ is equivalent to the canonical $\ell_p$ ($1\leq p<\infty)$ or $c_0$ basis;
    \item Every $($not necessarily consecutive$)$ blocking of a permutation of $(e_i)$ produces a greedy FDD.
\end{enumerate}
\end{proposition}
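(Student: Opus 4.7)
My plan for $(i)\Rightarrow(ii)$ is a direct verification. Assuming $(e_i)$ is the canonical basis of $\ell_p$ or $c_0$, every (not necessarily consecutive) blocking $(E_n)$ of any permutation of $(e_i)$ has $E_n$ equal to the closed span of a subset $A_n\subseteq\N$ of basis vectors, with the $A_n$ pairwise disjoint. Any normalized $v_n\in E_n$ is then lattice-disjoint from vectors in the other $E_m$, so $\|\sum_{n\in F}v_n\|=|F|^{1/p}$ (respectively $1$ for $c_0$) regardless of the choice of unit vector $v_n\in E_n$. Hence the FDD is $1$-democratic at the level of unit vectors, and unconditionality of $(E_n)$ is inherited from $(e_i)$, so the FDD is greedy by the FDD analogue of Konyagin--Temlyakov.

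For $(ii)\Rightarrow(i)$ I will argue the contrapositive. Suppose $(e_i)$ is subsymmetric but inequivalent to every canonical $\ell_p$ or $c_0$ basis; I will exhibit a blocking of a permutation of $(e_i)$ that is not greedy. The heart of the argument is to produce a normalized, disjointly supported block sequence $(u_j)$ of $(e_i)$ whose fundamental function $\|\sum_{j=1}^n u_j\|$ is not equivalent (in the $\sim$ sense) to $\varphi(n):=\|\sum_{i=1}^n e_i\|$. Granting this, set $B_j:=\text{supp}(u_j)$, pick distinct indices $(n_j)\subseteq\N\setminus\bigcup_j B_j$, and define $E_j:=[e_i\,:\,i\in B_j\cup\{n_j\}]$. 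After permuting $(e_i)$ so that the finite sets $B_1\cup\{n_1\},B_2\cup\{n_2\},\ldots$ appear as consecutive runs, $(E_j)$ becomes a blocking of this permuted basis. Each $E_j$ contains two unit vectors, namely $u_j$ and $e_{n_j}$, and the corresponding unit-vector sums $\|\sum_{j=1}^n u_j\|$ and $\|\sum_{j=1}^n e_{n_j}\|\sim\varphi(n)$ (by subsymmetry) are inequivalent by construction. Exactly as in the Lorentz example preceding the statement, the FDD $(E_j)$ then fails democracy at the level of unit vectors, so it is not greedy.

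The main technical obstacle is producing the block $(u_j)$ with a genuinely different fundamental function. My plan is a case split based on $\varphi$. First, I test the ``doubled'' block $w_j:=(e_{2j-1}+e_{2j})/\|e_1+e_2\|$, whose fundamental function is $\varphi(2n)/\|e_1+e_2\|$; if this sequence is not $\sim\varphi(n)$, I simply take $(u_j):=(w_j)$. Otherwise $\varphi(2n)\sim\|e_1+e_2\|\varphi(n)$, and a standard submultiplicativity argument (applied to the subadditive, increasing function $\varphi$) forces $\varphi(n)\sim n^{1/p}$ for some $p\in[1,\infty]$. In this regular case, the hypothesis $(e_i)\not\sim\ell_p$ (or $c_0$) combined with Krivine's theorem applied to the unconditional basis $(e_i)$, and spread out using subsymmetry, will yield a normalized disjoint block $(u_j)$ uniformly equivalent to the canonical $\ell_q$-basis for some $q\neq p$; then $\|\sum_{j=1}^n u_j\|\sim n^{1/q}\not\sim n^{1/p}\sim\varphi(n)$, as required. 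Establishing this dichotomy cleanly---in particular ruling out the coincidence $q=p$ when $(e_i)\not\sim\ell_p$---is the principal hurdle I expect, and will require invoking the Zippin-type classification of perfectly homogeneous subsymmetric bases (LT1, Chapter 3).
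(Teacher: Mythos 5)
Your overall strategy for $(ii)\Rightarrow(i)$ matches the paper's: assume $(e_i)$ is subsymmetric, establish some control over disjoint normalized sums, derive $\varphi(n)\sim n^{1/p}$, and then upgrade to actual $\ell_p$-equivalence; the non-democratic FDD construction (pairing a block vector $u_j$ with a basis vector $e_{n_j}$ inside the same two-dimensional block $E_j$) is a harmless variant of the paper's, which instead interleaves bad blocks with one-dimensional members $[e_j]$, $N_k\le j<2N_k$. The $(i)\Rightarrow(ii)$ direction is also the same (the paper dismisses it in one line).

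The gaps are in the ``principal hurdle'' you flagged, and they are substantive. First, the step ``$\varphi(2n)\sim\lambda\varphi(n)$ forces $\varphi(n)\sim n^{1/p}$'' does not follow: if the equivalence $\varphi(2n)\le C\lambda\varphi(n)$ holds with $C>1$, iteration only gives $\varphi(2^k)\le (C\lambda)^k$, and the error $C^k$ grows geometrically, which is not compatible with a single power law up to uniform constants. The argument you have in mind (the one in \cite[Thm.~2.a.9]{LT1}) requires a multiplicative estimate $\varphi(nm)\le C\varphi(n)\varphi(m)$ that holds with \emph{one} constant $C$ uniformly over all $m$ and $n$; this is exactly what the paper extracts first, by showing (via the non-democratic FDD construction applied to \emph{all} block sizes, not just $2$) that every disjoint family of $n$ unit vectors has norm within a fixed constant of $\varphi(n)$. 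Testing only blocks of size $2$ is not enough. Second, even granting $\varphi(n)\sim n^{1/p}$, the appeal to Krivine plus Zippin does not produce a disjoint block with $\ell_q$ growth for some $q\ne p$: Krivine's theorem only guarantees finite block representability of \emph{some} $\ell_q$ (which could be $q=p$), and Zippin's theorem concerns perfectly homogeneous bases, a hypothesis you have not established (and which is strictly stronger than the uniform democracy you actually have). The block with a genuinely different fundamental function need not be $\ell_q$-equivalent for any $q$, so there is no classification theorem to invoke. What the paper does instead, once the uniform democracy constant $C$ and $\varphi(n)\sim n^\gamma$ are in hand, is a direct combinatorial argument: decompose a finitely supported unit vector into dyadic levels $S_j=\{i:2^{-j/p}<|\alpha_i|\le 2^{(1-j)/p}\}$, compare with a ``spread out'' copy $y'$ built from $M$ translates, and use subsymmetry plus the democracy estimate on the disjoint pieces $2^{-j/p}\mathbbold{1}_{F_{j\ell}}$ to show $\sum_j 2^{-j}|S_j|\sim 1$. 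This step has no shortcut through Krivine or Zippin.
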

\begin{proof}
The implication (i)$\Rightarrow$(ii) is immediate, so we focus on (ii)$\Rightarrow$(i).
\medskip

We assume that every blocking of a permutation of $(e_i)$ is a greedy FDD.
By renorming, we can and do assume that $(e_i)$ is normalized, and $1$-unconditional.  Thus,  any FDD we produce from $(e_i)$ will be $1$-unconditional as well. We let $\phi(n)=\Big\|\sum_{i=1}^ne_i\Big\|$ and claim that there exists a constant $C$ so that, whenever $(x_k)_{k=1}^n$ is a disjoint collection of norm one vectors, then 
\begin{equation}\label{Dem estimate}
C^{-1}\phi(n)\leq \Big\|\sum_{k=1}^nx_k\Big\|\leq C\phi(n).\end{equation}
Suppose for the sake of contradiction that \eqref{Dem estimate} fails. Then for any $M\in \mathbb{N}$ and $C>1$ there exists $n\in \mathbb{N}$ and $x_1,\dots,x_n$ disjointly supported on $[M,\infty)$ for which \eqref{Dem estimate} fails. Then, concatenating, we can find a sequence $(N_k)$ so that $N_0=1$ and $N_k>2N_{k-1}$ for any $k$, together with norm one vectors $(x_{ik})$ ($k\in \mathbb{N}$, $1\leq i\leq n_k$), so that $(x_{ik})_{i=1}^{n_k}$ is disjointly supported on $[2N_{k-1},N_k-1]$ and

$$\Big\|\sum_{i=1}^{n_k}x_{ik}\Big\|\not\in [k^{-1}\phi(n_k),k\phi(n_k)].$$
We then consider an FDD so that
\begin{enumerate}
    \item Some blocks contain the vectors $x_{ik}$ described above;
    \item If $N_k\leq j<2N_k$ for some $k$, then span$[e_j]$ is a one-dimensional member of this FDD.
\end{enumerate}
Recall that, as $(e_i)$ is itself greedy, there exists a constant $c$ so that
$$c^{-1}\phi(n)\leq \Big\|\sum_{i\in A}e_i\Big\|\leq c\phi(n)$$
whenever $A\subseteq \mathbb{N}$ has cardinality $n$. Therefore, the FDD constructed above fails the democracy condition, so cannot be greedy. This is the desired contradiction.
\medskip

Reasoning as in the proof of \cite[Theorem 2.a.9]{LT1}, we show that there exists $\gamma\in [0,1]$ so that $C^{-2}n^\gamma\leq \phi(n)\leq C^2n^\gamma$ for every $n$, where $C$ is as before. If $\gamma=0$, then clearly $(e_i)$ is equivalent to the $c_0$ basis. We show that, if $\gamma>0$, then $(e_i)$ is equivalent to the $\ell_p$ basis, for $p=\frac{1}{\gamma}.$
\medskip

Consider a finitely supported norm one vector $a=\sum_{i\in S}\alpha_ie_i.$ We have to show that $\sum_i|\alpha_i|^p\sim 1.$ Due to unconditionality, we can and do assume that $\alpha_i>0$ for all $i\in S.$ Clearly, $\sup_i\alpha_i\leq 1$. For $j\in \mathbb{N},$ let $S_j=\{i\in S : 2^{-j/p}<\alpha_i\leq 2^{(1-j)/p}\}$. Let $J$ be the largest index $j$ for which $S_j$ is non-empty. Let $y=\sum_j2^{-j/p}\one_{S_j}$. Then $2^{-1/p}\leq \|y\|\leq 1$. It therefore suffices to show that $\sum_j 2^{-j}|S_j|\sim 1.$
\medskip

Fix $M > \max S$, divisible by $2^J$. For $0\leq k\leq M-1$, let $y_k=\sum_j2^{-j/p}\one_{S_j+kM}$, and 
$$y'=\sum_{k=0}^{M-1}2^{-j/p}\one_{S_j'}, \ \text{where} \ S_j'=\cup_{k=0}^{M-1}(S_j+(k-1)M).$$
Note that $\|y_k\|\sim 1$ by subsymmetry, and hence $\|y'\|\sim\phi(M)\sim M^{1/p}$. 
\medskip

For every $j\leq J,$ $2^{-j}M|S_j| = L_j$ is an integer.  $S_j'$ can be written as a union of $L_j$ disjoint sets $F_{j1},\dots, F_{jL_j}$, of cardinality $2^j$. Write
$$y'=\sum_j\sum_{\ell=1}^{L_j}2^{-j/p}\one_{F_{j\ell}}.$$
Each of the vectors $2^{-j/p}\one_{F_{j\ell}}$ has norm $\sim 1$, hence $M\sim \|y'\|^p\sim \sum_jL_j\sim \sum_j2^{-j}M|S_j|,$ which yields the desired result; namely, $\sum_j2^{-j}|S_j|\sim 1$.
\end{proof}

\begin{remark}
We do not know to what extent the subsymmetry assumption in \Cref{Rearrange greedy} is redundant. It is not required to establish that $\phi(n)\sim n^\frac{1}{p}$. If $(e_i)$ is not subsymmetric, our proof (with minor modifications) shows that any spreading basis $(e_i')$ arising from a subsequence of $(e_i)$ is equivalent to the $\ell_p$ basis. In fact, in the terminology of \cite{2}, $X$ must be strongly asymptotic $\ell_p$. In general, being strongly asymptotic $c_0$ does not imply being greedy, as the example of the Tsirelson space shows (see \cite[Remark 5.8]{3}).
\end{remark}
\begin{example}
It is shown in \cite{NoGreedy} that certain spaces of the form $(\oplus_{n=1}^{\infty} \ell_p^n)_{\ell_q}$ -- for appropriate $p,q$ -- do not have greedy bases. However, one can block the canonical basis in the evident way to get greedy FDD for these spaces. There are also more exotic spaces which have greedy FDD but no greedy bases; for example, \cite{Szarek} produces a space with an $\ell_2$-FDD but no basis.  On the other hand, there are several spaces for which it is unclear whether a greedy FDD can be produced; for example, $\ell_p\oplus \ell_q$, $\ell_q(\ell_p)$ (for appropriate $p,q$) and the Schatten classes. For  $\ell_p\oplus \ell_q$ it was shown in \cite{EW76,W78} that there is a unique unconditional basis up to permutation -- which is, of course, not greedy when $p\neq q$. See \cite{Schechtman} for a proof that there are no greedy bases for matrix spaces with mixed $\ell_p$ and $\ell_q$ norms. For the same reason as with $(\oplus_{n=1}^{\infty} \ell_p^n)_{\ell_q}$, it is clear that $\ell_q(\ell_p)$ has a greedy Schauder decomposition.
\end{example}

\begin{remark}\label{r:tensoring of bases}
 Results similar to \Cref{Rearrange greedy} (providing criteria for ``tensor-stable'' bases to be equivalent to the canonical basis of either $c_0$ or $\ell_p$) can be found in \cite{unit-vector-bases}.
\end{remark}

\subsection{Absolute sequences characterize AM-spaces}\label{ss:bad_basic} In AM-spaces (that is, closed sublattices of $C(K)$-spaces), every basic sequence is bibasic and every unconditional sequence is absolute. In \cite{TT19} it was asked whether the converse holds: If every basic (resp.~unconditional) sequence in $X$ is bibasic (resp.~absolute), must $X$ be lattice isomorphic to an AM-space? Here we make some progress on these conjectures.
\medskip

% Recall that a Banach lattice $X$ has a \emph{Fatou norm} if there exists $F$ so that, if $(x_\alpha)$ is an increasing net in $X_+$ and $x = \vee_\alpha x_\alpha$, then $\|x\| \leq F \sup_\alpha \|x_\alpha\|$. A
Recall that a Banach lattice $X$ has an \emph{upper $p$-estimate} if there exists a constant $C$ so that the inequality $\| \sum_i x_i\| \leq C \big( \sum_i \|x_i\|^p \big)^{1/p}$ holds for any disjoint $x_1, \ldots, x_n$. Clearly, an upper $p$-estimate implies upper $p'$-estimates for $p' < p$. Denote by $s(X)$ the supremum of all $p$'s for which $X$ has an upper $p$-estimate. By reversing inequalities, one obtains the definition of a lower $p$-estimate; denote by $\sigma(X)$ the infimum of all $p$'s for which $X$ has a lower $p$-estimate. We refer the reader to \cite{garcia2025banach,garcia2025banach2} for a comprehensive study of upper $p$-estimates.
Note that, if $X$ is finite dimensional, then $s(X) = \infty$ and $\sigma(X) = 1$. Otherwise, $1 \leq s(X) \leq \sigma(X) \leq \infty$. Furthermore, if $\sigma(X) < \infty$, then $X$ is order continuous. % , hence its norm is Fatou (with constant $F=1$).

\begin{proposition}\label{p:uncond_not_bibasic}
 Suppose $X$ is an infinite dimensional Banach lattice, and $\{s(X), \sigma(X)\} \cap (1,\infty) \neq \emptyset$. 
 Then $X$ contains an unconditional sequence which is not bibasic.
\end{proposition}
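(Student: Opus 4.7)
The plan is to reduce the problem to an $\ell_r$-setting (for some $r\in(1,\infty)$) and then invoke \Cref{p:destroy_lp} inside that setting. Concretely, I would first extract from $X$ a normalized disjoint sequence $(d_n)$ equivalent to the canonical basis of $\ell_r$ for some $r\in(1,\infty)$, so that $[d_n]$ is a closed sublattice of $X$ lattice-isomorphic to $\ell_r$. I would then transport the construction from the proof of \Cref{p:destroy_lp} into $[d_n]$ to produce a sequence $(u_i)$ equivalent to the canonical $\ell_r$-basis (hence unconditional) whose bibasic inequality fails because of the logarithmic growth of the maximal triangular truncation on $\ell_r^n$. Since the lattice operations on $[d_n]$ agree with those inherited from $X$, the failure of the bibasic inequality inside $[d_n]$ transfers immediately to $X$, producing the desired unconditional non-bibasic sequence.

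The extraction of the disjoint $\ell_r$-sequence splits naturally into two cases according to which half of the hypothesis is used. If $\sigma(X)\in(1,\infty)$, then $X$ has a lower $q$-estimate for some finite $q$, so $X$ is order continuous and, by infinite-dimensionality, admits a normalized disjoint sequence. A lattice version of Krivine's theorem applied to this sequence produces a subsequence equivalent to the canonical basis of $\ell_r$ for some $r\in[1,q]$; the strict inequality $\sigma(X)>1$ rules out $r=1$, since a disjoint $\ell_1$-equivalent sequence would force a lower $1$-estimate on the band it generates. If instead $s(X)\in(1,\infty)$, the upper $p$-estimate for some $p>1$ combined with a Krivine-style extraction yields a disjoint sequence equivalent to the $\ell_r$-basis for some $r\in[p,\infty)$; here $r>1$ is automatic, while the upper estimate together with a further sub-sequencing argument keeps $r$ finite.

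The main obstacle I anticipate is the extraction step in the case $s(X)\in(1,\infty)$ but $\sigma(X)=\infty$, where $X$ need not be order continuous and may contain $\ell_\infty^n$'s uniformly as sublattices. In this case the existence of a disjoint $\ell_r$-equivalent sequence with $r\in(1,\infty)$ is not obvious; my plan is to first pass to a suitable reduction of $X$ (for instance, to the order continuous part of $X^{**}$, or to the band generated by a weakly null normalized disjoint sequence produced from the upper $p$-estimate) and then apply Krivine there. Once this structural reduction is in place, applying the construction of \Cref{p:destroy_lp} inside $[d_n]\cong\ell_r$ completes the proof.
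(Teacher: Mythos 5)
There is a genuine gap in your extraction step, and it is fundamental rather than a technicality confined to the case $s(X)\in(1,\infty)$, $\sigma(X)=\infty$. You claim that ``a lattice version of Krivine's theorem applied to this sequence produces a subsequence equivalent to the canonical basis of $\ell_r$,'' so that $[d_n]$ is a sublattice lattice-isomorphic to $\ell_r$. Krivine's theorem (in any of its lattice incarnations, e.g.\ Schep's, which the paper cites) does not produce an infinite $\ell_r$-equivalent disjoint sequence; it only asserts that $\ell_r$ (for $r=s(X)$ or $r=\sigma(X)$) is \emph{finitely disjointly representable} in $X$, i.e.\ for every $n$ and every $c>1$ there exist disjoint positive vectors $c$-equivalent to the $\ell_r^n$-basis. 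The jump from finite blocks to an infinite $\ell_r$-copy is false in general: there are Banach lattices with $s(X)=\sigma(X)=r\in(1,\infty)$ that contain no isomorphic copy of $\ell_r$ at all (Tsirelson-type constructions and their convexifications). So the sublattice $[d_n]\cong\ell_r$ that your argument builds on need not exist.

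The paper's proof avoids this entirely by never leaving the finite-dimensional regime. Its key lemma, Proposition \ref{p:krivine}, is a refinement of Krivine giving, for each $n$, a disjoint $n$-tuple $c$-equivalent to $\ell_r^n$ \emph{together with} an infinite-dimensional sublattice $Y$ disjoint from the tuple and satisfying $s(Y)=s(X)$ (resp.\ $\sigma(Y)=\sigma(X)$); iterating this yields mutually disjoint tuples $(e_{in})_{1\le i\le 2n}$, each $2$-equivalent to $\ell_r^{2n}$, for every $n$. The construction of Proposition \ref{p:destroy_lp} is itself a block construction (it operates on $\ell_p^n\oplus_p\ell_p^n$ and concatenates), so it applies verbatim to each finite tuple, and the bibasis constant of the $n$-th block grows like $\log n$, yielding a non-bibasic unconditional sequence after concatenation. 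If you want to salvage your plan, you would need to replace ``extract an infinite $\ell_r$-copy'' by ``extract, for each $n$, a disjoint tuple $2$-equivalent to $\ell_r^{2n}$ with the tuples mutually disjoint,'' which is exactly what Proposition \ref{p:krivine} provides and what its proof (via Lemmas \ref{separability} and \ref{good-point}) is designed to guarantee.
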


Note that the hypothesis of the proposition fails if one of the three holds: (i) $s(X) = \sigma(X) = \infty$, (ii) $s(X) = \sigma(X) = 1$, or (iii) $s(X) = 1$, $\sigma(X) = \infty$.
Note that $s(x) = \sigma(X) = \infty$ does not imply that $X$ is an AM-space.
Similarly, $s(x) = \sigma(X) = 1$ does not imply that $X$ is an AL-space.
\medskip

For the proof, we need the following proposition, which strengthens Krivine's Theorem for lattices. % with a Fatou norm.

\begin{proposition}\label{p:krivine}
    For a Banach lattice $X$, $c > 1$, and $n \in \N$, there exist disjoint normalized $x_i \in X_+$ $(1 \leq i \leq n)$ so that $(x_1, \ldots, x_n)$ is $c$-equivalent to the canonical basis of $\ell_{s(X)}^n$ (of $\ell_{\sigma(X)}^n$), and an infinite dimensional Banach lattice $Y \subset X$, disjoint from $x_1, \ldots, x_n$, so that $s(Y) = s(X)$ (resp.~$\sigma(Y) = \sigma(X)$).
\end{proposition}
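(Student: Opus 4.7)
We prove the $s(X)$ case; the $\sigma(X)$ case follows by dualizing (swapping upper $p$-estimates for lower $p$-estimates throughout and invoking the Krivine representation of $\ell_{\sigma(X)}$). Set $p := s(X)$ and fix $c>1$, $n \in \N$. If $p = \infty$, the conclusion is trivial (any disjoint $\ell_\infty^n$-equivalent system in a sublattice with room leftover works), so assume $p < \infty$.

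The plan reduces the proposition to the following splitting lemma: \emph{$X$ contains two disjoint infinite dimensional bands $B_1, B_2$ with $s(B_1) = s(B_2) = p$.} Granted the lemma, I apply the classical Krivine Theorem for Banach lattices (\cite[Theorem 1.f.14]{LT2}) \emph{inside} the Banach lattice $B_1$, which yields pairwise disjoint normalized positive $x_1, \ldots, x_n \in (B_1)_+$ that are $c$-equivalent to the canonical basis of $\ell_p^n$. Setting $Y := B_2$, the sublattice $Y$ is an infinite dimensional Banach lattice with $s(Y) = p$, and every $x_i$ lies in $B_1$ which is disjoint from $B_2 = Y$, so the disjointness requirement holds automatically.

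To prove the splitting lemma, I would inspect the standard proof of Krivine's theorem, which produces an infinite disjoint sequence $(u_k)_{k\geq 1}$ of positive normalized elements of $X$ with the following property: for every $m \in \N$ and $\varepsilon > 0$, finitely many terms of $(u_k)$ can be combined (with nonnegative coefficients and pairwise disjoint index sets) into vectors $y_1, \ldots, y_m$ that are $(1+\varepsilon)$-equivalent to the $\ell_p^m$-basis. Since the construction uses only a finite portion of $(u_k)$ at each stage, the same property is inherited by any infinite subsequence. Splitting $(u_k)$ into the odd- and even-indexed subsequences and letting $B_i := \{u_{2k-i+1} : k \geq 1\}^{dd}$ for $i = 1,2$, each $B_i$ contains, for every $m$ and $\varepsilon$, disjoint positive blocks $(1+\varepsilon)$-equivalent to $\ell_p^m$. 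This forces $s(B_i) \leq p$ (since an upper $q$-estimate for $q > p$ would be incompatible with $\ell_p^m$-blocks for large $m$), while $s(B_i) \geq s(X) = p$ holds automatically because upper $p$-estimates are inherited by sublattices. Both $B_i$ are disjoint from one another by construction and are infinite dimensional since they contain the disjoint normalized sequences.

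The main obstacle is justifying the inheritance claim: that the Krivine property of $(u_k)$ passes to arbitrary infinite subsequences. Verifying this requires a direct inspection of the standard proof of Krivine's theorem for Banach lattices, checking that the $\ell_p^m$-equivalent blocks are extracted from a finite, though arbitrarily large, portion of the disjoint sequence and that the combinatorial choice of indices is flexible enough to be carried out within any given infinite subsequence. Under the usual ultrapower-based proof this flexibility is essentially built in, since the element of type $p$ in $X^{\mathcal U}$ can be represented by equivalence classes supported on any fixed cofinite set of indices; a secondary technical point is to ensure the bands $B_1, B_2$ are genuine bands rather than merely closed sublattices, which is automatic as we take $(\,\cdot\,)^{dd}$.
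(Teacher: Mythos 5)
Your reduction of the proposition to a ``splitting lemma'' (two disjoint infinite-dimensional bands $B_1, B_2$ with $s(B_i) = s(X)$) is a sensible reformulation, and the easy direction ($s(B_i) \ge s(X)$ because disjointness and norms are inherited by bands) is fine. But the proof of the splitting lemma itself has a genuine gap, which you partly acknowledge but do not resolve: the claim that an infinite disjoint sequence $(u_k)$ produced by Krivine's theorem passes its $\ell_p$-witnessing property to \emph{arbitrary} infinite subsequences is not justified, and it is in fact false without a careful choice of $(u_k)$. For a concrete obstruction, take $X = \ell_p \oplus_\infty \ell_q$ with $1 \le p < q < \infty$, so $s(X) = p$, and let $(u_k)$ interleave unit vectors of the $\ell_p$-summand (even indices) with unit vectors of the $\ell_q$-summand (odd indices); then $\{u_{2k}\}^{dd}$ has $s = p$ but $\{u_{2k-1}\}^{dd}$ has $s = q \ne p$. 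So the odd/even splitting does not work for this $(u_k)$, and nothing in the statement of Krivine's theorem guarantees you can pick a disjoint sequence that is robust under splitting into two co-infinite pieces. The ultrapower heuristic you invoke only handles passing to \emph{cofinite} index sets (which leave the ultrapower element unchanged); for the even/odd split, one of the two complementary subsets does not belong to the ultrafilter, so the argument says nothing about the band it generates. What you would actually need is to locate a disjoint sequence that ``accumulates'' at a single point where the $s(\cdot)$-invariant of $X$ is concentrated; making this precise is exactly what Lemma~\ref{good-point} in the paper does via the $C(K)$-representation of a principal ideal and a compactness argument, after which Krivine plus a small perturbation pushes the $x_i$ off a neighbourhood of that special point, and $Y = X^\Omega$ for a yet smaller neighbourhood supplies the disjoint sublattice. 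The two approaches are genuinely different in spirit (sequence-splitting versus topological localization), but the paper's localization step is precisely the missing ingredient you would need to make your splitting lemma go through. A minor further point: your $p=\infty$ case is not as trivial as asserted, since producing a disjoint infinite-dimensional $Y$ with $s(Y)=\infty$ disjoint from the $\ell_\infty^n$-system still requires the same localization; the paper's argument handles $s(X)$ and $\sigma(X)$ uniformly, including the infinite case, for exactly this reason.
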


Here, we say that the bases $(e_i)_{i=1}^n$ and $(f_i)_{i=1}^n$ are \emph{$c$-equivalent} if, for any sequence of scalars $(\alpha_i)_{i=1}^n$,
$$
c^{-1} \big\| \sum_i \alpha_i e_i \big\| \leq \big\| \sum_i \alpha_i f_i \big\| \leq c \big\| \sum_i \alpha_i e_i \big\| .
$$

We postpone the proof of this proposition and instead show how it implies \Cref{p:uncond_not_bibasic}.
\begin{proof}[Proof of \Cref{p:uncond_not_bibasic}]
% Let $p = s(X)$, $q = \sigma(X)$.
% By \cite{Schep}, $\ell_p^n$ and $\ell_q^n$ lattice embed into $X$ for any $n$, with arbitrarily small distortion.
We use \Cref{p:krivine} to find disjoint unit vectors $\big(e_{in}\big)_{n \in \N, 1 \leq i \leq 2n}$ so that, for every $n$, $\big(e_{in}\big)_{1 \leq i \leq 2n}$ is $2$-equivalent to the $\ell_r^{2n}$ basis, where $r \in \{s(X),\sigma(X)\} \cap (1,\infty)$.
By the construction from \Cref{p:destroy_lp}, $\spn [ e_{in} : 1 \leq i \leq 2n]$ has a basis $\big( u_{in} \big)_{i=1}^{2n}$ which is $c$-equivalent to the $\ell_r$ basis on the Banach space level, but has bibasis constant $\sim \log n$. Concatenating the $u_{in}$'s, one obtains an unconditional sequence in $X$ which is not bibasic.
\end{proof}

In the proof of \Cref{p:krivine}, we rely on some ideas from  \cite{Abramovich78} and \cite{AbLoz}. We also need two lemmas, the first of which is fairly straightforward.

\begin{lemma}\label{separability}
    Any Banach lattice $X$ contains a separable Banach lattice $X'$ so that $s(X') = s(X)$ and $\sigma(X') = \sigma(X)$.
\end{lemma}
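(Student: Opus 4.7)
The plan is to construct $X'$ as the closed sublattice of $X$ generated by a countable collection of finite vector families that witness the failure of the relevant upper and lower $p$-estimates in $X$.

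First, I would fix sequences $(p_k)$ and $(q_k)$ of real numbers with $p_k\searrow s(X)$ and $p_k>s(X)$ (in the case $s(X)<\infty$), and $q_k\nearrow\sigma(X)$ with $q_k<\sigma(X)$ (in the case $\sigma(X)>1$); the degenerate cases $s(X)=\infty$ or $\sigma(X)=1$ produce an empty list on the corresponding side. By the definition of $s(X)$, for each $p_k$ the lattice $X$ fails the upper $p_k$-estimate, so for every $n\in\N$ there exist finitely many disjoint vectors $(x^{k,n}_i)_{i=1}^{N(k,n)}$ in $X$ with
\[
\Bignorm{\sum_i x_i^{k,n}} > n\Bigl(\sum_i\norm{x_i^{k,n}}^{p_k}\Bigr)^{1/p_k}.
\]
Analogously, for each $q_k$ and each $n\in\N$ one selects a finite disjoint family $(y^{k,n}_j)_j$ in $X$ which witnesses the failure of the lower $q_k$-estimate with constant $n$.

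Next, collect all the chosen vectors into a countable set $S\subseteq X$ and let $X'$ be the closed sublattice of $X$ generated by $S$. Because the lattice operations in $X$ are continuous, the set of rational linear combinations of finite lattice polynomials in elements of $S$ is dense in $X'$; as $S$ is countable, this dense set is countable as well, so $X'$ is separable. Any upper $p$-estimate satisfied by $X$ is automatically inherited by $X'$, hence $s(X')\ge s(X)$; conversely, for every $p_k$ the vectors $(x^{k,n}_i)_i$ lie in $X'$ and are disjoint there (lattice operations in a sublattice coincide with those in the ambient lattice, so disjointness is preserved), and the inequality displayed above holds verbatim, showing that $X'$ fails the upper $p_k$-estimate. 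Hence $s(X')\le p_k$ for every $k$, yielding $s(X')\le s(X)$. An entirely symmetric argument using the $(y^{k,n}_j)$ gives $\sigma(X')=\sigma(X)$; in the degenerate cases the corresponding inherited estimates alone force the equality.

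I do not foresee a serious obstacle: the argument is essentially a countable selection. The only subtlety worth explicitly noting is that the witnesses must be genuinely disjoint in the sublattice $X'$, which is automatic once $X'$ is chosen as a sublattice (and not merely a closed subspace) of $X$. This is the reason we generate $X'$ using the lattice operations rather than linearly.
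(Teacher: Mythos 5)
Your argument is correct, and it is presumably the one the authors had in mind: the paper in fact omits the proof entirely, remarking only that the lemma is ``fairly straightforward.'' Your countable-selection scheme (collecting, for each $p_k>s(X)$ and each $n$, a disjoint witness to the failure of the upper $p_k$-estimate with constant $n$, doing the same on the $\sigma$-side, and then taking the closed sublattice generated by all these vectors) cleanly delivers both $s(X')\le s(X)$ and $\sigma(X')\ge\sigma(X)$, while the reverse inequalities are automatic for any sublattice; the only point that deserves the care you give it is that the witnesses must be generated as a sublattice (so disjointness and the lattice inequalities are preserved) and that the iterated closure of a countable set under rational linear combinations and the lattice operations remains countable, whence $X'$ is separable.
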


Henceforth, we assume that $X$ is separable. We find a weak unit $u \in X_+$, and let $X_u$ be the corresponding principal ideal, with the norm $\|x\|_\infty = \inf \{ \lambda > 0 : |x| \leq \lambda u\}$.
This ideal can be identified with $C(K)$, for some compact Hausdorff space $K$, with $u$ corresponding to $1$. Adjusting $u$, we can assume that $\| \cdot \| \leq \| \cdot \|_\infty$.
As $X_u$ is $\| \cdot \|$-dense in $X$, we have $s(X) = s(X_u, \| \cdot \|)$ and $\sigma(X) = \sigma(X_u, \| \cdot \|)$.
\medskip

Suppose now that $\Omega$ is an open subset of $K$. We shall denote by $X^\Omega$ the set of all $x \in X_u$ which vanish outside of $\Omega$.

\begin{lemma}\label{good-point}
    There exists $t \in K$ so that, for every neighborhood $\Omega \ni t$, $s(X^\Omega) = s(X)$ $(\sigma(X^\Omega) = \sigma(X))$.
\end{lemma}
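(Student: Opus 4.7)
The plan is to argue by contradiction, using compactness of $K$ and a continuous partition of unity to glue local estimates into a global one. First observe that, since $X^\Omega$ is a vector sublattice of $X_u \subset X$, any upper $p$-estimate on $X$ restricts to one on $X^\Omega$; hence $s(X^\Omega) \geq s(X)$ for every open $\Omega$, and dually $\sigma(X^\Omega) \leq \sigma(X)$. Consequently, if no good point $t$ exists for the $s$-statement (resp.~for the $\sigma$-statement), then for each $t \in K$ we can select an open $\Omega_t \ni t$ with $s(X^{\Omega_t}) > s(X)$ (resp.~$\sigma(X^{\Omega_t}) < \sigma(X)$).

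For the $s$-case, I would extract a finite subcover $\Omega_1,\dots,\Omega_n$ of $K$, pick $p \in (s(X), \min_i s(X^{\Omega_i}))$ with corresponding upper $p$-estimate constants $C_i$, and choose a partition of unity $\phi_1, \dots, \phi_n \in C(K) = X_u$ subordinate to this cover. For any disjoint $y_1, \dots, y_m \in X_u$ and any $i$, the vectors $(\phi_i y_j)_j$ are disjoint (since they are pointwise dominated by the disjoint family $(|y_j|)_j$) and lie in $X^{\Omega_i}$ with $\|\phi_i y_j\| \leq \|y_j\|$; the upper $p$-estimate on $X^{\Omega_i}$ combined with $y_j = \sum_i \phi_i y_j$ and the triangle inequality then yields
$$ \Bignorm{\sum_j y_j} \leq n \max_i C_i \Big( \sum_j \|y_j\|^p \Big)^{1/p}. $$
Thus $X_u$ has an upper $p$-estimate, and by the identity $s(X) = s(X_u, \|\cdot\|)$ already noted in the excerpt, so does $X$, contradicting $p > s(X)$.

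The $\sigma$-case is handled symmetrically. After obtaining a finite cover and $p \in (\max_i \sigma(X^{\Omega_i}), \sigma(X))$ with lower $p$-estimate constants $c_i$ on each $X^{\Omega_i}$, the power-mean inequality gives $\|y_j\|^p \leq n^{p-1} \sum_i \|\phi_i y_j\|^p$, which, combined with the lower estimate on each $X^{\Omega_i}$ and the disjointness bound $\Bignorm{\sum_j \phi_i y_j} \leq \Bignorm{\sum_j y_j}$, produces
$$ \sum_j \|y_j\|^p \leq n^{p-1} \Big( \sum_i c_i^{-p} \Big) \Bignorm{\sum_j y_j}^p. $$
This is a lower $p$-estimate for $X_u$, hence for $X$, contradicting $p < \sigma(X)$.

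The main obstacle is not in either direction individually but in verifying that the partition-of-unity decomposition is compatible with the lattice estimates: one needs that each $\phi_i y_j$ remains disjoint from $\phi_i y_k$ for $k \neq j$, that it is still dominated in absolute value by $y_j$, and that $\sum_j \phi_i y_j$ is dominated by $\sum_j y_j$ in absolute value. All three follow from $0 \leq \phi_i \leq 1$ and pointwise considerations in $C(K)$, so the argument goes through cleanly.
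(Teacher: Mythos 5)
Your argument is correct and follows the paper's scheme: contradiction, compactness of $K$ to produce a finite cover, and a subordinate partition of unity in $C(K)\cong X_u$ to glue the local $p$-estimates into a global one. The substantive difference is in how the local pieces are built. You use the multiplicative cut-offs $y_{ij}=\phi_i y_j$, so that $y_j=\sum_i y_{ij}$ holds exactly, disjointness of $(y_{ij})_j$ and the bound $|y_{ij}|\le |y_j|$ are immediate, and the whole estimate is manifestly homogeneous in the $y_j$'s. The paper instead sets $y_{ij}=x_i\wedge y_j$, which only gives the pointwise bound $y_j\le\sum_i y_{ij}$ after one first normalizes $\|y_j\|_\infty\le 1$ (otherwise $\sum_i(x_i\wedge y_j)\le\sum_i x_i=1$ can be strictly less than $y_j$); that normalization is harmless but left implicit in the paper, and the multiplicative decomposition sidesteps it, which is the cleaner choice. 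You also fill in the $\sigma$-case, which the paper handles in one line by remarking that the triangle inequality is to be replaced by $\|\sum_j y_j\|\ge\vee_i\|\sum_j y_{ij}\|$; your route via the power-mean bound $\|y_j\|^p\le n^{p-1}\sum_i\|\phi_i y_j\|^p$ is equivalent and goes through as written.
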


The spaces listed in this lemma are equipped with the norm $\| \cdot \|$. 

\begin{proof}
We deal with $s( \cdot )$, as $\sigma( \cdot )$ is handled similarly.
    Suppose, for the sake of contradiction, that any $t \in K$ has a neighborhood $\Omega_t$ so that $s(X^{\Omega_t}) > s(X)$. By the compactness of $K$, we can find $t_1, \ldots, t_n \in K$ so that $K = \cup_{i=1}^n \Omega_{t_i}$. We shall achieve a contradiction by showing that $s(X) = s:= \min_i s(X^{\Omega_{t_i}})$.
\medskip

    We shall show that, for any $r < s$, there exists a constant $C$ so that the inequality
    \begin{equation}
    \big\| \sum_{j=1}^m y_j \big\| \leq C \big( \sum_j \|y_j\|^r \big)^{1/r} 
    \label{upper est}
    \end{equation}
    holds for any disjoint $y_1, \ldots, y_m \in C(K)_+$.
\medskip

    Indeed, for any $i \in \{1, \ldots,n\}$ there exists $C_i$ so that, for any disjoint $z_1, \ldots, z_k \in C(K)_+$, vanishing outside of $\Omega_{t_i}$, we have
    $$
    \big\| \sum_{j=1}^k z_j \big\| \leq C_i \big( \sum_j \|z_j\|^r \big)^{1/r} .
    $$
 Find $x_1, \ldots, x_n \in C(K)_+$ so that $\sum_i x_i = 1$, and $x_i$ vanishes outside of $\Omega_{t_i}$. Let $y_{ij} = x_i \wedge y_j$. Then
    \begin{align*}
    \big\| \sum_{j=1}^m y_j \big\| 
    &
    \leq \sum_{i=1}^n \big\| \sum_{j=1}^m y_{ij} \big\| \leq \sum_i C_i \big( \sum_j \|y_{ij}\|^r \big)^{1/r}  
    \\
    &
    \leq \max_i C_i n^{1-1/r} \big( \sum_j \sum_i \|y_{ij}\|^r \big)^{1/r} \leq \max_i C_i n \big( \sum_j \|y_j\|^r \big)^{1/r}  ,
    \end{align*}
    the last inequality being due to the fact that
    $$
    \|y_j\| \geq \max_i \|y_{ij}\| \geq n^{-1/r} \big( \sum_i \|y_{ij}\|^r \big)^{1/r} .
    $$
    This establishes \eqref{upper est} with $C = n \cdot \max_i C_i$.
\medskip

    The case of $\sigma( \cdot )$ is handled similarly, except that one has to use the inequality $\big\| \sum_{j=1}^m y_j \big\| 
    \geq \vee_{i=1}^n \big\| \sum_{j=1}^m y_{ij} \big\|$ instead of $\big\| \sum_{j=1}^m y_j \big\| 
    \leq \sum_{i=1}^n \big\| \sum_{j=1}^m y_{ij} \big\|$.
\end{proof}

\begin{proof}[Proof of \Cref{p:krivine}]
Fix $\vp > 0$ and $c' \in (1,c)$ so that
$$
c > c' + n \vp \, {\textrm{  and  }} \, \frac1c < \frac1{c'} - n \vp .
$$
By Krivine's Theorem \cite{Schep}, $X$ contains disjoint normalized vectors $y_1, \ldots, y_{n+1}$, $c'$-equivalent to the $\ell_r$-basis, where $r$ is either $s(X)$ or $\sigma(X)$, depending on the case.
Perturbing these vectors slightly, we can assume that they belong to $X_u$ (in the notation of \Cref{good-point}); the latter space is identified with $C(K)$, again as in \Cref{good-point}.
\medskip

Let $t_0 \in K$ be the special point whose existence is guaranteed by \Cref{good-point}. At most one of our disjoint vectors $y_1, \ldots, y_{n+1}$ does not vanish at $t_0$; up to relabeling, we can assume that $y_1, \ldots, y_n$ do vanish there.
Find vectors $y_i' \in C(K)_+$ ($1 \leq i \leq n$) so that, for every $i$, $y_i'$ vanishes on some neighborhood of $t_0$, $y_i' \leq y_i$, and $\|y_i - y_i'\|_\infty < \vp$. Then $\|y_i - y_i'\| \leq \|y_i - y_i'\|_\infty < \vp$. 
\medskip

We claim that the vectors $x_i = y_i'/\|y_i'\|$ are disjoint, and $c$-equivalent to the $\ell_r$ basis. Indeed, for any $(\alpha_i)_{i=1}^n$,
\begin{align*}
\big\| \sum_i \alpha_i x_i \big\| 
&
\geq \big\| \sum_i \alpha_i y_i' \big\| \geq  \big\| \sum_i \alpha_i y_i \big\| - \sum_i |\alpha_i| \|y_i - y_i'\|
\\
&
\geq \frac1{c'} \big( \sum_i |\alpha_i|^r \big)^{1/r} - \vp \sum_i |\alpha_i| \geq \Big( \frac1{c'} - n \vp \Big) \big( \sum_i |\alpha_i|^r \big)^{1/r} .
\end{align*}
On the other hand, for any $i$,
$$
\big\| x_i - y_i' \big\| = \|y_i'\| \Big( \frac1{\|y_i\|} - 1 \Big) = 1 - \|y_i'\| < \vp ,
$$
hence
$$
\big\| \sum_i \alpha_i x_i \big\| \leq \big\| \sum_i \alpha_i y_i \big\| + \vp \sum_i |\alpha_i| \leq \big(c' + n \vp \big) \big( \sum_i |\alpha_i|^r \big)^{1/r} .
$$
Moreover, there exists an open neighborhood $\Omega \ni t_0$ disjoint from the $x_i$'s; then $X^\Omega$ has the required upper or lower estimate.
\end{proof}

In a manner similar to \Cref{p:uncond_not_bibasic} (but using \Cref{p:destroy_Lind} instead of \Cref{p:destroy_lp}), we establish the following.

\begin{proposition}\label{p:cond_not_bibasic}
 Suppose an infinite dimensional Banach lattice $X$ satisfies $s(X) = 1$.
 Then $X$ has a (conditional) basic sequence which is not bibasic.
\end{proposition}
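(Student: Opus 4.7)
The plan is to adapt the argument of \Cref{p:uncond_not_bibasic}, substituting \Cref{p:destroy_Lind} for \Cref{p:destroy_lp}. Fix a uniform constant $c > 1$. Since $s(X) = 1$, I would iterate \Cref{p:krivine} to produce, for every $n \in \N$, disjoint normalized positive vectors $(e_{i,n})_{i=1}^{2n+2} \subset X$ which are $c$-equivalent to the canonical basis of $\ell_1^{2n+2}$, chosen inside a nested family of infinite-dimensional sublattices $X \supset Y_1 \supset Y_2 \supset \ldots$ with $s(Y_n) = 1$. Concretely, after producing the first $n-1$ blocks inside $Y_{n-1}$, I apply \Cref{p:krivine} to $Y_{n-1}$ to obtain both the $n$-th block and a further infinite-dimensional disjoint sublattice $Y_n \subset Y_{n-1}$ with $s(Y_n) = 1$; the resulting finite-dimensional blocks $B_n := \spn[e_{i,n} : 1 \leq i \leq 2n+2]$ are then pairwise disjoint in $X$. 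Since each family $(e_{i,n})_i$ consists of disjoint positive vectors, the lattice operations on $B_n$ agree with those of $\ell_1^{2n+2}$ under the natural identification $e_{i,n} \leftrightarrow \delta_i$; only the Banach norm is distorted, by a factor of at most $c$.

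Inside each $B_n$, I form the Lindenstrauss-type vectors $x_i^{(n)} := e_{i,n} - \tfrac{1}{2}(e_{\phi_1(i),n} + e_{\phi_2(i),n})$ for $1 \leq i \leq n$, with $\phi_j(i) = 2i+j$, exactly as in \Cref{ss:destroy_Lind}. Because the identification $B_n \leftrightarrow \ell_1^{2n+2}$ is lattice-preserving and $c$-equivalent as a norm map, the obstruction built in the proof of \Cref{p:destroy_Lind} transfers: the vectors $y_m = \sum_{k=0}^{m-1} 2^{-k} \sum_{j_1, \ldots, j_k \in \{1,2\}} x^{(n)}_{\phi_{j_k, \ldots, j_1}(1)}$ satisfy $\|y_m\|_X \leq 2c$ for all $m \leq N$, while $\bignorm{\bigvee_{m=0}^{N-1} |y_m|}_X \geq c^{-1}(N+1)$, provided $n$ is chosen large enough for $y_N$ to make sense inside $B_n$. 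As observed in \Cref{p:destroy_Lind}, each $y_m$ (for $m < N$) is a partial sum of $y_N$ in the natural lexicographic enumeration of $(x_i^{(n)})_{i=1}^n$ (with intervening coefficients equal to zero), so the bibasis constant of that finite sequence in $X$ is at least of order $N/c^2$.

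Next, I concatenate the families $(x_i^{(n)})_{i=1}^n$ lexicographically into a single sequence $(u_j)_{j \in \N}$. Since the ambient blocks $B_n$ are pairwise disjoint sublattices, any $x_k \in F_k := \spn[x_i^{(k)} : 1 \leq i \leq k]$ has $|x_k| \in B_k$, so the moduli $|x_k|$ are pairwise disjoint; the standard identity for disjoint elements then gives $\bignorm{\sum_{k \leq N} x_k}_X \geq \bignorm{\sum_{k \leq M} x_k}_X$ whenever $M \leq N$, showing that $(F_n)$ is a $1$-unconditional FDD of $[F_n]$. Combined with the uniformly bounded within-block basis constants inherited from the Lindenstrauss basis via the $c$-equivalence, this shows that $(u_j)$ is a Schauder basic sequence in $X$. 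The single-block lower bounds of the previous paragraph preclude any uniform bibasic constant for $(u_j)$, so $(u_j)$ fails to be bibasic; conditionality is inherited from the Lindenstrauss basis.

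The hard part will be the iterative application of \Cref{p:krivine}, which must simultaneously achieve (i) $c$-equivalence of the $n$-th block to the canonical basis of $\ell_1^{2n+2}$, (ii) pairwise disjointness across blocks, and (iii) preservation of $s(Y_n) = 1$ in the tail used for the next step. The third point is what makes \Cref{p:krivine} (as opposed to the classical Krivine theorem) essential here. Once these three constraints are met, the failure of the bibasis property transfers verbatim from the $\ell_1$-model of \Cref{p:destroy_Lind}, with only a controlled multiplicative loss from the distortion $c$.
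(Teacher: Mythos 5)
Your proposal is correct and matches the paper's intended argument: the paper's own proof of this proposition is a one-line remark that one should argue as in \Cref{p:uncond_not_bibasic} but substitute the Lindenstrauss construction of \Cref{p:destroy_Lind} for the $\ell_p$ construction of \Cref{p:destroy_lp}, and that is precisely what you have carried out. The iterative use of \Cref{p:krivine} that you flag as ``the hard part'' is in fact exactly what that proposition was formulated to deliver (it explicitly hands back a disjoint sublattice with the same index $s$), so no further work is needed there.
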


Taken together, \Cref{p:uncond_not_bibasic} and \Cref{p:cond_not_bibasic} immediately imply the following.

\begin{corollary}\label{c:general basis destruction}
 Suppose an infinite dimensional Banach lattice $X$ has $s(X)< \infty$.
 Then $X$ contains a basic sequence which is not bibasic.
\end{corollary}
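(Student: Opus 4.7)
My plan is to deduce the corollary immediately from Propositions \ref{p:uncond_not_bibasic} and \ref{p:cond_not_bibasic} by splitting on the value of $s(X)$. Recall that by definition $1 \leq s(X) \leq \sigma(X) \leq \infty$, so the hypothesis $s(X) < \infty$ leaves exactly two possibilities: $s(X) = 1$, or $s(X) \in (1, \infty)$.

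In the first case, since $X$ is infinite dimensional and $s(X) = 1$, Proposition \ref{p:cond_not_bibasic} directly supplies a (conditional) basic sequence in $X$ which fails to be bibasic. In the second case, $s(X) \in (1,\infty)$, so in particular $\{s(X), \sigma(X)\} \cap (1,\infty) \neq \emptyset$, and Proposition \ref{p:uncond_not_bibasic} yields an unconditional (hence basic) sequence in $X$ which is not bibasic.

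Since basic sequences exist in each case and in both cases they are not bibasic, the corollary is established. There is no real obstacle here; the entire content is in the two underlying propositions, and the corollary is simply the verification that their hypotheses cover every infinite dimensional Banach lattice with $s(X) < \infty$.
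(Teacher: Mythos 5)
Your proof is correct and is exactly the argument the paper uses: the corollary is stated as an immediate consequence of Propositions \ref{p:uncond_not_bibasic} and \ref{p:cond_not_bibasic}, split on whether $s(X)=1$ or $s(X)\in(1,\infty)$. No further comment needed.
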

By \Cref{c:general basis destruction}, if every basic sequence in $X$ is bibasic then $s(X)=\infty.$ However, as mentioned above, $s(X)=\infty$ does not imply that $X$ is lattice isomorphic to an AM-space. We now show that if we instead  assume that every unconditional sequence in $X$ is absolute, we can reach this stronger conclusion.
\begin{theorem}\label{p:uncond_not_absolute}
If every unconditional basic sequence in a Banach lattice $X$ is absolute, then $X$ is lattice isomorphic to an AM-space.
\end{theorem}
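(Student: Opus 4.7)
I will argue the contrapositive: suppose $X$ is not lattice isomorphic to an AM-space, and construct an unconditional basic sequence in $X$ that is not absolute. A convenient reduction is the observation that every absolute basic sequence is automatically bibasic: since $|\sum_{k=1}^n a_kx_k|\le\sum_{k=1}^m|a_kx_k|$ for every $n\le m$, the absolute inequality $\|\sum_{k=1}^m|a_kx_k|\|\le A\|\sum_{k=1}^m a_kx_k\|$ converts directly into the bibasic inequality with the same constant. Thus it suffices to produce an unconditional basic sequence in $X$ that fails to be bibasic.

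Since $X$ is not lattice isomorphic to an AM-space, $X$ fails an upper $\infty$-estimate, and in particular at least one of the invariants $s(X)$ or $\sigma(X)$ is finite. If either of them lies in the open interval $(1,\infty)$, then Proposition~\ref{p:uncond_not_bibasic} immediately supplies an unconditional basic sequence in $X$ which is not bibasic, and we are done. This reduces the problem to the case $s(X)=1$, which is the principal obstacle: here every disjoint normalized sequence in $X$ is automatically absolute (with constant $1$), so the failure of the AM-property cannot be extracted from a disjoint sequence, and one must engineer controlled non-disjointness.

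For the case $s(X)=1$, the plan is to execute the twisting construction of Proposition~\ref{p:destroy_arbitrary_uncinditional} \emph{inside} $X$ itself rather than in an auxiliary direct sum. Using the strengthened Krivine theorem (Proposition~\ref{p:krivine}), for each $n$ one finds a disjoint normalized sequence $(y_i^{(n)})_{i=1}^n\subset X$ that is $2$-equivalent to the $\ell_1^n$-basis, together with an infinite-dimensional sublattice $Z_n\subset X$ which is disjoint from the $y_i^{(n)}$ and still satisfies $s(Z_n)=1$. The Wickstead-type input used in the proof of Proposition~\ref{p:destroy_arbitrary_uncinditional} then produces a contraction $T_n\colon\mathrm{span}(y_1^{(n)},\ldots,y_n^{(n)})\to Z_n$ of regular norm at least $n$. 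Setting $u_i^{(n)}=y_i^{(n)}+T_n y_i^{(n)}$ and concatenating these blocks across $n$ gives a sequence $(u_k)\subset X$ which, by the disjointness of the $Z_n$-components across different $n$ and the contractivity of the $T_n$'s, is equivalent to the $\ell_1$-basis (and is therefore unconditional). However, $\|\sum_i|u_i^{(n)}|\|\ge\|\sum_i|T_n y_i^{(n)}|\|$ can be forced to grow unboundedly in $n$ while $\|\sum_i u_i^{(n)}\|$ stays of order $n$, so the absolute constant of $(u_k)$ diverges, contradicting the hypothesis.

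The main obstacle is realizing this construction intrinsically in $X$: Proposition~\ref{p:destroy_arbitrary_uncinditional} as stated only produces non-absolute sequences inside an enlarged lattice $X\oplus_\infty c_0(\ell_p)$, and replacing that external ``reservoir'' by the internal sublattices $Z_n$ is precisely the role played by Proposition~\ref{p:krivine}. Verifying that blocks assembled from distinct $n$'s combine into a genuine unconditional basic sequence in $X$ requires a careful disjointness argument between the $Z_n$'s; a secondary subtlety is the edge case in which $X$ is itself an AL-space, where one must ensure there is enough disjoint ``twisting room'' within $X$ without collapsing the $\ell_1^n$-structure of the $(y_i^{(n)})$, and this is the most delicate step of the whole proof.
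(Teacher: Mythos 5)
Your proposal has a fundamental gap at the very first reduction. You claim that since $X$ is not lattice isomorphic to an AM-space, at least one of $s(X)$ or $\sigma(X)$ must be finite. This is false, and the paper explicitly flags it: $s(X)=\sigma(X)=\infty$ does \emph{not} imply that $X$ is an AM-space. Having an upper $p$-estimate for every finite $p$ (with constants that may blow up as $p\to\infty$) is strictly weaker than having a genuine AM-bound $\|\vee_i x_i\|\le C\vee_i\|x_i\|$. Consequently, your case analysis misses the hardest case of the theorem, namely $s(X)=\infty$ but $X$ not AM, which is precisely the regime where the paper's proof brings in Lemma~\ref{l:just_right_bases} and the Hadamard matrix twisting $u_{kn}=e_{kn}+2^{-n}\sum_j h_{kjn}f_{jn}$. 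That construction manufactures an $\ell_\infty^{2^n}$-equivalent (hence unconditional, in fact $2$-equivalent to an absolute basis!) block whose absolute constant nevertheless diverges; no reduction to non-bibasic sequences can reach it.

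The reduction to ``unconditional and not bibasic'' is also misdirected even where it is logically valid. You are right that absolute implies bibasic, so a non-bibasic unconditional sequence is automatically non-absolute; but this makes the problem \emph{harder}, not easier. In the case $s(X)<\infty$ the paper's own witness is the concatenation of discrete Rademachers, which it emphasizes is \emph{bibasic} yet non-absolute. So for the case you label as ``the principal obstacle'' ($s(X)=1$), the natural witness does not fail bibasicity at all, and you would be left trying to prove a strictly stronger negative that the paper never needs. Relatedly, your internal twisting plan (push the construction of Proposition~\ref{p:destroy_arbitrary_uncinditional} into a sublattice $Z_n\subset X$) is not supported by the cited ingredients: the Wickstead input produces a contraction with large regular norm mapping into $\ell_p$, and the argument explicitly uses that $\ell_p$ is positively contractively complemented in its bidual to pass from regular norm to order-bounded norm. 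An arbitrary sublattice $Z_n\subset X$ has no such property, so the key inequality does not transfer. In short, the two cases the paper actually treats ($s(X)<\infty$ via Rademachers, and $s(X)=\infty$ non-AM via Hadamards plus Lemma~\ref{l:just_right_bases}) are both absent from your proposal.
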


For future use, we recall \cite[Theorem 2.1.12]{MN}: $X$ is lattice isomorphic to an AM-space iff there exists $C \geq 1$ so that the inequality $\| \vee_j x_j \| \leq C \vee_j \|x_j\|$ holds for any $x_1, \ldots, x_n \in X_+$ (actually it suffices to verify the preceding inequality for disjoint $n$-tuples only).
Consequently, $X$ is lattice isomorphic to an AM-space iff the same is true for any separable sublattice of $X$.
\medskip

In the proof, we re-use the notation and facts introduced earlier in this section. We begin by establishing a version of \Cref{p:krivine}.

\begin{lemma}\label{l:not AM}
Suppose a separable Banach lattice $X$ is not lattice isomorphic to an AM-space and $u$ is a weak unit in $X$.
There exists $t \in K$ so that, for every neighborhood of $\Omega \ni t$, the $\| \cdot \|$-completion of $X^\Omega$ is not lattice isomorphic to an AM-space.
\end{lemma}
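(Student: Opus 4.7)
My plan is to mimic the structure of the proof of \Cref{good-point}: argue by contradiction, reduce to a finite cover by compactness of $K$, and then invoke the Meyer--Nieberg criterion \cite[Theorem 2.1.12]{MN}. Specifically, I would suppose that for every $t \in K$ there is an open neighborhood $\Omega_t \ni t$ such that the $\|\cdot\|$-completion $\overline{X^{\Omega_t}}$ is lattice isomorphic to an AM-space. By compactness, one extracts a finite subcover $K = \bigcup_{i=1}^n \Omega_i$, and for each $i$ a constant $C_i \geq 1$ witnessing the AM-inequality $\|\bigvee_j z_j\| \leq C_i \bigvee_j \|z_j\|$ for all finite families $z_j \in \overline{X^{\Omega_i}}_+$.

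Next, I would decompose positive vectors of $X_u$ via a continuous partition of unity. By normality of the compact Hausdorff space $K$, choose $x_1, \ldots, x_n \in C(K)_+ = (X_u)_+$ with $\sum_{i=1}^n x_i = u$ and with $x_i$ vanishing outside $\Omega_i$. For any finite family $y_1, \ldots, y_m \in (X_u)_+$, I would set $y_{ij} := x_i y_j$, where the product is formed in the $f$-algebra structure of $X_u \cong C(K)$. Then $y_{ij} \in X^{\Omega_i} \subseteq \overline{X^{\Omega_i}}$, the estimate $0 \leq y_{ij} \leq y_j$ yields $\|y_{ij}\| \leq \|y_j\|$, and $\sum_i y_{ij} = y_j$. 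Consequently $\bigvee_j y_j \leq \sum_i \bigvee_j y_{ij}$, and taking norms and applying the AM-inequality in each $\overline{X^{\Omega_i}}$ produces
\begin{equation*}
\Big\|\bigvee_j y_j\Big\| \leq \sum_i \Big\|\bigvee_j y_{ij}\Big\| \leq \sum_i C_i \bigvee_j \|y_{ij}\| \leq \Big(\sum_{i=1}^n C_i\Big) \bigvee_j \|y_j\|.
\end{equation*}

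The final step is an approximation argument: since $X_u$ is $\|\cdot\|$-dense in $X$ and both sides of the above inequality are continuous in the $y_j$ under the norm topology (using continuity of finite joins in a Banach lattice), the inequality extends to arbitrary $y_1, \ldots, y_m \in X_+$. The Meyer--Nieberg criterion then forces $X$ to be lattice isomorphic to an AM-space, contradicting the hypothesis and completing the proof.

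The only point requiring some care, though I do not expect it to be a serious obstacle, is the partition-of-unity decomposition: one must exploit the $f$-algebra product on $X_u = C(K)$ rather than purely lattice operations, since the naive analogue $y_{ij} = x_i \wedge y_j$ from the proof of \Cref{good-point} fails to give $\sum_i y_{ij} = y_j$ unless $y_j \leq u$. (One could instead take $y_{ij} = (Mx_i) \wedge y_j$ with $M \geq \bigvee_j \|y_j\|_\infty$, which is closer in spirit to \Cref{good-point}, but the multiplicative form is cleaner.) Apart from this, the argument is formally parallel to that of \Cref{good-point}.
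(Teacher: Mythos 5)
Your proof is correct and follows essentially the same strategy as the paper's: argue by contradiction, extract a finite subcover of $K$ by compactness, localize via a partition of unity, and invoke the Meyer--Nieberg criterion. Where you genuinely improve on the paper's exposition is in the decomposition step: the paper sets $y_{ij} = x_i \wedge y_j$ and asserts $y_j \leq \sum_i y_{ij}$, which in fact fails at points $s$ where $y_j(s) > 1$; the paper's inequality is only correct after one first normalizes so that every $y_j \leq u$, which is permissible by homogeneity of the AM-inequality but is left implicit. Your choice $y_{ij} = x_i y_j$, using the $f$-algebra structure of $X_u \cong C(K)$, yields $\sum_i y_{ij} = y_j$ exactly and avoids the scaling issue altogether. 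Your explicit density argument (extending the inequality from $(X_u)_+$ to $X_+$ via norm continuity of finite joins) is likewise a step the paper leaves unstated. Both approaches are sound once these details are supplied; yours makes them explicit.
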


% \begin{proof}[\Cref{l:not AM} -- sketch of a proof]
\begin{proof}[Sketch of a proof]
Suppose, for the sake of contradiction, that every $t \in K$ possesses an open neighborhood $\Omega_t \ni t$ so that the completion of $X^{\Omega_t}$ is lattice isomorphic to an AM-space. Use compactness to find $t_1, \ldots, t_n \in K$ so that $\cup_{i=1}^n \Omega_{t_i} = K$. For every $i$ there exists a constant $C_i$ so that, whenever $z_1, \ldots, z_m \in X^{\Omega_{t_i}}$, we have $\|\vee_j |z_j|\| \leq C_i \vee_j \|z_j\|$.
\medskip

To achieve a contradiction, we show that there exists a constant $C$ so that, for any $y_1, \ldots, y_m \in C(K)_+$ (we identify $C(K)$ with $X_u$), $\| \vee_j y_j\| \leq C \vee_j \|y_j\|$. As in the proof of \Cref{p:krivine}, let $(x_i)$ be a partition of unity subordinate to $(\Omega_{t_i})$, and let $y_{ij} = x_i \wedge y_j$. Then $y_j \leq \sum_i y_{ij}$, hence
$$
\big\| \vee_j y_j\big\| \leq \sum_i \big\| \vee_j y_{ij}\big\| \leq \sum_i C_i \vee_j \|y_{ij}\| \leq C \vee_j \|y_j\| ,
$$
where $C = \sum_{i=1}^n C_i$.
\end{proof}

\begin{lemma}\label{l:just_right_bases}
 Suppose a separable Banach lattice $X$ is not lattice isomorphic to an AM-space, while $s(X) = \infty$.
 Then there exist disjoint vectors $e_{kn}, f_{kn} \in X_+$ ($n \in \N$, $1 \leq k \leq M_n$) so that, for every $n$,
 \begin{enumerate}
  \item $(e_{kn})_{k=1}^{M_n}$ is $2$-equivalent to the $\ell_\infty^{M_n}$ basis.
  \item $\| \sum_k f_{kn} \| > n$, while $\| \sum_k \alpha_k f_{kn} \| \leq ( \sum_k \alpha_k^2 )^{1/2}$ for any $(\alpha_k)$.
 \end{enumerate}
\end{lemma}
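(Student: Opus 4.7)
\emph{Setup.} The plan is to build the two families by a joint induction that uses \Cref{l:not AM} to produce the $f_{kn}$'s (exhibiting non-AM behavior) and Krivine's theorem in Banach lattices to produce the $e_{kn}$'s, with pairwise disjointness maintained via a $C(K)$-representation. Fix a weak unit $u \in X_+$ and identify $X_u$ with $C(K)$ for some compact metric $K$, as in the discussion preceding \Cref{good-point}, with $u \leftrightarrow 1$ and $\|\cdot\| \leq \|\cdot\|_\infty$ on $X_u$. By \Cref{l:not AM}, choose $t_0 \in K$ such that the $\|\cdot\|$-completion of $X^\Omega$ is not lattice isomorphic to an AM-space for every open $\Omega \ni t_0$. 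Since $s(X) = \infty$, fix $C \geq 1$ so that $\|\sum_k \alpha_k g_k\| \leq C (\sum_k \alpha_k^2 \|g_k\|^2)^{1/2}$ for any disjoint $(g_k) \subset X$ and scalars $(\alpha_k)$.

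\emph{Inductive construction.} We build the vectors so that, after step $n$, all constructed vectors lie in $X_u$, are pairwise disjoint, and there is an open neighborhood $W_n \ni t_0$ disjoint from the union of their supports (set $W_0 := K$). At step $n$, let $F$ denote the compact union of supports of all previously-chosen vectors, so $F \cap W_{n-1} = \emptyset$. Pick an open $U \ni t_0$ with $\overline{U} \subset W_{n-1}$. Since the completion of $X^U$ is not an AM-space, one can find disjoint positive vectors in it with an arbitrarily large ratio $\|\sum h_k\|/\max_k \|h_k\|$. A standard density/disjointification argument in the dense sublattice $X^U$ (approximate in norm, then apply $h_k \mapsto (h_k - \sum_{\ell \neq k} h_k \wedge h_\ell)^+$, absorbing the loss by starting with a sufficiently large threshold) produces disjoint $h_1, \ldots, h_{M_n} \in X^U_+$ with $\|h_k\| \leq 1$ and $\|\sum_k h_k\| > nC$. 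Disjointness of continuous functions forces at most one $h_k$ to be nonzero at $t_0$; discard it if needed, so the supports of the remaining $h_k$'s lie in $U$ and miss $t_0$. By continuity there is an open $V \ni t_0$, $V \subset U$, disjoint from all these supports. Set $f_{kn} := h_k / C$; then $\|\sum_k f_{kn}\| > n$, and the upper $2$-estimate gives $\|\sum_k \alpha_k f_{kn}\| \leq (\sum_k \alpha_k^2)^{1/2}$.

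\emph{Producing the $e_{kn}$'s.} Pick an open $V' \ni t_0$ with $\overline{V'} \subset V$, and set
\begin{equation*}
\Omega := K \setminus \big( F \cup {\textstyle \bigcup_k} \text{supp}(f_{kn}) \cup \overline{V'} \big),
\end{equation*}
an open subset of $K$. Choosing $V'$ small enough, $\Omega$ remains infinite (here we use that $K$ is infinite, which follows from $X$ being infinite-dimensional), so the closed band $\overline{X^\Omega}$ of $X$ is an infinite-dimensional Banach lattice. As a band, it inherits the upper $p$-estimates of $X$ for every $p < \infty$, whence $s(\overline{X^\Omega}) = \infty$. By Krivine's theorem in Banach lattices (cf.~the argument in \Cref{p:krivine} for $r = \infty$), there exist disjoint normalized positive $e_{1n}, \ldots, e_{M_n n} \in \overline{X^\Omega}$ that are $2$-equivalent to the canonical basis of $\ell_\infty^{M_n}$. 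Another density/disjointification step places them in $X^\Omega \subset X_u$. Since $\Omega \cap V' = \emptyset$ and $V'$ is open, $\overline{\Omega} \cap V' = \emptyset$, so the supports of the $e_{kn}$'s miss $V'$, and in particular miss $t_0$. Setting $W_n := V'$ closes the induction.

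\emph{Main obstacle.} The principal difficulty is to coordinate two structurally different constructions---the non-AM one for $f_{kn}$, which by \Cref{l:not AM} is only localized near $t_0$, and the $\ell_\infty^{M_n}$ one for $e_{kn}$, which needs an infinite-dimensional band with $s = \infty$---so that pairwise disjointness is preserved at every stage. The $C(K)$-representation via the weak unit, together with the shrinking neighborhoods $W_n \supset W_{n+1} \supset \cdots \supset \{t_0\}$, provides precisely the room needed: the $f_{kn}$'s are placed in a neighborhood of $t_0$ inside $W_{n-1}$, the $e_{kn}$'s are pushed into the complement of the accumulated supports together with a still-smaller neighborhood $V' \ni t_0$, and $W_n := V'$ keeps the invariant intact for all subsequent stages. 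The only other delicate point is the density/disjointification step, which lets us transfer the abstract existence of non-AM vectors in the completion of $X^U$ (and of Krivine vectors in $\overline{X^\Omega}$) down to concrete continuous functions in $X_u$ where disjointness is transparent.
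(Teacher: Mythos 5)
Your overall strategy matches the paper's: represent the principal ideal $X_u$ as $C(K)$, use \Cref{l:not AM} to find a persistent non-AM core near a point $t_0$, invoke Krivine (via $s(X)=\infty$) for the $\ell_\infty^{M_n}$-systems, and run a recursion that keeps everything disjoint. Two remarks on the details.

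\emph{A harmless variation.} For the $\ell_2$-upper bound on the $f_{kn}$'s, you extract only the ``large-sum'' information from the non-AM characterization and then finish with an upper 2-estimate coming from $s(X)=\infty$. The paper instead invokes \cite[Lemma 4]{CL}, which already delivers the $\ell_2$-upper bound directly and, crucially, produces the disjoint vectors inside the normed lattice $X^U$ itself (not merely its completion), so no disjointification is needed. Your route is workable, but the passage ``approximate in norm, then disjointify, absorbing the loss'' is left at the level of a sketch; using \cite[Lemma 4]{CL} would eliminate that step entirely.

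\emph{A genuine gap.} The step I cannot accept as written is the placement of the $e_{kn}$'s. You build them in $\overline{X^\Omega}$ with
$\Omega = K \setminus \bigl(F \cup \bigcup_k \operatorname{supp} f_{kn} \cup \overline{V'}\bigr)$,
that is, \emph{away} from $t_0$, and justify applying Krivine there by asserting that ``$\Omega$ remains infinite'' because $K$ is infinite. That inference does not hold: $K$ being infinite does not make an arbitrary open complement infinite. For instance, nothing in your argument rules out that every point of $V\setminus\{t_0\}$ is isolated and accumulates only at $t_0$; then $V\setminus\overline{V'}$ is \emph{finite} for every neighbourhood $V'\ni t_0$, and if $F$ together with the $f_{kn}$-supports already exhausts $K\setminus W_{n-1}$, the set $\Omega$ is finite and $\overline{X^\Omega}$ is finite-dimensional, so Krivine yields nothing. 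The paper's proof evades this issue by a different placement: the $e_k$'s are constructed \emph{inside} $X_0 = \overline{X^\Omega}$ where $\Omega$ is a neighbourhood of the special point $t$; by \Cref{l:not AM} the lattice $X_0$ is not lattice isomorphic to an AM-space and is therefore automatically infinite-dimensional, so Krivine applies safely, and the non-AM residual $X'$ is found by a further localization inside $X_0$. You would need the same kind of guarantee---the $e_{kn}$'s must live in a region that is \emph{known}, not just hoped, to be infinite-dimensional. As it stands, the inductive step is not closed.
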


For the proof of this lemma, we shall use a characterization of AM-spaces from \cite[Lemma 4]{CL}.
The completion of a normed lattice $X$ is not lattice isomorphic to an AM-space iff for any $K$ we can find disjoint $x_1, \ldots, x_n \in X_+$ so that $\|\sum_k x_k\| > K$, while $\sum_k | x^*(x_k) |^2 \leq 1$ for any $x^* \in X^*$ with $\|x^*\| \leq 1$.
Equivalently,  the operator $T : X^* \to \ell_2^n : x^* \mapsto (x^*(x_k))_k$ is contractive. By duality, this is equivalent to $T^* : \ell_2^n \to X \subseteq X^{**} : e_i \mapsto x_i$ being contractive. In other words, we require that  $\| \sum_k \alpha_k x_k \| \leq 1$ whenever $\sum_k \alpha_k^2 \leq 1$.

\begin{proof}
Since the vectors in question can be produced recursively, it suffices to prove the following: Suppose $X$ is separable and is not lattice isomorphic to an AM-space. Then, for any $n$, there exists $M \in \N$ and mutually disjoint positive norm one vectors $e_1, \ldots, e_M, f_1, \ldots, f_M$, so that:
\begin{itemize}
    \item $(e_k)_{k=1}^M$ is $2$-equivalent to the $\ell_\infty^M$ basis.
    \item $\| \sum_{k=1}^M f_k \| > n$, while $\| \sum_k \alpha_k f_k \| \leq ( \sum_k \alpha_k^2 )^{1/2}$ for any $(\alpha_k)$.
    \item There exists a closed sublattice $X' \subset X$, disjoint from $e_1, \ldots, e_M, f_1, \ldots, f_M$ and not lattice isomorphic to an AM-space.
\end{itemize}

By \Cref{l:not AM}, there exists $t \in K$ so that, for any open $\Omega \ni t$, the completion of $X^\Omega$ is not lattice isomorphic to an AM-space. Find norm one positive disjoint $f_1, \ldots, f_{M+1} \in X$ so that $\| \sum_{k=1}^{M+1} f_k \| > n+1$, while $\| \sum_k \alpha_k f_k \| \leq ( \sum_k \alpha_k^2 )^{1/2}$ for any $(\alpha_k)$.
By removing one of the vectors (say $f_{M+1}$) and perturbing the rest, we may assume that $f_1, \ldots, f_M$ are supported outside of some open $\Omega \ni t$, where $t$ is a special point whose existence is guaranteed by \Cref{l:not AM}.
Let $X_0$ be the closure of $X^\Omega$. Repeat the same reasoning to construct suitable $e_1, \ldots, e_M \in X_0$, disjoint from suitable $X'$.
\end{proof}

\begin{proof}[Proof of \Cref{p:uncond_not_absolute}] 
As noted before, we can assume that $X$ is separable. If $p = s(X) < \infty$, use Krivine's Theorem to find disjoint lattice copies of $\ell_p^{2^n}$ in $X$.
Each of these contains ``independent discrete Rademachers'' $r_{kn}$ ($1 \leq k \leq n$). We know that $\| \sum_k \alpha_k r_{kn} \| \sim \big( \sum_k \alpha_k^2 \big)^{1/2}$ while $\| \sum_k |\alpha_k r_{kn}| \| = \sum_k |\alpha_k|$, hence the sequence obtained by concatenating the $r_{kn}$'s is  unconditional but not absolute. Actually, this sequence is also bibasic, due to \cite{TT19}.
 \medskip
 
 Now suppose that $s(X) = \infty$ but $X$ is not an AM-space. Use \Cref{l:just_right_bases} to find the sequences $(e_{kn})$ and $(f_{kn})$ in $X_+$ ($n \in \N$, $1 \leq k \leq 2^n$) so that the vectors involved are disjoint, $(e_{kn})$ is $2$-equivalent to the $\ell_\infty^{2^n}$ basis, $\|e_{kn}\| = 1$, and the vectors $(f_{kn})$ are such that $\| \sum_k f_{kn} \| \nearrow \infty$, while $\| \sum_k \alpha_k f_{kn} \| \leq 1$ whenever $\sum_k \alpha_k^2 \leq 1$.
 Let $H_n = (h_{ijn})_{i,j=1}^{2^n}$ be the Hadamard matrix of size $2^n \times 2^n$ (see \cite[Section 8]{TT19}).
 The entries of this matrix are equal to $\pm 1$, and the rows $\hat{h}_{kn} = (h_{kjn})_j$ are mutually orthogonal.
 \medskip
 
 Taking inspiration from \cite{APY} we let, for $1 \leq k \leq 2^n$,
 $$
 u_{kn} = e_{kn} + 2^{-n} \sum_{j=1}^{2^n} h_{kjn} f_{jn} .
 $$
 Then $(u_{kn})$ is a (double-indexed) unconditional basic sequence. Indeed, by disjointness, it suffices to establish the unconditionality of $(u_{kn})$ for a fixed $n$.
 In fact, we shall show that $(u_{kn})$ is $3$-equivalent to the $\ell_\infty^{2^n}$ basis.
 For any $(\alpha_k)$ in $c_{00}$ we have
\begin{eqnarray*}
     \vee_k |\alpha_k| &\leq& \big\| \sum_k \alpha_k e_{kn} \big\| \leq \big\| \sum_k \alpha_k u_{kn} \big\|\\
     &\leq& \big\| \sum_k \alpha_k e_{kn} \big\| + 2^{-n} \big\| \sum_k \alpha_k \sum_j h_{kjn} f_{kn} \big\| .
\end{eqnarray*}
Moreover,  $\big\| \sum_k \alpha_k e_{kn} \big\| \leq 2 \vee_k |\alpha_k|$. Therefore, it suffices to show that
 $$
 \big\| \sum_k \alpha_k \sum_j h_{kjn} f_{jn} \big\| \leq 2^n {\textrm{  whenever  }} \vee_k |\alpha_k| \leq 1 .
 $$
 By the properties of the vectors $f_{jn}$,
 $$
 \big\| \sum_k \alpha_k \sum_j h_{kjn} f_{jn} \big\|^2 = \big\| \sum_j \big( \sum_k \alpha_k h_{kjn} \big) f_{jn} \big\|^2 \leq \sum_j \big|\sum_k \alpha_k h_{kjn} \big|^2 .
 $$
 However,
 $$
 \sum_j\big| \sum_k \alpha_k h_{kjn} \big|^2 = \big\| \sum_k \alpha_k \hat{h}_{kn} \big\|^2 = 
 \sum_k |\alpha_k|^2 \|\hat{h}_{kn}\|^2 = 2^{2n} ,
 $$
 which is the desired inequality.
 \medskip
 
 Finally, we note that the sequence $(u_{kn})$ is not absolute. Indeed, for  each $n$, $\|\sum_k \pm u_{kn}\| \leq 3$, yet $\|\sum_k |u_{kn}| \| \geq \| \sum_k f_{kn} \|\nearrow \infty$.
\end{proof}

\subsection{Complemented absolute sequences}\label{complemented}
In this subsection, we prove some additional results that require conditions on the ambient space $X$, or how $[x_k]$ sits inside of $X$.
\medskip

Recall that a sequence $(x_k)$ is disjoint if and only if $\sum_{k=1}^n|a_kx_k|=\bigvee_{k=1}^n|a_kx_k|$ for all scalars $a_1,\dots,a_n$. Our next result shows that  \emph{complemented} absolute sequences behave very much like disjoint sequences.
\begin{proposition}\label{???}
Let $(x_k)$ be an absolute sequence in a Banach lattice $X$ and suppose that $[x_k]$ is complemented in $X$. Then $$\bigg\|\sum_{k=1}^n |a_kx_k|\bigg\|\sim \bigg\|\bigvee_{k=1}^n |a_kx_k|\bigg\|.$$
\end{proposition}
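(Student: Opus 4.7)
The inequality $\Bignorm{\bigvee_{k=1}^n|a_kx_k|}\le\Bignorm{\sum_{k=1}^n|a_kx_k|}$ is immediate from the pointwise inequality $\bigvee_k y_k\le\sum_k y_k$ valid for positive vectors, applied to $y_k=|a_kx_k|$.

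For the reverse, set $v:=\bigvee_k|a_kx_k|$. The absolute property gives $\Bignorm{\sum|a_kx_k|}\le A\Bignorm{\sum a_kx_k}$, so it suffices to prove $\Bignorm{\sum a_kx_k}\le C\|v\|$. Since absoluteness implies unconditionality, I may further reduce to the case $a_k\ge 0$. Let $P:X\to[x_k]$ be a bounded projection and $\tilde x_k^*:=x_k^*\circ P\in X^*$. In the Banach lattice $X^*$, the moduli $|\tilde x_k^*|$ are positive functionals, and the key inequality is $b_k:=|\tilde x_k^*|(v)\ge a_k$: this uses $v\ge a_k|x_k|$ together with the standard bound $|\tilde x_k^*|(|x_k|)\ge|\tilde x_k^*(x_k)|=1$. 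Unconditionality and the domination $0\le a_k\le b_k$ force $\Bignorm{\sum a_kx_k}\le K_u\Bignorm{\sum b_kx_k}$, so the task becomes $\Bignorm{\sum b_kx_k}\lesssim\|v\|$.

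I plan to establish this final bound by duality. For each $\psi\in X^*$ with $\|\psi\|\le1$, consider the positive functional $\Phi_\psi:=\sum_k|\psi(x_k)|\cdot|\tilde x_k^*|\in X^*_+$; since $b_k\ge 0$, one has $\Phi_\psi(v)=\sum|\psi(x_k)|b_k\ge|\psi(\sum b_kx_k)|$, so it suffices to show $\|\Phi_\psi\|_{X^*}\le C\|\psi\|$. The absolute property of $(x_k)$ enters here in the form $\Bignorm{\sum_k|\tilde x_k^*(x)|x_k}\le A\|Px\|\le A\|P\|\|x\|$, which says that the associated operator $T(x):=\sum_k|\tilde x_k^*(x)|x_k$ is bounded on $X$; a Riesz--Kantorovich/Hahn--Banach dualization then transfers this into the desired bound on $\|\Phi_\psi\|$. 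Taking $\sup$ over $\psi$ gives $\Bignorm{\sum b_kx_k}\le C\|v\|$, and chaining all estimates yields the proposition with constant of order $A^2K_u\|P\|$.

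The main technical obstacle is the dualization step bounding $\|\Phi_\psi\|$, i.e.\ bridging the scalar absolute values $|\tilde x_k^*(x)|$ (controlled by absoluteness through $T$) and the functional moduli $|\tilde x_k^*|(v)=\sup\{|\tilde x_k^*(z)|:|z|\le v\}$, which a priori require different witnesses for different $k$. Passage to the Dedekind-complete bidual $X^{**}$ (where $P^{**}$ has the same norm and the lattice supremum has more flexibility) may be needed to assemble, for each $\varepsilon>0$, a single element $z$ with $|z|\le v$ approximately realizing the required suprema simultaneously; it is precisely the interplay of absoluteness with the existence of the projection $P$ that makes this possible.
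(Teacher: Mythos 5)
The reductions at the start of your plan are fine, and the observation $b_k := |\tilde x_k^*|(v)\ge a_k$ is correct. The problem is the dualization step, and it is a genuine one, not merely a technicality: the inequality between $\Phi_\psi$ and the operator $T$ runs the \emph{wrong way}. For $x\ge 0$ you have $|\tilde x_k^*|(x)\ge |\tilde x_k^*(x)|$, hence $\Phi_\psi(x)=\sum_k|\psi(x_k)|\,|\tilde x_k^*|(x)\ge \sum_k|\psi(x_k)|\,|\tilde x_k^*(x)|\ge |\psi(T(x))|$. So boundedness of $T$ only gives a \emph{lower} bound on $\Phi_\psi(x)$, whereas you need an upper bound on $\|\Phi_\psi\|$. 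Unwinding the Riesz--Kantorovich formula, $\Phi_\psi(x)=\sup\{\sum_k|\psi(x_k)|\,\tilde x_k^*(z_k):|z_k|\le x\}$, and the suprema over the $z_k$ really are decoupled; collapsing them to a common witness $z$ is exactly what is needed, but that would require an estimate on $\big\|\sum_k b_kx_k\big\|$ with $b_k\le|\tilde x_k^*|(x)$ — which is essentially what you set out to prove in the first place. Passing to $X^{**}$ does not help here: Dedekind completeness makes suprema exist but does nothing to make a single $z$ approximately realize $n$ different suprema simultaneously, and in fact such a $z$ need not exist (that would amount to absoluteness of $(\tilde x_k^*)$ in $X^*$, which you have not established and which is not obviously available).

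The paper's argument uses complementation in a different and sharper way. By the remark after Theorem 1.d.6 in Lindenstrauss--Tzafriri (Vol.\ II), complemented unconditional sequences in a Banach lattice satisfy the square-function equivalence $\big\|\sum a_kx_k\big\|\sim\big\|\big(\sum|a_kx_k|^2\big)^{1/2}\big\|$, and then the Calder\'on--Lozanovskii/H\"older-type inequality (Proposition 1.d.2 of LT2, applied at $\theta=1/2$, i.e.\ $\tfrac12=\tfrac{1/2}{1}+\tfrac{1/2}{\infty}$) gives $\big\|\big(\sum|a_kx_k|^2\big)^{1/2}\big\|\le\big\|\sum|a_kx_k|\big\|^{1/2}\big\|\bigvee|a_kx_k|\big\|^{1/2}$. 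Feeding in absoluteness $\big\|\sum|a_kx_k|\big\|\le M^*\big\|\sum a_kx_k\big\|$ then closes the loop after a division. In other words, the hypothesis of complementation is encoded analytically through the $\ell_2$ square function, not through the dual projection $P^*$, and the entire argument never leaves the primal side. I would suggest replacing the duality program by this interpolation argument, or by whatever ingredient you can find that plays the role of the square function; without something of that strength, the bound on $\|\Phi_\psi\|$ will not follow from boundedness of $T$.
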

\begin{proof}
By the remark after \cite[Theorem 1.d.6]{LT2} we have 
$$\bigg\|\sum_{k=1}^n a_kx_k\bigg\|\sim \bigg\|\left(\sum_{k=1}^n |a_kx_k|^2\right)^{\frac{1}{2}}\bigg\|.$$
Take $\theta = \frac12$, then $\frac{1}{2}=\frac{\theta}{1}+\frac{1-\theta}{\infty}$; applying \cite[Proposition 1.d.2 (i) and (ii)]{LT2} we get that 
$$\bigg\|\sum_{k=1}^n a_kx_k\bigg\|\leq C \bigg\|\left(\sum_{k=1}^n |a_kx_k|^2\right)^{\frac{1}{2}}\bigg\| \leq C \bigg\|\sum_{k=1}^n |a_kx_k|\bigg\|^\theta \bigg\| \bigvee_{k=1}^n|a_kx_k|\bigg\|^{1-\theta},$$
where $C$ is the constant of equivalence. 
\medskip

Due to the absoluteness of $(x_k)$, there exists a constant $M^*$ so that, for any $(a_k)$, $\| \sum_{k=1}^n |a_k x_k| \| \leq M^* \| \sum_{k=1}^n a_k x_k \|$. Thus,
$$
\bigg\|\sum_{k=1}^n |a_kx_k| \bigg\|\leq  C M^* \bigg\|\sum_{k=1}^n |a_kx_k|\bigg\|^{1/2} \bigg\| \bigvee_{k=1}^n|a_kx_k|\bigg\|^{1/2} ,
$$
which leads to 
$$
\bigg\|\sum_{k=1}^n |a_kx_k| \bigg\|\leq  (C M^*)^2 \bigg\| \bigvee_{k=1}^n|a_kx_k|\bigg\| .
$$
Consequently,
$$
\bigg\|\sum_{k=1}^n a_kx_k\bigg\|\leq \bigg\|\sum_{k=1}^n |a_kx_k| \bigg\| \leq (C M^*)^2 \bigg\| \bigvee_{k=1}^n|a_kx_k|\bigg\| ,
% C^2M^*\bigg\| \bigvee_{k=1}^n|a_kx_k|\bigg\| \leq (CM^*)^2\bigg\|\sum_{k=1}^n a_kx_k\bigg\|,
$$
and on the other hand,
$$
\bigg\| \bigvee_{k=1}^n|a_kx_k|\bigg\| \leq \bigg\|\sum_{k=1}^n |a_kx_k| \bigg\| \leq M^* \bigg\|\sum_{k=1}^n a_kx_k\bigg\| .
$$
This completes the proof.
\end{proof}
\Cref{???} allows us to give a new Banach lattice proof of the well-known characterization of complemented unconditional sequences in $C[0,1]$.
\begin{corollary}\label{Purely banach space}
The only complemented semi-normalized unconditional basic sequences in AM-spaces are those equivalent to the unit vector basis of $c_0$. 
\end{corollary}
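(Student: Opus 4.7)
The plan is to combine three ingredients that have already been set up: (a) every unconditional sequence in an AM-space is absolute (noted at the end of Section~2 and used again in Theorem 4.26); (b) in an AM-space $X$, there is a constant $C$ with $\|\bigvee_j y_j\| \le C \bigvee_j \|y_j\|$ for positive $y_j$ (the characterization recalled just before Theorem 4.26); (c) the equivalence $\|\sum_k |a_k x_k|\| \sim \|\bigvee_k |a_k x_k|\|$ established in Proposition~\ref{???} for complemented absolute sequences. Chaining these should force $\|\sum_k a_k x_k\|$ to be equivalent to $\max_k |a_k|$, which is exactly equivalence to the $c_0$ basis.

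In more detail, let $(x_k)$ be a complemented semi-normalized unconditional basic sequence in an AM-space $X$, and fix scalars $a_1, \ldots, a_n$. By (a), $(x_k)$ is absolute, so there is a constant $A$ with
\[
  \Bignorm{\sum_{k=1}^n a_k x_k} \le \Bignorm{\sum_{k=1}^n |a_k x_k|} \le A \Bignorm{\sum_{k=1}^n a_k x_k}.
\]
By Proposition~\ref{???}, using that $[x_k]$ is complemented in $X$, we further have
\[
  \Bignorm{\sum_{k=1}^n |a_k x_k|} \sim \Bignorm{\bigvee_{k=1}^n |a_k x_k|}.
\]
Now invoking (b) and semi-normalization,
\[
  \Bignorm{\bigvee_{k=1}^n |a_k x_k|} \le C \bigvee_{k=1}^n |a_k|\, \|x_k\| \lesssim \max_{k} |a_k|.
\]
Combining these three estimates yields $\|\sum_k a_k x_k\| \lesssim \max_k |a_k|$.

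The reverse inequality $\max_k |a_k| \lesssim \|\sum_k a_k x_k\|$ is automatic from unconditionality and semi-normalization: each $|a_k|\|x_k\|$ is dominated, up to the unconditional basis constant, by $\|\sum_j a_j x_j\|$. Hence $\|\sum_k a_k x_k\| \sim \max_k |a_k|$ uniformly in $n$ and $(a_k)$, which by definition means $(x_k)$ is equivalent to the canonical basis of $c_0$.

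I do not anticipate any serious obstacle here: the argument is essentially a bookkeeping chain of the three inputs above, and the only point worth double-checking is that Proposition~\ref{???} is applied with the correct hypothesis (complementedness of $[x_k]$ in $X$, which is exactly our assumption), and that the AM-space inequality $\|\bigvee y_j\| \lesssim \bigvee \|y_j\|$ is used for positive vectors, which is the case since we are taking moduli. The only subtlety is that ``AM-space'' here should be interpreted in the sense of lattice isomorphism to a closed sublattice of $C(K)$, consistent with the rest of the paper; the constant $C$ in (b) is then finite but possibly larger than $1$, which only affects the implicit constants in the equivalence $\sim$ and not the final conclusion.
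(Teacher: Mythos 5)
Your proof is correct and follows essentially the same chain of estimates as the paper: AM-space forces absoluteness, Proposition~\ref{???} (via complementedness) gives $\|\sum_k |a_kx_k|\|\sim\|\bigvee_k|a_kx_k|\|$, the AM-space inequality passes the sup outside the norm, and semi-normalization plus the lower bound from the basis property finishes. The only cosmetic difference is that the paper writes the AM-space step as an exact equality $\|\bigvee_k|a_kx_k|\|=\bigvee_k\|a_kx_k\|$ (valid in a $C(K)$-norm) while you more carefully keep a constant for the lattice-isomorphic case; this does not change the argument.
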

\begin{proof}
Suppose that $(x_k)$ is such a sequence. In AM-spaces, unconditional is the same as absolute, so $(x_k)$ is absolute. Now, using the AM-property and \Cref{???} we see that

 \begin{eqnarray*}
     \max_k{|a_k|} &\lesssim& \|\sum_{k=1}^na_kx_k\|\lesssim \|\sum_{k=1}^n|a_kx_k|\|\sim \|\bigvee_{k=1}^n |a_kx_k|\|\\
     &=&\bigvee_{k=1}^n\|a_kx_k\|\sim \max_k{|a_k|}.
 \end{eqnarray*}

\end{proof}
\begin{remark}
The proof of \Cref{Purely banach space} shows us that the complementability assumption is critical in \Cref{???}. Since $C[0,1]$ is universal, it contains  copies of every normalized unconditional basis, and the conclusion of \Cref{???} must fail for all of them except $c_0$.
\end{remark}

\begin{remark}
Of course, \Cref{Purely banach space} is well-known; it is actually known (\cite[p.~74]{DJT}) that the only complemented semi-normalized unconditional basic sequences in $\mathcal{L}_\infty$-spaces are those equivalent to the unit vector basis of $c_0$. 
\end{remark}

\subsection{Bibasic sequences in non-atomic Banach lattices}\label{Ambient}
We now consider the case when the ambient lattice is $L_p$. It was shown in \cite{TT19} that $L_1$ does not reasonably embed into the span of a bibasic sequence, so it would be interesting to know if $L_1$ admits a uniformly quasi-greedy basis. Although we do not know the answer to this question, we will prove an $L_p$-version of it. Specifically, the next proposition proves that $L_p$ cannot admit a permutable bibasis, which is in contrast to the fact that uniformly quasi-greedy bases are ``almost" permutable (in the same sense that quasi-greedy bases are ``almost" unconditional) and that $L_p$ \textit{does} admit uniformly quasi-greedy bases when $p>1$. 
\medskip

In the next proposition, we use the concept of  unbounded convergence. Given a convergence $\xrightarrow{\tau}$ on a vector lattice $X$, a net $(x_\alpha)$ is said to \emph{unbounded $\tau$-converge} to $x\in X$ if $|x_\alpha-x|\wedge u\xrightarrow{\tau}0$ for all $u\in X_+$, see \cite{T1}. When $\tau$ is the convergence of a locally solid topology,  the convergence $\xrightarrow{u\tau}$ on $X$ defines the weakest locally solid topology on $X$  agreeing with $\tau$ on the order intervals. On the other hand, unbounded order convergence acts as the natural generalization of almost everywhere convergence to vector lattices. For a comprehensive study of such convergences, see \cite{DOT17,T2, MR3803666}. Although we  do not pursue it here, we note that the unbounded convergences could provide a systematic solution to \cite[Problem 12.3]{AABW19} as they encompass convergence in measure, convergence almost everywhere, as well as various  convergences that are weaker than the norm.
\begin{proposition}\label{Lp no perm}
Suppose that $1\leq p<\infty$ and $(E_n)$ is a sequence of subspaces of a Banach lattice $X$ which forms a permutable bi-FDD of $[E_n]$. Then there is no isomorphic embedding $T:L_p\to [E_n]$ with the property that $T^{-1}:T(L_p)\subseteq X\to L_p$ maps uniformly null sequences in $T(L_p)$ to $uo$-null sequences in $L_p$.
\end{proposition}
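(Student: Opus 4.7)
I plan to argue by contradiction. Suppose there is an isomorphic embedding $T: L_p \to [E_n]$ with the property that uniformly null sequences in $T(L_p)\subseteq X$ are pulled back by $T^{-1}$ to $uo$-null sequences in $L_p$ (which, in $L_p$, means a.e.\ null). The plan is to exhibit a single concrete sequence $(g_n)\subset L_p$ witnessing a contradiction: $(Tg_n)$ will be uniformly null in $X$, while $(g_n)$ will fail to converge to $0$ a.e.

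For the witness I would take the typewriter. Let $(I_n)_{n\ge 1}$ be the typewriter enumeration of the dyadic subintervals of $[0,1]$, with $|I_n|=2^{-m}$ whenever $2^m\le n<2^{m+1}$, and set $g_n=\chi_{I_n}$. Then $\|g_n\|_{L_p}=2^{-m/p}\to 0$, so $Tg_n\to 0$ in norm, but every $x\in[0,1]$ lies in infinitely many $I_n$'s, so $g_n\not\to 0$ a.e.\ and hence $(g_n)$ is not $uo$-null in $L_p$. The heart of the proof is to show that $(Tg_n)$ is nevertheless uniformly null in $X$, i.e., that there exist $e\in X_+$ and scalars $\varepsilon_n\to 0$ with $|Tg_n|\le\varepsilon_n e$. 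The key identity driving this is the level-$m$ decomposition $\chi_{[0,1]}=\sum_{k=0}^{2^m-1}g_{2^m+k}$ in $L_p$, which under $T$ yields the identity $T\chi_{[0,1]}=\sum_{k=0}^{2^m-1}Tg_{2^m+k}$ in $[E_n]$. Expanding $T\chi_{[0,1]}=\sum_n z_n$ along $(E_n)$, the bibasicity of the FDD provides some $u\in X_+^{**}$ bounding every initial-segment partial sum; working in $X^{**}$ and invoking the fact that uniform convergence descends from $X^{**}$ to $X$ (as in the last step of \Cref{u-greedy theorem. decompo}) lets us pull the eventual conclusion back down to $X$. Permutability then supplies the same type of order bound along \emph{any} rearrangement of $(E_n)$, and choosing rearrangements that list the FDD blocks in the order in which they are ``used'' to build the successive typewriter pieces $Tg_{2^m}, Tg_{2^m+1}, \dots$ lets us control each $Tg_n$ by a \emph{common} $u\in X_+$, so that $|Tg_n|\le 2u$ for every $n$. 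Combining this uniform order bound with the norm estimate $\|Tg_n\|\le\|T\|\,2^{-m/p}$ and a standard subsequence/rescaling argument upgrades ``order bounded $+$ norm null'' to a genuine uniform null estimate $|Tg_n|\le\varepsilon_n e$ with $\varepsilon_n\to 0$.

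Once $(Tg_n)$ has been shown to be uniformly null in $X$, the hypothesis on $T^{-1}$ forces $g_n=T^{-1}(Tg_n)\to 0$ in $uo$ in $L_p$, i.e.\ a.e., contradicting the typewriter property. The main obstacle is the combinatorial/analytic step in the previous paragraph: translating the typewriter decomposition of $\chi_{[0,1]}\in L_p$ into a collection of rearrangements of the FDD $(E_n)$ that are simultaneously bibasic with uniformly controlled bounds, so that the resulting partial-sum control can be turned into the single order bound $e$ and vanishing scalars $\varepsilon_n$. This is precisely the point at which permutability is essential: plain bibasicity would only control initial-segment partial sums along the fixed order of $(E_n)$, which bears no a priori relation to the typewriter order of $L_p$, whereas permutability lets us freely reorder the FDD blocks to match the typewriter structure while retaining the needed order estimates.
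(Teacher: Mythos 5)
Your typewriter idea is appealing, but the argument as written contains two serious gaps, and the paper's own proof uses an entirely different mechanism.

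First, and most fundamentally, the pieces $Tg_{2^m+k}$ of your decomposition $T\chi_{[0,1]}=\sum_{k=0}^{2^m-1}Tg_{2^m+k}$ are \emph{not} partial sums of the FDD expansion of $T\chi_{[0,1]}$ in any rearrangement of $(E_n)$. Writing $T\chi_{[0,1]}=\sum_n z_n$ with $z_n\in E_n$, permutability controls quantities of the form $\sum_{j\leq N}z_{\sigma(j)}$, not arbitrary finite decompositions of $T\chi_{[0,1]}$ into pieces that, being images under a general embedding $T$, are spread across all the $E_n$'s. There is no structural link between the dyadic/typewriter geometry of $L_p$ and the FDD structure of $[E_n]$ --- $T$ is arbitrary --- so ``choosing rearrangements that list the FDD blocks in the order in which they are used to build the typewriter pieces'' has no meaning: the typewriter pieces are not built out of disjoint subsets of FDD blocks. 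Even if they were, the order bound $u^\sigma$ supplied by permutability depends on the permutation $\sigma$ (getting a single $u$ over all $\sigma$ is exactly the stronger \emph{absolute} property, by \cite[Proposition~7.5]{TT19} as recalled in the paper), so you would still have a uniformity problem over $m$.

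Second, the final step --- ``order bounded $+$ norm null $\Rightarrow$ uniformly null via a standard subsequence/rescaling argument'' --- is false. The typewriter sequence itself in $L_p$ is a counterexample: it is order bounded by $\one$ and norm null, yet not uniformly null (indeed, not even a.e.\ null --- that is the whole point of the typewriter). Passing to a subsequence doesn't help either, because it destroys the very property you need: the subsequence $(g_{2^m})_m$ is both uniformly null and a.e.\ null, so it cannot produce a contradiction.

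The paper's proof is much shorter and structurally different: it cites a theorem of Kashin--Saakyan (\cite[Corollary 9 and Remark 10, p.~102]{KS89}) that no Haar-type system in $L_p$, $1\leq p<\infty$, is a permutable $uo$-bibasic sequence, and then runs the Bessaga--Pe\l czy\'nski gliding-hump/perturbation argument of \cite[Theorem 5.1]{GKP} together with the stability results of \cite{TT19}. The idea there is to find a weakly null Haar-type system in $L_p$, push it through $T$, extract a small perturbation of a block sequence of the FDD (which inherits permutability), and then pull back along $T^{-1}$ using the $u$-to-$uo$ hypothesis to conclude the Haar-type system is permutable $uo$-bibasic in $L_p$ --- contradicting \cite{KS89}. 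That contradiction is extracted from an external theorem about Haar systems, not from the typewriter; and permutability is used via stability of the bibasic property under small perturbations of block sequences, not via a direct order bound on image vectors.
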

\begin{proof}
By \cite[Corollary 9 and Remark 10, p.~102]{KS89} no Haar type system in $L_p$ ($1\leq p<\infty$) is a permutable $uo$-bibasic sequence in $L_p$. Now proceed as in \cite[Theorem 5.1]{GKP}, using the stability results proven in \cite{TT19}.
\end{proof}
We next present a result of a similar spirit for absolute FDD's.  Recall that the sequentially $u$-to-$u$-continuous isomorphisms (see \cite{TT19}) are the natural morphisms which preserve bibasic and uniformly quasi-greedy basic sequences. In \cite[Proposition 7.10]{OTTT} it is shown that a linear map is sequentially $u$-to-$u$-continuous if and only if it is multibounded, in the sense that there exists $M\geq 1$ such that for any $m$ and $x_1,\dots,x_m$ we have $\|\bigvee_{k=1}^m|Tx_k|\|\leq M\|\bigvee_{k=1}^m|x_k|\|$. Such maps are  sometimes called $(\infty,\infty)$-regular, and have been studied by many authors. 
% Using the characterization of these maps in \cite[Remark 7.11]{OTTT},  it is easy to see that multibounded embeddings  preserve the absolute property.
We show that such maps take absolute sequences to absolute sequences. More precisely, we have the following proposition.

\begin{proposition}\label{abs to abs}
Suppose $X, Y$ are Banach lattices, $E$ is a subspace of $X$, and $T : E \to Y$ is multibounded, with a bounded inverse. If a sequence $(x_k) \subset E$ is absolute, then the same is true for $(Tx_k)$.
\end{proposition}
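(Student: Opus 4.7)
The plan is to chain together four estimates, each of which is a one-line consequence of a hypothesis or a standard Archimedean identity. Let $A$ be an absolute constant of $(x_k)$, let $M$ be a multibound of $T$, and write $y_\varepsilon := \sum_{k=1}^m \varepsilon_k a_k x_k$ for $\varepsilon = (\varepsilon_k) \in \{\pm 1\}^m$. I want to bound $\bignorm{\sum_{k=1}^m |a_k Tx_k|}$ above by a constant multiple of $\bignorm{\sum_{k=1}^m a_k Tx_k}$.

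The key algebraic move is to rewrite the sum of moduli on the $Y$-side using the identity \eqref{standard sup}: since $T$ is linear,
\[
\sum_{k=1}^m |a_k Tx_k| \;=\; \bigvee_{\varepsilon \in \{\pm 1\}^m} \sum_{k=1}^m \varepsilon_k a_k Tx_k \;=\; \bigvee_{\varepsilon \in \{\pm 1\}^m} T y_\varepsilon.
\]
This quantity is nonnegative, and $Ty_\varepsilon \leq |Ty_\varepsilon|$ for each $\varepsilon$, so $\sum_k |a_k Tx_k| \leq \bigvee_\varepsilon |T y_\varepsilon|$, and taking norms preserves this inequality in the Banach lattice~$Y$.

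Now I apply the three hypotheses in succession. Multiboundedness of $T$, applied to the finite family $\{y_\varepsilon\}_\varepsilon \subset E$, yields
\[
\Bignorm{\bigvee_\varepsilon |T y_\varepsilon|} \;\leq\; M \, \Bignorm{\bigvee_\varepsilon |y_\varepsilon|} \;=\; M\, \Bignorm{\sum_{k=1}^m |a_k x_k|},
\]
where the equality is again \eqref{standard sup} applied in $X$. Absoluteness of $(x_k)$ then bounds this by $MA\,\bignorm{\sum_{k=1}^m a_k x_k}$, and finally the boundedness of $T^{-1}$ on $T(E)$ gives $\bignorm{\sum_k a_k x_k} \leq \norm{T^{-1}}\,\bignorm{\sum_k a_k Tx_k}$. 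Concatenating, $(Tx_k)$ is absolute with constant at most $MA\,\norm{T^{-1}}$.

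There is really no obstacle here beyond noticing the first identity: once one sees that the sum of moduli on the $Y$-side is a supremum of images of vectors in $E$, multiboundedness is exactly the tool designed to push such suprema through $T$, and the remainder is bookkeeping. No regularity or lattice-homomorphism property of $T$ is needed; in particular we never have to compare $T|x|$ with $|Tx|$.
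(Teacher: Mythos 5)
Your proof is correct and follows essentially the same route as the paper: rewrite the $Y$-side sum of moduli as a supremum over sign patterns via~\eqref{standard sup}, push $T$ through that supremum using multiboundedness, translate back to a sum of moduli in $X$, then apply absoluteness of $(x_k)$ and the boundedness of $T^{-1}$. The paper phrases the middle step via an explicit ``symmetric family'' observation (allowing it to drop the inner absolute values on both sides of the multibound inequality), whereas you insert the one-sided inequality $Ty_\varepsilon \leq |Ty_\varepsilon|$ and apply the multibound literally; these are cosmetic variants of the same argument, and your constant $MA\norm{T^{-1}}$ matches the paper's $C_1 C_0 C_2$.
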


\begin{proof}
The absoluteness of $(x_k)$ means the existence of a constant $C_0$ with the property that, for every $(a_i)_{i=1}^n$, $\| \sum_i |a_i x_i| \| \leq C_0 \| \sum_i a_i x_i\|$.
$T$ being multibounded gives us the existence of a constant $C_1$ so that the inequality $\big\| \vee_{k=1}^m |T e_k| \big\| \leq C_1 \big\| \vee_{k=1}^m |e_k| \big\|$ holds. If, in addition, for any $k$ there exists an $\ell$ so that $e_\ell = - e_k$ (the sequence $(e_k)$ is ``symmetric'' -- it contains the opposite of any of its elements), then we have $\big\| \vee_{k=1}^m T e_k \big\| \leq C_1 \big\| \vee_{k=1}^m e_k \big\|$. 
For $a_1, \ldots, a_n \in \R$, $\sum_i |a_i x_i| = \bigvee_{\vp_i = \pm 1} \sum_i \vp_i a_i x_i$, and the family $\big(\sum_i \vp_i a_i x_i\big)_{\vp_i = \pm 1}$ is symmetric in the above sense, hence
\begin{align*}
\bigg\| \sum_i |a_i  T x_i| \bigg\| 
&
= \bigg\| \bigvee_{\vp_i = \pm 1} T \big( \sum_i \vp_i a_i x_i\big) \bigg\| 
\\
&
\leq C_1 \bigg\| \bigvee_{\vp_i = \pm 1}  \big( \sum_i \vp_i a_i x_i\big) \bigg\| = C_1 \bigg\| \sum_i |a_i x_i| \bigg\| .
\end{align*}
However, by the boundedness of $T^{-1}$, there exists $C_2 > 0$ so that, for any $(a_i)$, $\|\sum_i a_i x_i\| \leq C_2 \|\sum_i a_i  T x_i\|$. Then 
$$
\bigg\| \sum_i |a_i T x_i| \bigg\| \leq C_1 \bigg\| \sum_i |a_i x_i| \bigg\| \leq C_1 C_0 \bigg\| \sum_i a_i x_i \bigg\| \leq C_2 C_1 C_0 \bigg\| \sum_i a_i T x_i \bigg\| ,
$$
% Denote the canonical embedding of $Y$ into $Y^{**}$ by $\kappa$. By \cite[Remark 7.11]{OTTT}, $T$ is multibounded iff $\kappa T$ is order bounded, in the sense that, for every $x \in X_+$, $\kappa T (E \cap [-x,x])$ is order bounded.
which shows the absoluteness of $(Tx_i)$.
\end{proof}

\begin{proposition}\label{no measurable functions}
%Let $X$ be a K\"othe function space on $[0,1]$. Then $X$ does not lattice embed into a Banach lattice with an absolute basis. In particular, $X$ does not have a positive unconditional basis.
If a $\sigma$-order complete Banach lattice embeds with multibounded inverse into the span of an absolute FDD then it is purely atomic.
\end{proposition}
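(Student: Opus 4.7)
The plan is to use the multiboundedness of $T^{-1}$ together with the absoluteness of $(E_n)$ to pull an absolute FDD onto $Z$, then exploit $\sigma$-order completeness to produce a bounded ``maximal function'' $\tau$ promoting the norm-absolute expansions to order-convergent ones, and finally derive pure atomicity by ruling out any positive element of $Z$ with no atoms below it.

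\textbf{Pullback and order-convergent expansions.} Let $(P_n)$ denote the canonical projections of the absolute FDD $(E_n)$, set $Q_n:=T^{-1}P_nT$ on $Z$ and $G_n:=(Q_n-Q_{n-1})(Z)$. Since $T$ is an isomorphic embedding, $(Q_n)$ is uniformly bounded and $Q_nx\to x$ for every $x\in Z$, so $(G_n)$ is a finite-dimensional Schauder decomposition of $Z$ with $T(G_n)\subseteq E_n$. Using the Archimedean identity $\sum_{n=1}^{N}|g_n|=\bigvee_{\varepsilon\in\{\pm 1\}^N}\sum_n\varepsilon_n g_n$ inside $Z$, and noting that since $T^{-1}(\sum\varepsilon_nTg_n) = \sum \varepsilon_n g_n$, taking the supremum over $\varepsilon$ commutes with $T^{-1}$ here, the multiboundedness of $T^{-1}$ combined with the absoluteness constant $A$ of $(E_n)$ yields
\[
\bignorm{\sum_{n=1}^{N}|g_n|}_Z\leq M\bignorm{\sum_{n=1}^{N}|Tg_n|}_{[E_n]}\leq MA\|T\|\bignorm{\sum_{n=1}^{N}g_n}_Z.
\]
Hence $(G_n)$ is an absolute FDD of $Z$, so with $R_n:=Q_n-Q_{n-1}$ there exists $C\geq1$ with $\bignorm{\sum_{n=1}^N|R_nx|}_Z\leq C\|x\|$ for every $x\in Z$ and $N$. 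The positive increasing norm-bounded sequence $(\sum_{n=1}^N|R_nx|)_N$ then has, by $\sigma$-order completeness, a supremum $\tau(x)\in Z_+$ with $\|\tau(x)\|\leq C\|x\|$ and $|R_nx|\leq\tau(x)$, and the tail estimate $|x-\sum_{n\leq N}R_nx|\leq\tau(x)-\sum_{n\leq N}|R_nx|\downarrow 0$ gives $\sum_nR_nx\xrightarrow{o}x$.

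\textbf{Ruling out an atomless band.} Suppose, for contradiction, that there is $u\in Z_+$ with no atoms below it. By Kakutani's theorem, the principal ideal $I_u$ equipped with its order-unit norm is lattice isometric to $C(K)$ for some compact Hausdorff $K$; the $\sigma$-order completeness of $Z$ forces $K$ to be basically disconnected (hence totally disconnected), while the atomlessness of $u$ ensures $K$ is perfect. Iterated dyadic clopen partitions of $K$ then yield a Rademacher-type sequence $(r_n)\subseteq I_u\subseteq Z$ with $|r_n|=u$ and $\sum_{n=1}^N|r_n|=Nu$, so $\bignorm{\sum_{n=1}^N|r_n|}_Z=N\|u\|_Z\to\infty$. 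The aim is to combine the order-convergent expansions $r_n=\sum_kR_kr_n$ (with $|R_kr_n|\leq\tau(r_n)$ and $\|\tau(r_n)\|\leq C\|u\|$), the $\sigma$-order completeness, and the absoluteness of $(G_n)$ to produce a uniform-in-$N$ bound on $\bignorm{\sum_{n=1}^Nr_n}_Z$; the absolute FDD structure then propagates this bound to $\bignorm{\sum_{n=1}^N|r_n|}_Z$ via the unconditionality of $(r_n)$, yielding the desired contradiction.

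\textbf{Main obstacle.} The delicate point is propagating the boundedness from $\bignorm{\sum_{n=1}^Nr_n}_Z$ to $\bignorm{\sum_{n=1}^N|r_n|}_Z$, because the $r_n$'s typically do not align with the FDD blocks $G_k$. The intended device is a blocking argument: one partitions $\N$ into successive groups $J_1<J_2<\cdots$ so that for $n\in J_\ell$ most of the order-mass of $r_n$ lies in a few adjacent blocks $G_{k_\ell},\ldots,G_{k_{\ell+1}}$, which then allows the absoluteness of $(G_n)$ to control moduli block by block. In the residual AM-type situation---where unconditional sequences are automatically absolute---one invokes Proposition~\ref{p:uncond_not_absolute} to reduce to the case that $Z$ is lattice isomorphic to an AM-space, and then appeals to Corollary~\ref{Purely banach space} together with the non-existence of an unconditional Schauder decomposition of $C[0,1]$ to conclude that a non-purely-atomic $\sigma$-order complete AM-space admits no absolute FDD at all.
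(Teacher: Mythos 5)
Your proposal contains several genuine gaps and does not follow the paper's argument.

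The most immediate problem is in the ``Pullback'' step: you define $Q_n := T^{-1}P_nT$ on $Z$, but this operator is not well-defined. For $T^{-1}(P_nTx)$ to make sense you would need $P_nT(Z)\subseteq T(Z)$, i.e.\ the range of $T$ would have to be invariant under the canonical FDD projections $P_n$. There is no reason for this to hold -- the hypothesis is only that $T$ is an isomorphic embedding into $[E_n]$ with multibounded inverse, not that $T(Z)$ is a ``blocked'' subspace adapted to $(E_n)$. This is not a cosmetic issue: the entire construction of the absolute FDD $(G_n)$ on $Z$, and everything built on it (the maximal function $\tau$, the order-convergent expansions), collapses without it.

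Second, you explicitly acknowledge under ``Main obstacle'' that you cannot yet propagate the boundedness of $\bignorm{\sum_{n\leq N}r_n}$ to $\bignorm{\sum_{n\leq N}|r_n|}$, and the blocking device you gesture at is not carried out; this is the crux of any argument along these lines and cannot be left as a plan. Moreover the fallback sketched in the final sentence does not apply: Theorem~\ref{p:uncond_not_absolute} has as its \emph{hypothesis} that every unconditional basic sequence in the lattice is absolute, which you have not established for $Z$, so you cannot conclude that $Z$ is lattice isomorphic to an AM-space. Even granting that step, the claim that a non-purely-atomic $\sigma$-order complete AM-space admits no absolute FDD would itself need a proof, and the remark following the proposition in the paper (the nonatomic AM-space isomorphic to $c_0$, which does carry an absolute unconditional basis when $\sigma$-order completeness fails) shows that the interaction between these hypotheses is delicate.

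For contrast, the paper's actual proof avoids pulling back the FDD entirely. After reducing to a separable, nonatomic $X$, it represents $X$ as a K\"othe function space on $[0,1]$ sitting between $L_\infty$ and $L_1$, observes that separability plus $\sigma$-order completeness gives order continuity so that the Rademachers $(r_k)$ are weakly null, and then runs Bessaga--Pe{\l}czy\'nski to find a block sequence $(y_k)$ of the given absolute sequence $(x_k)$ with $\|y_k-Tr_k\|\to 0$. Blocks of absolute sequences are absolute, small perturbations of absolute sequences are absolute, and the multibounded inverse pulls absoluteness back to $(r_k)$ in $X$. Since $|r_k|=\one$, absoluteness of $(r_k)$ forces $\|\sum a_kr_k\|_X\gtrsim\sum|a_k|$, so a normalized weakly null sequence would dominate the $\ell_1$-basis -- a contradiction. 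The atomic band is then split off by a band projection. This route sidesteps both of your obstacles: no invariance of $T(Z)$ under $P_n$ is needed, and the passage from a signed sum to a sum of moduli is handled directly by the absoluteness of the perturbed block sequence rather than by any blocking device.
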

\begin{proof}
Let $X$ be a $\sigma$-order complete Banach lattice, $E$ a Banach lattice, $(x_k)$ an absolute basic sequence in $E$ (which can be weakened to FDD, but we use basic for ease of reference), and $T:X\to [x_k]\subseteq E$ an isomorphic embedding with multibounded  inverse. We begin with a few reductions.
\medskip

Clearly, $X$ must be separable; let us assume that it does not have atoms. Since every separable $\sigma$-order complete nonatomic Banach lattice $X$ can be represented as a K\"othe function space on $[0,1]$ with $L_\infty \subseteq X \subseteq L_1$, we can assume that $X$ is K\"othe. 
\medskip

%Suppose $E\supseteq X$ is a Banach lattice with an absolute basis $(x_k)$. From this we deduce that 

The point of assuming that $T$ is an embedding with multibounded inverse is that the inverse map sends absolute sequences to absolute sequences by \Cref{abs to abs}. Suppressing $T$, we view $X\subseteq [x_k]\subseteq E$. The combination of $X$ being separable and $\sigma$-order complete yields that $X$ is order continuous. By \cite[Corollary 3.1.25]{Lin}, the Rademacher's form a weakly null sequence in $X$. Hence, by passing to a subsequence and using the Bessaga-Pe{\l}\-czy{\'n}\-ski's selection principle, we may find a block sequence $(y_k)$ of $(x_k)$ such that $\|y_k-r_k\|\rightarrow 0$. Here, $(r_k)$ is a subsequence of the Rademacher's. Passing to further subsequences, we may assume that $\|y_k-r_k\|\rightarrow 0$ sufficiently fast so that $(r_k)$ is a small perturbation of $(y_k)$, and hence is absolute, by the stability results proved in \cite{TT19} (more specifically, unsuppressing $T$ we get that $(Tr_k)$ is absolute, hence $(r_k)$ is as well by the multibounded inverse assumption). Therefore, 
$$\|\one\|_X\sum_{k=1}^n |a_k| = \|\sum_{k=1}^n |a_kr_k|\|_X\leq A \|\sum_{k=1}^n a_kr_k\|_X,$$ so that a subsequence of the Rademacher's is equivalent to the unit vector basis of $\ell_1$. This means that the unit vector basis of $\ell_1$ is weakly null, a contradiction. 
\medskip

%Note finally that positive unconditional bases are absolute, and we actually don't need the absolute sequence to be a basis, only that $E\subseteq [x_k]\subseteq X$. I think an extension to FDD shouldn't be hard.
%\\

Until this point, we assumed that $X$ was atomless, and we now reduce to the case that $X$ is not purely atomic. Since $X$ is $\sigma$-order complete and separable, it is order continuous, and hence has the projection property. Let $B$ be the band generated by the atoms, so that $B\oplus B^d=X$. $B^d$ is atomless and $\sigma$-order complete, so $X$ cannot be nicely embedded into the span of an absolute FDD without  $B^d$ being as well. This concludes the proof.
%
%Use the Rademacher's to spoil \Cref{???} and note that positive unconditional bases are absolute.
\end{proof}
\begin{remark}
In \cite{LW} (see also \cite{Mc95}) a nonatomic AM-space $X$ is constructed that is linearly isomorphic to $c_0$, hence has an unconditional basis, which is absolute since $X$ is AM. Hence, $\sigma$-order completeness cannot be dropped in \Cref{no measurable functions}. 
\end{remark}
\begin{remark}
     The reader may check that a bi-FDD version of \Cref{Blocking to get a-FDD} is  valid, which  when combined with \Cref{Lp no perm} leads to an interesting phenomena:  Start with any unconditional FDD of $L_p$, $p>1$. Then one can block it so that it is a bi-FDD. After that, using \Cref{Lp no perm}, one can rearrange the blocked FDD so that it fails to be a bi-FDD. However, one can then find a further blocking of this blocked and rearranged FDD to regain the bi-property, and so on ad infinitum.
\end{remark}

%\begin{example}
%By \cite[Theorem 6.3]{TT19}, permutable basic sequences satisfy $\sup_m\|\mathcal{G}^\vee_m\|<\infty$, hence are UQG. As mentioned above, UQG is strictly weaker than permutable; indeed, we will show that $L_2$ admits no permutable FDD, whereas the examples above show that it has a UQG basis. Simpler examples can be found on the level of basic sequences (i.e., when we don't enforce that $E=X$). The reason is that one can take $X=C[0,1]$, in which case the supremum passes freely in and out of the norm, so permutable reduces to unconditional and UQG to quasi-greedy.  For another easy explicit example of the fact that bibasic sequences needn't be quasi-greedy, see \cite[Example 10.4.4b]{AK16} - again, the reason this is easy is because when $X$ is an AM-space every basic sequence is bibasic. 
%\end{example}
%\begin{example}\textit{The Lindenstrauss basis:} TO BE ADDED

%\end{example}

%\begin{question}
%Suppose $(x_k)$ is a basis and an order schauder basis (in the sense of \cite{GKP15}). Is it a bibasis?  Explicitly, if for each $x\in X$ there exist unique scalars $(a_k)$ such that $\sum_{k=1}^{m}a_kx_k\xrightarrow{\|\cdot\|}x$ and there exist unique scalars $(b_k)$ such that $\sum_{k=1}^mb_kx_k$ converges to $x$ in order, does it follow that $a_k=b_k$ for all $k$?
%\end{question}
%\begin{question}
%Is every u-basis a bibasis? A u-basis is just a sequence $(x_k)$ such that every $x\in X$ can be written uniquely as $^u\sum_{k=1}^\infty a_kx_k$.
%\end{question}

\bibliographystyle{plain}
\bibliography{refs.bib}

\end{document}